\documentclass[a4paper,12pt,fleqn]{smfart}

\usepackage{ucs}
\usepackage[latin1]{inputenc}

\usepackage[francais,english]{babel} \AutoSpaceBeforeFDP
\usepackage[T1]{fontenc}
\usepackage{amstext,amssymb,amsthm}
\usepackage{varioref}

%\DeclareOption{french} 
%{\vref@addto\extrasfrench{% 
%\def\reftextfaceafter {page \reftextvario{ci-contre}{suivante}}% 
%\def\reftextfacebefore{page \reftextvario{ci-contre}% 
%{pr\?ec\?edente}}% 
%\def\reftextafter {page suivante}% 
%\def\reftextbefore {page pr\?ec\?edente}% 
%\def\reftextcurrent {de la pr\?esente page}% 
%\def\reftextfaraway#1{page~\pageref{#1}}% 
%\def\reftextpagerange#1#2{pages~\pageref{#1}--\pageref{#2}}% 
%\def\reftextlabelrange#1#2{\ref{#1} \?a~\ref{#2}}% 
%}}
%\usepackage[font=small,labelfont=bf,labelsep=period,
%justification=centerlast]{caption}
%\usepackage{floatrow}

\usepackage{layout}
\usepackage{smfthm}
\usepackage{graphicx}
\usepackage{subfig}
\usepackage{epsfig}
\usepackage[all]{xy}
\usepackage{enumerate}
\usepackage{version}

\usepackage{MnSymbol}
\usepackage{wasysym}

\usepackage{url}
\usepackage{geometry}
\geometry{ hmargin=2.5cm, vmargin=2.5cm }

\usepackage[usenames,dvipsnames]{color}
\definecolor{darkred}{rgb}{0.9,0.,.33}
\definecolor{darkblue}{rgb}{0.,0.,.6}
\definecolor{darkgreen}{rgb}{0.,.6,0.1}
\usepackage[colorlinks=true,urlcolor=darkblue,linkcolor=darkred,citecolor=darkgreen]{hyperref}

\newcommand{\C}{\mathcal{C}}
\newcommand{\R}{\mathbb{R}}
\newcommand{\N}{\mathbb{N}}
\newcommand{\Z}{\mathbb{Z}}

\newcommand{\G}{\mathcal{G}}
\newcommand{\s}{\mathrm{SL_3(\mathbb{R})}}
\renewcommand{\ss}{\mathrm{SL_{n+1}(\mathbb{R})}}
\renewcommand{\C}{\mathcal{C}}
\newcommand{\Cc}{\mathrm{C}}

\newcommand{\U}{\mathcal{U}}
\newcommand{\F}{\mathcal{F}}

\renewcommand{\O}{\Omega}
\newcommand{\g}{\gamma}
\renewcommand{\G}{\Gamma}

\renewcommand{\S}{\mathbb{S}}
\renewcommand{\P}{\mathbb{P}^2}
\newcommand{\Pd}{(\mathbb{P}^{2})^*}
\newcommand{\PP}{\mathbb{P}}
\newcommand{\Aut}{\textrm{Aut}}
\newcommand{\Sp}{\textrm{Sp}}
\newcommand{\Ax}{\textrm{Axe}}

\newcommand{\Vol}{\textrm{Vol}}

\newcommand{\Hol}{\textrm{hol}}
\newcommand{\dev}{\textrm{dev}}

\newcommand{\Quo}{\O/_{\G}}
\newcommand{\Quc}{\C/_{\G}}

\theoremstyle{plain}
\newtheorem{fait}{Fait}

\theoremstyle{definition}

\theoremstyle{remark}
\newtheorem*{rem}{Remarque}

%\setlength{\topmargin}{-1.5cm} % idem sauf que ca nique pas le bas..... ????
%\setlength{\textheight}{26cm} % fixe la hauteur du texte a 30 cm
%\addtolength{\hoffset}{-2cm}     % Translate horizontalemtent
%\setlength{\textwidth}{17cm} % Fixe la largeur du texte a 20 cm

%\setlength{\headheight}{2cm} % fixe la marge du haut a 5cm
%\addtolength{\topmargin}{0 cm} %Translate vers de 3cm vers le haut
%\addtolength{\textheight}{0 cm} % Agrandit la marge du bas de 6cm
%\addtolength{\textwidth}{2 cm} % Agrandit la marge de droite de 6 cm
%\addtolength{\rightmargin}{20 cm} % ????????
%\setlength{\marginparsep}{40cm} % ??????
%\setlength{\marginparwidth}{0cm} %?????
%\setlength{\oddsidemargin}{0cm}

%\setcounter{section}{0}
\setcounter{tocdepth}{3}
\setcounter{secnumdepth}{4}

%\subjclass{11F27,11F37,11F70,11F75}
\title{Surface Projective Convexe de volume fini}
%\date{}

\author{\href{mailto:ludovic.marquis@umpa.ens-lyon.fr}{Ludovic Marquis}}
\date{} % delete this line to display the current date
\urladdr{\href{http://www.umpa.ens-lyon.fr/~lmarquis/}{www.umpa.ens-lyon.fr/~lmarquis/}}

\begin{document}
\renewcommand{\labelitemi}{$\bullet$} %Change le symbole du 1er itemize

%\newcommand*\points[1]
%{\ifcase\value{#1}\or
%   $\cdot$ \or $\udotdot$ \or $\lefttherefore$ \or
%   $\diamonddots$ \or $\fivedots$ \or $(6
%   )$
% \fi}
%\renewcommand\theenumi{%
% \points{enumi}}
%\renewcommand\labelenumi{%
% \points{enumi}}

% avec la parenthese
%\renewcommand\labelenumi{%
% \points{enumi})}

\frontmatter
\maketitle

\begin{abstract}
Une surface projective convexe est le quotient d'un ouvert
proprement convexe $\Omega$ de l'espace projectif réel $\PP^2(\R)$ par un sous-groupe discret $\Gamma$ de
$\textrm{SL}_3(\R)$. Nous donnons plusieurs caractérisations du fait qu'une
surface projective convexe est de volume fini pour la mesure de
Busemann. On en d\'eduit que si $\Omega$ n'est pas un triangle alors $\Omega$ est strictement convexe, \`a bord
$\Cc^1$ et qu'une surface projective convexe $S$ est de volume fini
si et seulement si la surface duale est de volume fini.
\end{abstract}

\begin{altabstract}
A convex projective surface is the quotient of a properly convex
open $\Omega$ of the projective real space $\PP^2(\R)$ by a discret subgroup $\Gamma$ of $\textrm{SL}_3(\R)$. We give
some caracterisations of the fact that a convex projective
surface is of finite volume for the Busemann's measure. We deduce
of this that if $\Omega$ is not a triangle then $\Omega$ is strictly convex, with $\Cc^1$ boundary and
that a convex projective surface $S$ is of finite volume if and
only if the dual surface is of finite volume.
\end{altabstract}

\mainmatter

\section*{Introduction}

\subsection{Exemples de convexes divisibles}
\par{
Soit $\C$ une partie de l'espace projectif r\'eel $\PP^n=\PP^n(\R)$, on dira
que $\C$ est \emph{convexe} lorsque l'intersection de $\C$ avec toute droite de $\PP^n$ est connexe. Une partie convexe $\C$ est dite \emph{proprement convexe} lorsqu'il existe un ouvert
affine contenant l'adh\'erence $\overline{\C}$ de $\C$. Elle est
dite \emph{strictement convexe} lorsque tout segment inclus dans le bord
$\partial \C$ de $\C$ est trivial.}
\\
\par{
Le but de ce texte est d'\'etudier les ouverts proprement convexes
$\O$ de $\PP^n$ qui poss\`edent "beaucoup de sym\'etries". Un cas qui
a \'et\'e beaucoup \'etudi\'e est celui o\`u "beaucoup de sym\'etrie" signifie
qu'il existe un sous-groupe discret $\G$ de $\ss$ qui pr\'eserve
$\O$ et tel que le quotient $\Quo$ est compact. De tels ouverts
s'appellent des \emph{convexes divisibles} et on dit alors que
$\G$ \emph{divise} $\O$. Nous allons dans ce texte remplacer l'hypoth\`ese de compacit\'e
du quotient $\Quo$ par une hypoth\`ese de "finitude de volume", et nous restreindre \`a la dimension 2. Mais
commençons par donner des exemples du cas compact.
}
\\
\par{
L'exemple le plus simple de convexe divisible est le simplexe.
Toute base $\mathcal{B}$ de $\R^{n+1}$ d\'efinit un pavage de
$\PP^n$ en $2^n$ simplexes. La composante neutre du stabilisateur de chaque simplexe
ouvert est le groupe $D$ des matrices diagonales dans la base
$\mathcal{B}$ \`a coefficients positifs. $D$ est un groupe de Lie
ab\'elien isomorphe \`a $\R^n$ qui agit simplement transitivement sur
chaque simplexe ouvert $S$. Tout r\'eseau de $D$ divise donc $S$. On
vient donc de construire un convexe divisible non strictement
convexe. On remarque que dans cet exemple tout groupe qui divise
$S$ agit de façon r\'eductible sur $\R^{n+1}$, et $S$ est r\'eductible
au sens suivant.}
\\
\par{
Un ouvert proprement convexe $\O$ est \emph{r\'eductible} si l'une des deux composantes connexes $\Cc$ de $\pi^{-1}(\O)$ ($\pi$ est la projection naturelle $\pi :
\R^{n+1}-\{ 0 \} \rightarrow \PP^n$) est \emph{r\'eductible}. Ce qui
signifie qu'il existe une d\'ecomposition $\R^{n+1} = E_1 \oplus
E_2$ non triviale et des c\^ones convexes $\Cc_1$ de $E_1$ et $\Cc_2$ de
$E_2$ tel que $\Cc= \Cc_1 + \Cc_2$. Sinon, on dit qu'ils sont
\emph{irr\'eductibles}. Vey a montr\'e dans \cite{Vey} que tout convexe
divisible se d\'ecompose en un produit de convexes divisibles irr\'eductibles. On
s'intéresse donc avant tout aux convexes divisibles irr\'eductibles.}
\\
\par{
Parmi les convexes divisibles il y a une famille qui se distingue
des autres, celles des convexes divisibles homog\`enes c'est-\`a-dire ceux pour
lesquels le groupe $\Aut(\O)$ des transformations de $\ss$ qui
pr\'eserve $\O$ agit transitivement. Les travaux de Koecher,
Vinberg et Borel ont permis de classifier les convexes divisibles homog\`enes. Voici la liste des convexes divisibles irr\'eductibles homog\`enes:}
\\
\par{
Les espaces hyperboliques $\mathbb{H}^n  = \pi(\{ x \in \R^{n+1} \, | \,
x_1^2 + x_2^2 + ... + x_n^2 -x_{n+1}^2 >   0 \textrm{ et } x_{n+1}
> 0 \})$ forment la liste compl\`ete (avec $n \geqslant 1$) des
convexes divisibles strictement convexes et homog\`enes. Le groupe
d'automorphisme de $\mathbb{H}^n$ est bien entendu $SO_{n,1}(\R)$.
On remarquera qu'en toute dimension $n\geqslant 1$, il existe un unique convexe divisible strictement convexe et homog\`ene.}
\\
\par{Il existe quatre familles de convexes divisibles irr\'eductibles non
strictement convexes et homog\`enes. En voici la liste avec $n \geqslant 2$:
\begin{itemize}
\item $\Pi_n(\R)=\pi($ \{ Les matrices r\'eelles $(n+1) \times (n+1)$ sym\'etriques d\'efinies
positives) \}, il est de dimension $m=\frac{(n-1)(n+2)}{2}$ et son
groupe d'automorphisme est $\ss$.

\item $\Pi_n(\mathbb{C}) =\pi($ \{ Les matrices complexes $(n+1) \times (n+1)$
hermitiennes d\'efinies positives \}), il est de dimension $m=n^2-1$
et son groupe d'automorphisme est $\mathrm{SL}_{n+1}(\mathbb{C})$.

\item $\Pi_n(\mathbb{H}) =\pi($ \{ Les matrices quarternioniques $(n+1) \times (n+1)$
hermitiennes d\'efinies positives \}), il est de dimension
$m=(2n+1)(n-1)$ et  son groupe d'automorphisme est
$\mathrm{SL}_{n+1}(\mathbb{H})$.

\item $\Pi_3(\mathbb{O})$ un convexe "exceptionnel" de dimension
26 et tel que
Lie($\Aut(\Pi_3(\mathbb{O})))=\mathfrak{e}_{6(-26)}$.
\end{itemize}

Par cons\'equent, contrairement au cas strictement convexe, il
n'existe pas de convexe divisible irr\'eductible non strictement convexe et homog\`ene en
toute dimension.}
\\
\par{Expliquons succinctement l'histoire de cette classification. \`{A} la fin des ann\'ees 50, Koecher et Vinberg ont classifi\'e les ouverts proprement convexes sym\'etriques de $\PP^m$ (\cite{Vin}). Dans les ann\'ees 60, Borel a montr\'e dans \cite{Bor} que tout groupe r\'eductif contient un r\'eseau cocompact. On peut d\'eduire de cela que tout ouvert $\O$ proprement convexe et sym\'etrique est divisible, puisque si $\O$ est sym\'etrique alors le groupe $\Aut(\O)$ est un groupe r\'eductif qui agit transitivement et proprement sur $\O$. Le dernier pas vers la classification des convexes divisibles homog\`enes a \'et\'e fait par Vinberg (\cite{Vin2}) qui a classifi\'e les ouverts proprement convexes homog\`enes. Il r\'esulte de cette classification que tout ouvert proprement convexe homog\`ene est sym\'etrique si et seulement si le groupe $\Aut(\O)$ est unimodulaire. Par cons\'equent, tout ouvert proprement convexe homog\`ene est divisible si et seulement si il est sym\'etrique.
}
\\
\par{
Kac et Vinberg ont construit les premiers exemples de convexe
divisible strictement convexe et non homog\`ene dans \cite{KaV} \`a
l'aide de groupe de Coxeter. Johnson et Millson ont construit en
toute dimension $n \geqslant 2$ des convexes divisibles
irr\'eductibles, strictement convexes et non homog\`enes (\cite{JoMil}) en
d\'eformant des r\'eseaux cocompacts de $\textrm{SO}_{n,1}(\R)$. Kapovich et
Benoist ont construit (Benoist pour $n=4$ dans \cite{Beno1} et
Kapovich pour $n \geqslant 4$ dans \cite{Kapo}) des convexes
divisibles strictement convexes, non homog\`enes et non
quasi-isom\'etriques \`a l'espace hyperbolique $\mathbb{H}^n$ en toute
dimension $n \geqslant 4$.}

\subsection{Description des principaux r\'esultats}

\par{
 Revenons au but de ce texte. Tout ouvert proprement convexe est
 naturellement muni d'une m\'etrique Finsl\'erienne (distance de Hilbert) et de la
 mesure associ\'ee (mesure de Busemann). Le but de
 ce texte est d'\'etudier les ouverts proprement convexe de $\P$ pour
 lesquels il existe un sous-groupe discret $\G$ de $\s$ qui pr\'eserve
 $\O$ et tel que le quotient $\Quo$ muni de la mesure $\mu$ h\'erit\'ee de la mesure de Busemann soit de volume
 fini.}
 \\
\par{
Nous allons d\'emontrer les th\'eor\`emes suivants:

\begin{theo}(Corollaire \ref{Zariski})\label{0}
Soit $\G$ un sous-groupe discret de $\s$ qui pr\'eserve un ouvert
proprement convexe $\O$ de $\P$. Si $\mu(\Quo) < \infty$ et $\O$
n'est pas un triangle alors l'adh\'erence de Zariski de $\G$ est:
\begin{itemize}
\item $\s$ ou
\item Un conjugu\'e de $\mathrm{SO}_{2,1}(\R)$ et l'ouvert $\O$ est un ellipsoïde (i.e l'intérieur d'une ellipse).
\end{itemize}
\end{theo}

\begin{theo}\label{1}(Th\'eor\`eme \ref{typefini})
Toute surface admettant une structure
projective proprement convexe de volume fini est de type fini.
\end{theo}

\begin{theo}\label{2}(Corollaire \ref{caract})
Soit $S$ une surface sans bord et de type fini, une
structure projective proprement convexe sur $S$ est de volume fini si
et seulement si l'holonomie des lacets \'el\'ementaires (D\'efinition \ref{elem}) de $S$ est parabolique.
\end{theo}

On obtiendra ensuite les r\'esultats suivants:
\begin{theo}\label{3}(Th\'eor\`eme \ref{dual})
Soient $\O$ un ouvert proprement convexe et $\G$ un sous-groupe
discret qui pr\'eserve $\O$, l'action de $\G$ sur $\O$ est de
covolume fini si et seulement si l'action de $^t\G$ sur l'ouvert dual $\O^*$ est de covolume fini.
\end{theo}

\begin{theo}\label{4}(Corollaire \ref{strict} et th\'eor\`eme \ref{c1})
Soient $\O$ un ouvert proprement convexe et $\G$ un sous-groupe
discret qui pr\'eserve $\O$, on suppose que l'action de $\G$ sur
$\O$ est de covolume fini et que $\O$ n'est pas un triangle.
Alors, $\O$ est strictement convexe et le bord $\partial \O$ de
$\O$ est $C^1$.
\end{theo}

\begin{theo}\label{5}(Th\'eor\`eme \ref{enslimite})
Soient $\O$ un ouvert proprement convexe et $\G$ un
sous-groupe discret de $\s$ qui pr\'eserve $\O$, on suppose que $\G$
n'est pas virtuellement ab\'elien. Alors, l'action de $\G$ sur $\O$ est de
covolume fini si et seulement si $\G$ est de type fini et l'ensemble limite $\Lambda_{\G}$ de $\G$ v\'erifie $\Lambda_{\G}=\partial \O$.
\end{theo}
\par{
Cette \'etude permet d'obtenir sur l'espace des structures projectives marqu\'ees proprement convexes de volume fini sur la surface de genre $g$ avec $p$ pointes un syst\`eme de coordonn\'ees \`a la Fenchel-Nielsen qui montre que cet espace est hom\'eomorphe \`a $\R^{16g-16+6p}$. Ce syst\`eme de coordonn\'ees g\'en\'eralise celui  employ\'e par Goldman dans le cas compact (\cite{Gold1}). Cette \'etude sera soumise tr\`es prochainement (\cite{moi3}).
}
\\
\par{
Voici le plan de ce texte. La premi\`ere partie est une introduction \`a la g\'eom\'etrie de Hilbert. Les d\'emonstrations de certains th\'eor\`emes peuvent \^etre trouv\'es dans \cite{CVV1,CVV2,Beno3}. On donne quand m\^eme les d\'emonstrations de certains th\'eor\`emes pour faciliter la lecture de ce texte. Cette partie a pour but de d\'efinir la mesure de Busemann et de donner des exemples de parties de volume fini et infini pour celle-ci.
}
\\
\par{
Dans la seconde partie, on \'etudie la dynamique d'un \'el\'ement de $\s$ qui pr\'eserve un ouvert proprement convexe. Cette partie constitue une \'etude \'el\'ementaire mais essentiel pour nos r\'esultats.
}
\\
\par{
Dans la troisi\`eme partie on montre le th\'eor\`eme \ref{0}. Le point cl\'e \'etant de montrer l'irr\'eductibilit\'e du groupe $\G$.
}
\\
\par{
Le but de la quatri\`eme partie est de donner une courte d\'emonstration d'un th\'eor\`eme de Lee (\cite{JL}). Ce th\'eor\`eme assure l'existence d'un domaine fondamental convexe et localement fini pour l'action d'un groupe discret sur un ouvert proprement convexe. Ce r\'esultat est le point de d\'epart de l'\'etude des surfaces projectives proprement convexes de volume fini. On introduit aussi la notion de secteur qui permettra d'\'etudier les surfaces projectives proprement convexes \`a l'infini.
}
\\
\par{
L'objet de la cinqui\`eme partie est de montrer les th\'eor\`emes \ref{1} et \ref{2}. Pour cela, on utilise abondamment les parties 2 et 4. On commencera par d\'efinir pr\'ecisement les notions de surface projective, surface projective proprement convexe et surface projective proprement convexe de volume fini. Ensuite, on montrera le th\'eor\`eme \ref{1},
en utilisant une minoration uniforme de l'aire de tout triangle id\'eal. Enfin, on montre le th\'eor\`eme \ref{2}, les outils essentiels sont le th\'eor\`eme de Lee, la notion de secteur et les estimations de volume de la partie \ref{Hil}.
}
\\
\par{
La sixi\`eme partie a pour but de montrer les th\'eor\`emes \ref{3}, \ref{4}, \ref{5}. Pour montrer la strict-convexit\'e on utilise la m\^eme id\'ee que dans le cas compact mais on a besoin de raffinement. Ensuite, on d\'efinit la notion de surface duale, le th\'eor\`eme \ref{4} suit. Cette dualit\'e permettra de montrer que $\O$ est \`a bord $C^1$. Enfin, on montrera le th\'eor\`eme \ref{5}.
}
\\
\par{
Je remercie Yves Benoist pour ses nombreux conseils et nos nombreuses discussions sur ce sujet, Constantin Vernicos pour ces r\'eponses toujours tr\`es rapides \`a mes questions. Je remercie aussi Benjamin Favetto et Mathieu Cossutta, l'un pour ces conseils de r\'edaction et l'autre pour quelques discussions autour de ce sujet. Enfin, je remercie le rapporteur anonyme dont les remarques m'ont permis d'améliorer ce texte de façon significative.
}

\section{G\'eom\'etrie de Hilbert}\label{Hil}

Cette partie constitue une introduction \`a la g\'eom\'etrie de
Hilbert. Un expos\'e plus complet peut \^etre trouv\'e dans les
articles \cite{CVV1, CVV2, Beno3}.

\subsection{La m\'etrique d'un ouvert proprement convexe}\label{base}

Soit $\O$ un ouvert proprement convexe de $\PP^n$, Hilbert a
introduit sur de tels ouverts une distance, la distance de
Hilbert, d\'efinie de la façon suivante:

Soient $x \neq y \in \O$, on note $p,q$ les points d'intersection
de la droite $(xy)$ et du bord $\partial \O$ de $\O$ tels que $x$
est entre $p$ et $y$, et $y$ est entre $x$ et $q$ (voir figure \ref{dis}). On pose:

\begin{figure}[!h]
\begin{center}
\includegraphics[trim=0cm 12cm 0cm 0cm, clip=true, width=6cm]{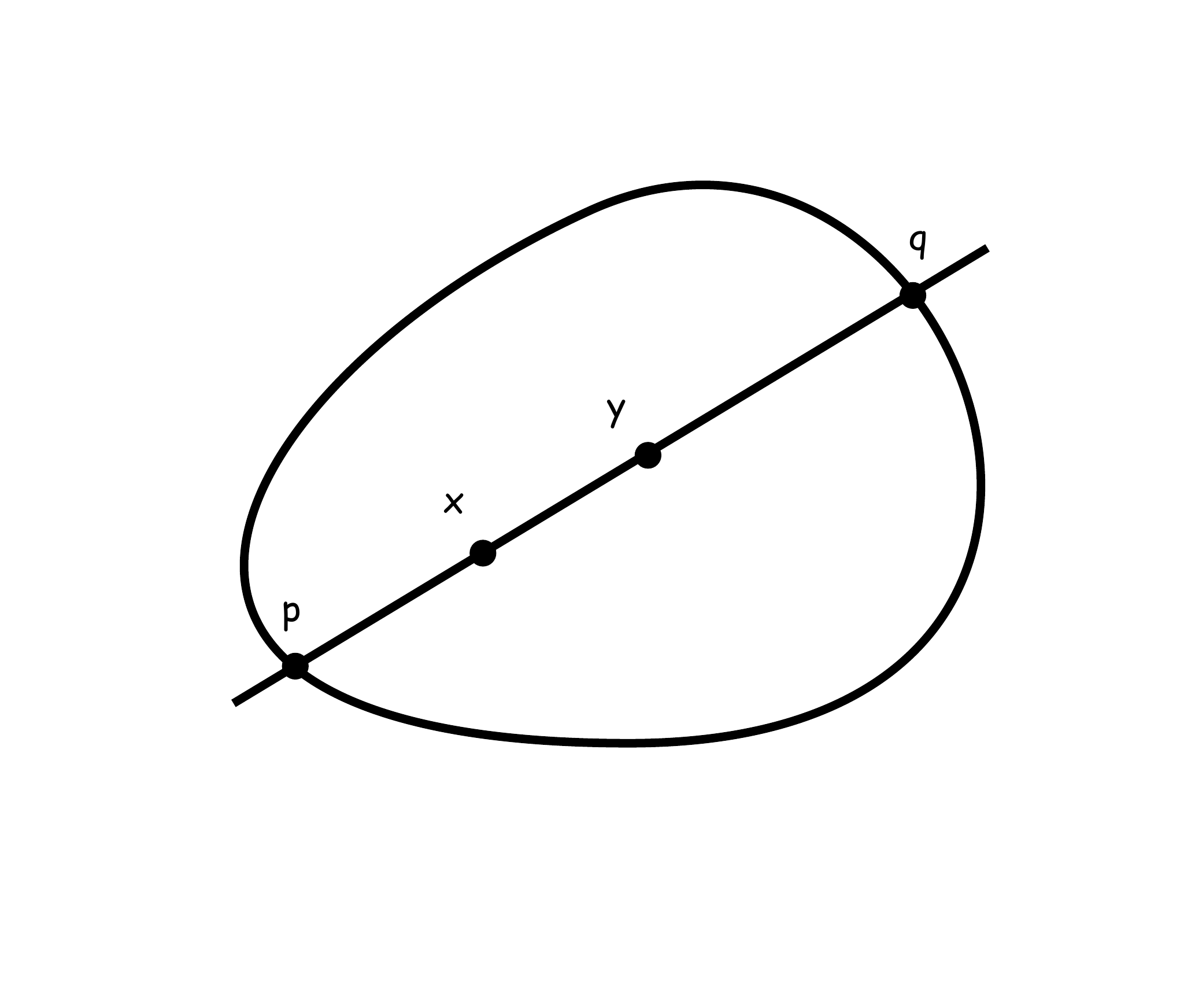}
%\centerline{\psfig{figure=dis.pdf, width=6cm}}
\caption{La distance de Hilbert} \label{dis}
\end{center}
\end{figure}

$$
\begin{array}{ccc}
d_{\O}(x,y) = \ln ([p:x:y:q]) = \ln \Big(\frac{\|p-y \|\cdot \|
q-x\|}{\| p-x \| \cdot \| q-y \|} \Big) & \textrm{et} &
d_{\O}(x,x)=0
\end{array}
$$
\begin{itemize}
\item $[p:x:y:q]$ d\'esigne le birapport des points $p,x,y,q$.

\item $\| \cdot \|$ est une norme euclidienne quelconque sur un
ouvert affine $A$ qui contient l'adh\'erence $\overline{\O}$ de $\O$.
\end{itemize}

\begin{rem}
Il est clair que $d_{\O}$ ne d\'epend ni du choix de $A$, ni du
choix de la norme euclidienne sur $A$.
\end{rem}

\begin{fait}
Soit $\O$ un ouvert proprement convexe de $\PP^n$,
\begin{itemize}
\item $d_{\O}$ est une distance sur $\O$.

\item $(\O,d_{\O})$ est un espace m\'etrique complet.

\item La topologie induite par $d_{\O}$ coïncide avec celle
induite par $\PP^n$.

\item Le groupe $\Aut(\O)$ des transformations projectives de $\ss$ qui pr\'eservent $\O$ est un sous-groupe ferm\'e de $\ss$ qui agit par isom\'etrie sur $(\O,d_{\O})$. Il agit donc proprement sur $\O$.
\end{itemize}
\end{fait}

On peut trouver une d\'emonstration de cet \'enonc\'e dans \cite{Beno3}.

\subsection{La structure finsl\'erienne d'un ouvert proprement convexe}

Soit $\O$ un ouvert proprement convexe de $\PP^n$, la m\'etrique de
Hilbert $d_{\O}$ est induite par une structure finsl\'erienne sur
l'ouvert $\O$. On identifie le fibr\'e tangent $T \O$ de $\O$ \`a
$\O \times A$.

Soient $x \in \O$ et $v \in A$, on note $p^+$ (resp. $p^-$) le
point d'intersection de la demi-droite d\'efinie par $x$ et $v$
(resp $-v$) avec $\partial \O$.

On pose: $\|v\|_x = \Big(\frac{1}{\|x-p^-\|} + \frac{1}{\| x-
p^+\|} \Big) \| v \|$.

\begin{figure}[!h]
\begin{center}
\includegraphics[trim=0cm 12cm 0cm 0cm, clip=true, width=6cm]{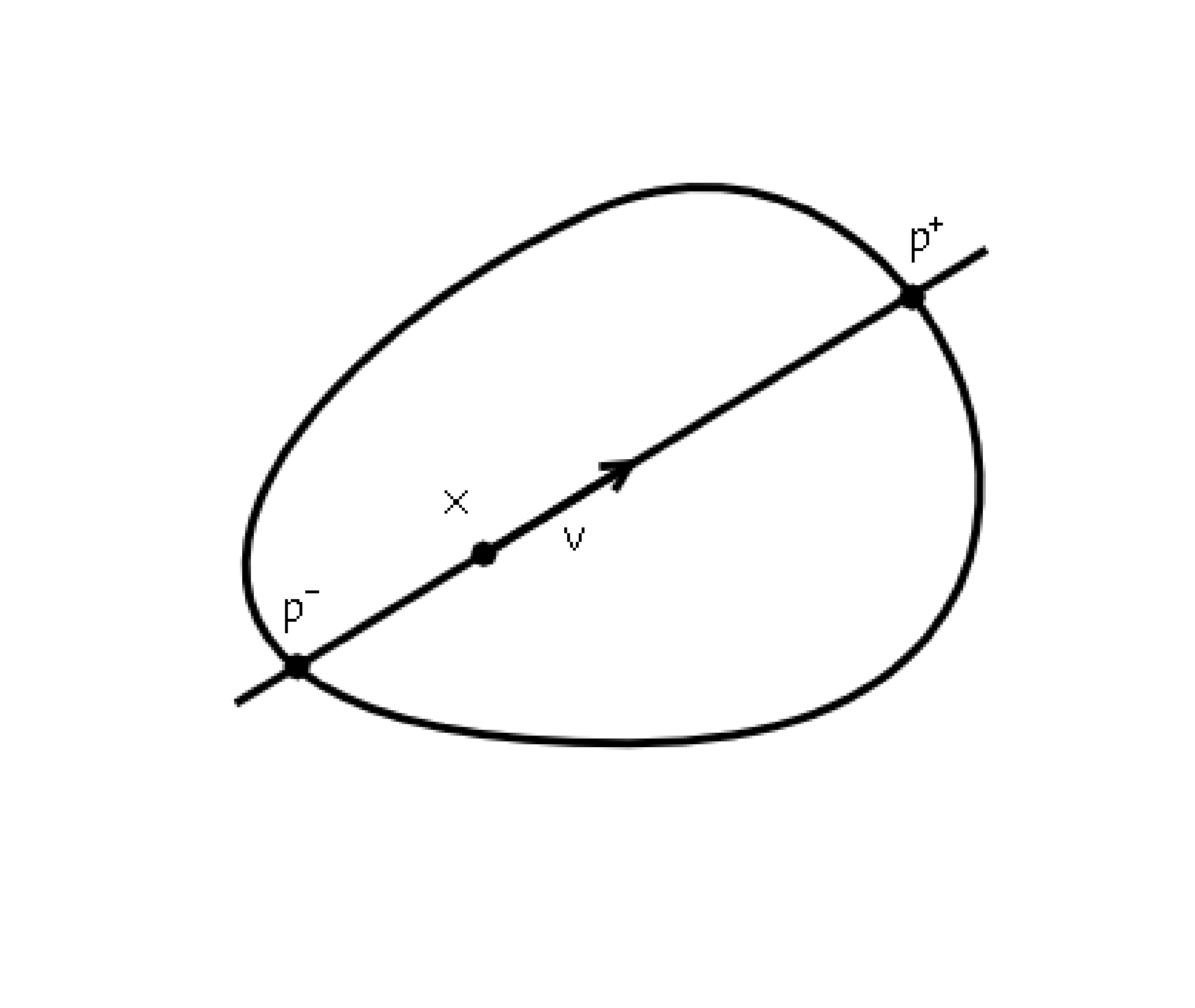}
%\centerline{\psfig{figure=met.pdf, width=6cm}}
\caption{La m\'etrique de Hilbert} \label{met}
\end{center}
\end{figure}

\begin{fait}
Soient $\O$ un ouvert proprement convexe de $\PP^n$ et $A$ un ouvert
affine qui contient $\overline{\O}$,
\begin{itemize}
\item la distance induite par la m\'etrique finsl\'erienne $\|\cdot \|
_{\cdot}$ est la distance $d_{\O}$.

\item Autrement dit on a les formules suivantes:
\begin{itemize}
\item $\|v\|_x = \frac{d}{dt}|_{t=0} d_{\O}(x,x+tv)$, o\`u $v \in
A$, $t\in \R$ assez petit.

\item $d_{\O}(x,y) = \inf \int_0^1 \| \sigma'(t) \|_{\sigma(t)}
dt$, o\`u $l'\inf$ est pris sur les chemins $\sigma$ de classe
$\C^1$ tel que $\sigma(0)=x$ et $\sigma(1)=y$.
\end{itemize}
\end{itemize}
\end{fait}

\begin{rem}
La quantit\'e $\|v\|_x$ est donc ind\'ependante du choix de $A$ et de $\| \cdot
\|_{A}$.
\end{rem}

\subsection{Mesure sur un ouvert proprement convexe (dite mesure de Busemann)}

Nous allons construire une mesure bor\'elienne $\mu_{\O}$ sur $\O$,
de la m\^eme façon que l'on construit une mesure bor\'elienne sur une
vari\'et\'e riemanienne.

Soit $\O$ un ouvert proprement convexe de $\PP^n$, on note:
\begin{itemize}
\item $B_x(1) = \{ v \in T_x \O \, | \, \|v\|_x < 1 \}$

\item $\textrm{Vol}$ est la mesure de Lebesgue sur $A$ normalis\'ee
pour avoir $\textrm{Vol}(\{ v \in A \, | \, \|v\| < 1 \})=1$.
\end{itemize}

On peut \`a pr\'esent d\'efinir la mesure $\mu_{\O}$. Pour tout bor\'elien
$\mathcal{A} \subset \O \subset A$, on pose:

$$\mu_{\O} (\mathcal{A})= \int_{\mathcal{A}} \frac{dVol(x)}{\textrm{Vol}(B_x(1))}$$

La mesure $\mu_{\O}$ est ind\'ependante du choix de $A$ et de $\| \cdot \|$,
car c'est la mesure de Hausdorff de $(\O,d_{\O})$ (Exemple 5.5.13
\cite{BBI}). (Pour une introduction aux mesures de Hausdorff, on
pourra regarder \cite{BBI}). La mesure $\mu_{\O}$ est donc
$\Aut(\O)$-invariante.

\subsection{Un r\'esultat de comparaison}

Dans la proposition suivante, il y a deux ouverts en jeu, on
ajoute donc aux notations introduites pr\'ec\'edemment le symbole de
l'ouvert auxquelles elles correspondent (ex: $\|v\|_x^{\O}$,
$p^-_{\O}$, $B^{\O}_x(1)$, etc...).

\begin{prop}\label{compa}
Soient $\O_1$ et $\O_2$ deux ouverts proprement convexes de $\PP^n$
tels que $\O_1 \subset \O_2$, alors:
\begin{itemize}
\item Les m\'etriques finsl\'eriennes de $\O_1$ et $\O_2$
v\'erifient: $\|v\|_x^{\O_2} \leqslant \|v\|_x^{\O_1}$ pour tout $x
\in \O_1$ et tout $v \in T_x \O_1 = T_x \O_2$, l'\'egalit\'e ayant
lieu si et seulement si $p^+_{\O_1}=p^+_{\O_2}$ et
$p^-_{\O_1}=p^-_{\O_2}$.

\item $\forall x,y \in \O_1$, on a $d_{\O_2}(x,y) \leqslant
d_{\O_1}(x,y)$.

\item $\forall x \in \O_1$, on a $B^{\O_1}_x(1) \subset
B^{\O_2}_x(1)$  avec \'egalit\'e si et seulement si $\O_1=\O_2$.

\item Pour tout bor\'elien $\mathcal{A}$ de $\O_1$, on a
$\mu_{\O_2}(\mathcal{A}) \leqslant \mu_{\O_1}(\mathcal{A})$.
\end{itemize}
\end{prop}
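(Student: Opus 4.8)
We have a properly convex open set $\Omega$ in projective space. Associated to it:
- Hilbert distance $d_\Omega$
- Finsler metric $\|v\|_x$
- Busemann measure $\mu_\Omega$

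For a point $x \in \Omega$ and direction $v$, we find $p^+$ and $p^-$ on the boundary (in directions $v$ and $-v$ from $x$), and:
$$\|v\|_x = \left(\frac{1}{\|x-p^-\|} + \frac{1}{\|x-p^+\|}\right)\|v\|$$

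**The proposition to prove:** For $\Omega_1 \subset \Omega_2$ two properly convex open sets:

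1. $\|v\|_x^{\Omega_2} \leq \|v\|_x^{\Omega_1}$ with equality iff the boundary points coincide
2. $d_{\Omega_2}(x,y) \leq d_{\Omega_1}(x,y)$
3. $B_x^{\Omega_1}(1) \subset B_x^{\Omega_2}(1)$ with equality iff $\Omega_1 = \Omega_2$
4. $\mu_{\Omega_2}(\mathcal{A}) \leq \mu_{\Omega_1}(\mathcal{A})$

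**Key geometric observation:** Since $\Omega_1 \subset \Omega_2$, for any point $x \in \Omega_1$ and direction $v$, the ray from $x$ in direction $v$ exits $\Omega_1$ before (or at the same point as) it exits $\Omega_2$. So $p^+_{\Omega_1}$ is between $x$ and $p^+_{\Omega_2}$ (or equal). This means:
$$\|x - p^+_{\Omega_1}\| \leq \|x - p^+_{\Omega_2}\|$$

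Let me verify my intuition and plan the proof.

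The crucial fact is that the boundary of the smaller convex set is "closer" to interior points along every ray. This makes the Finsler norm larger for $\Omega_1$, which makes balls smaller, distances larger, and volume larger.

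For the equality cases, I need to track when the inequalities become equalities.\emph{Strategy.} The whole proposition flows from one elementary geometric observation about the inclusion $\O_1 \subset \O_2$. Fix $x \in \O_1$ and a direction $v$. The ray emanating from $x$ in the direction $v$ meets $\partial \O_1$ at $p^+_{\O_1}$ and $\partial \O_2$ at $p^+_{\O_2}$; since $\O_1 \subset \O_2$, the exit point from the smaller convex cannot be farther than the exit point from the larger one, so $p^+_{\O_1}$ lies on the segment $[x,p^+_{\O_2}]$. Consequently $\|x-p^+_{\O_1}\| \leqslant \|x-p^+_{\O_2}\|$, and likewise $\|x-p^-_{\O_1}\| \leqslant \|x-p^-_{\O_2}\|$ for the direction $-v$. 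These two inequalities are the engine of everything.

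\emph{First bullet.} Plugging the two distance inequalities into the explicit formula for the Finsler norm gives immediately
$$
\|v\|_x^{\O_2} = \Big(\tfrac{1}{\|x-p^-_{\O_2}\|} + \tfrac{1}{\|x-p^+_{\O_2}\|}\Big)\|v\| \leqslant \Big(\tfrac{1}{\|x-p^-_{\O_1}\|} + \tfrac{1}{\|x-p^+_{\O_1}\|}\Big)\|v\| = \|v\|_x^{\O_1},
$$
since $t \mapsto 1/t$ is decreasing. Equality forces both reciprocal terms to match, hence $\|x-p^\pm_{\O_1}\| = \|x-p^\pm_{\O_2}\|$, i.e. the endpoints coincide. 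This is the equality clause.

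\emph{Second and third bullets.} The distance inequality $d_{\O_2}(x,y) \leqslant d_{\O_1}(x,y)$ follows by integrating the pointwise bound along paths: using the integral formula $d_{\O}(x,y)=\inf \int_0^1 \|\sigma'(t)\|_{\sigma(t)}\,dt$, every admissible path has smaller $\O_2$-length than $\O_1$-length, so the infimum drops. (Alternatively one checks the birapport directly, using that shrinking the convex moves the two boundary points $p,q$ closer to $x,y$ and that the cross-ratio is monotone.) The ball inclusion $B^{\O_1}_x(1) \subset B^{\O_2}_x(1)$ is just a restatement of the first bullet: $\|v\|_x^{\O_2} \leqslant \|v\|_x^{\O_1}$ means every $v$ with $\|v\|_x^{\O_1}<1$ also satisfies $\|v\|_x^{\O_2}<1$. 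For the equality case, if $B^{\O_1}_x(1)=B^{\O_2}_x(1)$ for \emph{some} $x$ then the two Finsler unit balls agree in every direction, forcing $p^\pm_{\O_1}=p^\pm_{\O_2}$ in all directions, whence $\partial\O_1$ and $\partial\O_2$ share all these exit points and $\O_1=\O_2$.

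\emph{Fourth bullet.} The measure inequality is a consequence of the ball inclusion through the defining integral. Since $B^{\O_1}_x(1) \subset B^{\O_2}_x(1)$, monotonicity of $\mathrm{Vol}$ gives $\mathrm{Vol}(B^{\O_1}_x(1)) \leqslant \mathrm{Vol}(B^{\O_2}_x(1))$ for every $x \in \O_1$, so the integrand satisfies
$$
\frac{1}{\mathrm{Vol}(B^{\O_2}_x(1))} \leqslant \frac{1}{\mathrm{Vol}(B^{\O_1}_x(1))}.
$$
Integrating this over any Borel set $\mathcal{A} \subset \O_1$ against $d\mathrm{Vol}(x)$ yields $\mu_{\O_2}(\mathcal{A}) \leqslant \mu_{\O_1}(\mathcal{A})$.

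\emph{Main obstacle.} None of the steps is deep; the only point requiring care is the equality discussion in the first and third bullets, where one must argue that pointwise equality of the Finsler norms in all directions really does pin down the boundary. The cleanest route is to note that the function $v \mapsto \|v\|_x^{\O}$ determines, for each direction, exactly the two boundary intercepts $p^+$ and $p^-$ (they are recovered from the two summands in the formula), so equality of the norms at even a single point $x$, across all directions, recovers the entire boundary $\partial\O$ as the set of these intercepts — forcing $\O_1 = \O_2$.
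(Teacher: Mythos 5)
Your proof is correct. Note that the paper itself states Proposition \ref{compa} \emph{without} proof --- it is presented as a standard comparison fact of Hilbert geometry, with the reader referred to \cite{CVV1,CVV2,Beno3} --- so there is no argument of the paper to compare yours against; what you wrote is the standard argument, and all four bullets do reduce, exactly as you organize it, to the single observation that a ray from $x$ exits $\O_1$ no later than it exits $\O_2$, giving $\|x-p^{\pm}_{\O_1}\| \leqslant \|x-p^{\pm}_{\O_2}\|$. One small imprecision in your last paragraph: you claim the two boundary intercepts are ``recovered from the two summands in the formula''; from the norm alone one only knows the \emph{sum} of the two reciprocals, so the individual intercepts are not determined in general. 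Your argument does not actually need this: equality of the unit balls gives equality of the Minkowski gauges, hence of the norms in every direction, and then the equality case of your first bullet --- which does use the termwise inequalities furnished by the inclusion $\O_1 \subset \O_2$ --- forces both intercepts to coincide direction by direction; since every point of $\partial \O_i$ is such an intercept (convexity), the boundaries agree and $\O_1=\O_2$. With that phrasing repaired, the proof is complete.
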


\subsection{Quelques r\'esultats en g\'eom\'etrie de Hilbert plane}

\subsubsection{Un r\'esultat sur les ouverts proprement convexes de $\P$}

On souhaite montrer la proposition suivante:

\begin{prop}\label{mubord}
Soient $\O$ un ouvert proprement convexe de $\P$ et $s$ un point du bord $\partial \O$ de
$\O$, alors, pour tout voisinage $V$ de $s$ dans $\overline{\O}$, on a $\mu_{\O}(V \cap \O) = \infty$.
\end{prop}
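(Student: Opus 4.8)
The plan is to establish a pointwise lower bound on the density $1/\Vol(B_x(1))$ of $\mu_{\O}$ that blows up as $x$ approaches $s$, and then to integrate it. Fix an affine chart $A$ containing $\overline{\O}$ together with a Euclidean norm on it. Since $\O$ is properly convex it admits a supporting line at $s$, and I choose affine coordinates $(x_1,x_2)$ so that this line is $\{x_2=0\}$, that $s=0$, and that $\O\subset\{x_2>0\}$. For $x\in\O$ I write $h(x)=x_2$ for the height of $x$ over the supporting line and $\ell(x)$ for the length of the horizontal chord $\O\cap(\R\times\{x_2\})$. The goal is the estimate $\Vol(B_x(1))\leqslant C\,h(x)\,\ell(x)$ for $x$ near $s$.

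The first step is to observe that $B_x(1)\subset\O-x$: from the Finsler formula one has $\|v\|_x\geqslant\|v\|/\|x-p^+\|$, so $\|v\|_x<1$ forces the point $x+v$ to lie strictly before the boundary point $p^+$ along the direction $v$, hence $x+v\in\O$. Since $\|\cdot\|_x$ is symmetric under $v\mapsto-v$ (this swaps $p^+$ and $p^-$), the ball $B_x(1)$ is centrally symmetric, so in fact $B_x(1)\subset(\O-x)\cap(x-\O)$. Because $\O\subset\{x_2>0\}$, this intersection lies in the slab $\{|z_2|\leqslant h(x)\}$; and on each level $z_2$ its extent in the $z_1$-direction is at most the chord length of $\O$ at height $x_2+z_2\in[0,2h(x)]$. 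Invoking the concavity of the width function of the planar convex set $\O$ (Brunn's theorem in the plane) to bound every such chord by $2\ell(x)$, one gets that $B_x(1)$ sits in a rectangle of dimensions $2h(x)\times 2\ell(x)$, whence $\Vol(B_x(1))\leqslant C\,h(x)\,\ell(x)$.

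Consequently $\mu_{\O}(V\cap\O)\geqslant c\int_{V\cap\O}\frac{dx_1\,dx_2}{h(x)\,\ell(x)}$. Restricting the integral to a thin vertical strip $W=\{0<x_2<\eta,\ |x_1|<\delta/2\}\cap\O$, which lies in $V$ once $\eta,\delta$ are small, and applying Fubini, the inner integral over the horizontal chord contributes $\int\frac{dx_1}{\ell(x)}\geqslant c'>0$, because the portion of the chord meeting the strip is a definite fraction of its total length (this holds whether $s$ is a smooth point, lies in the interior of a boundary segment, or is an endpoint of one), while the outer integral is $\int_0^{\eta}\frac{dx_2}{x_2}=\infty$. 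This yields $\mu_{\O}(V\cap\O)=\infty$.

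The main obstacle is precisely the area estimate, that is, controlling the horizontal extent of $B_x(1)$ by the chord length $\ell(x)$ uniformly; concavity of the width function is what makes the chords at heights up to $2h(x)$ comparable to the chord through $x$. The delicate point is to carry this out uniformly over the possible local shapes of $\partial\O$ at $s$ (flat, strictly convex, or a corner). A crude comparison of $\O$ with an enclosing triangle through Proposition \ref{compa} is too lossy when $s$ is a corner, which is exactly why the argument is organised around the intrinsic quantities $h(x)$ and $\ell(x)$ and the logarithmic divergence $\int dx_2/x_2$.
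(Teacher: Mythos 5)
Your proof is correct, and it takes a genuinely different route from the paper's. The paper never estimates the Busemann density $1/\Vol(B_x(1))$ at all: it packs infinitely many pairwise disjoint Hilbert disks of fixed radius into $V\cap\O$ along the geodesic ray $[x,s[$, obtaining a uniform positive lower bound for their volume from Benz\'ecri's compactness theorem (th\'eor\`eme \ref{ben}, lemme \ref{voldisc}) and confining them inside $V$ with the comparison lemma \ref{barriere}; this forces a case division according to whether $\partial \O$ contains a nontrivial segment near $s$ or not (lemme \ref{nons}), the segment case requiring disks of small radius $\varepsilon$ whose extent ``parallel'' to the segment is controlled separately. You instead prove the pointwise density bound $\Vol(B_x(1))\leqslant C\,h(x)\,\ell(x)$ from the inclusion $B_x(1)\subset(\O-x)\cap(x-\O)$ and concavity of the chord-length function, and then integrate over a thin strip, Fubini producing the divergence $\int_0^{\eta}dt/t=\infty$. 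Two of your steps are compressed but do hold: the bound of all chords at heights in $[0,2h(x)]$ by $2\ell(x)$ is false for a general point of a convex body, but is true as soon as $h(x)$ is at most half the total height of $\O$ (for $t\in[h,2h]$ compare $h$ against $0$ and $t$; for $t\in[0,h]$ compare $h$ against $t$ and the top height), which is exactly your regime ``$x$ near $s$''; and the uniform lower bound on the inner Fubini integral follows from the convergence of the chords $\O\cap\{x_2=t\}$, as $t\to 0$, to the face of $\overline{\O}$ at $s$. What your approach buys: it is elementary and self-contained, avoiding Benz\'ecri's theorem (the deepest input of the paper's proof), and it treats the smooth, corner and flat local shapes of $\partial\O$ at $s$ by one uniform computation instead of separate constructions. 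What the paper's approach buys: the packing argument is intrinsic to Hilbert geometry (only invariance and the comparison proposition \ref{compa} are used), the Benz\'ecri machinery it sets up is reused later in the paper (th\'eor\`eme \ref{ben2}), and, as the remark following lemme \ref{nons} points out, it is the argument the author has in mind for the generalization to higher dimension, though your density estimate would also extend there via Brunn--Minkowski.
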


Soient $\O$ un ouvert proprement convexe de $\P$ et un point $x\in \O$, on notera $D_x^{\O}(\varepsilon)$ le disque de centre $x \in \O$ et rayon $\varepsilon>0$: $D_x^{\O}(\varepsilon)= \{ y \in \O \, |\, d_{\O}(x,y) < \varepsilon \}$. L'id\'ee est de construire une infinit\'e de disques disjoints de rayon constant inclus dans $V \cap \O$. La d\'emonstration se passe en deux \'etapes. On commence par montrer (lemme \ref{voldisc}) que le volume des disques de rayon $\varepsilon$ est uniform\'ement minor\'e. Ensuite, on construit une suite de disques disjoints inclus dans $V \cap \O$.

Nous allons avoir besoin du th\'eor\`eme suivant dû \`a Benz\'ecri (\cite{Ben}). On munit l'ensemble $\mathcal{E} = \{ (\O,x) \, | \, \O \textrm{ est un ouvert proprement}$ $\textrm{convexe de: }$ $ \P \textrm{ et } x \in \O \}$ de la topologie de Hausdorff pointée.

\begin{theo}[Benz\'ecri]\label{ben}
L'action de $\s$ sur l'ensemble $\mathcal{E} = \{ (\O,x) \, | \, \O $ est
un ouvert proprement convexe de $\P \textrm{ et } x
\in \O \}$ est propre et cocompacte.
\end{theo}

On obtient la proposition suivante:

\begin{lemm}\label{voldisc}
Le volume minimum d'un disque de rayon $\varepsilon$ d'un ouvert proprement convexe de $\P$ est strictement positif. Autrement dit:
$$\underset{(\O, x) \in \mathcal{E}}{\inf} \mu_{\O}(D_x^{\O}(\varepsilon)) > 0$$
\end{lemm}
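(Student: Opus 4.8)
The plan is to combine the $\s$-invariance of the quantity $\mu_{\O}(D_x^{\O}(\varepsilon))$ with the compactness furnished by Benz\'ecri's theorem \ref{ben}, thereby reducing the statement to a lower semicontinuity property of the volume of a Hilbert ball.

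First I would introduce the function $f : \mathcal{E} \to \R_{>0}$ defined by $f(\O, x) = \mu_{\O}(D_x^{\O}(\varepsilon))$, and check that it is invariant under the action of $\s$ on $\mathcal{E}$. Indeed, any $g \in \s$ maps a properly convex open set $\O$ onto the properly convex open set $g\O$, and being the restriction of a projective transformation it is an isometry from $(\O, d_{\O})$ onto $(g\O, d_{g\O})$; hence it carries $D_x^{\O}(\varepsilon)$ onto $D_{gx}^{g\O}(\varepsilon)$. Moreover $g$ pushes $\mu_{\O}$ forward to $\mu_{g\O}$, since the Busemann measure is intrinsically attached to the metric space $(\O, d_{\O})$ (it is its Hausdorff measure). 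Therefore $f(g\O, gx) = f(\O, x)$, so that $f$ descends to a function $\bar f$ on the quotient $\mathcal{E}/\s$.

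By Benz\'ecri's theorem \ref{ben} this quotient is compact, and the invariance gives $\inf_{\mathcal{E}} f = \inf_{\mathcal{E}/\s} \bar f$. A lower semicontinuous function on a compact space attains its infimum, and $f$ is everywhere strictly positive, because $D_x^{\O}(\varepsilon)$ is a nonempty open subset of $\O$ (it contains a neighbourhood of $x$) while $\mu_{\O}$ is locally equivalent to Lebesgue measure and thus has full support. It therefore suffices to prove that $f$ is lower semicontinuous on $\mathcal{E}$.

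The heart of the argument---and the step I expect to be the main obstacle---is to control $f$ under pointed Hausdorff convergence $(\O_n, x_n) \to (\O_\infty, x_\infty)$. Working in a fixed affine chart containing the closures of $\O_\infty$ and of all $\O_n$ for $n$ large, I would establish two facts. On the one hand the Hilbert distances converge: using the cross-ratio formula for $d_{\O}$ together with the convergence of the boundary intersection points, $d_{\O_n}(x_n, \cdot) \to d_{\O_\infty}(x_\infty, \cdot)$ uniformly on compact subsets of $\O_\infty$, so that every compact $C \subset D_{x_\infty}^{\O_\infty}(\varepsilon)$ satisfies $C \subset D_{x_n}^{\O_n}(\varepsilon)$ for $n$ large. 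On the other hand the Busemann densities $x \mapsto 1/\Vol(B_x(1))$ converge uniformly on $C$, since the Finsler unit balls $B_x^{\O}(1)$ depend continuously, through the boundary, on the convex set. Combining these, $\liminf_n \mu_{\O_n}(D_{x_n}^{\O_n}(\varepsilon)) \geq \mu_{\O_\infty}(C)$, and letting $C$ exhaust $D_{x_\infty}^{\O_\infty}(\varepsilon)$ gives $\liminf_n f(\O_n,x_n) \geq f(\O_\infty, x_\infty)$, the desired lower semicontinuity. Making the two uniform-convergence statements precise---especially the behaviour of the Finsler unit balls as $x$ ranges over a compact set while $\O$ varies---is where the real work lies; the comparison estimates of Proposition \ref{compa} are convenient to sandwich the relevant quantities.
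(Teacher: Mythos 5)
Your proposal follows essentially the same route as the paper: the paper's proof simply observes that $(\O,x) \mapsto \mu_{\O}(D_x^{\O}(\varepsilon))$ is continuous and $\s$-invariant and invokes Benz\'ecri's theorem \ref{ben} to conclude that the infimum is attained, hence strictly positive. Your only deviation is to weaken the (unproved) continuity claim of the paper to lower semicontinuity and to sketch its verification under pointed Hausdorff convergence, which is a sound and indeed more careful rendering of the same argument.
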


\begin{proof}
La fonction qui a $(\O, x) \in \mathcal{E}$ associe $\mu_{\O}(D_x^{\O}(\varepsilon))$ est continue et $\s$-invariante. Par cons\'equent, le th\'eor\`eme \ref{ben}  montre que l'infimum de cette fonction est atteint sur $\mathcal{E}$. C'est ce qu'il fallait montrer.
\end{proof}

\`{A} pr\'esent, nous allons chercher \`a \'evaluer la taille euclidienne des disques de $\O$.

\begin{lemm}\label{barriere}
Soient $\O$ un ouvert proprement convexe de $\P$ et un point $s \in \partial \O$, on choisit une carte affine $A$ contenant $\overline{\O}$. On se donne $D_0$ une droite de $\P$ passant par $s$ tel que $\O \cap D_0 = \varnothing$. On consid\`ere un point $x \in \O$. On note $D_1$ la droite parall\`ele \`a $D_0$ (dans la carte $A$) passant par $x$. On note $D_{\infty}$ une droite parall\`ele (dans la carte $A$) \`a $D_0$ qui ne rencontre pas $\O$. Enfin, on note $D_e$ la droite parall\`ele (dans la carte $A$) \`a $D_0$ et tel que le birapport du quadruplet de droites $(D_0,D_1,D_e,D_{\infty})$ est \'egale \`a $e=\exp(1)$. Pour terminer, on note $B$ la composante connexe de $\O - D_e$ qui contient $x$. Alors, $D_x^{\O}(1)$ est inclus dans $B$.
\end{lemm}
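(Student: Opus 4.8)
The plan is to set up affine coordinates adapted to the pencil of parallel lines and reduce the statement to a one–dimensional cross–ratio estimate along the rays issued from $x$. First I would choose, inside the chart $A$, linear coordinates $(u,v)$ in which $D_0,D_1,D_e,D_{\infty}$ are the horizontal lines of heights $0,h_1,h_e,h_{\infty}$. Read off on any vertical transversal, the defining condition that the cross–ratio of $(D_0,D_1,D_e,D_{\infty})$ equals $e$ becomes $\frac{h_e(h_{\infty}-h_1)}{h_1(h_{\infty}-h_e)}=e$; since $e>1$ this forces $h_1<h_e<h_{\infty}$, so that $D_e$ separates $D_1$ (hence $x$) from $D_{\infty}$, and $B$ is the part of $\O$ of height $<h_e$. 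Moreover $\O$ lies in the open strip $0<v<h_{\infty}$, because $D_0$ and $D_{\infty}$ both miss $\O$ while $x\in\O$ has height $h_1\in(0,h_{\infty})$. The goal thus reduces to proving $d_{\O}(x,y)\geqslant 1$ for every $y\in\O$ with height $v_y\geqslant h_e$, which yields $D_x^{\O}(1)\subset\{v<h_e\}\cap\O\subset B$.

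Now I would fix such a $y$. Then $v_y\geqslant h_e>h_1$, so $y$ lies strictly above $x$, the line $(xy)$ is not horizontal, and height restricts to an affine parameter along it. Let $p,q\in\partial\O$ be the boundary points with $d_{\O}(x,y)=\ln[p:x:y:q]$, $p$ behind $x$ (the exit point on the low side) and $q$ behind $y$ (on the high side). Let $p_0$ and $p_{\infty}$ denote the intersections of $(xy)$ with $D_0$ and $D_{\infty}$. Because $\O\subset\{0<v<h_{\infty}\}$, the point $p$ lies between $x$ and $p_0$ and the point $q$ between $y$ and $p_{\infty}$; equivalently, the chord $[p,q]$ of $\O$ is contained in the segment $[p_0,p_{\infty}]$ cut out by the two parallels. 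Pushing the two outer points of a cross–ratio away from $x,y$ can only decrease it (the elementary one–variable monotonicity of $[\,\cdot\,]$ that underlies Proposition \ref{compa}), whence $d_{\O}(x,y)\geqslant\ln[p_0:x:y:p_{\infty}]$.

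It then remains to evaluate the right–hand side. Using height as the affine parameter, the points $p_0,x,y,p_{\infty}$ have parameters $0,h_1,v_y,h_{\infty}$, so $[p_0:x:y:p_{\infty}]=\frac{v_y(h_{\infty}-h_1)}{h_1(h_{\infty}-v_y)}$. Writing $F(v)=\frac{v(h_{\infty}-h_1)}{h_1(h_{\infty}-v)}$, the function $F$ is increasing on $(h_1,h_{\infty})$, and by the very definition of $D_e$ one has $F(h_e)=e$. Therefore $F(v_y)\geqslant F(h_e)=e$, giving $d_{\O}(x,y)\geqslant\ln F(v_y)\geqslant\ln e=1$, as required.

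The one genuinely delicate point is the second step: to turn a flat computation into a statement about the \emph{intrinsic} distance $d_{\O}$ one must control the unknown boundary points $p,q$, and the mechanism that makes this possible is precisely that $\O$ is trapped between the two parallels $D_0$ and $D_{\infty}$, which lets me replace $p,q$ by $p_0,p_{\infty}$ and reduce to the explicit strip computation. I would finally record the harmless degenerate case $D_e\cap\O=\varnothing$: then $\O-D_e=\O$ is connected, $B=\O$, and the inclusion $D_x^{\O}(1)\subset B$ is trivial.
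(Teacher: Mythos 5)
Your proof is correct and follows essentially the same route as the paper: both arguments compare $\O$ with the region bounded by the parallels $D_0$ and $D_{\infty}$ and reduce the claim to the one-dimensional cross-ratio computation in that strip. The only difference is one of packaging: the paper passes through an auxiliary rectangle $\O_2$ (so as to invoke Proposition \ref{compa} with a properly convex comparison body) and then asserts the containment of $D_x^{\O_2}(1)$ between $D_0$ and $D_e$ without computation, whereas you apply the underlying cross-ratio monotonicity directly on each line through $x$ (replacing the boundary points by the intersections with $D_0$ and $D_{\infty}$) and carry out the explicit strip computation $F(v_y)\geqslant F(h_e)=e$ --- which in fact supplies the detail the paper leaves implicit.
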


\begin{proof}
La figure \ref{bar1} peut aider \`a suivre cette d\'emonstration.

\begin{figure}[!h]
\begin{center}
\includegraphics[trim=0cm 12cm 0cm 0cm, clip=true, width=10cm]{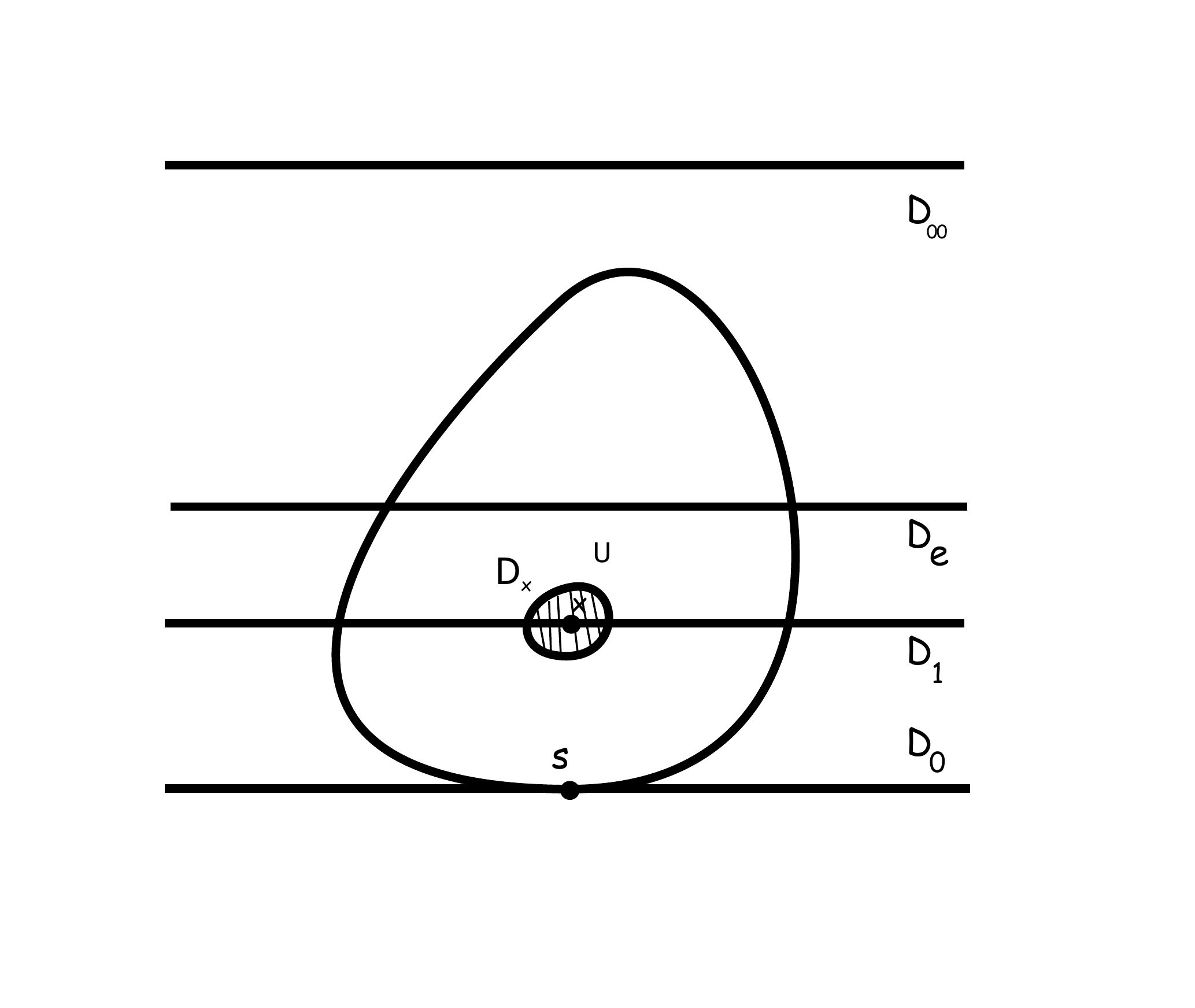}
%\centerline{\psfig{figure=bar1.eps, width=6cm}}
\caption{D\'emonstration du lemme \ref{barriere}}\label{bar1}
\end{center}
\end{figure}

Pour montrer ce lemme, il faut utiliser la proposition \ref{compa}. On choisit pour $\O_2$ n'importe quel rectangle contenant $\O$, d\'elimit\'e par les droites $D_0$ et $D_{\infty}$ et tel que les c\^ot\'es donn\'es par $D_0$ et $D_{\infty}$ sont des c\^ot\'es oppos\'es de $\O_2$. La proposition \ref{compa}  montre que $D_x^{\O}(1) \subset D_x^{\O_2}(1)$. Par cons\'equent, il suffit de montrer que $D_x^{\O_2}(1)$ est inclus entre les droites $D_e$ et $D_0$. Mais $\O_2$ est un rectangle, par cons\'equent, comme la distance de Hilbert est d\'efini en terme de birapport, le disque $D_x^{\O_2}(1)$ est inclus dans un rectangle dont les c\^ot\'es sont parall\`eles \`a ceux de $\O_2$ et inclus entre les droites $D_e$ et $D_0$.
\end{proof}

On peut \`a pr\'esent montrer la proposition \ref{mubord} lorsque $s$ poss\`ede un voisinage $V$ dans $\P$ tel que $V \cap \partial \O$ ne contient aucun segment non trivial.

\begin{lemm}
Soient $\O$ un ouvert proprement convexe de $\P$ et un point $s$ du bord $\partial \O$ qui poss\`ede un voisinage $V$ dans $\P$ tel que $V \cap \partial \O$ ne contient aucun segment non trivial; alors, pour tout voisinage $V$ de $s$ dans $\overline{\O}$, on a $\mu_{\O}(V \cap \O) = \infty$.
\end{lemm}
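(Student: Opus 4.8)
The plan is to exhibit inside $V \cap \O$ an infinite family of pairwise disjoint Hilbert disks of radius $1$. By Lemma~\ref{voldisc} (applied with $\varepsilon = 1$) each such disk has $\mu_\O$-measure at least a constant $\delta > 0$ independent of its centre, so a disjoint family of infinitely many of them forces $\mu_\O(V\cap\O) \geq \sum \delta = \infty$. Since enlarging $V$ only increases $\mu_\O(V\cap\O)$, we may freely shrink $V$. Everything thus reduces to a packing argument near $s$, and the whole difficulty lies in keeping the disks small enough, in the direction \emph{parallel} to a supporting line, that they remain inside $V$; this is exactly where the absence of segments in $\partial\O$ near $s$ will be used.

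First I would fix an affine chart $A$ containing $\overline\O$ and a supporting line $D_0$ of $\O$ at $s$, so that $\O \cap D_0 = \varnothing$ and $D_0\cap\overline\O$ is a (possibly trivial) segment of $\partial\O$ through $s$. For a point $x \in \O$ set $a = \mathrm{dist}(x, D_0)$. Enclosing $\O$ in a rectangle $\O_2$ two of whose opposite sides lie on $D_0$ and on a parallel line $D_\infty$ missing $\O$, Proposition~\ref{compa} gives $D_x^\O(1) \subset D_x^{\O_2}(1)$. In the rectangle the Hilbert ball is computed by cross-ratios, as in the proof of Lemma~\ref{barriere}, and the same argument applied on \emph{both} sides of $x$ shows that $D_x^{\O_2}(1)$ lies in the slab $\{\, c \leq \mathrm{dist}(\cdot, D_0) \leq b \,\}$, where $b$ and $c$ are the parallels meeting the perpendicular to $D_0$ at cross-ratio $e$ on either side of $x$. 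A short computation gives $c \sim a/e$ and $b \sim e\,a$ as $a \to 0$, so this confining slab is squeezed toward $D_0$ as $x \to s$.

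I would then take $x_n \to s$ along the perpendicular to $D_0$ through $s$, with $a_n = \mathrm{dist}(x_n, D_0) = a_0\lambda^n$ and $\lambda < e^{-2}$ chosen so small that the upper bound $b_{n+1} \sim e\,a_{n+1}$ of the $(n{+}1)$-st slab stays below the lower bound $c_n \sim a_n/e$ of the $n$-th. The confining slabs $\{\, c_n \leq \mathrm{dist}(\cdot, D_0) \leq b_n \,\}$ are then pairwise disjoint, and hence so are the disks $D_{x_n}^\O(1)$. It remains to see that for $n$ large these disks lie in $V$. This is the main obstacle, and I would settle it by contradiction: if some subsequence contained points $y_n \in D_{x_n}^\O(1) \setminus V$, then, being in $\overline\O$ (compact) and in the slab $\{\mathrm{dist}(\cdot, D_0) \leq b_n\}$ with $b_n \to 0$, the $y_n$ would subconverge to a point $y_\infty \in \overline\O \cap D_0 \subset \partial\O$; as $V$ is open and $y_n \notin V$, we get $y_\infty \notin V$, in particular $y_\infty \neq s$. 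But $s$ and $y_\infty$ both lie on the segment $D_0\cap\overline\O \subset \partial\O$, so the non-trivial segment $[s,y_\infty]$ lies in $\partial\O$, and its portion near $s$ lies in $V\cap\partial\O$ — contradicting the hypothesis that $V\cap\partial\O$ contains no non-trivial segment. Thus, after discarding finitely many indices, the open disks $D_{x_n}^\O(1) \subset V\cap\O$ are pairwise disjoint with $\mu_\O$-measure $\geq \delta$ each, and Lemma~\ref{voldisc} gives $\mu_\O(V\cap\O) = \infty$.
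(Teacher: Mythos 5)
Your strategy is essentially the paper's: pack infinitely many pairwise disjoint Hilbert disks of uniform radius into $V\cap\O$, trap each disk in a slab parallel to a supporting line $D_0$ at $s$ (via Proposition \ref{compa}, as in Lemma \ref{barriere}), bound each disk's volume below by Lemma \ref{voldisc}, and use the no-segment hypothesis to force the slabs, hence the disks, into $V$. The only real difference of mechanism is disjointness: the paper gets it for free from the triangle inequality, by placing the centers at Hilbert distance $3$ apart along the geodesic $[x,s[$, whereas you get it from the two-sided slab estimate $c_n\sim a_n/e$, $b_n\sim e\,a_n$ together with the choice $\lambda<e^{-2}$; both work. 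Your compactness/contradiction argument for the containment in $V$ is a correct, and in fact more detailed, version of the paper's assertion that the slabs $B_n$ eventually lie in $V$.

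There is, however, one step that fails as written: you place the centers $x_n$ \emph{on the perpendicular to $D_0$ through $s$}, and that perpendicular need not meet $\O$ at all. The tangent cone of $\O$ at $s$ is only constrained to lie in the open half-plane bounded by $D_0$; nothing forces it to contain the normal direction. Concretely, let $\O$ be the intersection of two round open disks whose boundary circles cross at $s$ at an angle $\theta<\pi/2$, and let $D_0$ be the tangent line at $s$ to one of the circles (a supporting line of $\O$). Then $\overline{\O}$ is contained in $s$ plus the closed tangent wedge of opening $\theta$, which does not contain the direction normal to $D_0$, so the perpendicular line through $s$ misses $\O$ entirely and your points $x_n$ do not exist; note that this $\O$ \emph{does} satisfy the hypothesis, since its boundary consists of two circular arcs and contains no segment. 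The repair is immediate, because your argument never uses the horizontal position of $x_n$, only the distances $a_n=\mathrm{dist}(x_n,D_0)$: it suffices to take $x_n$ anywhere in $\O$ with $\mathrm{dist}(x_n,D_0)=a_0\lambda^n$ (such points exist, since $\mathrm{dist}(\cdot,D_0)$ is continuous on the connected set $\O$ and takes arbitrarily small positive values near $s$), for instance on a segment $[x_0,s[$ with $x_0\in\O$, which is exactly what the paper does. With that one-line modification your proof is correct.
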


\begin{proof}
Nous allons construire une infinit\'e de disque de rayon 1 dans $V \cap \O$. On se donne un point $x$ de $V \cap \O$ et on consid\`ere le segment $[x,s[$ inclus dans $\O$. Ce segment  fournit une g\'eod\'esique $\lambda$ de longueur infini que l'on param\`etre par la longueur d'arc pour la m\'etrique de Hilbert. Par cons\'equent, si on note $x_n = \lambda(3n)$ alors les disques $D_n = D_{x_n}^{\O}(1)$ sont disjoints. Il  reste \`a comprendre pourquoi ils sont inclus dans $V \cap \O$ pour $n$ assez grand.

On se donne $D_s$ une droite de $\P$ passant par $s$ tel que $\O \cap D_s = \varnothing$. On note $D_n$ la droite parall\`ele \`a $D_s$ passant par $\lambda(3n-1)$ et on note $B_n$ la composante connexe de $\O - D_n$ qui contient $x_n$. Si $V$ est un voisinage de $s$ dans $\P$ tel que $V \cap \partial \O$ ne contient aucun segment non trivial alors $B_n$ est inclus dans $V$ pour $n$ assez grand.

De plus, le lemme \ref{barriere}  montre que les disques de centre $x_n$ et de rayon 1 sont inclus dans $B_n$ pour $n$ assez grand. L'ensemble $V \cap \O$ contient donc une infinit\'e de disques disjoints et leur volume est uniform\'ement minor\'e par le lemme \ref{voldisc}. L'ensemble $V \cap \O$ est donc de volume infini.
\end{proof}

Il faut \`a pr\'esent traiter le cas contraire: s'il existe un voisinage de $s$ dans $\P$ qui contient un segment du bord de $\O$ alors il existe un point $s' \in  V \cap \partial \O $ tel que $s'$ appartient \`a l'int\'erieur d'un segment de $\O$. Il  reste donc \`a montrer le lemme suivant pour terminer la d\'emonstration de la proposition \ref{mubord}.

\begin{lemm}\label{nons}
Soient $\O$ un ouvert proprement convexe de $\P$ et un point $s$ du bord $\partial \O$, on suppose que $s$ est sur l'int\'erieur d'un segment $S$ du bord de $\O$; alors, pour tout voisinage $V$ de $s$ dans $\overline{\O}$, on a $\mu_{\O}(V \cap \O) = \infty$.
\end{lemm}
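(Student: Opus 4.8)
My plan is to bypass the disk-packing construction altogether and read off $\mu_{\O}(V\cap\O)=\infty$ directly from the definition of the measure, by showing that its density blows up like the inverse of the height above the segment $S$. I would first fix coordinates in an affine chart $A\supset\overline{\O}$ so that the line $\ell$ carrying $S$ is $\{x_2=0\}$ and $\O\subset\{x_2>0\}$; here $\ell$ is automatically a supporting line, since a line meeting $\O$ crosses $\partial\O$ in only two points and so cannot contain the nontrivial segment $S\subset\partial\O$. Because $s$ lies in the relative interior of $S$, I would then check that a small box $R=(-\rho,\rho)\times(0,\rho)$ about $s$ is contained in $\O$: the convex hull of an interior point $x^*\in\O$ with a subsegment $S'\subset\mathrm{int}(S)$ containing $s$ has its interior in $\O$, and this interior contains $R$ for $\rho$ small; shrinking $\rho$ once more gives $R\subset V$.

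The crux is a pointwise estimate on the Finsler unit ball $B_x(1)=\{w\in A:\|w\|_x<1\}$. For its vertical size: if $w=(w_1,w_2)$ with $w_2>0$, the ray issued from $x=(x_1,x_2)$ in the direction $-w$ must leave $\O$ before its second coordinate reaches $0$ (as $\O\subset\{x_2>0\}$), so $\|x-p^-(w)\|\leqslant x_2\|w\|/w_2$ and hence $\|w\|_x\geqslant\|w\|/\|x-p^-(w)\|\geqslant w_2/x_2$; the case $w_2<0$ is symmetric, giving $|w_2|<x_2$ on $B_x(1)$. For its horizontal size, $\|x-p^+(w)\|\leqslant\mathrm{diam}(\O)$ yields $\|w\|_x\geqslant\|w\|/\mathrm{diam}(\O)$, hence $|w_1|\leqslant\|w\|<\mathrm{diam}(\O)$ on $B_x(1)$. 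Thus $B_x(1)$ lies inside the rectangle $(-\mathrm{diam}(\O),\mathrm{diam}(\O))\times(-x_2,x_2)$, so $\Vol(B_x(1))\leqslant C\,x_2$ with $C$ depending only on $\O$.

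Feeding this into the definition of $\mu_{\O}$ would finish the argument:
\[
\mu_{\O}(V\cap\O)\geqslant\int_R\frac{d\Vol(x)}{\Vol(B_x(1))}\geqslant\frac1C\int_{-\rho}^{\rho}\!\!\int_0^{\rho}\frac{dx_2\,dx_1}{x_2}=+\infty ,
\]
the divergence coming from $\int_0^{\rho}dx_2/x_2$. The same bound $\Vol(B_x(1))\leqslant C x_2$ could equally be inserted into the disk-packing scheme used for the preceding lemma, but the direct integral is shorter.

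The two points I expect to require care are, first, the verification that the box $R$ genuinely lies in $\O$: this is exactly where the hypothesis that $s$ is interior to $S$ is indispensable, for at an endpoint of $S$ the admissible region would pinch to zero width and the integral need not diverge. Second, the uniform vertical bound $|w_2|<x_2$ rests entirely on $\O\subset\{x_2>0\}$, i.e.\ on $\ell$ being a genuine supporting line; this is the step that converts ``there is a boundary segment'' into the quantitative blow-up $\Vol(B_x(1))=O(x_2)$ of the density. Everything else is a one-line estimate.
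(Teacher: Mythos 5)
Your proof is correct, and it takes a genuinely different route from the paper's. The paper packs infinitely many disjoint Hilbert disks $D_{x_n}^{\O}(\varepsilon)$ of fixed small radius along the geodesic $[x,s[$, keeps them inside $V\cap\O$ via the barrier lemma \ref{barriere} together with a cross-ratio argument involving four lines parallel to the segment, and then invokes the uniform lower bound on disk volume (lemma \ref{voldisc}), which itself rests on Benz\'ecri's compactness theorem \ref{ben}. You instead estimate the Busemann density directly: since the line carrying $S$ supports $\O$ (your argument for this is right: a line meeting $\O$ hits $\partial\O$ in exactly two points), every Finsler unit ball at Euclidean height $x_2$ above that line is trapped in a strip of height $2x_2$ and bounded width, so $\Vol(B_x(1))\leqslant Cx_2$, and integrating $1/(Cx_2)$ over a small Euclidean box $R\subset V\cap\O$ diverges. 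Your estimates hold up: the vertical bound $|w_2|<x_2$ on $B_x(1)$ follows from $\|w\|_x\geqslant \|w\|/\|x-p^{\pm}\|$ together with the fact that the ray must exit $\O$ before crossing the supporting line; the inclusion $R\subset\O$ follows from $\mathrm{int}\left(\mathrm{conv}(\{x^*\}\cup S')\right)\subset\mathrm{int}(\overline{\O})=\O$; and the hypothesis that $s$ is interior to $S$ is used exactly where it must be. What your approach buys: it is shorter, elementary (no Benz\'ecri compactness, no geodesic or disk packing), and quantitative, exhibiting the $1/x_2$ blow-up of the density. What the paper's approach buys: the same disk-packing machinery handles both halves of Proposition \ref{mubord} uniformly, whereas your box argument is unavailable at a boundary point lying on no segment (there the region $V\cap\O$ pinches, and the analogous upper bound on $\Vol(B_x(1))$ integrates to a finite quantity, so the density estimate alone cannot yield the companion lemma); moreover lemmas \ref{voldisc} and \ref{barriere} are reused elsewhere in the paper, so the author amortizes their cost.
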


\begin{proof}
La figure \ref{nonstricconv} peut aider \`a suivre la d\'emonstration.

\begin{figure}[!h]
\begin{center}
\includegraphics[trim=0cm 14cm 0cm 0cm, clip=true, width=10cm]{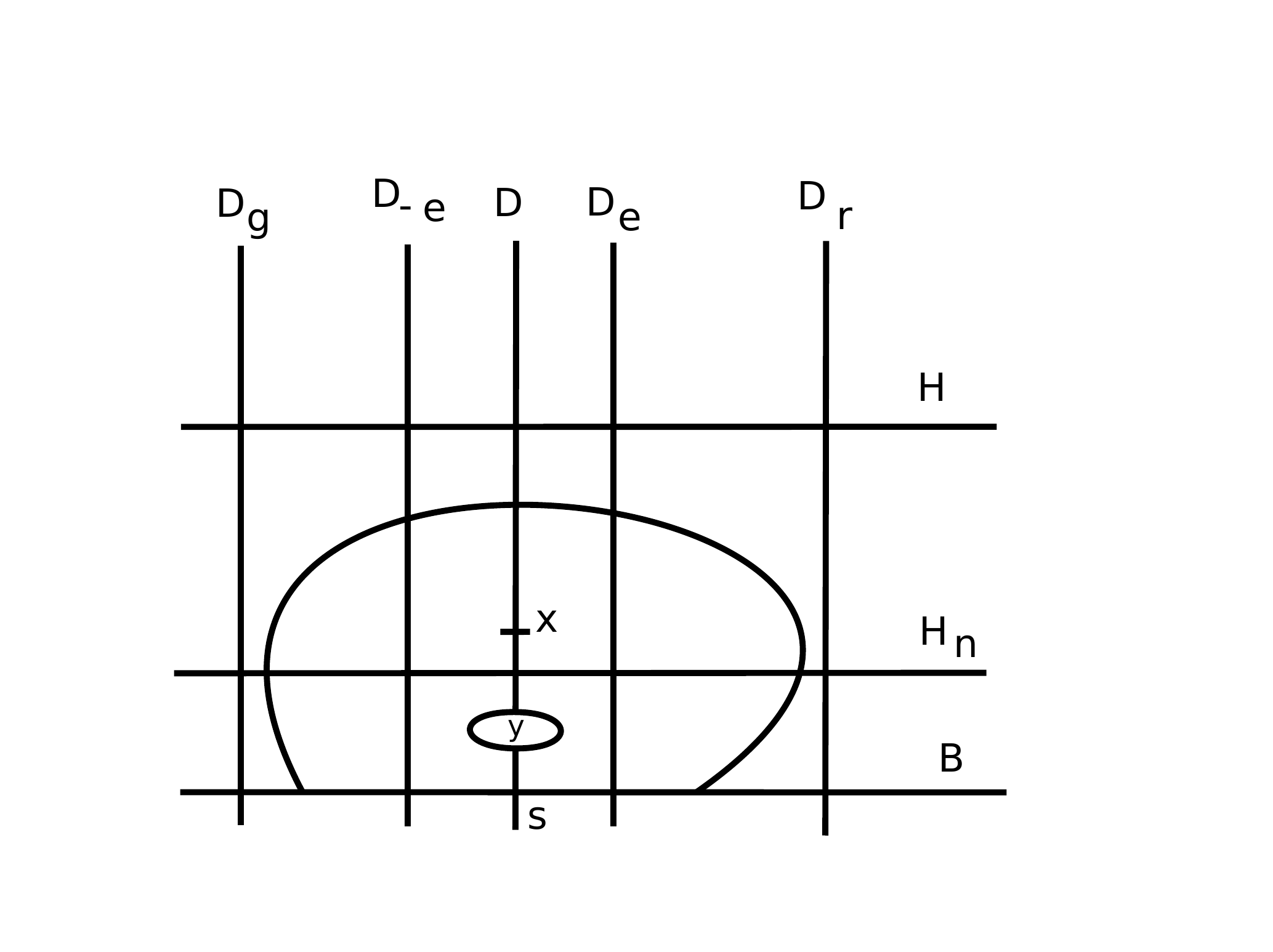}
\caption{D\'emonstration du lemme \ref{nons}}\label{nonstricconv}
\end{center}
\end{figure}

Dans ce cas, on ne peut pas trouver une infinit\'e de disques de rayon 1, mais nous allons construire une infinit\'e de disque de rayon $\varepsilon$ dans $V \cap \O$, avec $\varepsilon$ assez petit. On se donne un point $x$ de $V \cap \O$ et on consid\`ere le segment $[x,s[$ inclus dans $\O$. Ce segment  fournit une g\'eod\'esique $\lambda$ de longueur infini que l'on param\`etre par la longueur d'arc pour la m\'etrique de Hilbert. On d\'efinit la suite $x_n = \lambda(3n)$. Si $\varepsilon < 1$ alors les disques $D_n = D_{x_n}^{\O}(\varepsilon)$ sont disjoints.

Il  reste \`a comprendre pourquoi ils sont inclus dans $V \cap \O$ pour $n$ assez grand, si $\varepsilon$ est assez petit. Le lemme \ref{barriere} montre que ces disques tendent vers un sous-segment $S'$ de $S$. Comme $S'$ peut ne pas \^etre trivial il faut prendre $\varepsilon$  assez petit pour que les disques $D_n$ soient dans $V \cap \O$ pour $n$ assez grand. On va utiliser la m\^eme id\'ee que pour le lemme \ref{barriere} mais cette fois-ci, au lieu de pousser les $D_n$ vers le bord de $\O$, nous allons contrôler leur taille dans le sens "parall\`ele" au segment $S$ du bord de $\O$.

Pour cela, on note $D$ la droite passant par $x$ et $s$. On se donne une carte affine $A$ contenant $\overline{\O}$. On consid\`ere $D_g$, $D_{\varepsilon}$, $D_{-\varepsilon}$ et $D_r$ quatre droites parall\`eles (dans la carte affine $A$) \`a $D$ et tel que:
\begin{itemize}
\item Les droites $D_r$ et $D_g$ ne rencontre pas $\O$.

\item Le birapport de $(D_g,D,D_{\varepsilon},D_r)$ est \'egale \`a $e^{\varepsilon}$

\item Le birapport de $(D_g,D_{-\varepsilon},D,D_r)$ est \'egale \`a $e^{-\varepsilon}$.
\end{itemize}

Il faut aussi une droite $B$ passant par $s$ et n'intersectant pas $\O$, ainsi qu'une droite $H$ parall\`ele \`a $B$ et n'intersectant pas $\O$. L'ouvert $\O$ est donc inclus dans le quadrilat\`ere $\O_2$ d\'elimit\'e par les droites $H,D_r,B,D_g$. Comme la distance de Hilbert est d\'efini en terme de birraport, il vient que tout disque $D_y^{\O_2}(\varepsilon)$ de $\O_2$ de centre $y \in ]x,s[$ et de rayon $\varepsilon$ est inclus dans le quadrilat\`ere d\'elimit\'e par les droites $H,D_{\varepsilon},B,D_{-\varepsilon}$.

Il est essentiel de remarquer que lorsque $\varepsilon$ tend vers $0$, les droites $D_{\varepsilon}$ et $D_{-\varepsilon}$ converge vers $D$.

Pour conclure, notons $H_n$ une droite parall\`ele \`a $H$ et passant par $\lambda(3n-1)$. La proposition \ref{compa} et le lemme \ref{barriere}  montre que les disques $D_n$ de $\O$ de centre $x_n$ et de rayon $\varepsilon$ sont inclus dans le quadrilat\`ere d\'elimit\'e par les droites $H_n,D_{\varepsilon},B,D_{-\varepsilon}$. Par cons\'equent, si l'on choisit $\varepsilon$ assez petit alors les disques $D_n$ sont inclus dans $V \cap \O$, pour $n$ assez grand.

Leur volume est uniform\'ement minor\'e par le lemme \ref{voldisc}. L'ensemble $V \cap \O$ est donc de volume infini.
\end{proof}

\begin{rem}
Le théorème de Benzécri \ref{ben} est en fait valable en toute dimension. Le lemme \ref{voldisc} peut donc s'énoncer en toute dimension. L'énoncé de la proposition \ref{mubord} se généralise aussi en dimension quelconque et la méthode de démonstration peut aussi s'adapter au prix de quelques complications techniques.
\end{rem}

\subsubsection{Un r\'esultat sur les pics}

Soient $\O$ un ouvert proprement convexe de $\P$ et un point $p \in \partial \O$, l'ensemble des droites de $\P$ concourantes en $p$ et tel que $D \cap \O = \varnothing$ est un segment ferm\'e $E_p$ de $\Pd$.
On a la dichotomie suivante:
\begin{itemize}
\item L'ensemble $E_p$ est un singleton dans ce cas $\partial \O$ est $\Cc^1$
en $p$, et l'unique droite de $E_p$ est la \emph{tangente} au bord $\partial \O$ de
$\O$ en $p$.

\item Sinon, $\partial \O$ n'est pas $\Cc^1$ en $p$, et les points extr\'emaux du segment $E_p$
sont les \emph{demi-tangentes} \`a $\partial \O$ en $p$.
\end{itemize}

\begin{defi}
Soient $0<\alpha<1$, $\O$ un ouvert proprement convexe de $\P$ et
$x$ un point $\Cc^1$ du bord $\partial \O$ de $\O$, on dit que
\emph{$x$ est $\Cc^{1,\alpha}$} lorsqu'il existe $K>0$ et $V$ un
voisinage de $p$ dans $\partial \O$ tel que $\forall y \in V$,
$d_{\mathbb{E}}(y, T_x \O) \leqslant K
d_{\mathbb{E}}(x,y)^{\alpha}$, o\`u $d_{\mathbb{E}}$ est une
distance euclidienne sur un ouvert affine
contenant $\overline{\O}$.
\end{defi}

\begin{defi}
Soit $\O$ un ouvert proprement convexe de $\P$, un \emph{pic} $P$
est un triangle ouvert de $\O$ qui poss\`ede un et un seul sommet sur le
bord $\partial \O$ de $\O$, on appelle ce sommet \emph{le sommet \`a l'infini de $P$}.
\end{defi}

\begin{theo}[CVV]\label{mupointe}
Soit $\O$ un ouvert proprement convexe de $\P$.
\begin{itemize}
\item Tout pic de $\O$ dont le sommet \`a l'infini est un point
non $\Cc^1$ du bord $\partial \O$ de $\O$ est de volume infini.

\item Tout pic de $\O$ dont le sommet \`a l'infini est un point
$\Cc^{1,\alpha}$ avec $\alpha > 0$ du bord $\partial \O$ de $\O$
est de volume fini.
\end{itemize}
\end{theo}

\begin{rem}
Nous n'utiliserons pas le second point. Mais il illustre bien l'importance de la r\'egularit\'e dans l'estimation du volume des pics, qui sera essentielle dans ce texte. On pourra trouver une d\'emonstration dans \cite{CVV1} du premier et du second point. Nous donnons une d\'emonstration du premier point pour la commodit\'e du lecteur.
\end{rem}

\begin{rem}
La généralisation du théorème \ref{mupointe}  en dimension supérieure, où les pics seraient des simplexes avec un seul point sur le bord n'est pas du tout évidente. C'est encore une question ouverte.
\end{rem}

Nous allons utiliser la proposition \ref{compa} et le lemme suivant.

\begin{lemm}
Soit $\O_0$ l'ouvert proprement convexe de $\R^2$ d\'efini par $\O_0= \{ (x,y)\in \R^2 \,|\, x,y > 0 \}$, tout pic dont le sommet \`a l'infini est l'origine $(0,0)$ est de volume infini pour $\mu_{\O_0}$.
\end{lemm}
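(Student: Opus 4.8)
The plan is to compute the Busemann measure of $\O_0$ completely explicitly and to observe that, in the affine coordinates $(x,y)$, it has density proportional to $\frac{dx\,dy}{xy}$; the non-integrability of this density at the origin will then force any pic pointing at $(0,0)$ to have infinite volume. So the computation splits into two independent pieces: an identification of the density, and a one-line divergence estimate.

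First I would compute the Finsler norm $\|\cdot\|^{\O_0}_{(x,y)}$ directly. Viewed in $\P$, the boundary of $\O_0$ is made of the two half-axes $\{x=0\}$, $\{y=0\}$ together with the segment of the line at infinity. Given $(x,y)\in\O_0$ and $v=(v_1,v_2)$, the forward ray leaves the quadrant at parameter $t^+$ with $1/t^+=\max(0,-v_1/x,-v_2/y)$ and the backward ray at $t^-$ with $1/|t^-|=\max(0,v_1/x,v_2/y)$, a ray remaining in the quadrant escaping to the line at infinity and contributing $0$. Hence, writing $a=v_1/x$, $b=v_2/y$,
$$\|v\|^{\O_0}_{(x,y)}=\max(0,-a,-b)+\max(0,a,b).$$
A short case check (separating the four sign sectors of $(a,b)$) shows that in the $(a,b)$-plane the unit ball is the fixed hexagon with vertices $(1,0),(1,1),(0,1),(-1,0),(-1,-1),(0,-1)$, of Euclidean area $3$. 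Passing back to $v=(xa,yb)$ multiplies the area by the Jacobian $xy$, so $\Vol(B^{\O_0}_{(x,y)}(1))$ is a constant times $xy$. Since the same normalization constant occurs in $dVol$ and in $\Vol(B_x(1))$, it cancels in the quotient and one obtains $\mu_{\O_0}=\frac{c\,dx\,dy}{xy}$ with $c>0$ constant. (Equivalently, in the coordinates $u=\ln x$, $w=\ln y$ the Hilbert geometry is translation invariant, so the Busemann measure is a constant multiple of $du\,dw=\frac{dx\,dy}{xy}$.)

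Next I would parametrize the pic. A pic with vertex $O=(0,0)$ and opposite side $[A,B]$, where $A=(a_1,a_2)$, $B=(b_1,b_2)\in\O_0$, is $P=\{\lambda(\mu A+(1-\mu)B):\lambda\in(0,1),\ \mu\in(0,1)\}$. Setting $c_i(\mu)=\mu a_i+(1-\mu)b_i>0$, the map $(\lambda,\mu)\mapsto(x,y)=(\lambda c_1,\lambda c_2)$ has Jacobian $\lambda|\Delta|$ with $\Delta=a_1b_2-a_2b_1\neq0$, while $\frac{1}{xy}=\frac{1}{\lambda^2 c_1 c_2}$. Therefore
$$\mu_{\O_0}(P)=c\,|\Delta|\int_0^1\frac{d\lambda}{\lambda}\int_0^1\frac{d\mu}{c_1(\mu)\,c_2(\mu)}.$$
Since $c_1,c_2$ are continuous and strictly positive on the compact $[0,1]$, the $\mu$-integral is a finite positive number, whereas $\int_0^1 d\lambda/\lambda=+\infty$. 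Hence $\mu_{\O_0}(P)=\infty$, as claimed.

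The only genuinely delicate point is the first step: correctly reading off the three boundary components of $\O_0$ inside $\P$ and running the case analysis that produces the hexagonal unit ball and the density $\frac{c\,dx\,dy}{xy}$. Once this explicit density is available, the divergence is transparent — it is exactly the logarithmic divergence $\int_0^1 d\lambda/\lambda$ created by the accumulation of the pic at the corner $O$, where $1/(xy)$ fails to be integrable. I note that this lemma is meant as the model computation feeding the non-$\Cc^1$ case of Theorem \ref{mupointe}: for a general non-$\Cc^1$ point one encloses $\O$ in the cone bounded by its two half-tangents, an affine image of $\O_0$, and then concludes by the monotonicity $\mu_{\O}\geqslant\mu_{\O_0}$ of Proposition \ref{compa}.
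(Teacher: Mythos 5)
Your proof is correct, but it follows a genuinely different route from the paper's. The paper gives a two-line soft argument: the homothety $\g:(x,y)\mapsto(\tfrac12 x,\tfrac12 y)$ is an automorphism of $\O_0$, hence an isometry for $d_{\O_0}$ and preserves $\mu_{\O_0}$; writing $D$ for the line carrying the side of the pic $P$ opposite to the origin and $A$ for the closed quadrilateral of $P$ cut off between $D$ and $\g D$, one has $P=\bigcup_{n\in\N}\g^n A$ up to sets of measure zero, a countable union of pieces all of the same measure $\mu_{\O_0}(A)>0$, whence $\mu_{\O_0}(P)=\infty$. Your route — the case analysis giving the hexagonal unit ball of area $3$ in the coordinates $(a,b)=(v_1/x,v_2/y)$, the resulting density $c\,dx\,dy/(xy)$, the parametrization of the pic with Jacobian $\lambda|\Delta|$, and the divergence of $\int_0^1 d\lambda/\lambda$ against the finite positive $\mu$-integral — is computationally heavier but entirely sound, and it yields strictly more information: the explicit Busemann density of the quadrant, the logarithmic rate of divergence, and (via your parenthetical remark on the coordinates $(\ln x,\ln y)$) the structural reason the paper's trick works, namely that this Hilbert geometry is a translation-invariant normed-plane geometry in which the homothety becomes a translation. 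Conversely, the paper's argument is the one that scales: it uses no computation at all, only the existence of an automorphism fixing the vertex and contracting toward it together with the $\Aut(\O_0)$-invariance of the Busemann measure. Your closing remark on how the lemma is exploited — enclosing $\O$ in the triangle bounded by the two half-tangents at a non-$\Cc^1$ point and invoking the comparison of Proposition \ref{compa} — is exactly the paper's deduction of the first point of Theorem \ref{mupointe}, so you have correctly situated the lemma as well.
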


\begin{proof}
L'homoth\'etie $\g$ de rapport $\frac{1}{2}$ et de centre l'origine pr\'eserve $\O_0$. Soit $P$ un pic de $\O_0$ dont le sommet \`a l'infini est l'origine. On note $D$ la droite engendr\'e par le c\^ot\'e oppos\'e \`a l'origine de $P$, et $A$ le quadrilat\`ere ferm\'e d\'elimit\'e par les c\^ot\'es de $P$ contenant l'origine, la droite $D$ et la droite $\g D$. Ainsi, $P = \underset{n \in \N} \bigcup \g A$. Par cons\'equent, comme $\g$ pr\'eserve la mesure $\mu_{\O}$ et que $\mu_{\O}(A) > 0$. Il vient que $\mu_{\O}(P) = \infty$.
\end{proof}

\begin{proof}[D\'emonstration du premier point du th\'eor\`eme \ref{mupointe}]
Il  reste simplement \`a remarquer que si $p$ est un point non $\Cc^1$ de $\partial \O$ alors $\O$ est inclus dans un triangle dont l'un des sommets est $p$. Comme l'ouvert $\{ (x,y) \in \R^2 \,|\, x,y > 0 \}$ est un triangle, la proposition \ref{compa} et le lemme pr\'ec\'edent montre que tout pic de $\O$ dont le sommet \`a l'infini est le point $P$ est de volume infini.
\end{proof}

\subsubsection{Minoration de l'aire des triangles id\'eaux}

\begin{defi}
Soit $\O$ un ouvert proprement convexe de $\P$, un \emph{triangle
id\'eal de $\O$} est un triangle ouvert dont les sommets appartiennent au
bord de $\O$.
\end{defi}

\begin{theo}[CVV]\label{airetri}
Il existe une constante strictement positive $C_{\P}$ tel que pour tout ouvert $\O$ proprement convexe de $\P$ et tout triangle id\'eal $\Delta$ de $\O$ on ait $\mu_{\O}(\Delta) \geqslant C_{\P} >0$.
\end{theo}

\begin{proof}
On note $(p_i)_{i=1...3}$ les sommets de $\Delta$. On consid\`ere $(D_i)_{i=1...3}$ trois droites de $\P$ tel que $D_i \cap \O = \varnothing$ et $p_i \in D_i$. Les droites $(D_i)_{i=1...3}$ d\'efinissent quatre triangles ouverts de $\P$. Un seul d'entre eux contient l'ouvert $\O$, on le note T. La proposition \ref{compa}  montre que $\mu_{\O}(\Delta) \geqslant \mu_{\mathrm{T}}(\Delta)$. Le lemme \ref{tritri} qui suit conclut la d\'emonstration.
\end{proof}

\begin{lemm}\label{tritri}
Soit T un triangle de $\P$, il existe une constante strictement positive $C_{\P}$ tel que pour tout triangle id\'eal $\Delta$ de T on ait $\mu_{\mathrm{T}}(\Delta) \geqslant C_{\P}$.
\end{lemm}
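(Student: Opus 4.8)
The plan is to exploit the homogeneity of the simplex together with the fact that $\mu_T$ is $\Aut(T)$-invariant. First I would reduce to a single model triangle: any two proper triangles of $\P$ are conjugate under $\s$, and a projective transformation is an isometry between the Hilbert geometries of $T$ and $gT$, hence carries $\mu_T$ onto $\mu_{gT}$; therefore $\underset{\Delta}{\inf}\, \mu_T(\Delta)$ does not depend on $T$, the constant $C_{\P}$ is universal, and we may take $T=\O_0=\{x,y>0\}$. It then remains to bound $\mu_{\O_0}(\Delta)$ from below over all ideal triangles $\Delta$ of $\O_0$.

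Next I would dispose of the ideal triangles of infinite volume, which satisfy the bound trivially. The boundary $\partial T$ is the union of three open edges meeting at three corners, and three distinct points of $\partial T$ form a genuine triangle only if they do not all lie on one edge. If two vertices of $\Delta$ lie on the same open edge, then a whole side of $\Delta$ lies on $\partial T$, so near the midpoint of that side $\Delta$ contains a one-sided neighbourhood of a boundary point and $\mu_T(\Delta)=\infty$ by Proposition \ref{mubord} (equivalently Lemme \ref{nons}). If a vertex of $\Delta$ is one of the three corners, that corner is a non-$\Cc^1$ point of $\partial T$, so $\Delta$ contains a pic with that ideal vertex and $\mu_T(\Delta)=\infty$ by the first point of Théorème \ref{mupointe}. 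Thus only the ideal triangles having exactly one vertex on each open edge remain, and for these the lower bound has real content.

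For these I would use homogeneity to cut the parameter space down to one dimension: the group of positive diagonal matrices acts on $\O_0$ fixing the three corners and preserving each edge, so I may normalise two vertices, say to $(1,0)$ and $(0,1)$, leaving the third vertex free to run over the open edge at infinity, a single real parameter $\kappa$. The function $\kappa\mapsto\mu_{\O_0}(\Delta_\kappa)$ is everywhere strictly positive, since a non-empty open set has positive measure, and it is lower semicontinuous: if the vertices converge then $\mathbf{1}_{\Delta_\kappa}\to\mathbf{1}_{\Delta}$ pointwise on the interior of the limit, and Fatou's lemma gives the semicontinuity. The heart of the matter, and the place where the geometry really enters, is that this function is \emph{proper}, i.e. tends to $+\infty$ at both ends of the interval: as the free vertex tends to a corner of $\O_0$, the triangles $\Delta_\kappa$ converge to an ideal triangle having a vertex at that non-$\Cc^1$ corner, of infinite volume by Théorème \ref{mupointe}, so Fatou forces $\mu_{\O_0}(\Delta_\kappa)\to\infty$. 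A strictly positive, lower semicontinuous, proper function on an open interval attains a positive minimum, and that minimum is the desired $C_{\P}$. The only real obstacle is this properness: one must be certain that pushing the free vertex to a corner genuinely blows the volume up rather than letting it collapse. The convergence-to-a-corner argument above settles it cleanly, and it can if desired be double-checked by the explicit computation in the flat hexagonal model, where $\mu_{\O_0}$ becomes Lebesgue measure $\iint_{\Delta}\frac{dx\,dy}{xy}$ and the cusp opposite the fixed edge widens like $1/\kappa$.
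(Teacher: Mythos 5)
Your proof is correct and takes essentially the same route as the paper's: eliminate the degenerate ideal triangles via Proposition \ref{mubord} and the first point of Th\'eor\`eme \ref{mupointe}, use the diagonal subgroup to normalise down to a one-parameter family $\Delta_\kappa$, and extract a positive minimum from (semi)continuity together with the Fatou blow-up argument as the free vertex tends to a corner. The only cosmetic differences are that you work in the affine chart, replace the paper's asserted continuity by lower semicontinuity (proved via Fatou), and treat both ends of the parameter interval explicitly, whereas the paper only treats $x \to 0$, the other end following from the symmetry $x \mapsto 1/x$ induced by permuting coordinates.
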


\begin{proof}
La figure \ref{mintri} peut aider \`a suivre la d\'emonstration.

\begin{figure}[!h]
\begin{center}
\includegraphics[trim=0cm 12cm 0cm 0cm, clip=true, width=8cm]{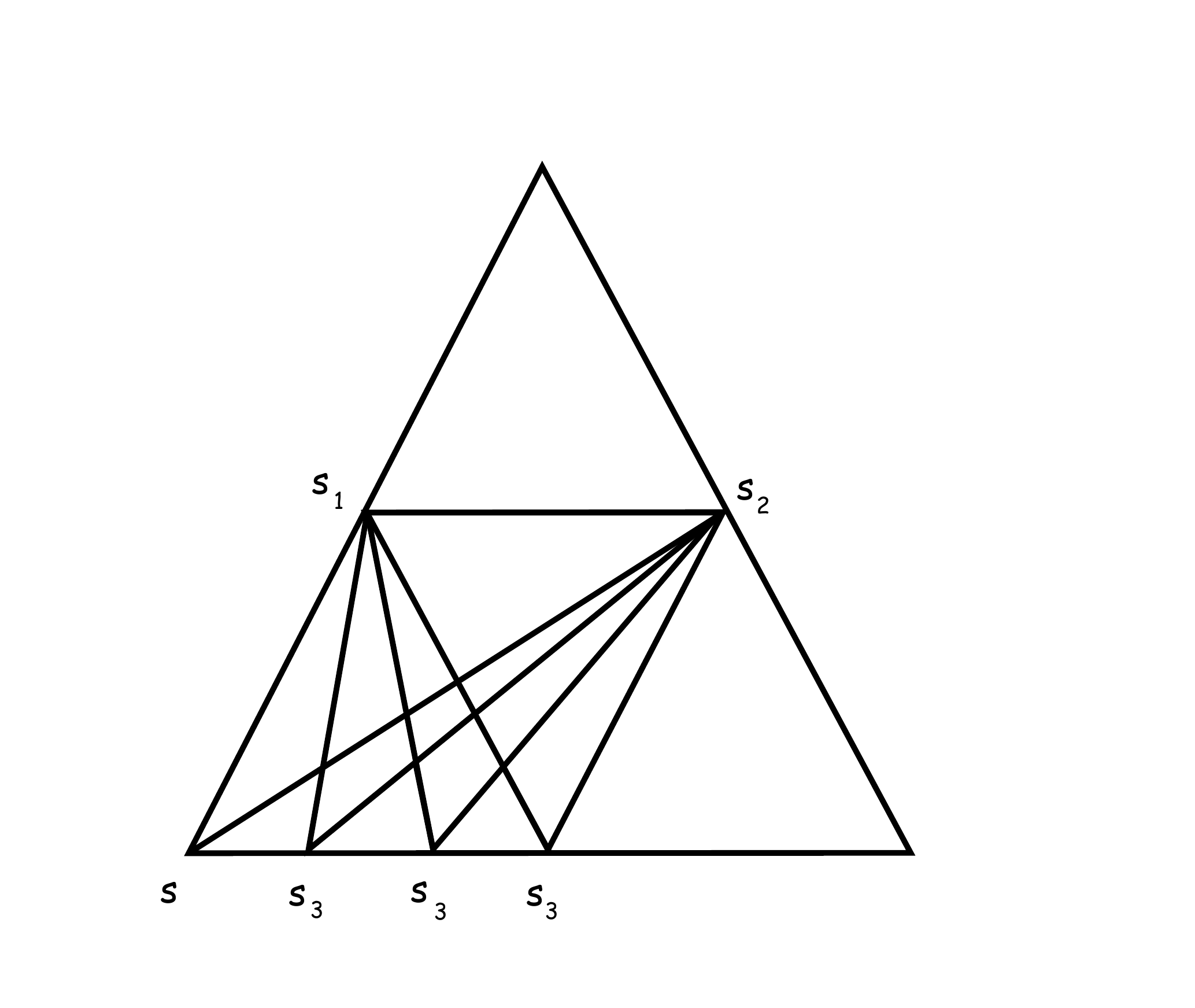}
%\centerline{\psfig{figure=mintri.pdf, width=6cm}}
\caption{D\'emonstration du lemme \ref{tritri}}\label{mintri}
\end{center}
\end{figure}

On peut supposer que T$=\{ [x:y:z] \in \P \,|\, x,y,z \textrm{ sont de m\^eme signe}\}$. Le groupe $D$ des matrices diagonales \`a diagonale strictement positive pr\'eserve T. Commençons par remarquer que la proposition \ref{mubord}  montre que tout triangle id\'eal qui poss\`ede deux sommets sur le m\^eme cot\'e de $T$ est de volume infini. De m\^eme, tout triangle id\'eal qui poss\`ede un sommet en commun avec les sommets de T est aussi de volume infini d'apr\`es le th\'eor\`eme \ref{mupointe}. On peut donc supposer que les sommets de $\Delta$ sont sur chacun des trois c\^ot\'es ouverts de T.

Le groupe $D$ agit transitivement sur chaque c\^ot\'e ouvert de T. Mieux, le stabilisateur d'un point de l'int\'erieur d'un c\^ot\'e ouvert de T agit encore transitivement sur chacun des deux autres c\^ot\'es ouverts. On peut donc supposer que les sommets $s_1$, $s_2$ et $s_3$ du triangle id\'eal $\Delta$ ont pour coordonn\'ees $s_1=[1:1:0]$, $s_2=[0:1:1]$ et enfin $s_3=[x:0:1]$, o\`u $x>0$. On note $\Delta_x$ ce triangle.

Comme $\mu_T(\Delta_x)$ d\'epend de façon continu de $x$, il  suffit de montrer que le volume de $\Delta_x$ admet une limite infinie lorsque $x$ tend vers $0$, pour conclure notre d\'emonstration. Le point $s_3$ tend vers le point $s=[0:0:1]$ lorsque $x$ tend vers 0. On note $\Delta'$ le triangle inclus dans $\Delta$ et de sommet $s,s_1,s_2$, et $\Delta'_x$ le triangle intersection $\Delta_x' =\Delta_x \cap \Delta'$. La famille des triangles $\Delta'_x$ croit lorsque $x$ d\'ecroit vers 0 et elle converge vers le triangle $\Delta'$. Le th\'eor\`eme \ref{mubord}  montre que le volume de $\Delta'$ est infini et le lemme de Fatou  montre que le volume de $\Delta'_x$ tend vers l'infini lorsque $x$ tend vers 0. Mais les $\Delta'_x$ sont inclus dans $\Delta_x$. Ce qui conclut la démonstration.
\end{proof}

\begin{rem}
On peut montrer que l'aire minimale pour un triangle id\'eal du triangle T est atteinte pour le triangle $\Delta_x$ avec $x=1$, et son aire est (avec les normalisations que l'on a choisi) $\frac{\pi^3}{24}$, voir \cite{CVV1}.
\end{rem}

\begin{rem}
La généralisation du théorème \ref{airetri} en dimension supérieure est fausse. En effet, on peut construire sans peine une suite de tétraèdres idéaux (non dégénérés) de l'espace hyperbolique dont le volume tend vers 0.
\end{rem}

\section{Dynamique}\label{dyna}

Dans cette partie nous allons \'etudier les diff\'erentes dynamiques
possibles pour un \'el\'ement $\g$ de $\s$ qui pr\'eserve un ouvert
proprement convexe.

\begin{rem}
On munit l'ensemble des ferm\'es de $\P$ de la topologie de
Hausdorff h\'erit\'ee de la topologie de $\P$. Ainsi, toutes les
convergences de suites de ferm\'es utilis\'ees dans le reste du texte
sont au sens de la topologie de Hausdorff.
\end{rem}

\subsection{Le cas de $\PP^1$ et $\mathrm{PSL}_2(\R)$}

On rappelle sans d\'emonstration la proposition suivante:

\begin{prop}
Soit $\g \in \mathrm{PSL}_2(\R)$, l'\'el\'ement $\g$ fait partie de l'une des quatre familles suivantes:

\begin{itemize}
\item La famille des \'el\'ements dits $\underline{hyperboliques}$ qui sont conjugu\'es \`a une matrice de la forme suivante:

$$
\begin{array}{cc}
\left(\begin{array}{cc}
\lambda &   0        \\
0          & \mu \\
\end{array}
\right) &
\begin{tabular}{l}
o\`u $\lambda \mu = 1$ et $\lambda > \mu > 0$\\
\end{tabular}
\end{array}
$$

\item La famille des \'el\'ements dits $\underline{paraboliques}$ qui sont conjugu\'es \`a la matrice:

$$
\begin{array}{cc}
\left(\begin{array}{cc}
1 & 1 \\
0 & 1 \\
\end{array}
\right) &
\end{array}
$$

\item La famille des \'el\'ements dits $\underline{elliptiques}$ qui sont conjugu\'es \`a une matrice de la forme suivante:

$$
\begin{array}{cc}
\left(\begin{array}{cc}
\cos(\theta) & -\sin(\theta) \\
\sin(\theta) &  \cos(\theta) \\
\end{array}
\right) &
\begin{tabular}{l}
o\`u, $0< \theta <2\pi$\\
\end{tabular}
\end{array}
$$

\item La famille compos\'ee uniquement de l'identit\'e.
\end{itemize}
\end{prop}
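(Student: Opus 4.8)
Le plan est de relever $\g$ en une matrice $M \in \mathrm{SL}_2(\R)$, bien d\'efinie au signe pr\`es puisque $\mathrm{PSL}_2(\R) = \mathrm{SL}_2(\R)/\{\pm \mathrm{Id}\}$, puis de classifier $M$ par sa r\'eduction. Comme $\det M = 1$, le polyn\^ome caract\'eristique de $M$ est $X^2 - tX + 1$ avec $t = \mathrm{tr}(M)$, et ses racines sont $\frac{t \pm \sqrt{t^2-4}}{2}$. Toute la discussion est alors gouvern\'ee par le signe du discriminant $t^2-4$, et je distinguerais trois cas selon que $|t|>2$, $|t|<2$ ou $|t|=2$.

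Dans le cas $|t|>2$, la matrice $M$ poss\`ede deux valeurs propres r\'eelles distinctes $\lambda \neq \mu$ avec $\lambda \mu = \det M = 1$; elle est donc diagonalisable sur $\R$. Quitte \`a remplacer $M$ par $-M$, ce qui ne modifie pas $\g$, on peut supposer ces valeurs propres strictement positives, et quitte \`a \'echanger les deux droites propres, par une conjugaison dans $\mathrm{SL}_2(\R)$ (la rotation d'angle $\tfrac{\pi}{2}$ \'echange les entr\'ees diagonales tout en restant de d\'eterminant $1$), on peut supposer $\lambda > \mu > 0$: on obtient exactement la forme hyperbolique annonc\'ee. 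Dans le cas $|t|<2$, les valeurs propres sont complexes conjugu\'ees de module $\sqrt{\det M}=1$, soit $e^{\pm i\theta}$ avec $\theta \notin \pi\Z$; la r\'eduction d'un endomorphisme r\'eel \`a valeurs propres complexes (choix d'une base adapt\'ee \`a la structure complexe induite) montre que $M$ est conjugu\'ee dans $\mathrm{SL}_2(\R)$ \`a la matrice de rotation d'angle $\theta$, ce qui donne le cas elliptique avec $0<\theta<2\pi$.

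Reste le cas $|t|=2$, o\`u la valeur propre double vaut $\tfrac{t}{2}=\pm 1$; quitte \`a remplacer $M$ par $-M$ on peut la supposer \'egale \`a $1$. Ou bien $M$ est diagonalisable, et alors $M=\mathrm{Id}$, ce qui fournit l'identit\'e dans $\mathrm{PSL}_2(\R)$; ou bien $M$ n'est pas diagonalisable, et sa r\'eduction est un unique bloc unipotent $\left(\begin{array}{cc} 1 & c \\ 0 & 1 \end{array}\right)$ avec $c \neq 0$. Une conjugaison par $\mathrm{diag}(s,s^{-1}) \in \mathrm{SL}_2(\R)$ multiplie $c$ par $s^2$, ce qui permet de ramener $|c|$ \`a $1$, d'o\`u le cas parabolique.

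La difficult\'e ne r\'eside pas dans l'alg\`ebre lin\'eaire, tout \`a fait classique, mais dans le soin \`a apporter aux normalisations impos\'ees par le passage au quotient $\mathrm{PSL}_2(\R)$. Le point le plus d\'elicat est pr\'ecis\'ement la normalisation parabolique: la conjugaison diagonale ne change pas le signe de $c$, et l'on v\'erifie (par l'invariance de la trace et par l'absence de conjugaison \`a l'int\'erieur de $\mathrm{SL}_2(\R)$) que $\left(\begin{array}{cc} 1 & 1 \\ 0 & 1 \end{array}\right)$ et $\left(\begin{array}{cc} 1 & -1 \\ 0 & 1 \end{array}\right)$ d\'efinissent en fait \emph{deux} classes de conjugaison distinctes dans $\mathrm{PSL}_2(\R)$. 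Il faudrait donc, pour aboutir exactement \`a la forme unique \'enonc\'ee, soit autoriser la conjugaison dans $\mathrm{PGL}_2(\R)$ (o\`u $\mathrm{diag}(1,-1)$ fusionne ces deux classes), soit pr\'eciser que l'\'enonc\'e d\'ecrit la forme canonique au signe du coefficient de cisaillement pr\`es, ce qui est sans cons\'equence pour la suite du texte. C'est ce raffinement que je mettrais en avant comme seul v\'eritable obstacle.
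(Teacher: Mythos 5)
Il n'y a ici aucune d\'emonstration \`a laquelle confronter la v\^otre : le texte rappelle explicitement cette proposition \emph{sans d\'emonstration}, comme pr\'eliminaire \`a la proposition \ref{classi} --- laquelle est, elle, d\'emontr\'ee, mais par un argument de nature toute diff\'erente (dynamique, utilisant l'existence du convexe pr\'eserv\'e), qui n'a pas d'analogue dans votre situation puisque l'\'enonc\'e sur $\mathrm{PSL}_2(\R)$ ne fait aucune hypoth\`ese de ce type. Votre preuve --- rel\`evement \`a $\mathrm{SL}_2(\R)$, discussion selon le signe de $t^2-4$, normalisations par $-\mathrm{Id}$, par conjugaison de rotation (\'echange des valeurs propres) et par conjugaison diagonale --- est l'argument standard, et elle est correcte.

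Votre mise en garde finale est exacte, et c'est le point le plus utile de votre r\'edaction : les deux unipotents $N_{\pm}=\left(\begin{array}{cc} 1 & \pm 1 \\ 0 & 1 \end{array}\right)$ ne sont pas conjugu\'es dans $\mathrm{PSL}_2(\R)$. En effet, l'\'egalit\'e $BN_+B^{-1}=-N_-$ est exclue par la trace, et $BN_+B^{-1}=N_-$ avec $B=\left(\begin{array}{cc} a & b \\ c & d \end{array}\right)$ force $c=0$ et $d=-a$, d'o\`u $\det B=-a^2=1$, ce qui est absurde ; ces deux classes ne fusionnent que dans $\mathrm{PGL}_2(\R)$, via $\mathrm{diag}(1,-1)$. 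L'\'enonc\'e pris \`a la lettre comporte donc bien cet abus, courant dans la litt\'erature. Il est toutefois sans cons\'equence pour la suite du texte, et il vaut la peine de dire pourquoi : d'une part $N_-=N_+^{-1}$, de sorte que la famille des paraboliques, lue comme r\'eunion de ces deux classes, est bien d\'efinie et stable par conjugaison et par passage \`a l'inverse ; d'autre part cette proposition ne sert qu'\`a d\'efinir le \emph{type} (hyperbolique, parabolique, elliptique) de l'action d'un \'el\'ement de $\s$ sur une droite stable, notion insensible \`a cette distinction. Signalons enfin que la m\^eme subtilit\'e appara\^it dans votre cas elliptique, o\`u vous la passez sous silence : $R(\theta)$ et $R(-\theta)$ ne sont pas conjugu\'ees dans $\mathrm{SL}_2(\R)$ (tout conjugateur dans $\mathrm{GL}_2(\R)$ est de d\'eterminant n\'egatif) ; mais ici l'\'enonc\'e absorbe la difficult\'e de lui-m\^eme, puisque $R(-\theta)=R(2\pi-\theta)$ et que le param\`etre parcourt $]0,2\pi[$. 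Il serait plus prudent de r\'ediger ce point ainsi, plut\^ot que d'affirmer directement que $M$ est conjugu\'ee dans $\mathrm{SL}_2(\R)$ \`a la rotation d'angle $\theta$.
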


\begin{defi}
Soient $\g \in \s$ et $D$ une droite de $\P$ stable par $\g$, on
dira que \emph{l'action de $\g$ sur $D$ est de type hyperbolique (resp. parabolique, resp. elliptique)} lorsque $\g$ restreint \`a cette
droite est un \'el\'ement hyperbolique (resp. parabolique, resp.
elliptique).
\end{defi}

\subsection{Classification}

La proposition suivante est d\'emontr\'ee dans \cite{Choi}. On
reproduit ici une d\'emonstration pour la commodit\'e du lecteur.

\begin{prop}\label{classi}
Soit $\O$ un ouvert proprement convexe et un \'el\'ement $\g \in \Aut(\O)$,
$\g$ fait partie de l'une des six familles
suivantes:

\begin{itemize}
\item La famille des \'el\'ements dits $\underline{hyperboliques}$ qui sont conjugu\'es \`a une matrice de la forme suivante:

$$
\begin{array}{cc}
\left(\begin{array}{ccc}
\lambda^+ &    0      & 0  \\
 0        & \lambda^0 & 0  \\
 0        &    0      & \lambda^-\\
\end{array}
\right)
&
\begin{tabular}{l}
o\`u, $\lambda^+ > \lambda^0 > \lambda^- > 0$\\
et $\lambda^+ \lambda^0 \lambda^- = 1$.
\end{tabular}
\end{array}
$$

\item La famille des \'el\'ements dits $\underline{planaires}$ qui sont conjugu\'es \`a une matrice de la forme suivante:

$$
\begin{array}{cc}
\left(\begin{array}{ccc}
\alpha &   0        & 0   \\
 0     & \alpha     & 0    \\
 0     &   0        & \beta\\
\end{array}
\right) &
\begin{tabular}{l}
o\`u, $\alpha, \beta > 0$, $\alpha^2 \beta = 1$\\
et $\alpha,\, \beta \neq 1$.
\end{tabular}
\end{array}
$$

\item La famille des \'el\'ements dits
$\underline{quasi-hyperboliques}$ qui sont conjugu\'es \`a une matrice de la forme suivante:

$$
\begin{array}{cc}
\left(\begin{array}{ccc}
\alpha &  1     &  0    \\
 0     & \alpha &  0   \\
 0     &  0     & \beta\\
\end{array}
\right)
 &
\begin{tabular}{l}
o\`u, $\alpha, \, \beta > 0$, $\alpha^2 \beta = 1$\\
et $\alpha, \,\beta \neq 1$.
\end{tabular}
\end{array}
$$

\item La famille des el\'ements dits $\underline{paraboliques}$ qui sont conjugu\'es \`a la matrice suivante:

$$
\left(\begin{array}{ccc}
1 & 1 & 0\\
0 & 1 & 1\\
0 & 0 & 1\\
\end{array}
\right)
$$

\item La famille des el\'ements dits \underline{elliptiques} qui sont conjugu\'es \`a une matrice de la forme suivante:

$$
\begin{tabular}{cc}
$\left(\begin{array}{ccc}
1      &   0          & 0\\
0      & \cos(\theta) & -\sin(\theta)\\
0      & \sin(\theta) & \cos(\theta)\\
\end{array}
\right)$ &

O\`u, $0 < \theta < 2\pi$.

\end{tabular}
$$

\item La famille compos\'ee uniquemenent de \underline{l'identit\'e}:
\end{itemize}
\end{prop}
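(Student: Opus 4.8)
The plan is to read off the classification from the real Jordan normal form of $\g$ in $\s$, using the interaction between its spectrum and the convex set it preserves to discard every form but the six listed ones. The essential input is a Perron--Frobenius principle for the salient open cone $\C$ over $\O$ (a connected component of $\pi^{-1}(\O)$, which we may assume is preserved by $\g$): the eigenvalue of $\g$ of largest modulus is real, and it is positive because it admits an eigenvector $v \in \overline{\C}$, so that $\g v = \lambda v$ with $v, \lambda v \in \overline{\C}$ and $\overline{\C}$ salient forces $\lambda > 0$. Applying the same statement to $\g^{-1}$ shows that the eigenvalue of smallest modulus is likewise real and positive. I would obtain this by making $\g^n$ act on the compact set $\overline{\O}$ and extracting a limit of the normalised iterates, whose image is carried by the top-modulus eigenspace.

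I then split into the complex and real cases. Suppose $\g$ has a non-real eigenvalue $re^{\pm i\theta}$ with $\sin\theta \neq 0$, and hence a real eigenvalue $\lambda = r^{-2}$ by $\det \g = 1$. Comparing the moduli $r$ and $r^{-2}$, and using that \emph{both} the largest and the smallest modulus must be attained by a positive real eigenvalue, rules out $r<1$ and $r>1$ and forces $r = 1$, whence $\lambda = 1$; then $\g$ is a rotation in a plane fixing a complementary line, which is exactly the elliptic family. If instead all eigenvalues are real, then $\det \g = 1$ allows only an even number of negative eigenvalues, and the positivity of the two extreme-modulus eigenvalues forces either all three positive or the triple $\{1,-1,-1\}$ (this last being elliptic with $\theta = \pi$, once a Jordan block at $-1$ is excluded as below).

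It remains to read off the Jordan structure when all eigenvalues are positive with product $1$. Distinct eigenvalues give the hyperbolic family; a double eigenvalue $\alpha$ with $\alpha^2\beta = 1$ gives the planar family if $\g$ is diagonalisable and the quasi-hyperbolic family if it has a size-two block at $\alpha$; a triple eigenvalue $1$ gives the identity or, for the full size-three block, the parabolic family. The only real forms outside the list are those carrying a single size-two unipotent block, namely the one with all eigenvalues $1$ and (after squaring) the $\{1,-1,-1\}$ block.

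I expect the genuinely delicate point to be excluding this size-two unipotent block; everything else is normal-form bookkeeping. Here I would argue dynamically. Such a $\g$ fixes pointwise a projective line and a distinguished vertex $p_0 \in \P$, and on any invariant line $\ell$ joining $p_0$ to a point of $\O$ it acts as a parabolic transformation of $\ell \cong \PP^1$, which has a \emph{unique} fixed point. But $\ell \cap \O$ is a $\g$-invariant proper open interval with two distinct endpoints, which $\g$ must fix or swap; in either case $\g^2$ fixes both endpoints, producing two fixed points on $\ell$ and contradicting uniqueness of the parabolic fixed point. Thus no such $\g$ preserves a properly convex open set, and the $\{1,-1,-1\}$ block is excluded by passing to $\g^2$, which reduces to the all-ones case just treated.
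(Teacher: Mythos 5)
Your architecture matches the paper's proof almost step for step. The paper reduces the classification to three spectral claims: (i) if $\Sp(\g)\subset\R$ and $\Sp(\g)\not\subset\{1,-1\}$, then $\Sp(\g)\subset\R_+^*$; (ii) if $\Sp(\g)\subset\{1,-1\}$, then $\g$ is diagonalisable or parabolic, the entire content being the exclusion of the single size-two unipotent Jordan block; (iii) if the spectrum is non-real, the real eigenvalue equals $1$. Your Perron--Frobenius input (the spectral radius of a matrix preserving a salient closed cone is an eigenvalue, with an eigenvector in the cone), applied to both $\g$ and $\g^{-1}$, delivers (i) and (iii) at once; this is genuinely different from, and arguably cleaner than, the paper's treatment of (iii), which uses the pencil of invariant ellipses through the unique fixed point. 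Your exclusion of the size-two unipotent block is exactly the paper's argument for (ii): the action on each invariant line $\ell$ through the distinguished vertex is parabolic, hence has a unique fixed point, while the invariant interval $\ell\cap\O$ forces $\g^2$ to fix its two distinct endpoints. (Two small points: you must choose the point of $\O$ off the line of fixed points, so that $\ell$ is not that pointwise-fixed line; and your one-line iteration sketch of Perron--Frobenius needs, when no top-modulus eigenvalue is positive real, the observation that the limit directions of the normalised iterates positively span a plane, contradicting salience.)

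The genuine gap is the clause ``which we may assume is preserved by $\g$''. An element of $\s$ preserving $\O$ need not preserve the two components of $\pi^{-1}(\O)$: it may swap them, and this case can neither be normalised away (the other lift $-\g$ has determinant $-1$, so it is not in $\s$, and passing to $\g^2$ only classifies $\g^2$, not $\g$) nor ignored as vacuous. Indeed $\g=\mathrm{diag}(-2,1,-1/2)\in\s$ preserves the ellipse $\O=\{[x:y:z]\in\P \,|\, xz>y^2\}$, since the projective map it induces is $[x:y:z]\mapsto[2x:-y:z/2]$ and $(2x)(z/2)=xz>(-y)^2$; it swaps the two components of the cone (projectively it is an orientation-reversing isometry of the hyperbolic plane, a glide reflection), and its spectrum $\{-2,1,-1/2\}$ matches none of the six families. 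So in the swapping case the statement itself fails, and no argument can close the hole: what your proof correctly establishes is the proposition under the extra hypothesis that $\g$ preserves each component of $\pi^{-1}(\O)$, equivalently that $\g$ preserves an orientation of $\O$. You are in good company---the paper's own proof makes the same tacit assumption, since its segments $[\g^{2n}x,\g^{2n+1}x]$ sweep out a full line only when the lifts of the iterates to a fixed component alternate sides, i.e.\ only when $\g$ preserves that component---and the hypothesis holds for every element to which the paper applies the proposition, namely deck transformations of orientable quotient surfaces. But it must be stated as a hypothesis, not dismissed with ``we may assume''.
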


\begin{proof}
Soit $\g \in \s$ on notera $\Sp(\g)$ le spectre de $\g$, pour montrer ce lemme il suffit de montrer 3 points:
\begin{itemize}
\item Si $\Sp(\g) \subset \R$ et $\Sp(\g) \not\subset \{1, \, -1\}$
alors on a $\Sp(\g) \subset \R_+^*$.

\item Si $\Sp(\g) \subset \{ 1, \, -1\}$ alors $\g$ est
diagonalisable ou parabolique.

\item Si $\Sp(\g) \not\subset \R$ alors l'unique valeur propre
r\'eelle de $\g$ est 1.
\end{itemize}

Commençons par montrer le premier point. On consid\`ere
une valeur propre $\lambda$ de $\g$ de valeur absolue maximale, comme $\Sp(\g)
\not\subset \{1, \, -1\}$ on a $|\lambda| > 1$. On note $E$ la
r\'eunion des points et droites stables de $\g$. L'ensemble $\P-E$
est un ouvert dense, par cons\'equent il existe un point $x \in
(\P-E) \cap \O$. Si $\lambda < 0$,  le segment
$[\g^{2n}(x),\g^{2n+1}(x)]$ converge vers une droite de $\P$, ce
qui est absurde car $\O$ est proprement convexe, donc $\lambda
> 0$. En appliquant ce raisonnement \`a $\g^{-1}$ on obtient que les
valeurs propres de valeur absolu maximale et minimale sont positives
et par cons\'equent, comme $\g \in \s$, on a $\Sp(\g) \subset \R_+^*$.

Montrons \`a pr\'esent le deuxi\`eme point. Il s'agit de montrer que
$\g$ ne peut pas \^etre conjugu\'e \`a l'une des 2 matrices suivantes:
$$
\left(\begin{array}{ccc}
u & 1 &  \\
  & u &  \\
  &   & 1\\
\end{array}
\right)
$$
o\`u, $u=1$ ou $u=-1$. Supposons qu'il existe un tel $\g \in
\Aut(\O)$. Quitte \`a travailler avec $\g^2$ on peut supposer que
$u=1$. Alors, il existe une unique droite $D$ compos\'ee de points
fixes pour $\g$. Et, $\g$ poss\`ede un unique point fixe $v \in D$
tel que l'action de $\g$ sur toute droite $D'$ passant par $v$ est
parabolique si $D' \neq D$. L'\'el\'ement $\g$ ne pr\'eserve donc aucun convexe
proprement convexe de $\P$.

Enfin, montrons le dernier point. Si le spectre de $\g$ n'est pas
r\'eel alors $\g$ est conjugu\'e \`a une matrice de la forme
suivante:

$$
\begin{tabular}{ccc}
$\left(\begin{array}{ccc}
r^{-2} &               &\\
       & r\cos(\theta) & -r\sin(\theta)\\
       & r\sin(\theta) & r\cos(\theta)\\
\end{array}
\right)$ &
o\`u, $0 < \theta < 2\pi$ et $\theta \neq \pi$.
\end{tabular}
$$

L'\'el\'ement $\g$ poss\`ede un unique point fixe $v$ et $\P- \{ v \}$ est une
r\'eunion d'ellipses disjointes permut\'ees par $\g$. Par cons\'equent $\g$
pr\'eserve un convexe proprement convexe si et seulement si les
ellipses sont globalement pr\'eserv\'ees par $\g$, c'est \`a dire si et
seulement si l'unique valeur propre r\'eelle de $\g$ est 1.
\end{proof}

\subsection{R\'esultat \'el\'ementaire sur la dynamique}

Les r\'esultats suivants ont d\'ej\`a \'et\'e pr\'esent\'e dans \cite{Gold1} ou
\cite{Choi}.

\subsubsection{Dynamique hyperbolique}

Soit $\g$ un \'el\'ement hyperbolique de $\s$, l'\'el\'ement $\g$ est conjugu\'e \`a la
matrice

$$
\begin{array}{cc}
\left(\begin{array}{ccc}
\lambda^+ &   0       & 0  \\
 0        & \lambda^0 & 0  \\
 0        &   0       & \lambda^-\\
\end{array}
\right) &
\begin{tabular}{l}
O\`u, $\lambda^+ > \lambda^0 > \lambda^- > 0$\\
et $\lambda^+ \lambda^0 \lambda^- = 1$.
\end{tabular}
\end{array}
$$

On note:
\begin{itemize}
\item $p^+_{\g}$ le point propre de $\P$ associ\'e \`a la valeur
propre $\lambda^+$.

\item $p^0_{\g}$ le point propre de $\P$ associ\'e \`a la valeur
propre $\lambda^0$.

\item $p^-_{\g}$ le point propre de $\P$ associ\'e \`a la valeur
propre $\lambda^-$.

\item $D^{+,-}_{\g}$ la droite stable de $\P$ associ\'ee aux valeurs
propres $\lambda^+,\lambda^-$.

\item $D^{+,0}_{\g}$ la droite stable de $\P$ associ\'ee aux valeurs
propres $\lambda^+,\lambda^0$.

\item $D^{-,0}_{\g}$ la droite stable de $\P$ associ\'ee aux valeurs
propres $\lambda^-,\lambda^0$.
\end{itemize}

%\begin{rem}
%Tout au long de ce texte, l'indice $\g$ pour d\'esigner un espace
%stable de $\g$ sera omis si le contexte est clair.
%\end{rem}

\begin{prop}
Soient $\O$ un ouvert proprement convexe et un \'el\'ement hyperbolique $\g \in \Aut(\O)$, alors, on a $p^+_{\g} ,\, p^-_{\g} \in \partial \O$ et $p^0_{\g} \notin \O$.
\end{prop}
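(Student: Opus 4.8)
The plan is to combine the attracting/repelling dynamics of $\g$ on $\P$ with the proper convexity of $\O$, reducing everything to the one-dimensional dynamics of $\g$ on its invariant lines. First I would normalize $\g$ to the diagonal form $\mathrm{diag}(\lambda^+,\lambda^0,\lambda^-)$ and record the elementary "North-South" behaviour: writing a point as $[x_1:x_2:x_3]$ in the eigenbasis, one has $\g^n[x_1:x_2:x_3]=[x_1:(\lambda^0/\lambda^+)^n x_2:(\lambda^-/\lambda^+)^n x_3]$, so $\g^n x \to p^+_{\g}$ for every $x$ off the line $D^{-,0}_{\g}=\{x_1=0\}$, and symmetrically $\g^{-n}x\to p^-_{\g}$ for every $x$ off $D^{+,0}_{\g}=\{x_3=0\}$. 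Since $\O$ is open it is contained in no line, so I can pick $x\in\O$ with $x_1\neq 0$; then $\g^n x\in\O$ by invariance and $\g^n x\to p^+_{\g}$, whence $p^+_{\g}\in\overline{\O}$. The same argument with a point having $x_3\neq 0$ gives $p^-_{\g}\in\overline{\O}$.

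It then remains to exclude these two points from the interior, and to treat $p^0_{\g}$. Here the key observation is: if $D$ is a $\g$-invariant line on which $\g$ acts hyperbolically (two distinct positive eigenvalues), then neither fixed point of $\g|_D$ can lie in $\O$. Indeed $\O\cap D$ is an open, $\g$-invariant interval of the projective line $D$, and it is a \emph{proper} interval because $\overline{\O}$ sits in an affine chart. On $D$ the element $\g$ acts, in a suitable affine coordinate $t$, as $t\mapsto ct$ with $c\neq 1$, fixing only $0$ and $\infty$; any $\g$-invariant open interval containing one of these fixed points in its interior would, under iteration of $\g$ or $\g^{-1}$, swell to cover the whole affine line, contradicting proper convexity (in the spirit of the converging segment ruled out in the proof of Proposition \ref{classi}). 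Applying this to the line $D^{+,-}_{\g}$ excludes $p^+_{\g},p^-_{\g}\in\O$, so together with the previous paragraph $p^+_{\g},p^-_{\g}\in\partial\O$; applying it to $D^{+,0}_{\g}$, where $p^0_{\g}$ is the repelling fixed point of $\g$, gives $p^0_{\g}\notin\O$.

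The only genuinely delicate point is the one-dimensional claim that an invariant proper open interval cannot contain a fixed point of a hyperbolic map in its interior. The cleanest way to nail it is to classify the $\g$-invariant open proper intervals of $\PP^1$: since $\g|_D$ is represented by a diagonal matrix with positive entries it preserves orientation and has $0,\infty$ as its only fixed points, and it has no nontrivial finite orbit, so the two endpoints of any invariant interval form a $\g$-invariant pair and must therefore be $\{0,\infty\}$. Hence the invariant proper intervals are exactly the two open arcs joining the two fixed points, and in each of them the fixed points are endpoints, never interior. Everything else (invariance of $\O\cap D$ as an interval, properness inherited from the affine chart, and the explicit orbit computation) is routine, so I expect no serious obstacle beyond stating and proving this single $\PP^1$-fact carefully.
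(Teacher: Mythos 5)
Your proof is correct, but its second half takes a genuinely different route from the paper's. The convergence argument giving $p^{+}_{\g},p^{-}_{\g}\in\overline{\O}$ is the same as the paper's — and you are in fact more careful, since the paper asserts $\g^n x\to p^+_{\g}$ for \emph{all} $x\in\O$, whereas one needs $x$ off the line $D^{-,0}_{\g}$, exactly the genericity condition you impose. For the exclusion $p^+_{\g},\,p^0_{\g},\,p^-_{\g}\notin\O$, however, the paper uses a one-line soft argument: $\Aut(\O)$ acts properly on $(\O,d_{\O})$ (the Fait of Section 1.1), so the stabilizer in $\Aut(\O)$ of any point of $\O$ is compact, while $\langle\g\rangle$ is discrete and unbounded because $\g$ has three distinct eigenvalues; hence no fixed point of $\g$ can lie in $\O$. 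You instead classify the $\g$-invariant proper open intervals of an invariant projective line and apply this to $D^{+,-}_{\g}$ and $D^{+,0}_{\g}$. What each buys: the paper's argument treats all three fixed points uniformly, works verbatim in any dimension, and recycles the metric machinery already established; yours is entirely elementary and self-contained (no Hilbert metric, no properness of the action), and makes visible exactly where proper convexity enters. One phrase of yours is loose: an invariant interval does not \emph{swell} under iteration — it is invariant; the correct statement is that it contains the $\g^{\mp n}$-images of a neighbourhood of an interior fixed point, and these exhaust the affine line. But the endpoint classification you then give (the endpoint pair is $\g$-invariant, and $\g$ has no nontrivial finite orbit, so the endpoints must be the two fixed points) is the clean version of this step and is correct.
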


\begin{proof}
Tout d'abord les points $p^+_{\g}, \, p^0_{\g}, \, p^-_{\g} \notin
\O$ puisque $\Aut(\O)$ agit proprement sur $\O$. Ensuite pour tout $x \in \O, \, \underset{n \rightarrow + \infty}{\lim} \g^n x =
p^+_{\g}$ et $\underset{n \rightarrow - \infty}{\lim} \g^n x = p^-_{\g}$
donc $p^+_{\g}, \, p^-_{\g} \in
\partial \O$.
\end{proof}

On peut \`a pr\'esent d\'efinir l'axe d'un \'el\'ement hyperbolique qui agit sur un ouvert proprement convexe.

\begin{defi}
Soient $\O$ un ouvert proprement convexe et un \'el\'ement hyperbolique $\g \in \Aut(\O)$, \emph{l'axe de $\g$} que l'on notera $\Ax(\g)$ est le segment ouvert de la droite $(p^-_{\g} p^+_{\g})$ qui est inclus dans $\overline{\O}$ et dont les extr\'emit\'es sont $p^-_{\g}$ et $p^+_{\g}$.
\end{defi}

\begin{defi}\label{axesecond}
Soient $\O$ un ouvert proprement convexe de $\P$ et un \'el\'ement hyperbolique $\g$ de $\Aut(\O)$. On suppose que $p^0_{\g} \in \partial \O$. Alors, on note $[p^+_{\g},p^0_{\g}]$ (resp. $[p^-_{\g},p^0_{\g}]$) le segment ouvert de la droite $(p^+_{\g} p^0_{\g})$ (resp.$(p^-_{\g} p^0_{\g})$) inclus dans $\overline{\O}$. On les appelle les \emph{axes secondaires} de $\g$.
\end{defi}

\begin{defi}
Soit $c: \mathbb{S}^1 \rightarrow \P$ une courbe simple continue, on dit
que $c$ est une \emph{courbe convexe} lorsque la composante
connexe orientable de $\P - c(\mathbb{S}^1)$ est un ouvert
proprement convexe.
\end{defi}

Les figures \ref{ffhyp} et \ref{dessin_hyp} illustrent la dynamique d'un \'el\'ement hyperbolique.

\begin{prop}\label{hyp}
Soient $\O$ un ouvert proprement convexe et un \'el\'ement hyperbolique $\g \in \Aut(\O)$.
\begin{itemize}
\item[(A)] Si $p^0_{\g} \notin \overline{\O}$ et $\Ax(\g) \subset \O$
alors
      \begin{itemize}
      \item $\partial \O$ est $\Cc^1$ en $p^+_{\g}$ et en $p^-_{\g}$.

      \item $p^0_{\g} = T_{p^+_{\g}} \partial \O \cap T_{p^-_{\g}} \partial \O$.
      \end{itemize}

\item[(B)]  Si $p^0_{\g} \notin \overline{\O}$ et $\Ax(\g) \subset
\partial \O$ alors
      \begin{itemize}
      \item $\partial \O$ n'est pas $\Cc^1$ en $p^+_{\g}$ et en $p^-_{\g}$.
      \item Les demi-tangentes \`a $\partial \O$ en $p^+_{\g}$ sont $(p^+_{\g} p^-_{\g})$ et $(p^+_{\g} p^0_{\g})$.
      \item Les demi-tangentes \`a $\partial \O$ en $p^-_{\g}$ sont $(p^+_{\g} p^-_{\g})$ et $(p^-_{\g} p^0_{\g})$.
      \end{itemize}

\item[(C)]  Si $p^0_{\g} \in \overline{\O}$ et $\Ax(\g) \subset \O$ alors
      \begin{itemize}
      \item $[p^+_{\g},p^0_{\g}]$ et $[p^-_{\g},p^0_{\g}] \subset \partial \O$.
      \item $\partial \O$ est $\Cc^1$ en $p^+_{\g}$, $p^-_{\g}$ et $p^0_{\g} = T_{p^+_{\g}} \partial \O \cap T_{p^-_{\g}} \partial
      \O$.
      \end{itemize}

\item[(D)]  Si $p^0_{\g} \in \overline{\O}$ et $\Ax(\g) \subset \partial
\O$ alors
      \begin{itemize}
      \item $[p^+_{\g},p^0_{\g}]$ et $[p^-_{\g},p^0_{\g}] \subset \partial \O$.
      \item $\O$ est un
triangle dont les sommets sont $p^+_{\g}, \, p^0_{\g}, \, p^-_{\g}$.
      \end{itemize}
\end{itemize}
\end{prop}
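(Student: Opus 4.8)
The plan is to normalise so that $\g=\mathrm{diag}(\lambda^+,\lambda^0,\lambda^-)$ with $p^+_{\g}=[1:0:0]$, $p^0_{\g}=[0:1:0]$, $p^-_{\g}=[0:0:1]$, hence $D^{+,-}_{\g}=\{y=0\}$, $D^{+,0}_{\g}=\{z=0\}$, $D^{-,0}_{\g}=\{x=0\}$; recall from the previous proposition that $p^+_{\g},p^-_{\g}\in\partial\O$ and $p^0_{\g}\notin\O$. The single tool driving the $\Cc^1$ and tangent statements is the dynamics of $\g$ on the pencil of lines through $p^+_{\g}$. A line $\{by+cz=0\}$ through $p^+_{\g}$ transforms under $\g$ by the covector dynamics $[b:c]\mapsto[b/\lambda^0:c/\lambda^-]$, a north-south dynamics on $\PP^1$ whose two fixed points are exactly $D^{+,-}_{\g}$ and $D^{+,0}_{\g}$. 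The set $E_{p^+_{\g}}\subset\Pd$ of support lines at $p^+_{\g}$ is a nonempty closed segment (by the dichotomy recalled above), it is $\g$-invariant since $\g$ fixes $p^+_{\g}$ and preserves $\O$, and it is not the whole circle (a line through an interior point meets $\O$). As $\g$ is orientation-preserving on $\PP^1$, such an invariant closed sub-arc is either one of the two fixed points, or a genuine arc whose endpoints are precisely $D^{+,-}_{\g}$ and $D^{+,0}_{\g}$.

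For case (A) (and identically the $\Cc^1$ part of case (C), which uses only $\Ax(\g)\subset\O$), the hypothesis $\Ax(\g)\subset\O$ means $D^{+,-}_{\g}$ meets $\O$, so $D^{+,-}_{\g}\notin E_{p^+_{\g}}$; since every nondegenerate invariant arc contains that fixed point, $E_{p^+_{\g}}$ reduces to the other fixed point $D^{+,0}_{\g}$. Hence $\partial\O$ is $\Cc^1$ at $p^+_{\g}$ with tangent $D^{+,0}_{\g}=(p^+_{\g}p^0_{\g})$; running the same argument for $\g^{-1}$ (which exchanges $p^+_{\g}$ and $p^-_{\g}$) gives $\Cc^1$ at $p^-_{\g}$ with tangent $D^{-,0}_{\g}$, whence $T_{p^+_{\g}}\partial\O\cap T_{p^-_{\g}}\partial\O=D^{+,0}_{\g}\cap D^{-,0}_{\g}=\{p^0_{\g}\}$. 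In case (C) we moreover have $p^0_{\g}\in\partial\O$; since the tangent $D^{+,0}_{\g}$ is a support line, $D^{+,0}_{\g}\cap\overline\O\subset\partial\O$, and by convexity this intersection contains the secondary axis $[p^+_{\g},p^0_{\g}]$, giving $[p^+_{\g},p^0_{\g}]\subset\partial\O$ and likewise $[p^-_{\g},p^0_{\g}]\subset\partial\O$.

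For case (B), $\Ax(\g)\subset\partial\O$ makes $D^{+,-}_{\g}$ a support line, so $D^{+,-}_{\g}\in E_{p^+_{\g}}$. I then show $D^{+,0}_{\g}$ is also a support line using $p^0_{\g}\notin\overline\O$: if some $q\in\O$ lay on $D^{+,0}_{\g}=\{z=0\}$, then $\g^{-n}q\to p^0_{\g}$, forcing $p^0_{\g}\in\overline\O$, a contradiction. Thus $E_{p^+_{\g}}$ contains the two distinct fixed points, so it is a genuine arc and $\partial\O$ is not $\Cc^1$ at $p^+_{\g}$; by the classification its endpoints are exactly $D^{+,-}_{\g}$ and $D^{+,0}_{\g}$, i.e. the demi-tangents are $(p^+_{\g}p^-_{\g})$ and $(p^+_{\g}p^0_{\g})$. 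Applying this to $\g^{-1}$ yields the symmetric statement at $p^-_{\g}$.

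Case (D) is where the triangle appears, and the cleanest route is to lift to the cone. Choose a salient convex cone $C\subset\R^3$ over $\O$ and lifts $e_1,e_2,e_3\in\overline C$ of $p^+_{\g},p^0_{\g},p^-_{\g}$ (possible since all three lie in $\partial\O$); then $\mathrm{cone}(e_1,e_2,e_3)\subset\overline C$ by convexity, giving one inclusion. For the reverse, write $v=ae_1+be_2+ce_3\in C$: the support line $D^{+,-}_{\g}$ forces $b>0$, while pushing $v$ by $\g^{n}$ (resp. $\g^{-n}$) shows that $a<0$ (resp. $c<0$) would place the ray $\R_{>0}(-e_1)$ (resp. $\R_{>0}(-e_3)$) in $\overline C$, contradicting that $C$ is salient and contains $e_1$ (resp. $e_3$). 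Hence $a,c\ge 0$, so $v\in\mathrm{cone}(e_1,e_2,e_3)$, and $\O$ is exactly the open triangle with vertices $p^+_{\g},p^0_{\g},p^-_{\g}$, two of whose sides are the secondary axes $[p^+_{\g},p^0_{\g}]$ and $[p^-_{\g},p^0_{\g}]\subset\partial\O$. The main obstacle throughout is not the $\Cc^1$ bookkeeping but the three places where one must certify that a candidate line is genuinely a support line, and each time the decisive input is the same dynamical fact: a point of $\O$ sitting on the wrong invariant line has an orbit accumulating either on $p^0_{\g}$ (excluded in (B)) or on the antipodal ray $-e_1$ or $-e_3$ in the cone (excluded by salience in (D)).
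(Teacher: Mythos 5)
Your proof is correct, but it follows a genuinely different route from the paper's. The paper's proof is a single construction covering all four cases: it takes the two closed triangles $T_1,T_2$ of the paving defined by $p^+_{\g},p^0_{\g},p^-_{\g}$ that meet along $\Ax(\g)$, picks $x_1\in\overset{\circ}{T_1}$, $x_2\in\overset{\circ}{T_2}$, and concatenates the orbit curves $(\g^t x_1)$ and $(\g^{-s}x_2)$ of the one-parameter group with the limit points $p^\pm_{\g}$ to obtain a $\g$-invariant convex curve, analytic off $p^\pm_{\g}$ and $\Cc^{1,\alpha^\pm}$ there with explicit exponent $\alpha^+=\frac{\ln\lambda^+-\ln\lambda^0}{\ln\lambda^0-\ln\lambda^-}$; the four conclusions are then read off by comparing $\partial\O$ with these model curves (a step the paper leaves to the reader). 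You instead work dually and linearly: for the $\Cc^1$/demi-tangent statements you let $\g$ act on the pencil of lines through $p^\pm_{\g}$, observe a north--south dynamics with fixed points $D^{+,-}_{\g}$ and $D^{+,0}_{\g}$, and classify the possible $\g$-invariant closed arcs that the support set $E_{p^\pm_{\g}}$ can be -- a single fixed line, or an arc whose endpoints are the two fixed lines; for case (D) you lift to the cone and show in an eigenbasis that properness forces the coordinates to be nonnegative. Each approach has its merits: yours is complete and self-contained precisely where the paper says ``on tire facilement de tout ceci les conclusions,'' and it pinpoints exactly which lines can be support lines (the three places where a dynamical argument certifies a support line are indeed the crux); the paper's construction yields strictly more information, namely H\"older regularity $\Cc^{1,\alpha^+}$ of the invariant curves at the fixed points and a unified geometric picture of all four configurations, at the cost of leaving the case analysis implicit.
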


\begin{figure}[h!]
\begin{center}
\includegraphics[trim=0cm 7cm 0cm 0cm, clip=true, width=8cm]{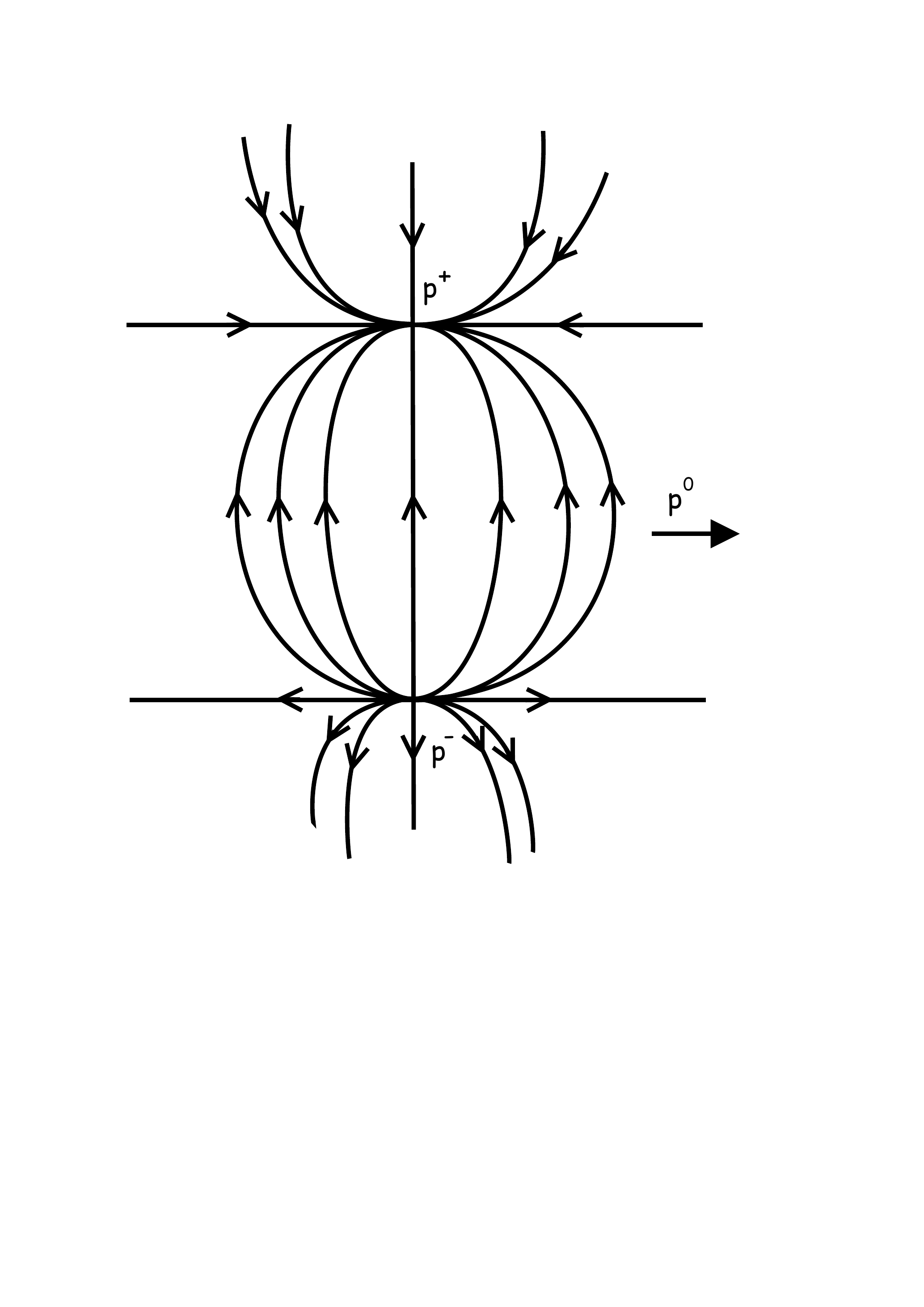}
%\centerline{\psfig{figure=hyp.pdf, width=6cm}}
\caption{Dynamique d'un \'el\'ement hyperbolique sur $\P$}\label{ffhyp}
\end{center}
\end{figure}

\begin{proof}
Les points $p^+_{\g}$, $p^0_{\g}$ et $p^-_{\g}$ d\'efinissent un pavage de
$\P$ en quatre triangles ferm\'es. Soient $T_1$ et $T_2$ les deux triangles de cette partition tels
que $T_1 \cap T_2  = \Ax(\g)$, et $x_1 \in \overset{\circ}{T_1}$ et $x_2 \in \overset{\circ}{T_2}$, la courbe
obtenue en concat\'enant la courbe $(\g^t x_1)_{t \in \R}$ et la courbe $(\g^{-s} x_2)_{s \in \R}$ et en ajoutant les points limites $p^+_{\g}$ et $p^-_{\g}$ d\'efinit une courbe $\C$ convexe analytique
en dehors des points $p^+_{\g}$ et $p^-_{\g}$. $\C$ est $\Cc^{1,\alpha^+}$
en $p^+_{\g}$ et $\Cc^{1,\alpha^-}$ en $p^-_{\g}$, o\`u $\alpha^+ =
\frac{\ln(\lambda^+) -\ln(\lambda^0)}{\ln(\lambda^0)
-\ln(\lambda^-)} > 0$ et $\alpha^- = (\alpha^+)^{-1}$. On tire facilement de tout ceci les conclusions de la proposition.
\end{proof}

\begin{center}
\begin{figure}[h!]
  \centering
  \subfloat[][]{\label{pliage1}\includegraphics[trim=0cm 10cm 5cm 0cm, clip=true, width=5cm]{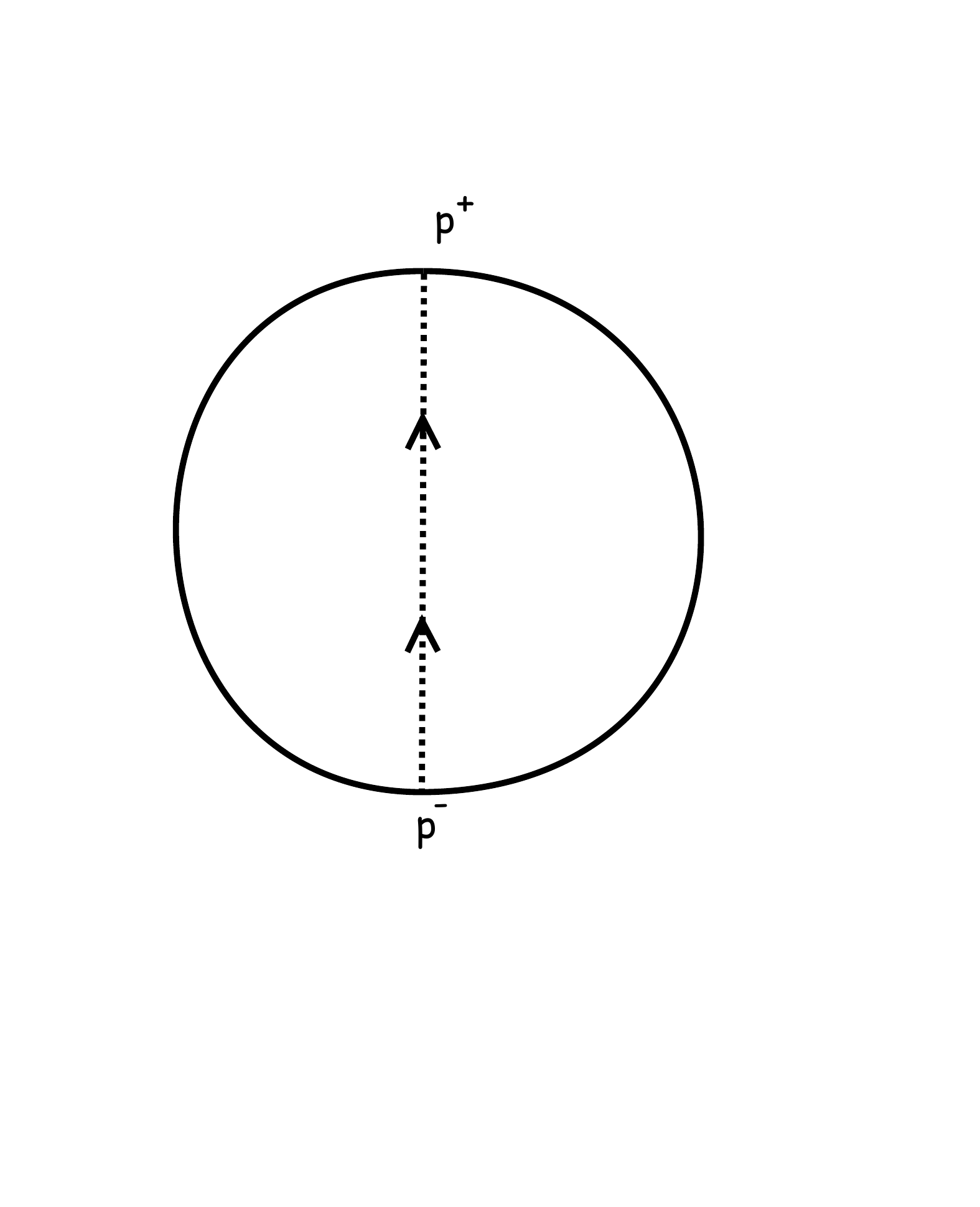}}                
  \subfloat[][]{\label{pliage2} \includegraphics[trim=0cm 10cm 5cm 0cm, clip=true, width=5cm]{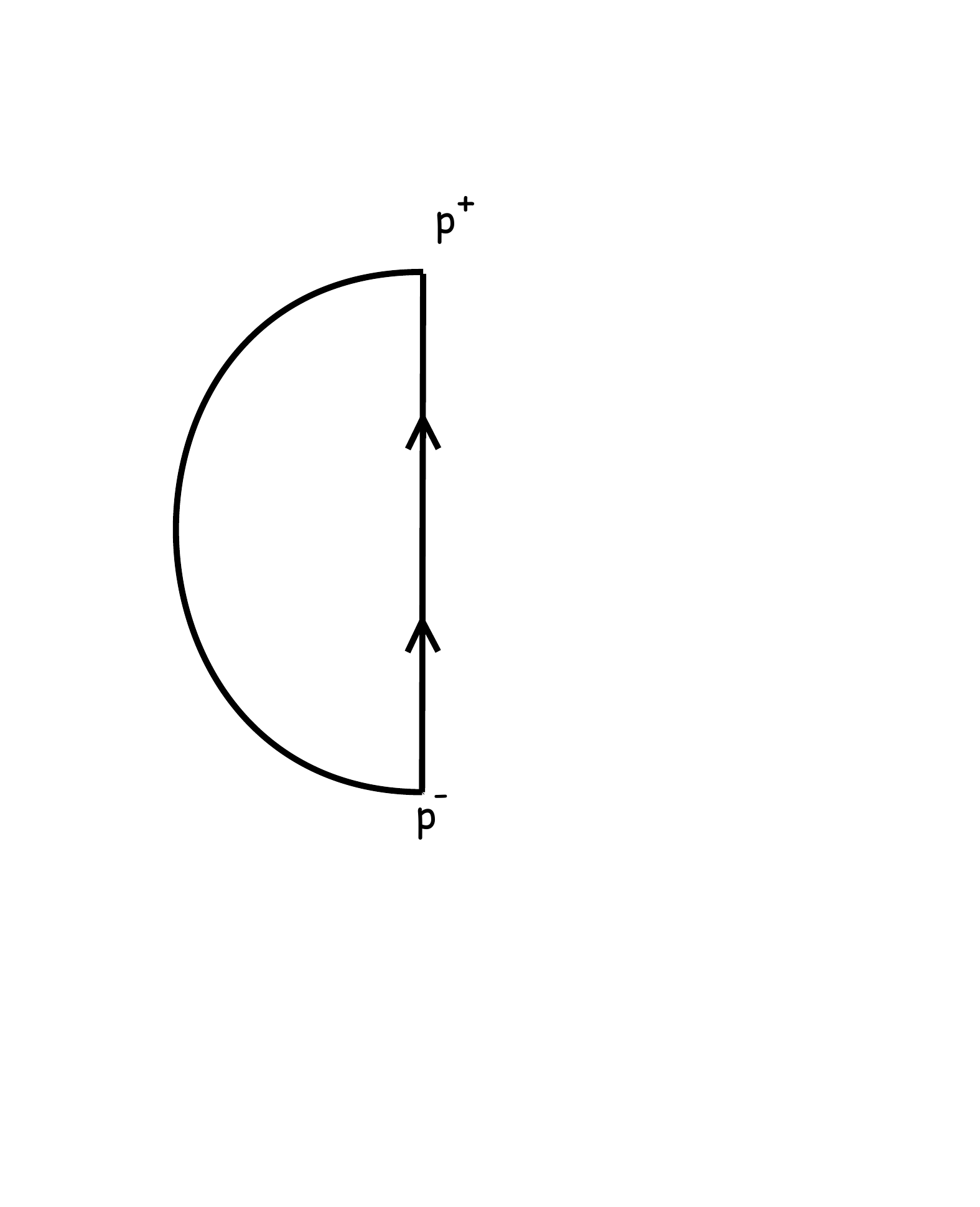}}
\\
  \subfloat[][]{\label{pliage3}\includegraphics[trim=0cm 10cm 5cm 0cm, clip=true, width=5cm]{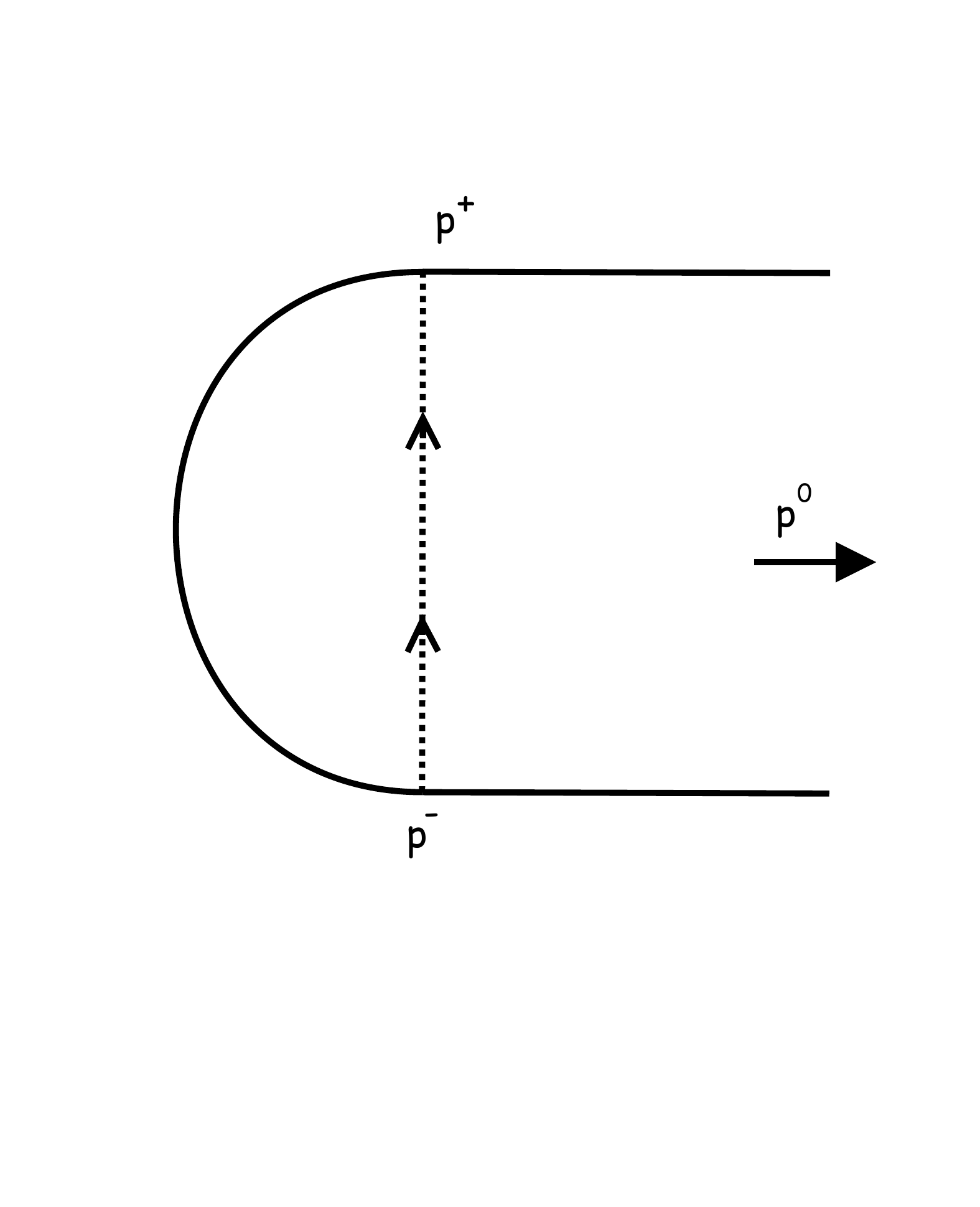}}                
  \subfloat[][]{\label{pliage4} \includegraphics[trim=0cm 10cm 5cm 0cm, clip=true, width=5cm]{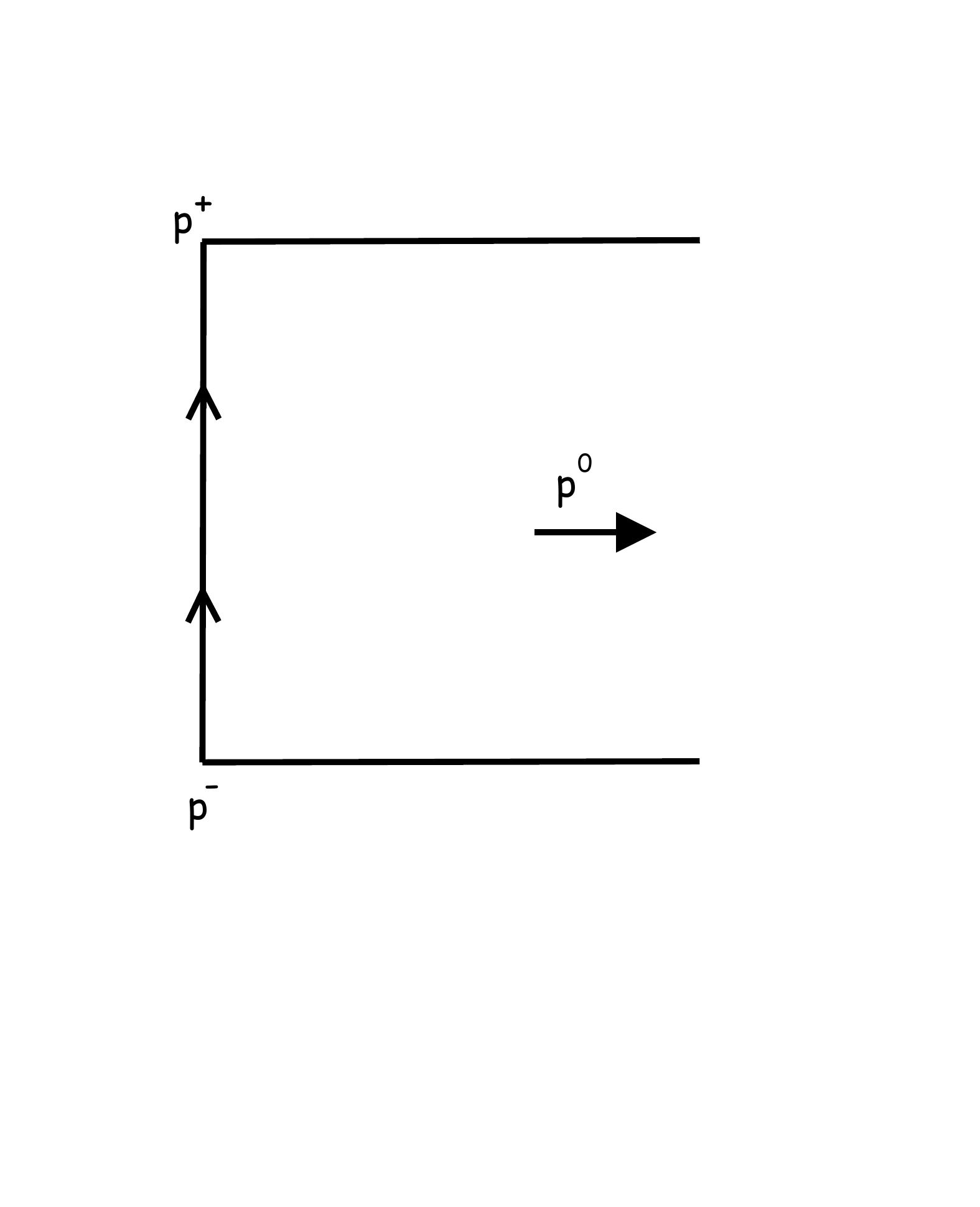}}
\caption{Les différentes configurations de la proposition \ref{hyp}}
\label{dessin_hyp}
\end{figure}
\end{center}
\subsubsection{Dynamique planaire}

Soit $\g$ un \'el\'ement planaire de $\s$, l'\'el\'ement $\g$ est conjugu\'e \`a la
matrice

$$
\begin{array}{cc}
\left(\begin{array}{ccc}
\alpha &  0        & 0   \\
 0     & \alpha    & 0   \\
 0     &  0        & \beta\\
\end{array}
\right) &
\begin{tabular}{l}
o\`u, $\alpha, \beta > 0$, $\alpha^2 \beta = 1$\\
et $\alpha,\, \beta \neq 1$.
\end{tabular}
\end{array}
$$

On note:
\begin{itemize}
\item $p_{\g}$ le point propre de $\P$ associ\'e \`a la valeur propre
$\beta$.

\item $D_{\g}$ la droite stable de $\P$ associ\'ee \`a la valeur
propre $\alpha$.
\end{itemize}

La dynamique des \'el\'ements planaires \'etant extrêmement simple, on
obtient facilement la propostion suivante.

\begin{prop}\label{planaire}
Soit $\O$ un ouvert proprement convexe et un
\'el\'ement planaire $\g \in \Aut(\O)$, alors, $\O$ est un triangle dont l'un des
sommets est $p_{\g}$ et le c\^ot\'e de $\O$ oppos\'e \`a $p_{\g}$ est
inclus dans la droite $D_{\g}$.
\end{prop}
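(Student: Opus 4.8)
The plan is to exploit the fact that a planar element acts, in a suitable affine chart, as a genuine homothety, and that a convex set invariant under a homothety is forced to be a cone. So the whole proof reduces to the elementary geometry of homothety-invariant convex sets, the projective input being only the identification of the chart and the properness of the action.

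First I would pin down the fixed data. Since $\g$ is planar we have $\beta=\alpha^{-2}$ with $\alpha\neq 1$, so $\langle\g\rangle$ is infinite and not relatively compact in $\s$ (its eigenvalues $\alpha^{\pm n}$ are unbounded). Because $\Aut(\O)$ agit proprement sur $\O$, no fixed point of $\g$ can belong to $\O$: a fixed point $q\in\O$ would give $\langle\g\rangle\subset\mathrm{Stab}(q)$ with $\mathrm{Stab}(q)$ compact, a contradiction. In particular $p_{\g}\notin\O$ and, since every point of the line $D_{\g}$ is fixed by $\g$, one gets $D_{\g}\cap\O=\varnothing$. Hence $\O$ is contained in the affine chart $A=\P - D_{\g}$.

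Next I would describe the dynamics in $A$. Placing $p_{\g}$ at the origin and $D_{\g}$ as the line at infinity, the diagonal matrix $\mathrm{diag}(\alpha,\alpha,\beta)$ acts on $A$ as the homothety $h$ of centre $p_{\g}$ and ratio $\mu=\alpha/\beta=\alpha^{3}\neq 1$. The key step is then: for $x\in\O$ the orbit $\{h^{n}(x)\,|\,n\in\Z\}$ lies on the open ray $R_x$ issued from $p_{\g}$ through $x$, at the parameters $\mu^{n}$; since $\{\mu^{n}\,|\,n\in\Z\}$ has infimum $0$ and supremum $+\infty$ (whether $\mu>1$ or $\mu<1$), its convex hull is the whole positive ray, so the convexity of $\O$ yields $R_x\subset\O$. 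Therefore $\O$ is an open convex cone of apex $p_{\g}$, with $p_{\g}\notin\O$.

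Finally I would conclude by the planar geometry of such a cone, described by its open arc $C$ of directions. Convexity together with $p_{\g}\notin\O$ forces $C$ to have aperture at most $\pi$ (an aperture $>\pi$ would contain an antipodal pair of directions, so $\O$ would contain a segment through $p_{\g}$), and proper convexity excludes the remaining borderline case of an open half-plane, whence the aperture is $<\pi$. The two ends of the nonempty open arc $C$ give two distinct boundary rays meeting $D_{\g}$ in two points $P_{1}\neq P_{2}$, and $\overline{\O}$ is exactly the closed triangle of vertices $p_{\g},P_{1},P_{2}$; thus $\O$ is the interior of a triangle with vertex $p_{\g}$ and opposite side $[P_{1}P_{2}]\subset D_{\g}$, as claimed. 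The only genuinely delicate point is this last paragraph, namely using proper convexity to rule out the half-plane and obtain a non-degenerate triangle; the preceding ``homothety-invariant convex set is a cone'' observation is routine.
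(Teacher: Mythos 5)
Your proof is correct, and it follows exactly the route the paper has in mind: the paper states this proposition without proof (``la dynamique des \'el\'ements planaires \'etant extr\^emement simple, on obtient facilement la proposition suivante''), the intended argument being precisely the one you give, namely that $\g$ acts as a homothety of ratio $\alpha^{3}\neq 1$ centred at $p_{\g}$ in the affine chart $\P - D_{\g}$, so that $\O$ is an invariant convex cone with apex $p_{\g}$, which proper convexity then forces to be an open triangle with the opposite side at infinity, i.e.\ in $D_{\g}$. Your write-up supplies correctly the details the paper omits (fixed points excluded from $\O$ by properness of the action, the convex hull of a homothety orbit being a full open ray, and the aperture argument ruling out the half-plane).
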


\subsubsection{Dynamique quasi-hyperbolique}

Soit $\g$ un \'el\'ement quasi-hyperbolique de $\s$, l'\'el\'ement $\g$ est conjugu\'e
\`a la matrice

$$
\begin{array}{cc}
\left(\begin{array}{ccc}
\alpha &  1     &  0   \\
 0     & \alpha &  0   \\
 0     &  0     & \beta\\
\end{array}
\right)
 &
\begin{tabular}{l}
O\`u, $\alpha, \, \beta > 0$, $\alpha^2 \beta = 1$\\
et $\alpha, \,\beta \neq 1$.
\end{tabular}
\end{array}
$$

On note:
\begin{itemize}
\item $p^1_{\g}$ le point propre de $\P$ associ\'e \`a la valeur
propre $\beta$.

\item $p^2_{\g}$ le point propre de $\P$ associ\'e \`a la valeur
propre $\alpha$.

\item $D_{\g}$ la droite stable de $\P$ associ\'ee \`a la valeur
propre $\alpha$.
\end{itemize}

On peut d\'efinir l'axe d'un \'el\'ement quasi-hyperbolique qui agit sur un ouvert proprement convexe de la m\^eme façon que pour un \'el\'ement hyperbolique. La m\^eme d\'emonstration que dans le cas hyperbolique donne la proposition suivante:

\begin{prop}
Soient $\O$ un ouvert proprement convexe et un
\'el\'ement quasi-hyperbolique $\g \in \Aut(\O)$, alors, on a $p^1_{\g} ,\, p^2_{\g} \in \partial \O$.
\end{prop}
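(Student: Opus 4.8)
The plan is to follow almost verbatim the argument used for the hyperbolic case, the only difference being the presence of a nontrivial Jordan block, which makes the computation of the limits of the orbits slightly more delicate without altering the qualitative behaviour.

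First I would note that $p^1_{\g}$ and $p^2_{\g}$ are fixed points of $\g$, and that $\g$ is of infinite order: since $\alpha, \beta \neq 1$, the powers $\g^n$ leave every compact subset of $\s$. Because $\Aut(\O)$ acts properly on $\O$, no point of $\O$ can be fixed by an element whose powers escape to infinity in $\s$ (otherwise the stabilizer of that point, which is compact by properness, would contain the unbounded sequence $\g^n$). Hence $p^1_{\g}, p^2_{\g} \notin \O$. This is exactly the first step of the hyperbolic proof.

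Next I would compute the dynamics. With $\beta = \alpha^{-2}$, the $n$-th power of the normal form is
$$
M^n = \left(\begin{array}{ccc}
\alpha^n & n\alpha^{n-1} & 0 \\
0 & \alpha^n & 0 \\
0 & 0 & \beta^n
\end{array}\right),
$$
so for $x = [x_1 : x_2 : x_3]$ one has $M^n x = [\alpha^n x_1 + n\alpha^{n-1} x_2 : \alpha^n x_2 : \beta^n x_3]$. Separating the cases $\alpha > 1$ (so $\beta < 1$) and $\alpha < 1$ (so $\beta > 1$), and dividing the three homogeneous coordinates by the dominant one, a direct estimate shows that for every $x \in \O$ the forward orbit $\g^n x$ converges, as $n \to +\infty$, to one of the two fixed points, while the backward orbit converges, as $n \to -\infty$, to the other (for the limit falling on $p^1_{\g}$ one needs $x_3 \neq 0$, which holds for a suitable $x \in \O$ since $\O$ is open and hence not contained in the line $D_{\g} = \{x_3 = 0\}$). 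In either case both $p^1_{\g}$ and $p^2_{\g}$ arise as limits of sequences of points of $\O$, and therefore belong to $\overline{\O}$. Combining with the previous step, $p^1_{\g}, p^2_{\g} \in \overline{\O} \setminus \O = \partial \O$.

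The only point that requires genuine care — the main obstacle, modest as it is — is the limit computation in the direction where the Jordan block acts: because of the off-diagonal $1$, the first coordinate of $M^n x$ grows like $n\alpha^{n-1}$ rather than $\alpha^n$, so one must check that this extra linear factor does not prevent convergence to the eigenline $[1:0:0]$. It does not, since $n\alpha^{n-1}$ still dominates $\alpha^n x_1$ and $\beta^n x_3$ at the correct rate, and in projective space the overall scaling factor $n$ is irrelevant; the limit is thus unchanged from the hyperbolic case.
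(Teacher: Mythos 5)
Your proof is correct and takes essentially the same route as the paper: the paper simply remarks that the hyperbolic-case argument applies verbatim, namely (i) the fixed points cannot lie in $\O$ because $\Aut(\O)$ acts properly, and (ii) forward and backward orbits of a point of $\O$ converge to $p^2_{\g}$ and $p^1_{\g}$, placing both in $\overline{\O}$ and hence in $\partial \O$. Your only addition is to spell out the Jordan-block computation showing the extra factor $n\alpha^{n-1}$ does not change the projective limits, which is exactly the detail the paper leaves implicit.
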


\begin{defi}
Soient $\O$ un ouvert proprement convexe et un
\'el\'ement quasi-hyperbolique $\g \in \Aut(\O)$, \emph{ l'axe de $\g$} que l'on notera $\Ax(\g)$ est le segment ouvert de la droite $(p^1_{\g}
 p^2_{\g})$ qui est inclus dans $\overline{\O}$ et dont les extr\'emit\'es sont $p^1_{\g}$ et $p^2_{\g}$.
\end{defi}

La figure \ref{qhyp} illustre la dynamique d'un \'el\'ement quasi-hyperbolique

%prob_fig
\begin{figure}[!h]
\begin{center}
\includegraphics[trim=0cm 12cm 0cm 0cm, clip=true, width=8cm]{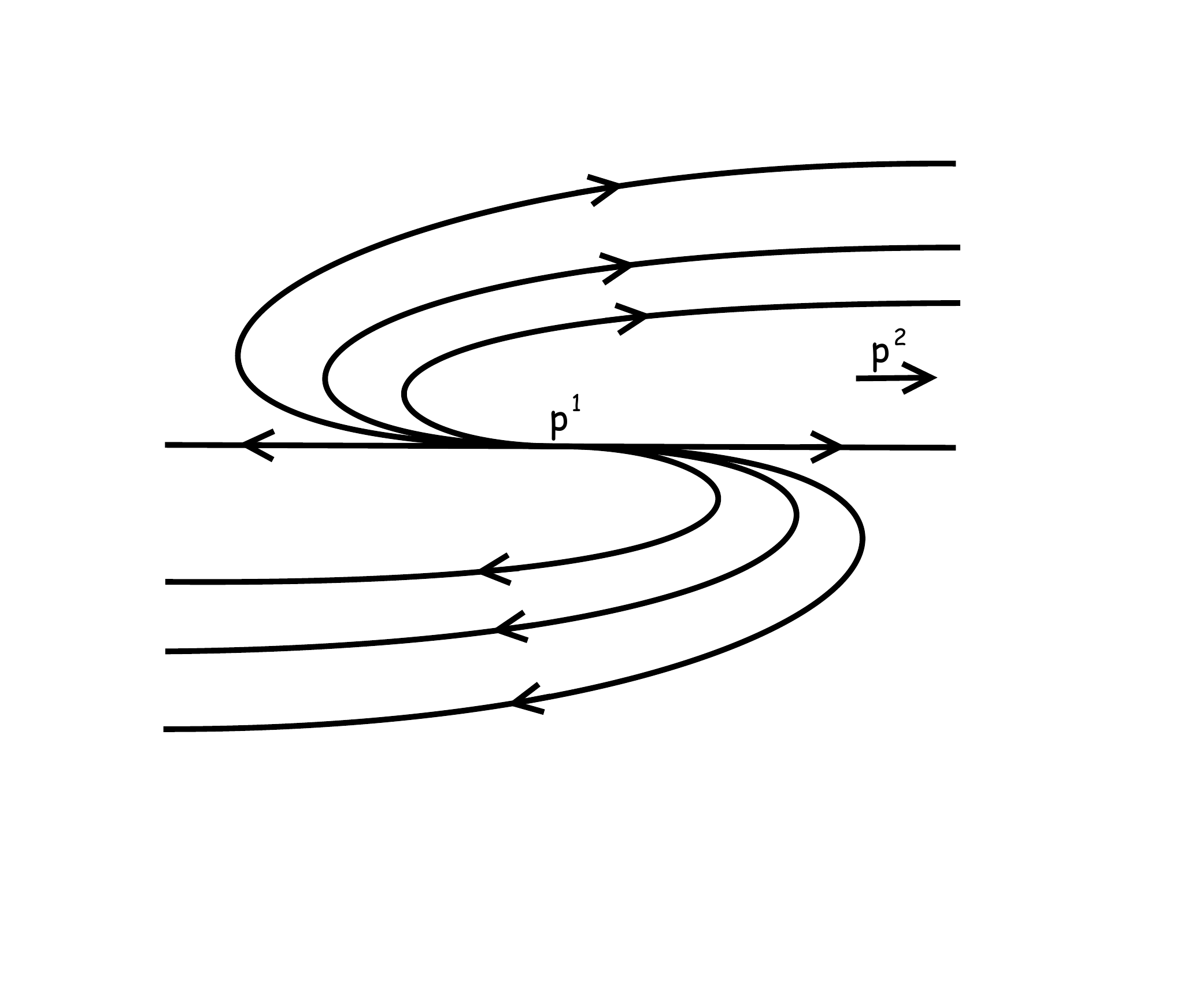}
\caption{Dynamique d'un \'el\'ement quasi-hyperbolique}\label{qhyp}
\end{center}
\end{figure}

\begin{prop}\label{quasihyp}
Soient $\O$ un ouvert proprement convexe et un
\'el\'ement quasi-hyperbolique $\g \in \Aut(\O)$. Alors,

\begin{itemize}
\item $\Ax(\g) \subset \partial \O$.

\item $\partial \O$ n'est pas $\Cc^1$ en $p^2_{\g}$ et les
demi-tangentes \`a $\partial \O$ en $p^2_{\g}$ sont $(p^1_{\g} p^2_{\g})$ et $D$.

\item $\partial \O$ est $\Cc^1$ en $p^1_{\g}$ et $T_{p^1_{\g}} \partial \O =
(p^1_{\g} p^2_{\g})$.
\end{itemize}
\end{prop}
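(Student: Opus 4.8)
The plan is to read the local shape of $\partial\O$ at the two fixed points $p^1_{\g}=[e_3]$ and $p^2_{\g}=[e_1]$ straight off the Jordan dynamics. Write $e_1,e_2,e_3$ for the basis in which $\g$ has the stated form, so that $p^2_{\g}=[e_1]$, $p^1_{\g}=[e_3]$, the line $D_{\g}$ (the line $D$ of the statement) is $\{x_3=0\}=\mathrm{span}(e_1,e_2)$, and the line carrying the axis is $L:=(p^1_{\g}p^2_{\g})=\{x_2=0\}=\mathrm{span}(e_1,e_3)$; both pass through $p^2_{\g}$. Replacing $\g$ by $\g^{-1}$ if needed (this changes none of $p^1_{\g},p^2_{\g},D_{\g},\Ax(\g)$), I may assume $\alpha>1>\beta$. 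First I would note that on $D_{\g}\simeq\PP^1$ the element $\g$ acts as a nontrivial parabolic fixing $p^2_{\g}$; hence $D_{\g}\cap\O=\varnothing$, for otherwise $D_{\g}\cap\O$ would be a nonempty proper open interval of $\PP^1$ invariant under a nontrivial parabolic, which is impossible (a fixed-point count on the endpoints of such an interval forces two fixed points, or none in a compact invariant sub-interval). Thus $D_{\g}$ is a supporting line of $\O$ at $p^2_{\g}$, which will be one of the two half-tangents.

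The key step is a change of affine chart that \emph{linearises} the dynamics. In $A=\{x_3\neq 0\}$ with coordinates $(\xi,\eta)=(x_1/x_3,x_2/x_3)$, the line $D_{\g}$ becomes the line at infinity, so $\O$ (being disjoint from $D_{\g}$) is an honest convex subset of $A$ with $p^2_{\g}$ as its unique point at infinity, in the $\xi$-direction; the fixed point $p^1_{\g}$ sits at the origin, and $\g$ acts by the expanding linear map $(\xi,\eta)\mapsto(\alpha^3\xi+\alpha^2\eta,\ \alpha^3\eta)$, whose only eigendirection is the $\xi$-axis $=L$. Iterating gives $\g^{-k}(\xi,\eta)=\alpha^{-3k}(\xi-\tfrac{k}{\alpha}\eta,\ \eta)$, so every backward orbit with $\eta\neq 0$ tends to $p^1_{\g}$ with slope $\eta/\xi\to 0$, i.e. tangentially to $L$; a point with $\eta>0$ stays in $\{\eta>0\}$ and reaches $p^1_{\g}$ through $\xi<0$, while a point with $\eta<0$ reaches it through $\xi>0$.

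Next I would establish $\Ax(\g)\subset\partial\O$ by contradiction. If $\Ax(\g)$ met $\O$, then $L\cap\O$, being the $\g$-invariant open interval cut on $L$ by $\O$ and fixed setwise by the hyperbolic $\g|_L$, would be an entire open ray (say the positive $\xi$-axis) lying \emph{inside} $\O$; consequently $\overline\O$ would contain points arbitrarily close to $p^1_{\g}$ in $\{\eta>0\}$ (through $\xi<0$) and in $\{\eta<0\}$ (through $\xi>0$), both tangent to $L$. Testing a candidate supporting line $a\xi+b\eta=0$ at $p^1_{\g}$, the positive $\xi$-axis forces $a\geqslant 0$ while the $\{\eta>0,\ \xi<0,\ \text{slope}\to0\}$ points force $a\leqslant 0$, hence $a=0$ and $\O\subset\{\eta\geqslant 0\}$ locally; this is incompatible with the boundary points having $\eta<0$. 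So $p^1_{\g}$ would admit no supporting line, contradicting $p^1_{\g}\in\partial\O$. Therefore $\Ax(\g)\subset\partial\O$ and $\O$ lies (say) in $\{\eta>0\}$. The same test now proves $\Cc^1$ at $p^1_{\g}$: any supporting line $a\xi+b\eta=0$ must have $a=0$ (by the axis and the interior points of slope $\to0$ with $\xi<0$), so $L$ is the \emph{unique} supporting line at $p^1_{\g}$; hence $\partial\O$ is $\Cc^1$ there with $T_{p^1_{\g}}\partial\O=L=(p^1_{\g}p^2_{\g})$.

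Finally, for the corner at $p^2_{\g}$ I would pass to $\{x_1\neq 0\}$ with coordinates $(u,v)=(x_2/x_1,x_3/x_1)$, so that $p^2_{\g}$ is the origin, $L=\{u=0\}$ and $D_{\g}=\{v=0\}$. Since $D_{\g}\cap\O=\varnothing$ and $\Ax(\g)\subset L\cap\partial\O$ gives $L\cap\O=\varnothing$, the set $\O$ lies in the open quadrant $\{u>0,\ v>0\}$. One edge of this quadrant (the positive $v$-axis) is $\Ax(\g)\subset\partial\O$, so the tangent cone contains the $v$-direction; along a forward orbit the computation gives $v/u\to 0$, so $\O$ reaches $p^2_{\g}$ tangentially to $\{v=0\}=D_{\g}$ and the tangent cone contains the $u$-direction. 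Being convex and contained in the quadrant, the tangent cone is exactly $\{u\geqslant 0,\ v\geqslant 0\}$, whose bounding rays lie on $L$ and on $D_{\g}$; thus the two half-tangents at $p^2_{\g}$ are $(p^1_{\g}p^2_{\g})$ and $D_{\g}$. In particular $E_{p^2_{\g}}$ contains the two distinct supporting lines $L$ and $D_{\g}$, so $\partial\O$ is not $\Cc^1$ at $p^2_{\g}$. I expect the main obstacle to be the rigorous passage from the discrete Jordan orbit to the tangency of the whole convex boundary at $p^1_{\g}$ — controlling $\partial\O$ between consecutive orbit points by convexity and monotonicity — since all the remaining content reduces to the explicit linear dynamics in the two charts.
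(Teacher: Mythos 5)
Your coordinate set-up and the first two thirds of the argument are sound, and they constitute a legitimate alternative to the paper's proof: where you linearise $\g$ in the chart $\{x_3\neq 0\}$ and test supporting lines at $p^1_{\g}$ against the backward-orbit asymptotics $\g^{-k}(\xi,\eta)=\alpha^{-3k}(\xi-\tfrac{k}{\alpha}\eta,\eta)$, the paper instead closes up an orbit $(\g^t x)_{t\in\R}$ of the one-parameter group by one of the two segments of $(p^1_{\g}p^2_{\g})$, observes that the resulting curve is convex, analytic away from the fixed points, $\Cc^1$ at $p^1_{\g}$ and cornered at $p^2_{\g}$, and sandwiches $\O$ between such curves. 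Your treatment of $D_{\g}\cap\O=\varnothing$, of $\Ax(\g)\subset\partial\O$ (no supporting line could exist at $p^1_{\g}$ otherwise), and of the $\Cc^1$ statement at $p^1_{\g}$ is correct.

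The final paragraph, however, contains a genuine error. You claim that $D_{\g}\cap\O=\varnothing$ and $L\cap\O=\varnothing$ force $\O\subset\{u>0,\,v>0\}$. This does not follow and is in fact always false here: $\P\setminus(L\cup D_{\g})$ has only two connected components, and the component containing $\O$ meets the chart $\{x_1\neq 0\}$ in the union of the first \emph{and third} open quadrants together with an interval of that chart's line at infinity $M=\{x_1=0\}$. Worse, your own third bullet says $L$ is the \emph{only} line through $p^1_{\g}$ missing $\O$; since $M$ passes through $p^1_{\g}$ and $M\neq L$, the line $M$ meets $\O$, so $\O$ is never contained in the chart $\{x_1\neq 0\}$ at all (one can also see this on the explicit invariant convex set bounded by the orbit curve $\xi=c\,\eta\ln\eta$, $\eta>0$, and the positive $\xi$-axis). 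Consequently the step ``being convex and contained in the quadrant, the tangent cone is exactly $\{u\geqslant 0,\,v\geqslant 0\}$'' is unjustified. What survives without repair is only the weak half of the second bullet: $L$ and $D_{\g}$ are two distinct lines through $p^2_{\g}$ disjoint from $\O$, so $\partial\O$ is not $\Cc^1$ there. To identify the half-tangents you still need two local facts your argument silently uses: (a) $\O$ accumulates at $p^2_{\g}$ through only one of the two opposite quadrants — if it accumulated through both, chords of $\O$ joining the two families would avoid $L\cup D_{\g}$, hence pass through $M$, and their limits would put a whole projective line inside $\overline{\O}$, contradicting proper convexity; and (b) $\O\cap\{u>0,\,v>0\}$ is affinely convex, because a chord between two of its points that leaves the quadrant must cross $L\cup D_{\g}$. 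With (a) and (b) in hand, your tangent-cone computation (the $v$-direction from $\Ax(\g)$, the $u$-direction from the forward orbit) does yield the half-tangents; as written, the step fails.
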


\begin{proof}
On proc\`ede comme pour l'\'etude de la dynamique d'un \'el\'ement
hyperbolique. La droite $(p^1_{\g} p^2_{\g})$ et la droite $D$ d\'efinissent un
pavage en deux parties de $\P$. L'action de $\g$ sur $D$ est
parabolique et l'action de $\g$ sur $(p^1_{\g} p^2_{\g})$ est hyperbolique. Les points $p^1_{\g}$ et $p^2_{\g}$ d\'efinissent deux segments $S_1$ et $S_2$ de la droite $(p^1_{\g} p^2_{\g})$ dont les extr\'emit\'es sont $p^1_{\g}$ et $p^2_{\g}$.
Soit $x \in \P - (D \cup (p^1_{\g} p^2_{\g}))$, la courbe $(\g^t x)_{t \in \R}$ a pour limite les points $p^1_{\g}$ et $p^2_{\g}$ lorsque $t$ tend vers $\pm\infty$. Si on ajoute le segment $S_1$ ou bien $S_2$ \`a cette courbe on obtient une courbe $\C$ convexe et analytique en dehors des points $p^1_{\g}$ et $p^2_{\g}$. La courbe $\C$ n'est pas $\Cc^1$ en $p^2_{\g}$ et admet comme
demi-tangentes en $p^2_{\g}$, les droites $D$ et $(p^1_{\g} p^2_{\g})$. La courbe $\C$ est
$C^1$ en $p^1_{\g}$ et sa tangente est la droite $(p^1_{\g} p^2_{\g})$. Il faut
aussi remarquer que si $y \in \P - (D \cup (p^1_{\g}p^2_{\g}))$ est dans
l'autre composante connexe de $\P - (D \cup (p^1_{\g} p^2_{\g}))$, alors la
courbe obtenue par le m\^eme proc\'ed\'e, mais en ajoutant l'autre segment est une courbe convexe analytique en dehors des points
$p^1_{\g}$ et $p^2_{\g}$. On tire facilement de ceci les conclusions de la proposition.
\end{proof}

\subsubsection{Dynamique parabolique}

Soit $\g$ un \'el\'ement parabolique de $\s$, l'\'el\'ement $\g$ est conjugu\'e \`a la
matrice

$$
\left(\begin{array}{ccc}
1  & 1    & 0 \\
0  & 1    & 1\\
0  & 0    & 1\\
\end{array}
\right)
$$

On note:
\begin{itemize}
\item $p_{\g}$ l'unique point fixe de $\g$ sur $\P$.

\item $D_{\g}$ l'unique droite fixe de $\g$ sur $\P$.
\end{itemize}

La figure \ref{fpara} illustre la dynamique d'un \'el\'ement parabolique.

\begin{figure}[!h]
\begin{center}
\includegraphics[trim=0cm 10cm 0cm 0cm, clip=true, width=9cm]{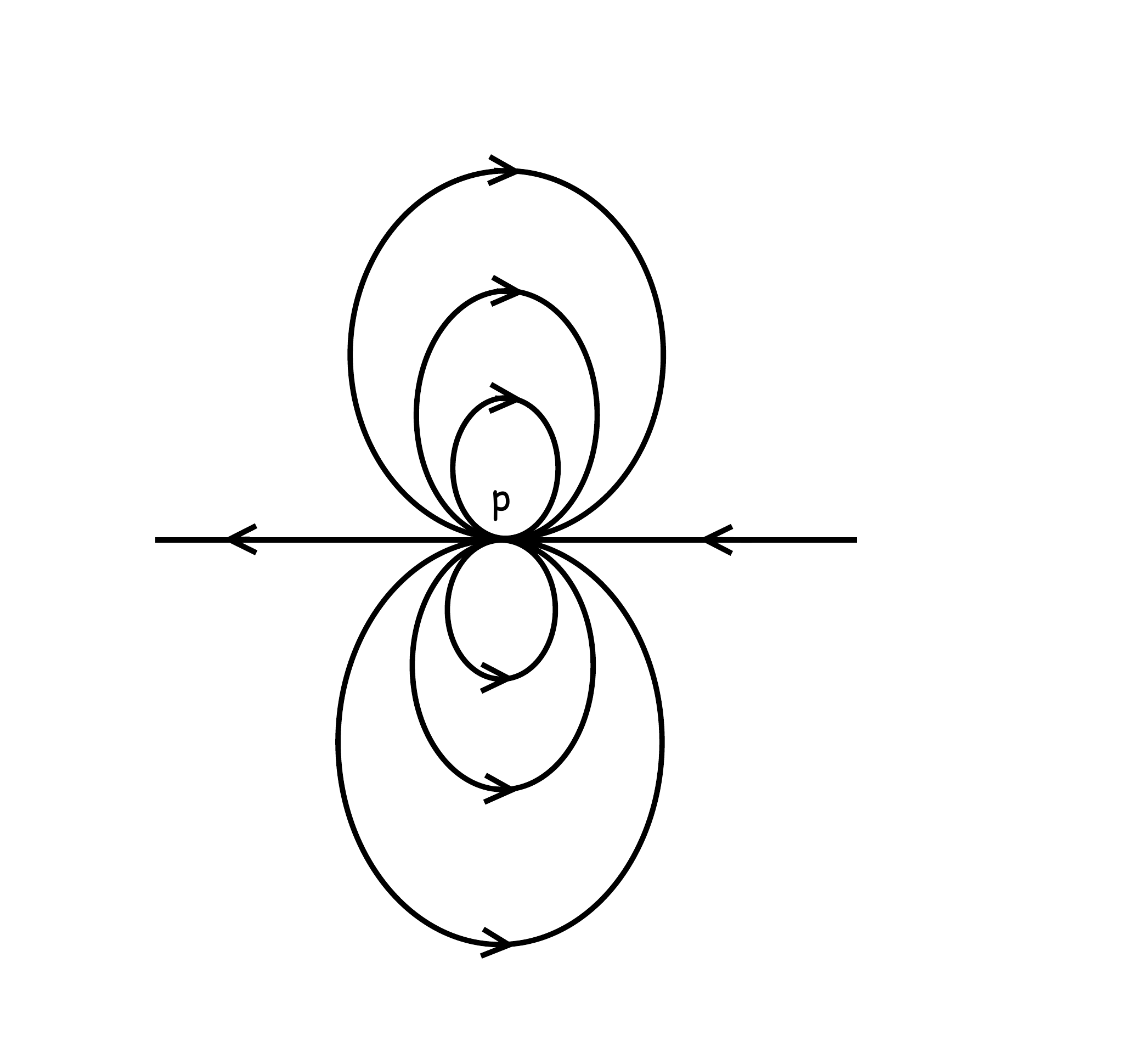}
\caption{Dynamique d'un \'el\'ement parabolique}\label{fpara}
\end{center}
\end{figure}

\begin{prop}\label{para}
Soient $\O$ un ouvert proprement convexe et un \'el\'ement parabolique $\g \in \Aut(\O)$ alors

\begin{itemize}
\item $p \in \partial \O$.

\item $\partial \O$ est $\Cc^1$ en $p$.

\item $T_p \partial \O = D$.

\item Le point $p$ n'appartient pas \`a un segment non trivial du bord de $\O$.
\end{itemize}
\end{prop}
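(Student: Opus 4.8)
The plan is to conjugate $\g$ to the Jordan block displayed above, so that $p=[1:0:0]$ is the unique fixed point and $D=\{x_3=0\}$ the unique fixed line, and then to read off all four conclusions from one fact: $\g$ acts \emph{parabolically}, with a single fixed point, both on the pencil of lines through $p$ and on the line $D$ itself. This mirrors the study of the hyperbolic and quasi-hyperbolic cases, where the local shape of $\partial\O$ is also extracted from the dynamics of $\g$. First, $p\in\partial\O$: since $\Aut(\O)$ acts properly on $(\O,d_{\O})$ and $\g$ is unipotent of infinite order, $\g$ cannot fix a point of $\O$, so $p\notin\O$; on the other hand, writing $\g=I+N$ with $N^3=0$ one has $\g^n=I+nN+\frac{n(n-1)}{2}N^2$, so for $x=[x_1:x_2:x_3]$ with $x_3\neq 0$ the dominant coefficient of $\g^n x$ is $\frac{n(n-1)}{2}x_3$, whence $\g^n x\to p$ as $n\to\pm\infty$. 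Choosing $x\in\O$ off $D$ (possible as $\O$ is open) gives $p\in\overline{\O}\setminus\O=\partial\O$.

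Next I treat the $\Cc^1$ statement and the tangent together. Recall that $E_p\subset\Pd$ is the closed segment of lines through $p$ missing $\O$, and that $\partial\O$ is $\Cc^1$ at $p$ exactly when $E_p$ is a single point, which is then the tangent. Since $\g$ fixes $p$ and preserves $\O$, it permutes the pencil of lines through $p$ and leaves $E_p$ invariant. Parametrising the line $\{a x_2+b x_3=0\}$ by $[a:b]$, a direct dual computation gives the action $[a:b]\mapsto[a:b-a]$, i.e. $w\mapsto w-1$ in the coordinate $w=b/a$: a parabolic projective transformation of the pencil $\cong\PP^1$, orientation preserving (its matrix has determinant $1$), with unique fixed point $D=[0:1]$. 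Now $E_p$ is a $\g$-invariant closed sub-arc; were it nondegenerate, its two endpoints, preserved as a pair by an orientation-preserving map, would be fixed individually, contradicting uniqueness of the fixed point. Hence $E_p=\{D\}$, which is precisely $\Cc^1$-ness at $p$ together with $T_p\partial\O=D$.

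The \emph{no nontrivial segment} statement follows from the same counting applied to $D$. Restricted to $D$, $\g$ acts by the top-left block $\left(\begin{array}{cc} 1 & 1 \\ 0 & 1 \end{array}\right)$, a parabolic of $D\cong\PP^1$ with unique fixed point $p$, again orientation preserving. The set $D\cap\overline{\O}$ is a $\g$-invariant closed segment of $D$ containing $p$ (proper convexity forbids it from being all of $D$, since a projective line is not contained in an affine chart), so by the endpoint argument it reduces to the single fixed point: $D\cap\overline{\O}=\{p\}$. As $\partial\O$ is $\Cc^1$ at $p$, any nontrivial boundary segment through $p$ would lie in the unique supporting line $D$, contradicting $D\cap\overline{\O}=\{p\}$; hence no such segment exists.

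The only delicate point is the repeated arc/endpoint argument: one must be sure that a $\g$-invariant nondegenerate closed arc of $\PP^1$ forces two distinct fixed points. This is exactly where parabolicity (a single fixed point) and the orientation-preserving character of $\g$ are used, and it is the crux of the statement; everything else is the normalization and the elementary limit $\g^n x\to p$. As a complementary picture, matching the method used for the hyperbolic element, one can instead exhibit the $\g$-invariant convex curve obtained from a one-parameter orbit $(\g^t x)_{t\in\R}$: in an affine chart it is a genuine parabola, and a change of chart at its point at infinity $p$ shows it is strictly convex and $\Cc^1$ there with tangent $D$, which renders the above conclusions geometrically transparent.
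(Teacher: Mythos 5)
Your proof is correct, but it takes a genuinely different route from the paper's. The paper disposes of this proposition in one line, in the same spirit as its treatment of the hyperbolic and quasi-hyperbolic cases: for $x \notin D$, the curve obtained by adjoining $p$ to the one-parameter orbit $(\g^t x)_{t \in \R}$ is an analytic convex curve --- in fact a conic tangent to $D$ at $p$ (an ellipse; a parabola in the chart $\P \setminus D$; compare Lemma \ref{faiselli}) --- and the four conclusions are then read off by squeezing $\partial \O$ against these $\g$-invariant conics. What you do instead is reduce everything to one-dimensional dynamics: you check that $\g$ induces a parabolic element of $\mathrm{PSL}_2(\R)$ both on the pencil of lines through $p$ and on $D$ itself, and you exploit the fact that a nondegenerate invariant closed arc would force two fixed points, applied once to $E_p$ (giving $\Cc^1$-ness and $T_p \partial \O = D$ at a stroke) and once to $D \cap \overline{\O}$ (giving the absence of boundary segments through $p$); the membership $p \in \partial \O$ comes from the elementary computation $\g^n = I + nN + \binom{n}{2}N^2$. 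Your approach buys an elementary, fully detailed argument that makes precise the step the paper leaves implicit (``on tire facilement de ceci les conclusions'') and isolates exactly where parabolicity enters. The paper's approach buys uniformity with Propositions \ref{hyp} and \ref{quasihyp} and, more importantly, the invariant pencil of conics itself, which is not a throwaway device: it is reused (Lemmas \ref{faiselli} and \ref{compaelli}) in the proof of Proposition \ref{pic} to show that a pic with parabolic vertex has \emph{finite} volume, quantitative information your fixed-point counting does not yield. Your closing ``complementary picture'' is in fact precisely the paper's proof.
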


\begin{proof}
On proc\`ede comme pour l'\'etude des \'el\'ements hyperboliques et
quasi-hyperboliques. Soit $x \notin D$, la courbe $\C$ obtenue en
ajoutant le point $p$ \`a la courbe $(\g^t x)_{t \in \R}$ d\'efinit
une courbe convexe analytique (la courbe $\C$ est en fait une ellipse).
\end{proof}

\subsubsection{Dynamique elliptique}

On ne se soucie pas des \'el\'ements elliptiques car le lemme de Selberg  permet de s'en d\'ebarrasser sans frais. On rappelle ici un \'enonc\'e de celui-ci.

\begin{lemm}[Selberg]
Tout sous-groupe de type fini de $\mathrm{GL}_n(\mathbb{C})$ est virtuellement sans torsion, c'est \`a dire poss\`ede un sous-groupe d'indice fini sans torsion.
\end{lemm}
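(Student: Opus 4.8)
Le plan consiste à ramener l'énoncé à une question arithmétique, au moyen d'une réduction modulo un idéal premier convenable. Soit $\Gamma$ un sous-groupe de type fini de $\mathrm{GL}_n(\mathbb{C})$, engendré par des matrices $g_1,\dots,g_r$. Je commencerais par introduire le sous-anneau $R \subset \mathbb{C}$ engendré sur $\mathbb{Z}$ par tous les coefficients des $g_i$, des $g_i^{-1}$ et par les inverses $\det(g_i)^{-1}$. Alors $R$ est un anneau intègre de type fini sur $\mathbb{Z}$, de caractéristique nulle, et l'on a l'inclusion $\Gamma \subset \mathrm{GL}_n(R)$. Un point préliminaire est que l'ordre des éléments de torsion de $\mathrm{GL}_n(\mathbb{C})$ est uniformément borné en fonction de $n$ seulement: un tel élément étant diagonalisable de valeurs propres des racines de l'unité, chaque polynôme cyclotomique $\Phi_d$ associé divise son polynôme caractéristique, d'où $\varphi(d)\leqslant n$; il n'existe qu'un nombre fini de tels $d$, et l'ordre (ppcm de ceux-ci) est donc majoré par une constante $M=M(n)$.

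J'invoquerais ensuite le fait de géométrie algébrique que $R$, étant une $\mathbb{Z}$-algèbre de type fini (donc un anneau de Jacobson), possède des idéaux maximaux $\mathfrak{m}$ de corps résiduel $\mathbb{F}_q=R/\mathfrak{m}$ fini, et que la caractéristique $p$ de ce corps peut être prise arbitrairement grande (toute caractéristique $p$, sauf un nombre fini, est atteinte, via la dominance du morphisme $\mathrm{Spec}(R)\to\mathrm{Spec}(\mathbb{Z})$ et le Nullstellensatz pour les fibres). Je fixerais un tel $\mathfrak{m}$ avec $p>M$. La réduction modulo $\mathfrak{m}$ fournit un morphisme de groupes $\rho\colon\Gamma\to\mathrm{GL}_n(\mathbb{F}_q)$, de but fini; son noyau $\Gamma'=\ker(\rho)$ est donc un sous-groupe d'indice fini de $\Gamma$. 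Il reste à établir que $\Gamma'$ est sans torsion.

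Pour cela, je raisonnerais par l'absurde en supposant qu'il existe $\gamma\in\Gamma'$ d'ordre fini $>1$; quitte à remplacer $\gamma$ par une puissance, je peux le supposer d'ordre premier $\ell$, et alors $\ell\leqslant M<p$ par le choix de $p$. Comme $\gamma\neq I$ est d'ordre $\ell$, il est diagonalisable et admet une valeur propre $\zeta$ qui est une racine primitive $\ell$-ième de l'unité. Je choisirais un idéal premier $\mathfrak{P}$ de l'anneau $R[\zeta]$ au-dessus de $\mathfrak{m}$. Puisque $\gamma\equiv I\pmod{\mathfrak{m}}$, son polynôme caractéristique, à coefficients dans $R$, se réduit en $(X-1)^n$ modulo $\mathfrak{P}$; la valeur propre $\zeta$, qui en est une racine, vérifie donc $\zeta\equiv 1\pmod{\mathfrak{P}}$, c'est-à-dire $\zeta-1\in\mathfrak{P}$. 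On en déduit $\prod_{i=1}^{\ell-1}(\zeta^i-1)\in\mathfrak{P}$; or ce produit vaut $\Phi_\ell(1)=\ell$. Ainsi $\ell\in\mathfrak{P}\cap\mathbb{Z}=(p)$, d'où $p\mid\ell$ et donc $\ell=p$, ce qui contredit $\ell<p$. Par conséquent $\Gamma'$ est sans torsion, ce qui achève la démonstration.

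Le point le plus délicat est l'énoncé de géométrie algébrique garantissant l'existence d'idéaux maximaux à corps résiduel fini de caractéristique arbitrairement grande: c'est là que réside le cœur technique de l'argument, reposant sur le caractère jacobsonien des $\mathbb{Z}$-algèbres de type fini. La majoration uniforme $M(n)$ de l'ordre des éléments de torsion, qui permet de découpler le choix de $p$ de la torsion à éliminer, en est l'autre ingrédient essentiel; le reste n'est qu'une manipulation standard de valuations et de polynômes cyclotomiques.
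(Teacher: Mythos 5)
Le texte ne d\'emontre pas ce lemme (il le cite comme r\'esultat classique de Selberg), il n'y a donc pas de d\'emonstration \`a laquelle comparer la v\^otre ; votre strat\'egie est la strat\'egie standard, \`a la Alperin, de r\'eduction modulo un id\'eal maximal, et ses \'etapes alg\'ebriques (existence d'id\'eaux maximaux de corps r\'esiduel fini et de caract\'eristique r\'esiduelle arbitrairement grande, mont\'ee \`a $R[\zeta]$, $\zeta-1\in\mathfrak{P}$, puis $\prod_{i=1}^{\ell-1}(\zeta^i-1)=\pm\ell\in\mathfrak{P}$, d'o\`u $\ell=p$) sont correctes. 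En revanche, votre \emph{point pr\'eliminaire} est faux : l'ordre des \'el\'ements de torsion de $\mathrm{GL}_n(\mathbb{C})$ n'est pas born\'e en fonction de $n$ seulement. D\'ej\`a $\mathrm{GL}_1(\mathbb{C})=\mathbb{C}^*$ contient des racines de l'unit\'e d'ordre arbitraire. Votre justification (chaque $\Phi_d$ divise le polyn\^ome caract\'eristique) n'est valable que pour une matrice \`a coefficients \emph{rationnels} : c'est la conjugaison galoisienne sur $\mathbb{Q}$ qui force alors toutes les racines primitives $d$-i\`emes \`a \^etre valeurs propres. Pour $\gamma\in\mathrm{GL}_n(R)$ avec $R\subset\mathbb{C}$ quelconque, le polyn\^ome caract\'eristique est \`a coefficients dans $R$ et l'argument s'effondre (la matrice $1\times 1$ $(\zeta_N)$ a pour polyn\^ome caract\'eristique $X-\zeta_N$, que $\Phi_N$ ne divise pas). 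Comme le choix de $p>M$ et la contradiction finale $\ell\leqslant M<p$ reposent enti\`erement sur cette borne, la d\'emonstration est incompl\`ete en l'\'etat.

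La r\'eparation est standard et votre propre calcul cyclotomique en fournit l'essentiel : il montre que tout \'el\'ement d'ordre premier du noyau $\Gamma'$ est d'ordre exactement $p$, donc, en l'appliquant aux puissances convenables, que toute torsion de $\Gamma'$ est d'ordre une puissance de $p$. Prenez alors \emph{deux} id\'eaux maximaux $\mathfrak{m}_1$ et $\mathfrak{m}_2$ de caract\'eristiques r\'esiduelles distinctes $p_1\neq p_2$ (votre argument de dominance les fournit) et posez $\Gamma''=\ker\rho_1\cap\ker\rho_2$ : c'est un sous-groupe d'indice fini de $\Gamma$, et un \'el\'ement de torsion de $\Gamma''$ serait d'ordre \`a la fois une puissance de $p_1$ et une puissance de $p_2$, donc trivial ; aucune borne sur la torsion n'est alors n\'ecessaire. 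Si vous tenez \`a votre sch\'ema initial avec un seul id\'eal, la borne correcte d\'epend du corps $K=\mathrm{Frac}(R)$ et non de $n$ seul : pour $\gamma\in\mathrm{GL}_n(K)$ d'ordre premier $\ell$, une valeur propre primitive $\zeta_\ell$ v\'erifie $[K(\zeta_\ell):K]\leqslant n$, et comme la cl\^oture alg\'ebrique de $\mathbb{Q}$ dans $K$ est un corps de nombres, disons de degr\'e $D<\infty$ (c'est ici qu'intervient le fait que $K$ est de type fini sur $\mathbb{Q}$), la th\'eorie de Galois des extensions cyclotomiques donne $\ell-1=\varphi(\ell)\leqslant nD$ ; il suffit alors de choisir $p>nD+1$.
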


\subsection{Calcul du centralisateur d'un \'el\'ement de $\G$ dans $\Aut(\O)$}

\begin{lemm}\label{para-para}
Soient $\g, \delta$ des \'el\'ements de $\s$ qui v\'erifient que:
\begin{itemize}
\item les \'el\'ements $\g, \delta$ sont hyperboliques tels que $\{p^+_{\g},p^-_{\g},p^0_{\g} \} = \{p^+_{\delta},p^-_{\delta},p^0_{\delta}\}$.

\item Ou bien, les \'el\'ements $\g, \delta$ sont paraboliques tels que $p_{\g} = p_{\delta}$ et $D_{\g} = D_{\delta}$.

\item Ou bien, les \'el\'ements $\g$ et $\delta$ sont quasi-hyperboliques tels que $p^1_{\g} = p^1_{\delta}$ et $p^2_{\g} = p^2_{\delta}$.
\end{itemize}

Si le groupe $<\g, \, \delta>$ est discret et pr\'eserve un ouvert
proprement convexe qui n'est pas un triangle alors le groupe $<\g, \, \delta>$ est cyclique infini.
\end{lemm}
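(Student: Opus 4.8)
The plan is to reduce the statement, in all three cases, to the study of a discrete subgroup of a $2$-dimensional abelian Lie group, and then to exclude the rank-$2$ case using the hypothesis that $\O$ is not a triangle.

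First I would show that $\g$ and $\delta$ commute. In the hyperbolic case this is immediate: since $\{p^+_\g,p^0_\g,p^-_\g\}=\{p^+_\delta,p^0_\delta,p^-_\delta\}$ and both elements are diagonalisable with one-dimensional eigenspaces carried by these three points, they are simultaneously diagonalisable, hence commute. In the parabolic and quasi-hyperbolic cases I would argue through the commutator. The shared data forces $\g$ and $\delta$ into a common connected solvable subgroup: in the parabolic case they both lie in the unipotent radical $U$ of the Borel subgroup stabilising the complete flag $(p,D)$, and $[\g,\delta]\in[U,U]$ is a central unipotent $I+cN^2$; in the quasi-hyperbolic case a short computation (writing $\delta$ as a matrix sharing the $\alpha'$-eigenline $p^2$ and the $\beta'$-eigenline $p^1$ of $\g$) shows that $\g\delta$ and $\delta\g$ differ only in one off-diagonal entry, so $[\g,\delta]$ is unipotent with a single Jordan block of size $2$, i.e. conjugate to a matrix $I+cE$ with $E$ rank-one nilpotent. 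In either case $[\g,\delta]$ is exactly of the type excluded in the second point of the proof of Proposition \ref{classi}, which preserves no properly convex open set. Since $[\g,\delta]\in\Aut(\O)$, it must be trivial; so $\g$ and $\delta$ commute (and in the quasi-hyperbolic case this moreover forces $\delta$ to share the invariant plane $D$ of $\g$).

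Once $\g$ and $\delta$ commute, both lie in the identity component $A$ of the centraliser of $\g$, which in each case is a $2$-dimensional abelian Lie group isomorphic to $\R^2$: the group of positive diagonal matrices (hyperbolic case), the abelian unipotent group $\{I+bN+cN^2\}$ (parabolic case), or the group of matrices that are block-diagonal with a $2\times 2$ upper-triangular block of eigenvalue $s>0$ and a scalar block $s^{-2}$ (quasi-hyperbolic case). Hence $\langle\g,\delta\rangle$ is a discrete subgroup of $\R^2$, so it is free abelian of rank $r\in\{0,1,2\}$; as $\g\neq\mathrm{id}$ we have $r\geq 1$, and it remains only to exclude $r=2$. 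For this, suppose $\Lambda=\langle\g,\delta\rangle\cong\Z^2$. Being torsion-free and a discrete subgroup of $\Aut(\O)$ (which acts properly on $\O$), $\Lambda$ acts freely and properly discontinuously on $\O$, which is homeomorphic to $\R^2$. Thus $\O/\Lambda$ is an aspherical surface with $\pi_1=\Z^2$; since a non-compact surface has free fundamental group, $\O/\Lambda$ is compact, i.e. $\Lambda$ divides $\O$. But a properly convex open set divided by an abelian group is a triangle, contradicting the hypothesis. Therefore $r=1$ and $\langle\g,\delta\rangle$ is infinite cyclic.

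The main obstacle is precisely the exclusion of $r=2$. The tempting elementary route is to project $\Lambda$ to the one-parameter quotient of $A$ that kills the ``forbidden'' direction (the translation length on $D$ in the parabolic case, the $\log s$-character in the quasi-hyperbolic case, an eigenvalue ratio in the hyperbolic case) and to hope its image is discrete, hence cyclic; this fails because a tilted lattice can avoid the forbidden one-parameter subgroups entirely while projecting to a \emph{dense} rank-$2$ subgroup of $\R$, so no forbidden element appears and no limit contradicts discreteness. The topological/divisibility argument above sidesteps this; its only non-elementary input is that an abelian group divides no properly convex open set other than a simplex (equivalently, that the only convex projective torus is the quotient of a triangle). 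As an alternative avoiding that input, at least in the hyperbolic case one can argue directly: a rank-$2$ lattice of positive diagonal matrices has orbits accumulating on all three vertices of the coordinate triangle, so the closed convex hull of a single orbit is that triangle, forcing $\O$ to coincide with it.
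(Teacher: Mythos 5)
Your proof is correct, and its first half is essentially the paper's own argument: in each case the commutator $[\g,\delta]$ is either trivial or a unipotent with Jordan blocks of sizes $2$ and $1$, and that second possibility is ruled out inside $\Aut(\O)$ by the second point of the proof of Proposition \ref{classi}, so $\g$ and $\delta$ commute (the paper does this computation in all three cases, where you shortcut the hyperbolic one by simultaneous diagonalisation). The genuine divergence is in the last step, and your version is in fact the sounder one. The paper takes $A$ to be the identity component of the \emph{Zariski closure} of $\langle\g,\delta\rangle$, splits according to $\dim A\in\{1,2\}$, and in the two-dimensional case asserts that a Zariski-dense discrete subgroup of an abelian Lie group isomorphic to $\R^2$ is a cocompact lattice, after which the orbit-convex-hull argument (the hull of any orbit in a component $X$ of the complement of the invariant lines is all of $X$) gives the contradiction. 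That asserted implication is false as stated: a single quasi-hyperbolic element, or a hyperbolic element whose eigenvalues satisfy no multiplicative relation, already generates a discrete, Zariski-dense, infinite cyclic subgroup of the corresponding two-dimensional group, so the paper's ``first case'' contains honest cyclic examples preserving non-triangle convexes. Your dichotomy is instead on the \emph{rank} of the discrete subgroup of the centraliser $A\cong\R^2$, for which ``rank $2$ implies cocompact lattice'' is true; this is exactly the invariant that makes the contradiction argument fire precisely when it should. For the exclusion of rank $2$ you argue topologically (a free, properly discontinuous $\Z^2$-action on $\O\approx\R^2$ has compact quotient, since a noncompact surface has free fundamental group) and then invoke the classical Kuiper--Benz\'ecri theorem that a properly convex open subset of $\P$ divided by $\Z^2$ is a triangle; the paper's orbit-hull argument is self-contained and treats all three cases uniformly, which is what it buys over yours. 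Two cautions about your external input: it must be cited from the classical literature and not from this paper, since the paper's later lemma ``$\G$ virtually abelian and $\mu(\Quo)<\infty$ implies $\O$ is a triangle'' depends, through Proposition \ref{centra}, on the very lemma you are proving, so quoting it here would be circular; and your more elementary fallback via orbit hulls of a rank-$2$ lattice of diagonal matrices is only sketched for the hyperbolic case, so the parabolic and quasi-hyperbolic cases of your proof do rest on that classical theorem.
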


\begin{proof}
Dans les trois cas, un calcul simple montre que le commutateur $\g \delta \g^{-1} \delta^{-1}$ de
$\g$ et $\delta$ est la matrice identit\'e ou une matrice conjugu\'ee \`a la matrice
suivante:

$$
\left(\begin{array}{ccc}
1  & 1  & 0  \\
0  & 1  & 0 \\
0  & 0  & 1 \\
\end{array}
\right)
$$
La proposition \ref{classi} montre que le dernier cas est
impossible donc $\g$ et $\delta$ commutent. Pour conclure, on peut consid\'erer la composante connexe de l'adh\'erence de Zariski $A$ du groupe $<\g, \, \delta>$ qui est un groupe de Lie ab\'elien. On a donc deux possibilit\'es:
\begin{itemize}
\item $A$ est isomorphe \`a $\R^2$.
\begin{itemize}
\item Si les \'el\'ements $\g,\delta$ sont hyperboliques alors $A$ est conjugu\'e au groupe suivant:
$$
\left\{ \left(\begin{array}{ccc}
\alpha  & 0  & 0  \\
0  & \beta  & 0  \\
0  & 0  & \gamma \\
\end{array}
\right) | \, \alpha,\beta,\gamma > 0 \textrm{ et } \alpha\beta\gamma=1 \right\}
$$

\item Si les \'el\'ements $\g,\delta$ sont quasi-hyperboliques alors $A$ est conjugu\'e au groupe suivant:
$$
\left\{ \left(\begin{array}{ccc}
\alpha  & \beta   & 0  \\
0       & \alpha  & 0  \\
0       & 0       & \gamma \\
\end{array}
\right) | \, \alpha,\gamma > 0 \, , \, \beta \in \R \textrm{ et } \alpha^2\gamma=1 \right\}
$$

\item Si les \'el\'ements $\g,\delta$ sont paraboliques alors $A$ est conjugu\'e au groupe suivant:
$$
\left\{ \left(\begin{array}{ccc}
1  & \alpha  & \beta  \\
0  & 1       & \alpha  \\
0  & 0       &   1 \\
\end{array}
\right) | \, \alpha,\beta \in \R \right\}
$$
\end{itemize}

\item $A$ est isomorphe \`a $\R$
\end{itemize}
Dans le second cas, comme le groupe $<\g, \, \delta>$ est un sous-groupe discret de $A$, le groupe $<\g, \, \delta>$
est cyclique. Pour conclure, il suffit donc de montrer que le premier cas est absurde. Pour cela, remarquons que A agit simplement transitivement sur:
\begin{itemize}
\item Chacune des quatre composantes connexes de $\P-({D_{\g}^{+,-}\cup D_{\g}^{+,0} \cup D_{\g}^{0,-}})$ si $\g$ est hyperbolique.

\item Chacune des deux composantes connexes de $\P-(D_{\g} \cup (p^1_{\g} p^2_{\g}))$ si $\g$ est quasi-hyperbolique.

\item $\P-D_{\g}$ si $\g$ est parabolique.
\end{itemize}
On note $X$ l'une de ces composantes connexes. Comme le groupe $\G=<\g, \, \delta>$ est Zariski-dense dans $A$,  c'est un r\'eseau cocompact de $A$, car $A$ est un groupe de Lie ab\'elien. L'enveloppe convexe de toute orbite d'un point de $X$ est alors \'egale \`a $X$. Par cons\'equent, si $\g$ et $\delta$ sont hyperboliques alors tout ouvert proprement convexe pr\'eserv\'e par $\G$ est un triangle, ce qui est absurde. Et sinon $\G$ ne peut pas pr\'eserver d'ouvert proprement convexe, ce qui est absurde.
\end{proof}

\begin{defi}
Soient $\O$ un ouvert proprement convexe de $\P$ et $\g, \delta \in \Aut(\O)$, on dira que $\g$ et $\delta$ ont \emph{les m\^emes caract\'eristiques g\'eom\'etriques} s'ils font partie d'un m\^eme groupe \`a un param\`etre de $\s$. Ceci entraine qu'ils ont les m\^emes points fixes et droites fixes.
\end{defi}

\begin{prop}\label{centra}
Soient $\O$ un ouvert proprement convexe qui n'est pas un triangle et $\g \in \Aut(\O)$, le
centralisateur d'un \'el\'ement hyperbolique (resp.
quasi-hyperbolique, resp. parabolique, resp. elliptique d'ordre diff\'erent de 2) $\g$ dans $\Aut(\O)$ est le
sous-groupe des \'el\'ements hyperboliques (resp. quasi-hyperboliques, resp. paraboliques, resp. elliptiques) de $\Aut(\O)$ qui ont les m\^emes caract\'eristiques g\'eom\'etriques que $\g$.
\end{prop}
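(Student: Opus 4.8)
The plan is to prove the two inclusions separately, the containment $\supseteq$ being immediate and the reverse inclusion carrying all the content. For $\supseteq$, if $\delta\in\Aut(\O)$ is of the same type as $\g$ and has the same geometric characteristics, then by definition $\g$ and $\delta$ lie in a common one-parameter subgroup of $\s$, which is abelian, so $\delta$ commutes with $\g$. For the reverse inclusion, let $\delta\in\Aut(\O)$ satisfy $\delta\g=\g\delta$; I must show $\delta$ is of the same type as $\g$ with the same geometric characteristics. The starting observation is that any matrix commuting with $\g$ preserves the (generalized) eigenspaces of $\g$, hence fixes the same points and the same lines of $\P$; in each of the four cases $\g$ is \emph{regular}, so its centralizer $C$ in $\s$ is a fixed abelian group I can write down. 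When $\g$ is hyperbolic, $C$ is the diagonal group in the eigenbasis of $\g$ together with the sign involutions of determinant $1$; when $\g$ is parabolic, $C$ is the two-dimensional abelian unipotent group of upper-triangular Toeplitz matrices commuting with the single Jordan block; when $\g$ is quasi-hyperbolic, $C$ consists of the matrices that are scalar-plus-nilpotent on $\langle e_1,e_2\rangle$ and scalar on $\langle e_3\rangle$; and when $\g$ is elliptic of order $\neq 2$ (rotation angle $\theta\neq\pi$, so the rotation block is regular in $\mathrm{GL}_2$), $C$ is $\{\mathrm{diag}(d,rR_\phi)\}$ with $dr^2=1$. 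In every case $\delta\in C\cap\Aut(\O)$, and $C$ is abelian.

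Next I determine which elements of $C$ actually preserve $\O$, using the classification of Proposition \ref{classi}. It forces $\delta$ into the six families, and the degenerate directions inside $C$ are eliminated as follows. A planar element of $C$ would make $\O$ a triangle by Proposition \ref{planaire}, which is excluded by hypothesis. The transvections conjugate to $I+N^2$ sitting inside $C$ (the $\mathrm{diag}(1,1,1)+cN^2$ locus in the parabolic case, and the $\alpha=1$ locus in the quasi-hyperbolic case) preserve no properly convex open set, by the second point in the proof of Proposition \ref{classi}. In the elliptic case the third point of that same proof forces the real eigenvalue of $\delta$ to equal $1$, so $\delta$ is a pure rotation $\mathrm{diag}(1,R_\phi)$. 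Thus what survives in $C\cap\Aut(\O)$ is exactly the set of elements of the same type as $\g$, together with the identity and possibly some finite-order elements.

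It remains to upgrade "same fixed points and lines" to "same geometric characteristics," i.e.\ membership in one common one-parameter subgroup. Here I use that $\O$ is not a triangle: the identity component $C^0$ cannot be contained in $\Aut(\O)$, since the positive diagonal torus acts simply transitively on each open triangle and its only invariant properly convex open sets are the open triangles (and similarly for the unipotent and elliptic forms of $C$). Hence $C^0\cap\Aut(\O)$ has dimension at most $1$; being a closed connected subgroup of the abelian group $C^0$ and containing $\g$, it is a single one-parameter subgroup, so every one of its elements shares that one-parameter subgroup with $\g$. This identifies the identity component of the centralizer with the one-parameter group of same-type elements through $\g$, which is the asserted description.

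The main obstacle is the torsion, i.e.\ the possible finite-order elements of $C\cap\Aut(\O)$. For generic $\O$ they do not occur, but for the ellipsoid $\Aut(\O)$ is conjugate to $\mathrm{SO}_{2,1}(\R)$, and the reflection across $\Ax(\g)$ of a hyperbolic $\g$ is an elliptic element of order $2$ commuting with $\g$; this is precisely why the elliptic hypothesis excludes order $2$ (for $\theta=\pi$ the block $-I_2$ is central in $\mathrm{GL}_2$, so $C$ jumps in dimension and the conclusion fails). In the intended applications $\G$ is torsion-free by Selberg's lemma, so these elements are absent; I would either invoke that reduction or treat the ellipsoid separately, it being exactly the exceptional case singled out in Theorem \ref{0}. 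Finally, Lemma \ref{para-para} provides an alternative route whenever $\langle\g,\delta\rangle$ is known to be discrete: since $\delta$ has the same fixed data as $\g$, that lemma forces $\langle\g,\delta\rangle$ to be infinite cyclic, placing $\g$ and $\delta$ in a common one-parameter group directly.
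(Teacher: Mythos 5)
Your skeleton (compute the centralizer $C$ of $\g$ in $\s$ by linear algebra, use Proposition \ref{classi} together with the non-triangle hypothesis to pin down the type of a commuting element, then upgrade ``same fixed data'' to ``same one-parameter subgroup'') is exactly the paper's, and your elimination of planar elements, transvections and non-unit real eigenvalues is correct; your remark that order-$2$ elliptic elements (the reflection across $\Ax(\g)$ in an ellipsoid) genuinely commute with a hyperbolic $\g$ is a real point which the paper itself glosses over. But your key step fails. From $C^0\not\subset\Aut(\O)$ you correctly get $\dim(C^0\cap\Aut(\O))\leqslant 1$, and you then call $C^0\cap\Aut(\O)$ a \emph{closed connected} subgroup. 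Closedness holds; connectedness does not: a closed subgroup of $C^0\cong\R^2$ of dimension $\leqslant 1$ can be $\Z^2$ or $\R\times\Z$, and these are precisely the configurations the proposition must exclude --- two commuting hyperbolic (or parabolic, or quasi-hyperbolic) elements whose logarithms are not proportional. Your dimension count says nothing about them; in the lattice case the identity component of $C\cap\Aut(\O)$ is trivial, so your argument identifies nothing. All the content of the proposition sits in this disconnected case.

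The missing observation, which is how the paper's proof actually closes, is that the bad case is \emph{automatically discrete}: if $\delta$ has the same type and fixed data as $\g$ but $\log\delta\notin\R\log\g$ inside the $2$-dimensional abelian Lie algebra of $C$, then $\langle\g,\delta\rangle=\exp(\Z\log\g+\Z\log\delta)$ is a lattice in $C^0$, hence a discrete group isomorphic to $\Z^2$ preserving $\O$; Lemma \ref{para-para} (whose proof carries the real geometric input: a cocompact lattice of such an abelian group has orbits whose convex hull is a whole triangle, or no properly convex invariant set at all) then forces $\langle\g,\delta\rangle$ to be cyclic, a contradiction. So Lemma \ref{para-para} is not, as you present it, an ``alternative route whenever $\langle\g,\delta\rangle$ is known to be discrete''; it is the unavoidable tool, and its discreteness hypothesis is satisfied for free exactly in the case your main argument leaves open. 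Had you stated the dichotomy (logarithms proportional, or else lattice) and then invoked Lemma \ref{para-para}, your proof would coincide with the paper's.
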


\begin{proof}
Soit $\g \in \Aut(\O)$, rappellons que, comme $\O$ n'est pas un triangle, $\Aut(\O)$ ne contient pas d'\'el\'ement planaire. Raisonnons au cas par cas.
\begin{itemize}
\item Si $\g$ est hyperbolique alors le centralisateur de $\g$
dans $\s$ est l'ensemble des matrices diagonalisables dans la m\^eme
base que $\g$, et le lemme \ref{para-para} permet de conclure.

\item Si $\g$ est elliptique alors comme $\g$ n'est pas d'ordre 2, le centralisateur de $\g$
dans $\Aut(\O)$ est l'ensemble des \'el\'ements elliptiques de $\G$ qui ont les m\^emes espaces stables. Or, l'ensemble des \'el\'ements elliptiques de $\s$ qui pr\'eserve un plan $P$ et une droite $D$ avec $D \not\subset P$ est un groupe de Lie de dimension 1. C'est ce qu'il fallait d\'emontrer.

\item Si $\g$ est quasi-hyperbolique alors un simple calcul montre que le centralisateur de $\g$ dans $\Aut(\O)$ est l'ensemble des \'el\'ements quasi-hyperboliques $\delta$ qui v\'erifient $p^1_{\g} = p^1_{\delta}$ et $p^2_{\g} = p^2_{\delta}$. Le lemme \ref{para-para} conclut la d\'emonstration.

\item Si $\g$ est parabolique alors un simple calcul montre que le centralisateur de $\g$ dans $\Aut(\O)$ est l'ensemble des \'el\'ements paraboliques qui v\'erifient $p_{\g} = p_{\delta}$ et $D_{\g} = D_{\delta}$. Par cons\'equent le lemme \ref{para-para} conclut la d\'emonstration une nouvelle fois.
\end{itemize}
\end{proof}

\section{Irr\'eductibilit\'e et adh\'erence de Zariski}

\subsection{Irr\'eductibilit\'e}

On reproduit pour la commodit\'e du lecteur la d\'emonstration de la
proposition suivante due \`a Goldman dans \cite{Gold1}.

\begin{prop}[Goldman]\label{irreducgold}
Soit $\G$ un sous-groupe discret de $\s$ qui pr\'eserve un ouvert
proprement convexe $\O$ de $\P$, si $\G$ n'est pas virtuellement
ab\'elien alors $\G$ est irr\'eductible.
\end{prop}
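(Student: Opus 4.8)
The plan is to prove the contrapositive: if $\G$ is reducible, then $\G$ is virtually abelian. Being reducible, $\G$ preserves a proper non-zero subspace of $\R^3$, that is, it fixes a point $p\in\P$ (equivalently a line $L\subset\R^3$ with $p=[L]$) or it fixes a line $D$ of $\P$. I would organise everything so as to reduce every configuration to the case of a fixed \emph{point}, and then distinguish according to whether this point lies in $\O$ or in $\partial\O$.

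Suppose first that $\G$ fixes a point $p$. If $p\in\O$, then $\G$ is contained in the stabiliser of $p$ in $\Aut(\O)$; since $\Aut(\O)$ acts properly on $\O$, this stabiliser is compact, and a discrete subgroup of a compact group is finite, hence virtually abelian. If $p\notin\O$, then $\O\subset\P\setminus\{p\}$ and I would project from $p$ onto the pencil $\PP^1$ of lines through $p$, on which $\G$ acts through its induced action on $\R^3/L$. The image of $\O$ is a non-degenerate open interval of $\PP^1$ (it cannot reduce to a point, as $\O$ is two-dimensional), so it has two distinct endpoints, which $\G$ permutes. Passing to the subgroup $\G'$ of index at most $2$ fixing each endpoint, $\G'$ preserves, besides $L$, two $2$-planes $M_1,M_2$ with $M_1\cap M_2=L$; in a basis $(e_1,e_2,e_3)$ adapted to $L\subset M_1$ and $L\subset M_2$, every element of $\G'$ has the block form
$$
\left(\begin{array}{ccc}
a & b & c\\
0 & d & 0\\
0 & 0 & f
\end{array}\right).
$$

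The crux is then a short linear-algebra observation. The map $\delta$ sending such a matrix to $(a,d,f)\in(\R^*)^3$ is a group homomorphism, since the diagonal entries multiply. Its kernel consists of the unipotent elements $\left(\begin{array}{ccc}1&b&c\\0&1&0\\0&0&1\end{array}\right)$; for $(b,c)\neq 0$ such an element is conjugate to $\left(\begin{array}{ccc}1&1&0\\0&1&0\\0&0&1\end{array}\right)$, which preserves no properly convex open set, exactly as shown in the second point of the proof of Proposition \ref{classi}. As $\G'\subset\Aut(\O)$, the kernel of $\delta$ is trivial, so $\delta$ embeds $\G'$ into the abelian group $(\R^*)^3$. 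Hence $\G'$ is abelian and $\G$ is virtually abelian.

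It remains to reduce the case of a fixed line $D$ to the previous one. If $D\cap\overline{\O}=\varnothing$, then $\P\setminus D$ is an affine chart containing the compact convex body $\overline{\O}$, on which $\G$ acts affinely; the centre of mass of $\overline{\O}$ is then a fixed point lying in $\O$, and we conclude by the compact-stabiliser argument. Otherwise $D\cap\overline{\O}$ is either a point or a segment contained in $\partial\O$ (indeed $D\cap\O=\varnothing$ forces $D\cap\overline{\O}\subset\partial\O$, while if $D\cap\O\neq\varnothing$ it is an invariant open segment with its two endpoints in $\partial\O$); in every case $\G$ preserves a non-empty finite set of boundary points, and a subgroup of index at most $2$ fixes one point of $\partial\O$, bringing us back to the fixed-point case treated above. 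The main obstacle is thus essentially organisational, namely arranging the geometry so that one always lands on a fixed point together with its two tangent directions; once the block-triangular form is reached, the vanishing of the unipotent kernel supplied by Proposition \ref{classi} makes the conclusion immediate.
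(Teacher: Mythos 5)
Your reduction of the fixed-line case to the fixed-point case is correct (and the centre-of-mass argument is a pleasant elementary substitute for the paper's one-line appeal to duality), and your treatment of a fixed point $p\in\O$ agrees with the paper's. The gap is in the case $p\notin\O$: you assert that the image of $\O$ in the pencil of lines through $p$ ``has two distinct endpoints''. This fails precisely when $p\in\partial\O$ and $\partial\O$ is $\Cc^1$ at $p$: then the set $E_p$ of lines through $p$ not meeting $\O$ is a \emph{single} point (the tangent at $p$), so the image of $\O$ is the whole pencil minus one point and the two ``endpoints'' coincide. This situation is not exotic: by Proposition \ref{para} it is exactly what happens when $p$ is the fixed point of a parabolic element (e.g.\ $\G$ generated by a parabolic preserving an ellipse), which is the central case of this paper. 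In that configuration you only get \emph{one} $\G$-invariant plane through $L$, hence the full upper-triangular form
$\left(\begin{array}{ccc} a & b & c\\ 0 & d & e\\ 0 & 0 & f \end{array}\right)$,
and your key step collapses: the kernel of the diagonal homomorphism is now the whole unipotent (Heisenberg) group, which contains the regular unipotents
$\left(\begin{array}{ccc} 1 & b & c\\ 0 & 1 & e\\ 0 & 0 & 1 \end{array}\right)$ with $be\neq 0$;
these are the \emph{parabolic} elements of Proposition \ref{classi} and they \emph{do} preserve properly convex open sets, so the kernel need not be trivial. The most one gets for free is that the kernel is abelian (the commutator of two of its elements is a central unipotent of the excluded rank-one type), hence $\G'$ metabelian --- but discrete solvable subgroups of $\s$ need not be virtually abelian, so the conclusion does not follow as written.

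To close the gap you need a separate argument in this degenerate case, for instance: nontrivial kernel elements are parabolics sharing the same fixed point and fixed line, and one can then invoke the centralizer machinery (Lemma \ref{para-para}, Proposition \ref{centra}) to control $\G'$. The paper avoids the dichotomy altogether: after reducing by duality to a fixed point $p$, it notes that for $p\notin\O$ the group is torsion-free (an elliptic element would fix a point of $\O$), so $S=\Quo$ is a surface, and the pencil of lines through $p$ descends to a nonsingular $1$-dimensional foliation on $S$; the paper concludes that $S$ is a cylinder or a torus, hence $\G$ abelian. That argument treats $p\notin\overline{\O}$, $p$ a non-$\Cc^1$ boundary point, and $p$ a $\Cc^1$ boundary point uniformly, which is exactly the uniformity your tangent-lines construction lacks.
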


\begin{proof}
Supposons que $\G$ n'est pas irr\'eductible, nous allons montrer que $\G$ est virtuellement ab\'elien. Alors le groupe $\G$ fixe un point ou une droite de $\P$. Par dualit\'e, on peut supposer que $\G$ fixe un
point $p \in \P$. Il faut distinguer 2 cas.
\begin{itemize}
\item Si $p \in \O$ alors, comme $\G$ agit proprement sur $\O$,
$\G$ est fini.

\item Si $p \notin \O$ alors $\G$ ne contient aucun \'el\'ement elliptique, par cons\'equent $\G$ est sans torsion. \`{A} pr\'esent, le faisceau $\mathcal{F}$ des droites concourantes en $p$ est pr\'eserv\'e par $\G$. La projection de $\mathcal{F}$ sur la surface $S=\O/_{\G}$ est un feuilletage de dimension 1 sans point singulier. La surface $S$ est donc un cylindre ou un tore. Le groupe $\G$ est donc ab\'elien.
\end{itemize}
\end{proof}

\begin{lemm}
Soit $\G$ un sous-groupe discret de $\s$ qui pr\'eserve un ouvert
proprement convexe $\O$ de $\P$. Si $\Gamma$ est virtuellement ab\'elien et $\mu(\Quo) < \infty$ alors $\O$ est un triangle, $\G$ contient une copie de $\Z^2$ d'indice fini et $\O/_{\G}$ est compact.
\end{lemm}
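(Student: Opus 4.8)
Le plan est de se ramener \`a un sous-groupe d'indice fini bien choisi, puis d'encadrer le rang de ce groupe ab\'elien. Comme $\G$ est virtuellement ab\'elien, il poss\`ede un sous-groupe ab\'elien $\G_1$ d'indice fini; un tel $\G_1$, \'etant discret et ab\'elien dans $\s$, est de type fini, donc le lemme de Selberg fournit un sous-groupe $\G_0 \leqslant \G_1$ d'indice fini, ab\'elien et sans torsion, isomorphe \`a $\Z^k$. Puisque $[\G:\G_0] < \infty$, on a $\mu(\O/_{\G_0}) < \infty$ si et seulement si $\mu(\Quo) < \infty$, et de m\^eme pour la compacit\'e; de plus le fait que $\O$ soit un triangle ne d\'epend que de $\O$. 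On peut donc raisonner sur $\G_0$. Un groupe sans torsion ne contient aucun \'el\'ement elliptique (qui serait d'ordre fini, ou engendrerait un groupe non discret), et si $\O$ n'est pas un triangle il ne contient aucun \'el\'ement planaire (proposition \ref{planaire}). Ses \'el\'ements non triviaux sont donc tous hyperboliques, paraboliques ou quasi-hyperboliques.

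Supposons par l'absurde que $\O$ n'est pas un triangle; je vais encadrer $k$ de deux fa\c cons incompatibles. D'abord, la finitude du volume impose $k \geqslant 2$. Si $\G_0$ est trivial, $\mu_{\O}(\O) = \infty$ par la proposition \ref{mubord}. Si $\G_0 = \langle \g \rangle$ est cyclique infini, je choisis un point $s \in \partial \O$ distinct des points fixes de $\g$. D'apr\`es les propositions \ref{hyp}, \ref{para} et \ref{quasihyp}, la suite $\g^n s$ converge vers les points fixes de $\g$ quand $n \to \pm\infty$, donc $s$ n'est pas valeur d'adh\'erence de sa propre orbite; il existe alors un voisinage $V$ de $s$ dans $\overline{\O}$ tel que $\g^n V \cap V = \varnothing$ pour tout $n \neq 0$. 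Par suite $V \cap \O$ s'injecte dans le quotient $\O/_{\G_0}$, et comme $\mu_{\O}(V \cap \O) = \infty$ (proposition \ref{mubord}), le quotient est de volume infini. Dans les deux cas on contredit l'hypoth\`ese, d'o\`u $k \geqslant 2$.

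Ensuite, je montre qu'au contraire $k \leqslant 1$. Comme $\G_0$ est ab\'elien, tout \'el\'ement $\delta$ commute avec un \'el\'ement non trivial fix\'e $\g$: si $\g$ est hyperbolique, $\delta$ pr\'eserve les trois droites propres de $\g$ donc est semi-simple et, n'\'etant pas planaire (car $\O$ n'est pas un triangle), est hyperbolique; si $\g$ est parabolique (resp. quasi-hyperbolique), $\delta$ appartient au groupe ab\'elien \`a deux param\`etres d\'ecrit dans la preuve du lemme \ref{para-para}, dont la proposition \ref{classi} \'ecarte les \'el\'ements ne pr\'eservant aucun ouvert proprement convexe, de sorte que $\delta$ est parabolique (resp. quasi-hyperbolique). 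Ainsi tous les \'el\'ements non triviaux de $\G_0$ sont du m\^eme type et ont, par commutation, les m\^emes points et droites fixes que $\g$. Le lemme \ref{para-para} s'applique donc \`a toute paire d'\'el\'ements non triviaux et montre qu'elle engendre un groupe cyclique; un groupe ab\'elien de type fini dont toute paire engendre un cyclique \'etant lui-m\^eme cyclique, on obtient $k \leqslant 1$. Ceci contredit l'\'etape pr\'ec\'edente, donc $\O$ est un triangle.

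Il reste \`a d\'ecrire $\G$. Notant $T = \O$ le triangle, la composante neutre de $\Aut(T)$ est le groupe $D$ des matrices diagonales \`a coefficients strictement positifs (dans la base des sommets), isomorphe \`a $\R^2$, qui agit simplement transitivement sur $T$ et identifie la mesure de Busemann \`a une mesure de Haar. Le sous-groupe $\G \cap D$ est d'indice fini dans $\G$ (car $D$ est d'indice fini dans $\Aut(T)$) et discret dans $D \cong \R^2$, donc isomorphe \`a $\Z^r$ avec $r \leqslant 2$; comme $T/_{\G \cap D} \cong \R^2/\Z^r$, la finitude du volume force $r = 2$, ce qui donne \`a la fois la copie de $\Z^2$ d'indice fini dans $\G$ et la compacit\'e du quotient. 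L'obstacle principal est la minoration $k \geqslant 2$: il faut transformer l'hypoth\`ese qualitative de volume fini en une contrainte de rang, ce qui passe par la construction explicite, pour un groupe cyclique, d'un ouvert plong\'e dans le quotient et de volume infini \`a l'aide d'un voisinage errant d'un point du bord et de la proposition \ref{mubord}.
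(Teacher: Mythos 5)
Your proof is correct in substance and follows the same overall strategy as the paper: reduce to an abelian subgroup, use Lemma \ref{para-para} (together with Proposition \ref{classi} and the exclusion of planar elements, Proposition \ref{planaire}) to show that when $\O$ is not a triangle the group must be cyclic, derive a contradiction with finite covolume via Proposition \ref{mubord}, and treat the triangle case through the simply transitive action of the diagonal group $\R^2$. Your packaging differs in two ways. First, you pass through Selberg's lemma and a rank count ($k\geqslant 2$ versus $k\leqslant 1$), where the paper invokes Proposition \ref{centra} to conclude directly that $\G\cong\Z$; these are equivalent, since \ref{centra} is itself proved from \ref{para-para} (note also that your assertion that a discrete abelian subgroup of $\s$ is finitely generated, needed to apply Selberg, is true but deserves a word of justification). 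Second, and more substantially, for the step "an infinite cyclic group has infinite covolume" the paper does not use a wandering neighborhood: it takes a line $D$ through a \emph{fixed} point $p\in\partial\O$ of $\g$ with $D\cap\O\neq\varnothing$, and considers the closure of a connected component of $\O-\bigcup_{n}\g^nD$; since all the lines $\g^nD$ belong to the pencil of lines through $p$ and form a monotone family there, the region between two consecutive translates injects into the quotient essentially for free, contains a neighborhood in $\O$ of a boundary arc, and Proposition \ref{mubord} concludes.

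This comparison points to the one real gap in your write-up. You assert: since $\g^ns$ converges to fixed points as $n\to\pm\infty$, the point $s$ is not an accumulation point of its own orbit, \emph{hence} there is a neighborhood $V$ of $s$ with $\g^nV\cap V=\varnothing$ for all $n\neq 0$. That implication is not valid in general: non-recurrence of the single orbit of $s$ does not imply that $s$ is a wandering point, because orbits of points \emph{near} $s$ could return near $s$ even though the orbit of $s$ itself does not. What you need is the uniform statement that, for $V$ small enough, no $\g^n$ with $n\neq 0$ maps any point of $V$ back into $V$. This is true here, but it must be checked from the explicit form of $\g^n$ in a Jordan basis: in each of the hyperbolic, quasi-hyperbolic and parabolic cases one compares, coordinate by coordinate, the ratios of eigenvalues raised to the power $n$ (or the translation term $n$ in the unipotent direction) and verifies that they push every point of a small enough $V$ definitively away from $V$. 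The propositions \ref{hyp}, \ref{para}, \ref{quasihyp} you cite describe the fixed points and the boundary regularity, not this uniformity. So either insert this computation, or replace the wandering-neighborhood device by the paper's pencil-of-lines construction, which sidesteps the issue entirely.
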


\begin{proof}
On peut supposer que $\Gamma$ est ab\'elien. L'espace $\Quo$ est de volume fini par cons\'equent $\G$ contient un \'el\'ement d'ordre infini. Pour faciliter la discussion, il est commode de distinguer le cas o\`u $\O$ est un triangle, du cas o\`u $\O$ n'est pas un triangle.

Dans le premier cas, le groupe $\Aut(\O)$ est un groupe de Lie ab\'elien isomorphe \`a $\R^2$ qui agit simplement transitivement sur $\O$. Par cons\'equent, $\mu(\Quo) < \infty$ si et seulement si $\G$ est un r\'eseau de $\Aut(\O)$. Ceci entraine que $\Quo$ est compact et que $\G$ contient
une copie de $\Z^2$ d'indice fini.

Enfin il faut montrer que le second cas est absurde. On peut utiliser la proposition \ref{centra}. Celle-ci montre
que le centralisateur dans $\Aut(\O)$ d'un \'el\'ement hyperbolique (resp. parabolique, resp. quasi-hyperbolique) est un \'el\'ement hyperbolique (resp. parabolique, resp.
quasi-hyperbolique) qui poss\`ede les m\^emes caract\'eristiques g\'eom\'etriques. Il vient que $\G$ est isomorphe \`a $\Z$.

\`{A} pr\'esent, nous allons construire un domaine fondamental convexe $F$ pour l'action de $\G$ sur $\O$. Pour cela, on consid\`ere un g\'en\'erateur $\g$ de $\G$ et on note un point fixe de $\g$: $p \in \partial \O$. On consid\`ere une droite $D$ passant par $p$ et tel que $\O \cap D \neq \varnothing$. On d\'efinit $F$ comme l'adh\'erence d'une composante connexe de $\O - \underset{n \in \Z}{\bigcup} \g^n D$. L'ensemble $F$ est un domaine fondamental pour l'action de $\G$ sur $\O$ et il contient un voisinage d'un point du bord de $\O$. Il vient que $F$ est de volume infini par le th\'eor\`eme \ref{mubord}.
\end{proof}

\begin{coro}\label{irre}
Soit $\G$ un sous-groupe discret de $\s$ qui pr\'eserve un ouvert
proprement convexe $\O$ de $\P$. Si $\mu(\Quo) < \infty$ et $\O$
n'est pas un triangle alors $\G$ est irr\'eductible.
\end{coro}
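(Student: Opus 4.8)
The plan is to establish the statement by contraposition, combining the two results that immediately precede it. Suppose, toward a contradiction, that $\G$ is \emph{not} irreducible while at the same time $\mu(\Quo) < \infty$ and $\O$ is not a triangle. I want to derive that $\O$ must in fact be a triangle, which is absurd.

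First I would apply Goldman's Proposition~\ref{irreducgold}. Read contrapositively, it says that a discrete subgroup of $\s$ preserving a properly convex open set which fails to be irreducible must be virtually abelian. Hence the reducibility assumption forces $\G$ to be virtually abelian; this is the only structural information the reducibility hypothesis is used for.

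Second, I would feed this into the preceding lemma, which handles exactly the virtually abelian case under the finite-volume hypothesis: if $\G$ is virtually abelian and $\mu(\Quo) < \infty$, then $\O$ is a triangle. This directly contradicts the standing assumption that $\O$ is not a triangle, so $\G$ cannot have been reducible, and the corollary follows.

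Since both ingredients are already in hand, there is no genuine obstacle remaining at the level of the corollary itself; all the difficulty has been absorbed into the preceding lemma. The real work there lies in the subcase where $\O$ is \emph{not} a triangle: the centralizer computation of Proposition~\ref{centra} collapses a torsion-free abelian $\G$ down to an infinite cyclic group, and one then builds an explicit convex fundamental domain (the closure of a connected component of $\O$ minus the translates of a well-chosen line through a fixed point) which contains a neighborhood of a point of $\partial\O$. By Proposition~\ref{mubord} such a neighborhood has infinite volume, contradicting finiteness. It is precisely this boundary volume-barrier, rather than any bookkeeping in the corollary, that carries the weight.
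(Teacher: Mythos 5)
Your proof is correct and is exactly the paper's route: the corollary is stated there without a separate proof precisely because it follows, as you argue, by combining the contrapositive of Goldman's Proposition~\ref{irreducgold} (reducible $\Rightarrow$ virtually abelian) with the preceding lemma (virtually abelian $+$ finite covolume $\Rightarrow$ $\O$ is a triangle). Your summary of where the real work lies --- the cyclic reduction via Proposition~\ref{centra} and the infinite-volume fundamental domain argument via Proposition~\ref{mubord} inside that lemma --- also matches the paper.
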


\subsection{Adh\'erence de Zariski}

\begin{prop}
Soit $\G$ un sous-groupe discret de $\s$, si $\G$ est infini et
irr\'eductible alors l'adh\'erence de Zariski de
$\G$ est:
\begin{itemize}
\item $\s$ ou

\item Un conjugu\'e de $\mathrm{SO}_{2,1}(\R)$.
\end{itemize}
\end{prop}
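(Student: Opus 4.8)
Pour d\'emontrer cette proposition, je noterais $G$ l'adh\'erence de Zariski de $\G$ dans $\s$ et je me placerais dans le cadre d'usage, o\`u $\G$ pr\'eserve un ouvert proprement convexe $\O$ de $\P$ qui n'est pas un triangle. Comme $\G\subset G$ agit de fa\c{c}on irr\'eductible sur $\R^3$, le groupe $G$ agit lui aussi irr\'eductiblement ; la dimension $3$ \'etant impaire, son commutant ne peut \^etre que $\R$ (les cas $\mathbb{C}$ et $\mathbb{H}$ exigeraient une dimension paire, respectivement multiple de $4$), de sorte que l'action est absolument irr\'eductible et que $G(\mathbb{C})$ agit irr\'eductiblement sur $\mathbb{C}^3$. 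Je montrerais ensuite que $G$ est r\'eductif : le radical unipotent $R_u(G)$ \'etant distingu\'e, son espace de vecteurs fixes est non nul et $G$-invariant, donc \'egal \`a $\R^3$ par irr\'eductibilit\'e, ce qui entra\^{\i}ne $R_u(G)=\{1\}$. J'\'ecrirais alors la composante neutre sous la forme $G^0=Z\cdot S$, avec $Z$ un tore central et $S=[G^0,G^0]$ semi-simple.

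Le point central de la preuve, et celui que j'attends le plus d\'elicat, est de montrer que $Z$ est trivial : c'est pr\'ecis\'ement ici que sert l'hypoth\`ese que $\O$ n'est pas un triangle. Supposons $Z\neq\{1\}$. Ce tore ne peut agir par homoth\'eties (celles-ci forment un groupe fini dans $\s$), il poss\`ede donc au moins deux poids distincts sur $\mathbb{C}^3$, et $G^0$ pr\'eserve chacun de ses espaces de poids. L'irr\'eductibilit\'e absolue, via le groupe fini $G/G^0$ qui permute transitivement ces espaces, ne laisse subsister que la configuration de trois droites de poids ; $Z$ est alors un tore maximal et $G^0=Z$. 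L'intersection $\G\cap G^0$ serait donc un sous-groupe discret Zariski-dense du tore $2$-dimensionnel $Z$, c'est-\`a-dire un r\'eseau isomorphe \`a $\Z^2$ ; il contiendrait deux \'el\'ements hyperboliques $\g,\delta$ qui commutent, de m\^eme ensemble de points fixes, engendrant un groupe non cyclique. Le lemme \ref{para-para} forcerait alors $\O$ \`a \^etre un triangle, contredisant l'hypoth\`ese. J'en d\'eduirais que $Z=\{1\}$, c'est-\`a-dire que $G^0=S$ est semi-simple.

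Il resterait \`a classer les sous-groupes semi-simples connexes de $\s$ agissant de fa\c{c}on absolument irr\'eductible sur $\R^3$. En complexifiant, l'alg\`ebre de Lie $\mathfrak g$ de $G^0$ est semi-simple et porte une repr\'esentation irr\'eductible fid\`ele de dimension $3$ ; une alg\`ebre semi-simple non simple n'en admettant aucune en dimension inf\'erieure \`a $4$, l'alg\`ebre $\mathfrak g$ est simple, et seules $\mathfrak{sl}_2(\mathbb{C})$ (par $\mathrm{Sym}^2$) et $\mathfrak{sl}_3(\mathbb{C})$ (repr\'esentation standard) conviennent. Si $\mathfrak g=\mathfrak{sl}_3(\mathbb{C})$, alors $\dim G^0=8=\dim\s$, donc $G=\s$. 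Si $\mathfrak g=\mathfrak{sl}_2(\mathbb{C})$, la forme r\'eelle compacte $\mathfrak{su}(2)$ rendrait $G$ compact et $\G$ fini, ce qui est exclu ; la forme r\'eelle est donc $\mathfrak{sl}_2(\R)$, dont la repr\'esentation $\mathrm{Sym}^2$ pr\'eserve une forme quadratique de signature $(2,1)$. Ainsi $G^0$ est conjugu\'e \`a la composante neutre de $\mathrm{SO}_{2,1}(\R)$ et, comme $G$ normalise $G^0$, il est contenu dans son normalisateur $\mathrm{SO}_{2,1}(\R)$ ; le groupe $G$ est donc un conjugu\'e de $\mathrm{SO}_{2,1}(\R)$, ce qui ach\`everait la preuve.
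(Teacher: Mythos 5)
Your proof takes a genuinely different route from the paper's, and the comparison is instructive. The paper's proof is two lines: it cites the classification of Zariski-closed irreducible subgroups of $\s$ (namely $\s$, the conjugates of $\mathrm{SO}_3(\R)$ and the conjugates of $\mathrm{SO}_{2,1}(\R)$), and then observes that the Zariski closure of $\G$ is Zariski-closed, irreducible and unbounded (an infinite discrete group cannot have compact closure), which eliminates $\mathrm{SO}_3(\R)$. You instead rebuild that classification by hand: Schur's lemma and the odd dimension give absolute irreducibility, the unipotent radical argument gives reductivity, and the list of three-dimensional irreducible representations of simple complex Lie algebras gives $\mathfrak{sl}_2$ or $\mathfrak{sl}_3$. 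Your paragraphs 1 and 3 are essentially a correct proof of the input the paper takes for granted, up to two fillable points: (i) before complexifying you must check that $G^0$ itself acts irreducibly (its isotypic components are permuted by $G$; a $2+1$ splitting is then impossible, and $1+1+1$ forces $G^0$ abelian, hence trivial by your paragraph 2, hence $G$ finite and $\G$ finite, excluded); (ii) at the end, $G\subset \mathrm{SO}_{2,1}(\R)$ together with $G\supset G^0$ gives equality only because $G$ is Zariski closed and $G^0$ is Zariski dense in $\mathrm{SO}_{2,1}(\R)$ -- containment in the normalizer is not yet the conclusion.

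The serious divergence is your paragraph 2. To kill the central torus you assume that $\G$ preserves a properly convex open set $\O$ which is not a triangle, and you invoke the lemma \ref{para-para}, whose hypotheses require exactly that. But this hypothesis is \emph{not} in the statement, which is purely group-theoretic ($\G$ discrete, infinite, irreducible). As a proof of the statement as printed, your argument therefore fails at its central step. (Also, inside that step, ``$Z$ est alors un tore maximal'' needs justification: if $\dim Z=1$, an element of $G$ normalizing $Z$ acts on it by $t\mapsto t^{\pm 1}$, and one checks neither sign allows a cyclic permutation of three distinct weights; so irreducibility does force $\dim Z=2$, but you should say why.)

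Ironically, this defect is also the most valuable thing in your proposal: the statement as printed is \emph{false}, and the central-torus case is precisely where it fails. Take $L=\{\mathrm{diag}(e^{a},e^{b},e^{c}) \,:\, (a,b,c)\in\Z^3,\ a+b+c=0\}$ and $\sigma$ the cyclic permutation matrix; then $\G=L\rtimes\langle\sigma\rangle$ is discrete, infinite and irreducible (the only $L$-invariant lines and planes are the coordinate ones, and $\sigma$ permutes them), yet its Zariski closure is the full diagonal torus extended by $\Z/3\Z$, which is neither $\s$ nor conjugate to $\mathrm{SO}_{2,1}(\R)$. The same example shows that the classification cited in the paper's proof is itself false as stated. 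The proposition, and the deduction of the corollary \ref{Zariski}, are correct only because in that context $\G$ is moreover not virtually abelian (corollary \ref{irre}) -- equivalently, because any convex set it preserves is not a triangle, which is exactly the hypothesis you added. So your proof establishes the proposition as it is actually used, and repairs a genuine looseness in the paper; but you should state explicitly that you are proving a corrected statement, not the one printed.
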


\begin{proof}
Tout sous-groupe Zariski-ferm\'e et irr\'eductible de $\s$ est:
\begin{itemize}
\item $\s$ ou

\item un conjugu\'e de $\mathrm{SO}_{3}(\R)$ ou

\item un conjugu\'e de $\mathrm{SO}_{2,1}(\R)$.
\end{itemize}
Par cons\'equent, comme l'adh\'erence de Zariski de $\G$ est un sous-groupe de $\s$
Zariski-ferm\'e, irr\'eductible et non born\'e. On obtient le r\'esultat
voulu.
\end{proof}

\begin{coro}\label{Zariski}
Soit $\G$ un sous-groupe discret de $\s$ qui pr\'eserve un ouvert
proprement convexe $\O$ de $\P$, si $\mu(\Quo) < \infty$ et $\O$
n'est pas un triangle alors l'adh\'erence de Zariski de $\G$ est:
\begin{itemize}
\item $\s$ ou

\item un conjugu\'e de $\mathrm{SO}_{2,1}(\R)$.
\end{itemize}
\end{coro}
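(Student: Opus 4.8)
The plan is to obtain the corollary as a direct synthesis of Corollary~\ref{irre} with the proposition that immediately precedes it, the only genuinely new input being that $\G$ is infinite. So there are really three things to assemble: infiniteness of $\G$, irreducibility of $\G$, and the classification of Zariski-closed irreducible unbounded subgroups of $\s$.

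First I would check that $\G$ is infinite. By Proposition~\ref{mubord}, every neighborhood in $\overline{\O}$ of a boundary point of $\O$ has infinite Busemann volume; in particular $\mu_{\O}(\O)=\infty$. If $\G$ were finite, then $\Quo$ would be covered by $\O$ with finite fibres, and would therefore still carry infinite volume, contradicting the hypothesis $\mu(\Quo)<\infty$. Hence $\G$ is infinite.

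Next, since $\mu(\Quo)<\infty$ and $\O$ is not a triangle, the hypotheses of Corollary~\ref{irre} are exactly met, so $\G$ is irreducible. Finally, $\G$ being an infinite irreducible discrete subgroup of $\s$, the preceding proposition yields precisely the stated dichotomy: the Zariski closure of $\G$ is $\s$ or a conjugate of $\mathrm{SO}_{2,1}(\R)$.

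In short, the argument is essentially bookkeeping: the substantive content already lives in Corollary~\ref{irre} (which itself rests on Goldman's irreducibility criterion, Proposition~\ref{irreducgold}, together with the volume analysis of the virtually abelian case) and in the classification of Zariski-closed irreducible subgroups of $\s$ (where one discards $\mathrm{SO}_3(\R)$ because $\G$ is unbounded). The one place needing a word of justification is the infiniteness of $\G$, and even there the work is done by the volume estimate of Proposition~\ref{mubord}; I anticipate no real obstacle, the difficulty having been absorbed into the results quoted above.
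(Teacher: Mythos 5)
Your proof is correct and follows exactly the route the paper intends: the paper states Corollaire~\ref{Zariski} as an immediate consequence of Corollaire~\ref{irre} together with the preceding proposition on infinite irreducible discrete subgroups (whose proof discards $\mathrm{SO}_3(\R)$ by unboundedness, i.e. because an infinite discrete group cannot lie in a compact group). Your added justification that $\G$ is infinite via Proposition~\ref{mubord} is a sound way to make explicit a point the paper leaves implicit.
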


\section{Existence d'un domaine fondamental convexe}

L'existence d'un domaine
fondamental convexe et localement fini pour l'action d'un
sous-groupe discret de $\ss$ sur un ouvert proprement convexe de
$\PP^n$ est dû \`a Jaejeong Lee (\cite{JL}). Nous donnons ici une courte d\'emonstration de ce r\'esultat.

\subsection{Fonction caract\'eristique d'un c\^one convexe}

Pour montrer ce r\'esultat, nous aurons besoin de nous placer
dans un cadre vectoriel. Rappelons donc quelques d\'efinitions.

\begin{defi}
Un \emph{c\^one} de $\R^{n+1}$ est une partie invariante par les
homoth\'eties lin\'eaires de rapport positifs. Un c\^one convexe est dit
\emph{proprement convexe} s'il ne contient pas de droite affine.
\end{defi}

Soit $\Cc$ un c\^one ouvert proprement convexe, on note $\Cc^* = \{f \in
(\R^{n+1})^*\, | \, \forall \, v \in \overline{\Cc}-\{ 0 \}, \, f(v)
> 0 \}$ le \emph{c\^one dual} de $\Cc$. C'est un c\^one ouvert proprement
convexe de $(\R^{n+1})^*$. Les points 1 \`a 5 du lemme suivant sont tir\'es d'un article
de Vinberg \cite{Vin}.

\begin{lemm}\label{vinberg}
Soit $\Cc$ un c\^one proprement convexe de $\R^n$, on
consid\`ere l'application suivante appel\'ee \emph{fonction caract\'eristique}
de $\Cc$.

$$
\begin{array}{cccc}
\varphi_{\Cc} :& \Cc & \rightarrow & \R \\
           & M  & \mapsto     & \int_{\Cc^*} e^{-f(M)}df
\end{array}
$$

\begin{enumerate}
\item La fonction $\varphi_{\Cc}$ est bien d\'efinie.

\item La fonction $\varphi_{\Cc}$ est analytique.

\item La fonction $\varphi_{\Cc}$ est une submersion.

\item Le hessien de $\varphi_{\Cc}$ est d\'efini positif.

\item $\underset{M \rightarrow M_{\infty} \in
\partial \Cc}{\lim}  \varphi_{\Cc}(M) =
+ \infty $.

\item $\forall M \in \Cc$, $\forall v \in \overline{\Cc}$, $\underset{\lambda
\rightarrow +\infty}{\lim} \varphi_{\Cc}(M+\lambda v) = 0$.
\end{enumerate}
\end{lemm}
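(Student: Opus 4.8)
The plan is to derive all six properties from one fact of convex geometry together with routine integration arguments. The geometric input is biduality: for a proper open convex cone one has $\Cc=\Cc^{**}$, so that $M\in\Cc$ if and only if $f(M)>0$ for every $f\in\overline{\Cc^*}\setminus\{0\}$. First I would record the quantitative form of this: fixing $M\in\Cc$, the map $f\mapsto f(M)$ is continuous and strictly positive on the compact set $\{f\in\overline{\Cc^*}:\|f\|=1\}$, hence there is $c=c(M)>0$ with $f(M)\geq c\|f\|$ for all $f\in\Cc^*$. Property (1) follows at once, since $\varphi_{\Cc}(M)\leq\int_{(\R^n)^*}e^{-c\|f\|}\,df<\infty$. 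Property (6) is equally direct: for $v\in\overline{\Cc}\setminus\{0\}$ and $f\in\Cc^*$ one has $f(v)>0$ by the very definition of $\Cc^*$, so in $\varphi_{\Cc}(M+\lambda v)=\int_{\Cc^*}e^{-f(M)-\lambda f(v)}\,df$ the integrand decreases pointwise to $0$ as $\lambda\to+\infty$ while remaining dominated by the integrable function $e^{-f(M)}$; dominated convergence gives the limit $0$.

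For properties (2)--(4) I would differentiate under the integral sign. The bound above is locally uniform in $M$ (on a compact $K\subset\Cc$ the continuous positive function $(M,f)\mapsto f(M)/\|f\|$ attains a positive minimum $c$), and the derivative integrands $f^{\alpha}e^{-f(M)}$ are then dominated by the integrable functions $\|f\|^{|\alpha|}e^{-c\|f\|}$, so differentiation under the integral is legitimate to all orders and yields $\partial^{\alpha}\varphi_{\Cc}(M)=\int_{\Cc^*}(-f)^{\alpha}e^{-f(M)}\,df$. Analyticity (2) is cleanest via the tube $\{M+iN:M\in\Cc,\ N\in\R^n\}$: there $|e^{-f(M+iN)}|=e^{-f(M)}$, the integral converges locally uniformly, and Morera's theorem makes it holomorphic, so its restriction to $\Cc$ is real-analytic. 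For (3), the differential $d\varphi_{\Cc}(M)(v)=-\int_{\Cc^*}f(v)e^{-f(M)}\,df$ is strictly negative for $v\in\Cc$, so $\varphi_{\Cc}$ has nowhere-vanishing differential and is a submersion. For (4), the Hessian quadratic form is $\mathrm{Hess}\,\varphi_{\Cc}(M)(v,v)=\int_{\Cc^*}f(v)^2e^{-f(M)}\,df\geq 0$, and it vanishes only if $f(v)=0$ throughout the open set $\Cc^*$, that is only for $v=0$; hence it is positive definite.

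The remaining and most delicate point is the boundary blow-up (5). Let $M_\infty\in\partial\Cc$. Since $M_\infty\in\overline{\Cc}$ but $M_\infty\notin\Cc$, Hahn--Banach separation of the point from the open convex cone $\Cc$ produces a nonzero supporting functional $f_0$ with $f_0\geq 0$ on $\overline{\Cc}$ and $f_0(M_\infty)=0$; thus $f_0\in\overline{\Cc^*}\setminus\{0\}$. The idea is to show that the limiting integrand $e^{-f(M_\infty)}$ already has infinite integral over $\Cc^*$, so that Fatou's lemma forces the blow-up. Fix $f_1\in\Cc^*$ and a small open ball $U\subset\Cc^*$ around it with compact closure. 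Because $\Cc^*+\overline{\Cc^*}\subset\Cc^*$, the half-cylinder $U+[0,\infty)f_0$ lies in $\Cc^*$; it has infinite Lebesgue measure (choose linear coordinates in which $f_0$ is a coordinate direction), while on it $(f+tf_0)(M_\infty)=f(M_\infty)$ stays bounded by some constant $C$ since $f_0(M_\infty)=0$. Hence $\int_{\Cc^*}e^{-f(M_\infty)}\,df\geq e^{-C}\,\mathrm{vol}\big(U+[0,\infty)f_0\big)=+\infty$, and Fatou's lemma gives $\liminf_{M\to M_\infty}\varphi_{\Cc}(M)\geq\int_{\Cc^*}e^{-f(M_\infty)}\,df=+\infty$.

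The main obstacle is concentrated in two places: the quantitative biduality bound $f(M)\geq c\|f\|$, which must be made locally uniform to license the repeated differentiation under the integral behind (2)--(4); and, for (5), the construction of the supporting functional $f_0$ vanishing at $M_\infty$ together with the infinite-measure region on which the limiting integrand stays bounded below. Everything else reduces to dominated convergence, Fatou's lemma, and Morera's theorem.
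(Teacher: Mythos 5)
Your proposal is correct and, on the two substantive points, follows the same route as the paper: property (5) is proved in both cases by producing a supporting functional $f_0$ with $f_0\geq 0$ on $\overline{\Cc}$ and $f_0(M_\infty)=0$, observing that a bounded piece of $\Cc^*$ translated along the ray $\R_+ f_0$ stays in $\Cc^*$ and has infinite measure while the limiting integrand is bounded below there, and concluding by Fatou; property (6) is the same dominated-convergence argument, and (3)--(4) are the same differentiation under the integral sign. The only divergences are minor: for (1) the paper integrates over the compact slice $\{f\in\Cc^*\,|\,f(M)=1\}$ via a cone decomposition where you use the bound $f(M)\geq c\|f\|$ coming from compactness of the unit sphere of $\overline{\Cc^*}$ (both rest on properness of $\Cc$), and for (2) you supply a genuine argument (tube domain plus Morera) where the paper simply declares analyticity clear.
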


\begin{proof}
\item
\begin{enumerate}
\item Soit $M \in \Cc$ fix\'e, on note $\O_M = \{ f \in
\Cc^* \,|\, f(M)=1 \}$, $E$ l'espace vectoriel engendr\'e par $\O_M$ et $\textrm{Vol}_E$ la mesure de Lebesgue
canonique du sous-espace $E$ de $(\R^{n+1})^*$. La propre convexit\'e de $\Cc$ entraine que $\O_M$ est une partie compacte de $E$, le calcul suivant conclut.
$$\int_{\Cc^*} e^{-f(M)}df = \int_{\O_M}
 \Big(\int_{\R^*_+} e^{-\lambda} d\lambda \Big) d\textrm{Vol}_{E}  =
\textrm{Vol}_{E}(\O_M) < +\infty.$$

\item C'est clair.

\item Le calcul de $d\varphi_{\Cc}$ est imm\'ediat et donne:
$$\begin{array}{cccc}
d\varphi_{\Cc}: & \Cc & \rightarrow & (\R^{n+1})^*\\
            &  M & \mapsto     & \int_{\Cc^*} fe^{-f(M)}df
\end{array}$$
Donc $\varphi_{\Cc}$ est une submersion.

\item Le calcul de $d^2\varphi_{\Cc}$ est lui aussi imm\'ediat et donne:
$$\begin{array}{cccc}
d^2\varphi_{\Cc}: & \Cc & \rightarrow & \textrm{Sym}(\R^{n+1} \times \R^{n+1},\R)\\
            &  M & \mapsto     & (u,v) \mapsto \int_{\Cc^*} f(u)f(v)e^{-f(M)}df
\end{array}$$
o\`u $\textrm{Sym}(\R^{n+1} \times \R^{n+1},\R)$ d\'esigne l'espace
des formes bilin\'eaires sym\'etriques sur $\R^{n+1}$. Donc le hessien
de $\varphi_{\Cc}$ est d\'efini positif.

\item Soit $M_{\infty} \in \partial \Cc$, il existe $f \in
\partial \Cc ^*$ tel que $f(M_{\infty})=0$. On consid\`ere un compact
d'int\'erieur non vide $K$ inclus dans $\Cc^*$ et on d\'efinit le
sous-ensemble $L = K + \{ \lambda f \}_{\lambda > 0} \subset \Cc^*$.
Enfin, on note $c = \underset{f \in K}{\sup} \{f(M_{\infty})\}$. Le calcul suivant permet de conclure.

$$\lim_{M \rightarrow M_{\infty} \in
\partial \Cc} \varphi_{\Cc}(M) \geqslant \varphi_{\Cc}(M_{\infty}) \geqslant \int_{L} e^{-f(M_{\infty})}df \geqslant \int_{L} e^{-c} df = +\infty$$

La premi\`ere in\'egalit\'e est une cons\'equence du lemme de Fatou, les
autres sont triviales.

\item Soient $M \in \Cc$ et $v \in \overline{\Cc}$, $\varphi_{\Cc}(M+\lambda v) =
\int_{\Cc^*} e^{-\lambda f(v)} e^{-f(M)}df$, l'int\'egrant est domin\'e
par $M \mapsto e^{-f(M)}$ qui est int\'egrable et il tend vers $0$
lorsque $\lambda \rightarrow +\infty$. Le th\'eor\`eme de convergence
domin\'ee entraine que  $\underset{\lambda \rightarrow
+\infty}{\lim} \varphi_{\Cc}(M+\lambda v) = 0$.
\end{enumerate}
\end{proof}

\begin{defi}
Soit $\Sigma$ une hypersurface de $\R^{n+1}$, on dit que
$\Sigma$ est \emph{localement convexe} (resp. \emph{localement
strictement convexe}) lorsque tout point de $\Sigma$ poss\`ede un
voisinage dans $\Sigma$ qui est une partie du bord d'un convexe
(resp. d'un convexe strictement convexe) de $\R^{n+1}$.
\end{defi}

\begin{rem}
Soit $\Sigma$ une hypersurface de $\R^{n+1}-\{ 0 \}$ localement
strictement convexe, pour tout point $p$ de $\Sigma$, et tout
ouvert $V$ de $\R^{n+1}$ suffisament petit contenant $p$,
$V-\Sigma$ poss\`ede deux composantes connexes. La stricte convexit\'e
 permet de d\'efinir la composante int\'erieure et la composante
ext\'erieure. En particulier, pour tout point $p$ de $\Sigma$, on
peut donner un sens \`a la phrase "le vecteur $\overrightarrow{0p}$
pointe vers l'ext\'erieur (resp. l'int\'erieur) de $\Sigma$".
\end{rem}

\begin{defi}
Soit $\Sigma$ une hypersurface localement strictement convexe de $\R^{n+1}-\{ 0 \}$, on dit que $\Sigma$ est \emph{radiale} si pour point $p$ de $\Sigma$ le vecteur $\overrightarrow{0p}$ pointe vers l'int\'erieur de $\Sigma$.
\end{defi}

\begin{rem}
Toute hypersurface de $\R^{n+1}-\{ 0 \}$ localement strictement
convexe, radiale et propre est le bord d'un ouvert strictement
convexe de $\R^{n+1}-\{ 0 \}$.
\end{rem}

\begin{defi}
Soient $\Sigma$ une hypersurface localement strictement convexe,
radiale et propre de $\R^{n+1}-\{ 0 \}$ et $\Cc$ un c\^one ouvert
convexe de $\R^{n+1}$, on dit que $\Sigma$ est asymptote au c\^one
$\Cc$ lorsque:
\begin{itemize}
\item Le c\^one $\Cc$ contient $\Sigma$.

\item Toute demi-droite affine incluse dans $\Cc$ intersecte
$\Sigma$.
\end{itemize}
\end{defi}

\begin{rem}
On peut remarquer que la derni\`ere condition est \'equivalente au
fait que l'hyperplan \underline{affine} tangent \`a $\Sigma$ en un point $x$
converge vers un hyperplan \underline{vectoriel} tangent \`a
$\partial \Cc$ le bord de $\Cc$, lorsque la droite engendr\'ee par $x$
converge dans $\PP^n$ vers une droite incluse dans $\partial \C$.
\end{rem}

Soit $\pi:\R^{n+1}-\{ 0 \} \rightarrow \mathbb{P}^n$ la projection
naturelle, le lemme \ref{vinberg}  donne le corollaire
suivant.

\begin{coro}
Soit $\Cc$ un c\^one ouvert proprement convexe de $\R^{n+1}$, pour tout $m > 0$, l'hypersurface $\varphi_{\Cc}^{-1}(m)$ est une hypersurface ferm\'ee de $\R^{n+1}$, strictement convexe, radiale, propre et asymptote au c\^one $\Cc$. De
plus, toute application lin\'eaire de d\'eterminant 1 qui pr\'eserve $\Cc$, pr\'eserve
$\varphi_{\Cc}$ et donc aussi les hypersurfaces $(\varphi_{\Cc}^{-1}(m))_{m
\in \R^*_+}$.
\end{coro}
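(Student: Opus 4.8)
The plan is to read off every assertion from the six properties of $\varphi_{\Cc}$ established in Lemma~\ref{vinberg}, so that no new integral estimate is needed. First I would fix $m>0$ and check the smooth-hypersurface structure: since $\varphi_{\Cc}$ is a submersion, $m$ is a regular value and $\varphi_{\Cc}^{-1}(m)$ is a codimension-one analytic submanifold of the open set $\Cc$. To upgrade ``closed in $\Cc$'' to ``closed in $\R^{n+1}$'' I would take a sequence $p_k\in\varphi_{\Cc}^{-1}(m)$ with $p_k\to p$ in $\R^{n+1}$: the limit cannot lie outside $\overline{\Cc}$ since $\Cc$ is open and $p_k\in\Cc$, and property~5 forbids $p\in\partial\Cc$ (including $p=0$) because $\varphi_{\Cc}(p_k)=m$ stays bounded; hence $p\in\Cc$ and by continuity $\varphi_{\Cc}(p)=m$. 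Thus $\Sigma:=\varphi_{\Cc}^{-1}(m)$ is closed in $\R^{n+1}$, and being a closed subset its inclusion is automatically proper.

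Next I would treat strict convexity and radiality together. Property~4 gives a positive definite Hessian, so $\varphi_{\Cc}$ is strictly convex on the convex domain $\Cc$; hence the body $K=\{\varphi_{\Cc}\le m\}$ is convex and $\Sigma=\partial K$. Restricting the positive definite Hessian to the tangent hyperplane $T_p\Sigma=\ker d\varphi_{\Cc}(p)$ keeps it positive definite, which is exactly the definiteness of the second fundamental form of $\Sigma$; so $\Sigma$ is locally strictly convex with $K$ on its concave side. For radiality I would use the computation $\varphi_{\Cc}(sM)=\int_{\Cc^*}e^{-sf(M)}\,df$, which shows $s\mapsto\varphi_{\Cc}(sM)$ strictly decreasing because $f(M)>0$ for $f\in\Cc^*$; moving from $p\in\Sigma$ in the direction $\overrightarrow{0p}=p$ therefore lowers $\varphi_{\Cc}$ strictly below $m$ and enters the interior $\{\varphi_{\Cc}<m\}$ of $K$, so $\overrightarrow{0p}$ points inward and $\Sigma$ is radial.

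For the asymptote condition the containment $\Sigma\subset\Cc$ is immediate, and the content is the intersection property. A maximal affine half-line contained in $\Cc$ enters through a boundary point $b\in\partial\Cc$ and has direction in the recession cone $\overline{\Cc}$; along it property~5 forces $\varphi_{\Cc}\to+\infty$ near $b$ while property~6 forces $\varphi_{\Cc}\to 0$ at infinity, so by the intermediate value theorem it meets the level $m$ (exactly once, by the strict monotonicity coming from convexity). I expect this to be the main obstacle, chiefly because one must fix the correct reading of ``half-line included in $\Cc$'': a half-line already lying inside $K$ never meets $\Sigma$, so the statement must concern half-lines issuing from $\partial\Cc$. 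The equivalent tangent-hyperplane formulation in the following remark then reflects the asymptotic flatness of $\Sigma$ encoded by properties~5 and~6.

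Finally, for the invariance let $g$ satisfy $\det g=1$ and $g\Cc=\Cc$. Writing $f(gM)=({}^{t}g\,f)(M)$ and substituting $h={}^{t}g\,f$, I would use that $g\Cc=\Cc$ forces ${}^{t}g\,\Cc^{*}=\Cc^{*}$ — the dual cone of $g\Cc$ being $({}^{t}g)^{-1}\Cc^{*}$ — together with the unit Jacobian $|\det{}^{t}g|=1$, to get $\varphi_{\Cc}(gM)=\int_{\Cc^*}e^{-h(M)}\,dh=\varphi_{\Cc}(M)$. Invariance of $\varphi_{\Cc}$ immediately yields $g\,\varphi_{\Cc}^{-1}(m)=\varphi_{\Cc}^{-1}(m)$ for every $m$, which is the last assertion.
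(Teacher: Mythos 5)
Your proof is correct and matches the paper's intent exactly: the paper states this corollary with no proof at all, as an immediate consequence of Lemma \ref{vinberg}, and your argument is precisely the detailed verification of that claim (the submersion property for the hypersurface structure, property 5 for closedness in $\R^{n+1}$ and hence properness, the positive definite Hessian for strict convexity, monotonicity of $s \mapsto \varphi_{\Cc}(sM)$ for radiality, properties 5 and 6 plus the intermediate value theorem for asymptoticity, and the change of variables $h = {}^t g f$ with unit Jacobian for invariance). Your observation that the asymptote condition must be read as concerning maximal half-lines, i.e.\ those issuing from a point of $\partial \Cc$, is a correct and necessary clarification of the paper's literal definition, which as written would fail for any half-line contained in the sublevel set $\{\varphi_{\Cc} < m\}$, and your reading is the one consistent with the paper's remark on tangent hyperplanes converging to supporting hyperplanes of $\partial \Cc$.
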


\subsection{Existence d'un domaine fondamental convexe}

\begin{defi}
Soient $X$ un espace topologique et $\G$ un groupe qui agit sur
$X$ par hom\'eomorphisme, on dit qu'une partie ferm\'ee $D \subset X$ est un \emph{domaine
fondamental pour l'action de $\G$ sur $X$} lorsque:
\begin{itemize}
\item $\underset{\g \in \G}{\bigcup} \g D = X$.

\item $\forall \g \neq 1$, $\g \overset{\circ}{D} \cap
\overset{\circ}{D} = \varnothing$.
\end{itemize}
De plus, un domaine fondamental $D$ pour l'action de $\G$ sur $X$
est dit \emph{localement fini} lorsque:
\begin{itemize}
\item $\forall K$ compact de $X$, $\{\g \in \G \,|\, \g D \cap K
\neq \varnothing \}$ est fini.
\end{itemize}
\end{defi}

Nous allons construire un domaine fondamental convexe et
localement fini pour l'action de $\G$ sur $\O$. Pour cela on
introduit les objets suivants. On note $\Cc$ l'une des deux
composantes connexes de $\pi^{-1}(\O)$. On pose $\varphi=\varphi_{\Cc}$ la
fonction caract\'eristique de $\Cc$. Le groupe $\G$ agit sur $\Cc$ en pr\'eservant
les lignes de niveau de $\varphi$. On note $\Sigma =
\varphi^{-1}(1)$, c'est une hypersurface pr\'eserv\'ee par $\G$.

Enfin, on d\'efinit $\psi_X$ pour $X \in \Sigma$ la forme lin\'eaire sur $\R^{n+1}$
qui donne l'\'equation de l'hyperplan vectoriel tangent \`a $\Sigma$
en $X$ et qui v\'erifie $\psi_X(X)=1$. Autrement dit, on a $\psi_X =
\frac{d\varphi_X}{d\varphi_X(X)}$.

\begin{defi}
On reprend les notations introduites. Soit $X_0 \in \Sigma$ dont
le stabilisateur dans $\G$ est trivial, \emph{le domaine de
Dirichlet-Lee pour l'action de $\G$ sur $\Sigma$ bas\'e en $X_0$}
est, l'ensemble:
$$
D_{X_0} = \{ X \in \Sigma \,|\, \forall \g \neq 1, \, \psi_{X_0}(X) \leqslant \psi_{X_0}(\g X) \}
$$
\vspace{.005cm}

Si on note $x_0 = \pi(X_0)$, on a une d\'efinition naturelle du
\emph{domaine de Dirichlet-Lee pour l'action de $\G$ sur $\O$ bas\'e
en $x_0$}, il s'agit de l'ensemble $\Delta_{x_0} = \pi(D_{X_0})$.
\end{defi}

\begin{theo}[Lee]\label{Lee}
Soient $\G$ un sous-groupe discret de $\ss$ qui pr\'eserve un ouvert
proprement convexe $\O$ de $\mathbb{P}^n$ et un point $x_0$ dont le stabilisateur dans $\G$ est trivial. Le domaine de
Dirichlet-Lee pour l'action de $\G$ sur $\O$ bas\'e en $x_0$ est un
domaine fondamental convexe et localement fini pour l'action de
$\G$ sur $\O$.
\end{theo}

Pour montrer le th\'eor\`eme de Lee, nous aurons besoin de deux lemmes.

\begin{lemm}\label{existence}
Pour tout points $X_0, X \in \Sigma$, $\underset{\g \rightarrow \infty}{\lim}
\psi_{X_0}(\g X) = +\infty$.
\end{lemm}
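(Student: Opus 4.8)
The plan is to combine a soft convexity estimate for the fixed linear form $\psi_{X_0}$ with the fact that the $\G$-orbit of $X$ escapes to infinity in $\R^{n+1}$. More precisely, I will first show that $\psi_{X_0}$ is coercive on $\overline{\Cc}$, and then that $\|\g X\| \to +\infty$ as $\g \to \infty$; the conclusion is immediate from these two facts.

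First I would check that $\psi_{X_0} \in \Cc^*$. By the formula for $d\varphi_{\Cc}$ computed in Lemma \ref{vinberg}, one has $d\varphi_{X_0} = \int_{\Cc^*} f\, e^{-f(X_0)}\, df$, so for every $v \in \overline{\Cc} - \{0\}$ one gets $d\varphi_{X_0}(v) = \int_{\Cc^*} f(v)\, e^{-f(X_0)}\, df > 0$, since $f(v) > 0$ for all $f \in \Cc^*$ by the very definition of the dual cone. As $\psi_{X_0} = d\varphi_{X_0}/d\varphi_{X_0}(X_0)$ is a positive multiple of $d\varphi_{X_0}$ (note $d\varphi_{X_0}(X_0) > 0$ because $X_0 \in \Cc$), it follows that $\psi_{X_0}(v) > 0$ for every $v \in \overline{\Cc} - \{0\}$, that is, $\psi_{X_0} \in \Cc^*$. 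Since $\Cc$ is properly convex, $\overline{\Cc}$ contains no line, so $S = \{v \in \overline{\Cc} : \|v\| = 1\}$ is compact; the continuous function $\psi_{X_0}$ attains on $S$ a minimum $c > 0$, whence by homogeneity $\psi_{X_0}(Y) \geq c\, \|Y\|$ for all $Y \in \overline{\Cc}$.

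Next I would show that $\|\g X\| \to +\infty$ as $\g \to \infty$ in $\G$. Because $\Sigma$ is radial and asymptote to $\Cc$, the radial projection $\pi|_{\Sigma} : \Sigma \to \O$ is a homeomorphism conjugating the $\G$-action on $\Sigma$ to the $\G$-action on $\O$, and the latter is proper since $\G \subset \Aut(\O)$ and $\Aut(\O)$ acts properly on $\O$. Hence $\G$ acts properly on $\Sigma$, so for our fixed $X$ the set $\{\g \in \G : \g X \in K\}$ is finite for every compact $K \subset \Sigma$; equivalently $\g X$ leaves every compact subset of $\Sigma$ as $\g \to \infty$. Finally $\Sigma = \varphi^{-1}(1)$ is closed in $\R^{n+1}$: a sequence in $\Sigma$ cannot accumulate on $\partial \Cc$ (where $\varphi \to +\infty$ by point 5 of Lemma \ref{vinberg}) nor outside $\overline{\Cc}$, so any limit point lies in $\Cc$ and satisfies $\varphi = 1$. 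Thus $\Sigma$ is properly embedded and its compact subsets are exactly the sets $\Sigma \cap K$ with $K$ compact in $\R^{n+1}$, so escaping the compacta of $\Sigma$ forces $\|\g X\| \to +\infty$.

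Combining the two parts yields $\psi_{X_0}(\g X) \geq c\, \|\g X\| \to +\infty$, which is the assertion. The main obstacle is the second part: one must convert the abstract statement ``$\g \to \infty$ in the discrete group'' into genuine escape to infinity in the ambient $\R^{n+1}$, and this genuinely uses two distinct inputs, namely the properness of the action on $\Sigma$ via the radial identification with $\O$, and the proper embedding of $\Sigma$ obtained from the blow-up of $\varphi$ at $\partial \Cc$. The first part, by contrast, is a purely formal convexity argument once one recognizes that $\psi_{X_0}$ is a supporting form belonging to $\Cc^*$.
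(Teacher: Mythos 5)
Your proof is correct, and its skeleton is the same as the paper's: (i) the sublevel sets $\{\psi_{X_0} \leqslant c\} \cap \Cc$ are bounded, and (ii) the orbit point $\g X$ escapes to infinity in $\R^{n+1}$; together these give $\psi_{X_0}(\g X) \to +\infty$. The differences lie in how the two halves are justified. For (i), the paper invokes the fact that $\Sigma$ is asymptotic to the cone $\Cc$, whereas you verify directly that $\psi_{X_0} \in \Cc^*$ and deduce the coercivity $\psi_{X_0} \geqslant c\,\|\cdot\|$ on $\overline{\Cc}$; these are equivalent facts, but your computational route is more self-contained than the appeal to the geometric asymptote property. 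For (ii), the paper simply asserts that $\g X$ tends to infinity, while you supply the argument it leaves implicit: properness of the $\G$-action on $\O$, transported to $\Sigma$ by the equivariant radial projection, combined with closedness of $\Sigma$ in $\R^{n+1}$, which you correctly extract from the blow-up of $\varphi_{\Cc}$ along $\partial \Cc$ (point 5 of Lemma \ref{vinberg}). One caveat: the formula you quote, $d\varphi_{X_0} = \int_{\Cc^*} f\,e^{-f(X_0)}\,df$, reproduces a sign slip already present in the paper (differentiating $e^{-f(M)}$ yields $-f(v)\,e^{-f(M)}$, so $d\varphi_{X_0}$ is in fact negative on $\overline{\Cc}-\{0\}$); since $\psi_{X_0} = d\varphi_{X_0}/d\varphi_{X_0}(X_0)$ is a quotient, the two signs cancel and your conclusion $\psi_{X_0} \in \Cc^*$ --- hence the whole proof --- is unaffected.
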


\begin{proof}
Le lemme \ref{vinberg} montre que l'hypersurface localement strictement convexe, radiale et propre $\Sigma$ est asymptote au c\^one proprement convexe $\Cc$. Par cons\'equent, l'intersection du c\^one $\Cc$ avec tout demi-espace de la forme $\{ \psi_{X_0} \leqslant c \}$, o\`u $c \in \R$ est une partie born\'ee de $\R^{n+1}$. Mais, le point $\g X$ tend vers l'infini lorsque $\g$ tend vers l'infini. Ceci conclut la d\'emonstration du lemme.
\end{proof}

Pour le second lemme, il faut introduire plusieurs objets. Soit $X \in \Sigma$, on note $T_X$ l'hyperplan affine
tangent \`a $\Sigma$ en $X$, il est donn\'e par l'\'equation $\psi_X=1$.
Si $X \in \Sigma$ et $\g \in \G$, alors on note $\mu^{\g}_{X}$ la
forme lin\'eaire $\psi_{X}-\psi_{X} \circ \g$. Et on note
$H^{\g}_{X}$ l'hyperplan vectoriel $\mu^{\g}_{X}=0$, et si
$x=\pi(X)$, on note $M^{\g}_{x}$ son image dans $\P$.

Nous aurons aussi besoin d'une d\'efinition.

\begin{defi}
Soient $\Cc$ un ouvert convexe de $\R^{n+1}$ et $X \in \partial \Cc$, un \emph{hyperplan d'appui} \`a $\Cc$ en $X$ est un hyperplan affine de $\R^{n+1}$ contenant le point $X$ mais ne rencontrant pas $\Cc$.

Soient $\O$ un ouvert convexe de $\PP^n$ et $x \in \partial \O$, un \emph{hyperplan d'appui} \`a $\O$ en $x$ est un hyperplan projectif de $\PP^n$ contenant le point $x$ mais ne rencontrant pas $\O$.
\end{defi}

\begin{lemm}\label{lemessen}
Soient $(\g_p)_{p \in \N} \in \G^{\N}$ tel que $\underset{p
\rightarrow \infty}{\lim} \g_p= \infty$ et $x_0 \in \O$, on
suppose que la suite $\g_p x_0$ converge vers un point $x_{\infty}
\in
\partial \O$. Alors, la suite des hyperplans $M^{\g_p}_{x_0}$
converge vers un hyperplan d'appui $M_{\infty}$ \`a $\O$ en $x_{\infty}$.
\end{lemm}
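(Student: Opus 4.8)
The plan is to argue by extracting subsequential limits in the compact space of projective lines $\Pd$ and to show that every such limit is a supporting hyperplane to $\O$ at $x_\infty$; since the limit will turn out to be unique, this yields the asserted convergence $M^{\g_p}_{x_0}\to M_\infty$. Concretely, for a subsequential limit $M_\infty$ I must establish two things: that $M_\infty$ avoids $\O$, and that $x_\infty\in M_\infty$.

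First I would unwind the definitions. Writing $\mu^{\g_p}_{X_0}=\psi_{X_0}-\psi_{X_0}\circ\g_p$, the hyperplane $M^{\g_p}_{x_0}=\pi(\ker\mu^{\g_p}_{X_0})$ is exactly the bisector separating $x_0$ from its translate along the orbit: evaluating $\mu^{\g_p}_{X_0}$ at $X_0$ gives $1-\psi_{X_0}(\g_p X_0)$, which tends to $-\infty$ by Lemma \ref{existence}, so $x_0$ lies strictly on the negative side for $p$ large, while the translate $\g_p x_0$ tending to $x_\infty$ lies on the positive side. The essential quantitative input is a uniform version of Lemma \ref{existence}: on any fixed compact $K\subset\O$ the function $\psi_{X_0}\circ\g_p$ tends to $+\infty$ uniformly as $p\to\infty$, because $\{\psi_{X_0}\leqslant c\}\cap\Cc$ is bounded (by the fact that $\Sigma$ is asymptote to $\Cc$), so its $\g_p$-preimage leaves every compact set. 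Since $\psi_{X_0}$ stays bounded on $K$, the form $\mu^{\g_p}_{X_0}$ is eventually strictly negative on $K$; hence $M^{\g_p}_{x_0}\cap K=\varnothing$ for $p$ large, and therefore $M_\infty\cap\O=\varnothing$. In particular $\overline{\O}$ lies in one closed half-space bounded by $M_\infty$.

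Next I would locate $x_\infty$ on $M_\infty$. Normalizing the form $M_\infty$ so that $\O$ sits in its closed negative half-space, the fact that $x_\infty\in\partial\O\subset\overline{\O}$ gives $\mu_\infty(x_\infty)\leqslant 0$. On the other hand $\g_p x_0$ lies on the positive side of $M^{\g_p}_{x_0}$ and converges to $x_\infty$; passing to the limit of the normalized forms yields $\mu_\infty(x_\infty)\geqslant 0$. Hence $\mu_\infty(x_\infty)=0$, i.e. $x_\infty\in M_\infty$, so $M_\infty$ is a supporting hyperplane to $\O$ at $x_\infty$. To make the normalization precise I would divide $\mu^{\g_p}_{X_0}$ by $\|\psi_{X_0}\circ\g_p\|$: since this norm diverges while $\psi_{X_0}$ is fixed, the limit of $[\mu^{\g_p}_{X_0}]$ in $\Pd$ coincides with the limit of the rescaled tangent forms $\psi_{X_0}\circ\g_p$, which by the asymptote property of $\Sigma$ (the affine tangent hyperplanes of $\Sigma$ converge to the vectorial tangent hyperplane of $\partial\Cc$ as the orbit point diverges over $x_\infty$) is precisely the supporting form at $x_\infty$; this simultaneously shows that the subsequential limit is unique, and hence that $M^{\g_p}_{x_0}$ itself converges.

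I expect the main obstacle to be this divergence estimate, namely upgrading the pointwise conclusion of Lemma \ref{existence} to a statement uniform on compact subsets of $\O$ and, at the same time, controlling the normalization $\|\psi_{X_0}\circ\g_p\|\to\infty$. It is precisely this uniformity that guarantees both that the limiting hyperplane does not slip inside $\O$ and that the fixed term $\psi_{X_0}$ becomes negligible in the projective limit. Once this estimate and the asymptote property of $\Sigma$ are in hand, the remaining ingredients — compactness of $\Pd$, the sign bookkeeping, and the passage to the limit in the evaluations — are routine.
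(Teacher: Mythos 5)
Your proposal is, at bottom, the paper's own argument rewritten in the dual language of linear forms. The paper proves the lemma by observing that $H^{\g_p}_{X_0}=\mathrm{Vect}(T_{X_0}\cap T_{\g_p X_0})$, so that the bisecting hyperplane has the same limit as the affine tangent hyperplane of $\Sigma$ at the escaping orbit point, and the asymptote property of $\Sigma$ makes that limit a vectorial supporting hyperplane of $\Cc$ over $x_\infty$. Your normalization step (divide $\mu^{\g_p}_{X_0}$ by $\|\psi_{X_0}\circ\g_p\|$, so the fixed term $\psi_{X_0}$ washes out and $[\mu^{\g_p}_{X_0}]$ has the same projective limit as the tangent forms) is exactly this observation; by itself it proves the lemma, and it makes your half-space bookkeeping (limits avoid $\O$, limits pass through $x_\infty$) logically redundant.

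Two of your steps, however, are not justified as written. First, the assertion that $\g_p x_0$ lies on the positive side of $M^{\g_p}_{x_0}$ does not follow from the stated definition: $\mu^{\g_p}_{X_0}(\g_p X_0)=\psi_{X_0}(\g_p X_0)-\psi_{X_0}(\g_p^2 X_0)$ is a difference of two divergent terms whose sign is uncontrolled, and for $\g_p=\g^p$ with $\g$ hyperbolic it is eventually \emph{negative}. The missing ingredient is the equivariance identity $\psi_{X_0}\circ\g=\psi_{\g^{-1}X_0}$ (from $T_{\g X}=\g T_X$ and the normalization $\psi_X(X)=1$), which shows that $M^{\g_p}_{x_0}$ is the ``bisector'' of $x_0$ and $\g_p^{-1}x_0$; consequently your argument --- and the paper's proof read literally, since it uses $T_{\g_p X_0}$ where the definition of $\mu^{\g_p}_{X_0}$ produces $T_{\g_p^{-1}X_0}$ --- really concerns the tangent planes at $\g_p^{-1}X_0$ and yields a supporting hyperplane at an accumulation point of $\g_p^{-1}x_0$. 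Everything (your sign claims included, via Lemma \ref{existence} applied to both $\g_p$ and $\g_p^{-1}$) becomes consistent once one adopts the convention $\mu^{\g}_{X}=\psi_{X}-\psi_{X}\circ\g^{-1}=\psi_X-\psi_{\g X}$, which is what the paper tacitly uses; without stating this identity and fixing the convention, the sign step fails. Second, your uniform version of Lemma \ref{existence} is not a consequence of ``$\{\psi_{X_0}\leqslant c\}\cap\Cc$ is bounded and $\g_p\to\infty$'': that the sets $\g_p^{-1}(\{\psi_{X_0}\leqslant c\}\cap\Cc)$ eventually avoid a fixed compact is precisely properness of the action of $\G$ on $\O$ (stated for $\Aut(\O)$ in Section \ref{base}), which you must invoke; alternatively, use that the forms are linear, so divergence at the vertices of a simplex of $\Cc$ containing the lifted compact (Lemma \ref{existence} plus homogeneity) already gives uniform divergence on its convex hull. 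Both repairs are available inside the paper, so your approach does go through, but as written these two points are genuine gaps.
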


\begin{proof}
La figure \ref{lemdom} peut aider \`a suivre la d\'emonstration.

\begin{figure}[!h]
\begin{center}
\includegraphics[trim=0cm 10cm 0cm 0cm, clip=true, width=10cm]{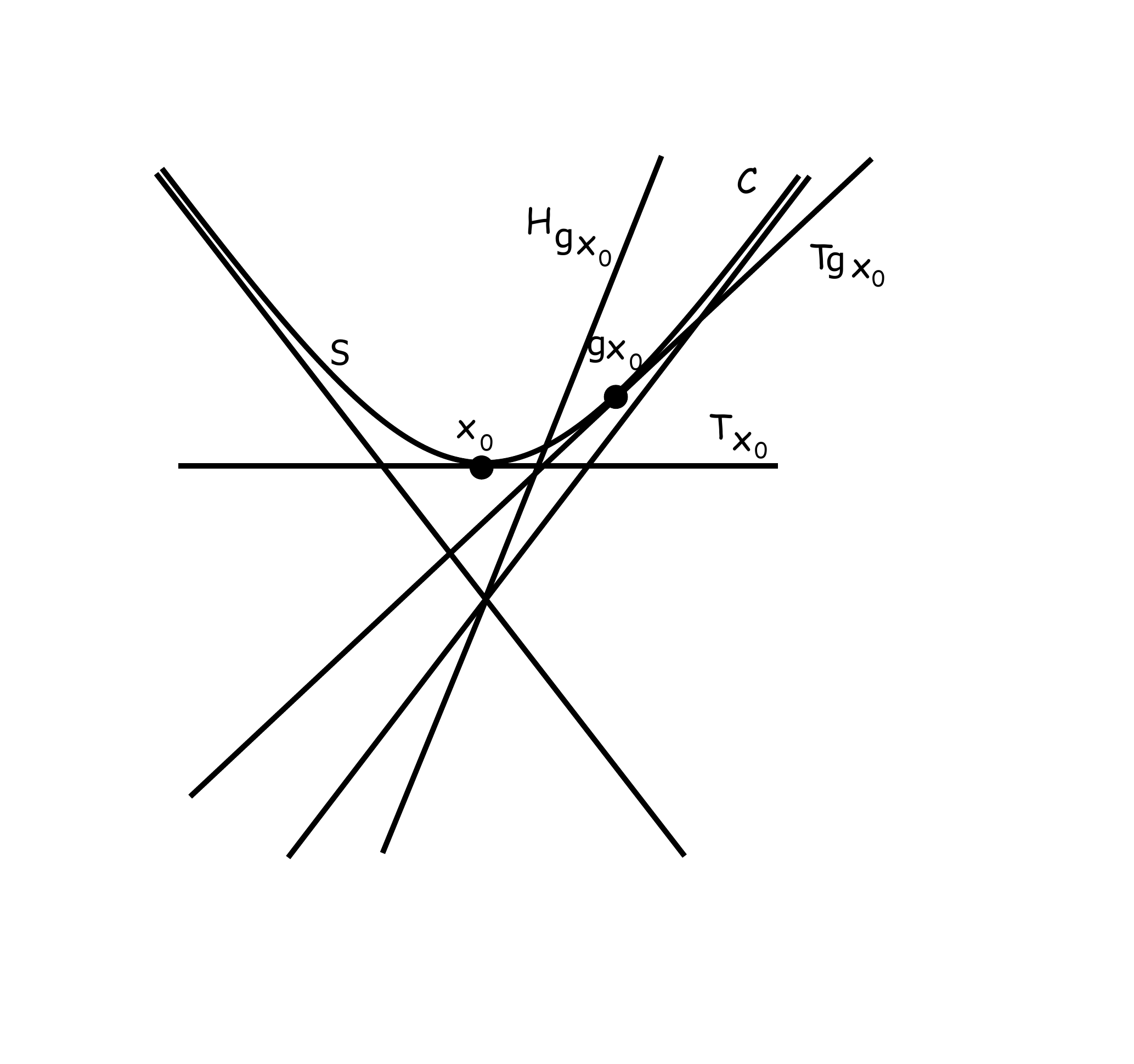}
%\centerline{\psfig{figure=lemdom.pdf, width=6cm}}
\caption{D\'emonstration du lemme \ref{lemessen}}\label{lemdom}
\end{center}
\end{figure}

On note $X_0$ le point de $\Sigma$ tel que $\pi(X_0)=x_0$. Tout
d'abord, l'hypersurface $\Sigma$ est asymptote au c\^one convexe $\Cc$ par cons\'equent les hyperplans affines $T_{\g_p X_0}$
convergent vers un hyperplan vectoriel $T$ de $\R^{n+1}$ qui est un hyperplan d'appui \`a $\Cc$ contenant la droite $x_{\infty}$. On veut montrer que la suite d'hyperplan vectoriel $\pi^{-1}(M^{\g_p}_{x_0})=H^{\g_p}_{X_0}$ converge aussi vers $T$.

Commençons par remarquer que $\pi^{-1}(M^{\g_p}_{x_0})=H^{\g_p}_{X_0} =$Vect$(T_{X_0} \cap T_{\g_p X_0})$. En effet, l'intersection $T_{X_0} \cap T_{\g_p X_0}$ est incluse dans $H^{\g_p}_{X_0}$ et les hyperplans affines $T_{X_0}$ et $T_{\g_p X_0}$ ne sont pas parall\`eles donc $\dim(T_{X_0} \cap T_{\g_p X_0})=n-1$.

Ainsi, la suite $(H^{\g_p}_{X_0})_{p \in \N}$ a donc la m\^eme limite que la suite
$(T_{\g_p X_0})_{p \in \N}$ et puisque l'hyperplan affine $(T_{\g_p X_0})_{p \in \N}$ converge vers l'hyperplan \underline{vectoriel} $T$, on a donc d\'emontr\'e que la suite $(H^{\g_p}_{X_0})_{p \in \N}$ converge vers un hyperplan d'appui au c\^one convexe $\Cc$. Ce qui conclut la d\'emonstration.
\end{proof}

On obtient le corollaire suivant:

\begin{coro}\label{finitude}
Pour tout point $x_0 \in \O$, la famille des hyperplans
$\{M^{\g}_{x_0}\}_{\g \in \G}$ est localement finie dans $\O$.
\end{coro}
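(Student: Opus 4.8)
The plan is to argue by contradiction, using Lemma \ref{lemessen} together with the properness of the $\G$-action on $\O$ and the compactness of $\overline{\O}$ in $\P$. Local finiteness in $\O$ means that every compact set $K \subset \O$ meets only finitely many of the hyperplanes $M^{\g}_{x_0}$. So I would suppose, for contradiction, that some compact $K \subset \O$ meets $M^{\g_p}_{x_0}$ for an infinite sequence of pairwise distinct elements $\g_p \in \G$. Since $\G$ is discrete, the distinctness of the $\g_p$ forces $\g_p \to \infty$ in $\G$.

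First I would control the orbit of $x_0$. As $\overline{\O}$ is compact, after extracting a subsequence the points $\g_p x_0$ converge to some $x_{\infty} \in \overline{\O}$. Because $\G \subset \Aut(\O)$ and $\Aut(\O)$ acts properly on $\O$ (the Fait of Section \ref{base}), and since $\g_p \to \infty$, the sequence $\g_p x_0$ cannot accumulate inside $\O$; hence $x_{\infty} \in \partial \O$. This is exactly the situation covered by Lemma \ref{lemessen}, which then yields that $M^{\g_p}_{x_0}$ converges (in the Hausdorff topology on closed subsets of $\P$) to a hyperplane $M_{\infty}$ supporting $\O$ at $x_{\infty}$. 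In particular $M_{\infty} \cap \O = \varnothing$.

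Finally I would derive the contradiction from the intersection points. For each $p$ choose $y_p \in M^{\g_p}_{x_0} \cap K$. Since $K$ is compact, after a further extraction $y_p \to y_{\infty} \in K \subset \O$. The relation ``a point lies on a hyperplane'' is closed, so from $y_p \in M^{\g_p}_{x_0}$, $y_p \to y_{\infty}$ and $M^{\g_p}_{x_0} \to M_{\infty}$ one gets $y_{\infty} \in M_{\infty}$. Thus $y_{\infty} \in M_{\infty} \cap \O$, contradicting $M_{\infty} \cap \O = \varnothing$. Hence only finitely many $M^{\g}_{x_0}$ can meet $K$, which is the desired local finiteness.

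The only genuine input is Lemma \ref{lemessen}; the remaining steps are the passage to a convergent subsequence and the use of properness to place $x_{\infty}$ on $\partial \O$. The point to be careful about is precisely this last localization: one must invoke properness of the action (not merely discreteness of $\G$) to rule out $x_{\infty} \in \O$, since it is only a \emph{boundary} limit point that produces, via Lemma \ref{lemessen}, a support hyperplane disjoint from $\O$, and it is this disjointness that closes the argument.
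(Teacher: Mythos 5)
Your argument is correct and is exactly the route the paper intends: the paper states this corollary with no written proof, treating it as an immediate consequence of Lemma \ref{lemessen}, and your write-up simply makes explicit the extraction of a convergent subsequence, the use of properness of the $\Aut(\O)$-action to force $x_{\infty} \in \partial \O$, and the closedness of the incidence relation. Nothing is missing.
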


\begin{proof}[D\'emonstration du th\'eor\`eme \ref{Lee}]
L'ensemble $\Delta_{x_0}$ est une partie convexe de
$\O$ puisqu'elle est obtenue comme intersection de demi-espaces de
$\O$. Pour montrer que c'est un domaine fondamental il suffit de
montrer que l'$\underset{\g \in \G}{\inf}\{ \psi_{X_0}(\g X)\}$
est atteint pour tout point $X \in \Sigma$. Ceci est une cons\'equence
directe du lemme \ref{existence}. La partie $\Delta_{x_0}$ est donc un
domaine fondamental convexe, il est localement fini car la famille
des hyperplans $M^{\g}_{x_0}$ est localement finie dans $\O$
(corollaire \ref{finitude}).
\end{proof}

\begin{prop}\label{proprete}
Soient un espace localement compact $X$ et un
groupe discret $\G$ qui agit par hom\'eomorphisme sur $X$. Supposons qu'il existe un domaine fondamental $D$ localement fini pour l'action de $\G$ sur $X$. L'application naturelle $p:D \rightarrow X/_{\G}$ est propre.
\end{prop}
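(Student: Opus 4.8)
Le plan est de caract\'eriser la propret\'e de $p$ par le fait que l'image r\'eciproque de toute partie compacte de $X/_{\G}$ est compacte. On note $q : X \to X/_{\G}$ la projection canonique, de sorte que $p$ est la restriction de $q$ au ferm\'e $D$; elle est donc continue, et surjective puisque $\bigcup_{\g \in \G} \g D = X$. Je fixerais alors une partie compacte $K$ de $X/_{\G}$ et chercherais \`a montrer que $p^{-1}(K)$ est compacte.

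La premi\`ere \'etape consisterait \`a relever $K$ en un compact de $X$. On se sert d'abord du fait que $q$ est ouverte : pour tout ouvert $U$, l'ensemble $q^{-1}(q(U)) = \bigcup_{\g \in \G} \g U$ est ouvert, donc $q(U)$ aussi. Pour chaque $\bar{x} \in K$, on choisit un ant\'ec\'edent $x$ et, $X$ \'etant localement compact, un voisinage ouvert relativement compact $V_x$ de $x$; les ouverts $q(V_x)$ recouvrent $K$, et la compacit\'e fournit un sous-recouvrement fini. La r\'eunion $L$ des adh\'erences correspondantes est alors un compact de $X$ v\'erifiant $q(L) \supseteq K$.

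La seconde \'etape, qui est le c\oe ur combinatoire de l'argument, exploite la finitude locale de $D$. Comme $p^{-1}(K) = D \cap q^{-1}(K)$ et que $q^{-1}(K) \subset q^{-1}(q(L)) = \bigcup_{\g \in \G} \g L$, on obtient $p^{-1}(K) \subset \bigcup_{\g \in \G} (D \cap \g L)$. Or $D \cap \g L \neq \varnothing$ \'equivaut \`a $\g^{-1} D \cap L \neq \varnothing$, et la finitude locale assure que l'ensemble $F = \{ \delta \in \G \,|\, \delta D \cap L \neq \varnothing \}$ est fini; seuls les indices $\g \in F^{-1}$ donnent un terme non vide. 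Ainsi $p^{-1}(K)$ est contenue dans la r\'eunion finie de compacts $\bigcup_{\g \in F^{-1}} \g L$, donc dans un compact de $X$.

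Il resterait enfin \`a voir que $p^{-1}(K)$ est ferm\'ee dans $X$, ce qui permettrait de conclure qu'elle est compacte comme partie ferm\'ee d'un compact. C'est ici que se concentre la v\'eritable difficult\'e : puisque $D$ est ferm\'e et $q$ continue, tout revient \`a ce que $K$ soit ferm\'e dans $X/_{\G}$, autrement dit \`a la s\'eparation du quotient (la majoration de l'\'etape pr\'ec\'edente, elle, est purement formelle). Je me ram\`enerais donc au fait que $X/_{\G}$ est s\'epar\'e, de sorte que ses compacts y sont ferm\'es : dans les situations o\`u cette proposition est appliqu\'ee, $\G$ est un sous-groupe discret du groupe $\Aut(\O)$ qui agit proprement (voir le premier Fait), et l'action est donc proprement discontinue, ce qui garantit la s\'eparation recherch\'ee. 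L'obstacle principal n'est donc pas la construction de la majoration mais bien ce contr\^ole topologique, \`a savoir l'ouverture de $q$ et la s\'eparation de $X/_{\G}$.
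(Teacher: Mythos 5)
Le c\oe ur de votre argument est correct et suit une route r\'eellement diff\'erente de celle du texte. Le texte proc\`ede par suites : il rel\`eve une suite convergente $s_n \rightarrow s$ du quotient en une suite $\g_n t_n \rightarrow t \in D$ (via l'ouverture de la projection), puis applique la finitude locale \`a un voisinage compact de $t$ pour conclure que les $\g_n$ ne prennent qu'un nombre fini de valeurs, donc que $(t_n)$ reste dans un compact. Vous travaillez au contraire globalement avec un compact $K$ du quotient : rel\`evement de $K$ en un compact $L$ de $X$ (ouverture de $q$ et compacit\'e locale), puis finitude locale appliqu\'ee \`a $L$ pour obtenir $p^{-1}(K) \subset \bigcup_{\g \in F^{-1}} \g L$ avec $F$ fini. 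Cette majoration est juste, et elle a l'avantage d'\'eviter le rel\`evement de suites du texte, qui suppose implicitement l'existence de bases d\'enombrables de voisinages.

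En revanche, votre derni\`ere \'etape laisse une vraie lacune. Pour conclure que $p^{-1}(K)$ est ferm\'ee, vous avez besoin que $K$ soit ferm\'e, donc que $X/_{\G}$ soit s\'epar\'e, et vous justifiez cette s\'eparation en invoquant le contexte d'application (action propre de $\Aut(\O)$) et non les hypoth\`eses de l'\'enonc\'e : tel quel, vous ne d\'emontrez la proposition que sous une hypoth\`ese suppl\'ementaire, alors qu'elle est \'enonc\'ee pour un $X$ et un $\G$ quelconques munis d'un domaine fondamental localement fini. Cette lacune se comble pourtant \`a partir de la seule finitude locale. Soient $K'$ un compact de $X$ et $F' = \{\delta \in \G \,|\, \delta D \cap K' \neq \varnothing\}$, qui est fini par finitude locale ; comme $K' \subset \bigcup_{\delta \in F'} \delta D$, si $\g K' \cap K' \neq \varnothing$, en \'ecrivant $x \in K'$, $\g x \in K'$ et $x \in \delta D$ avec $\delta \in F'$, on obtient $\g \delta D \cap K' \neq \varnothing$, donc $\g \in F' \cdot (F')^{-1}$. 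L'ensemble $\{\g \,|\, \g K' \cap K' \neq \varnothing\}$ est donc fini : l'action est proprement discontinue, et puisque $q$ est ouverte et $X$ s\'epar\'e, on en d\'eduit classiquement que le quotient $X/_{\G}$ est s\'epar\'e. Avec ce compl\'ement, votre d\'emonstration devient compl\`ete et autonome ; notez que la d\'emonstration du texte passe elle aussi sous silence ce point de s\'eparation, que vous avez eu le m\'erite d'identifier explicitement.
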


\begin{proof}
On va montrer que l'image r\'eciproque de tout suite convergente est
une suite incluse dans un compact de $D$. Soient une suite de points $(s_n)_{n\in \N} \in
(X/_{\G})^{\N}$ telle que $\underset{n \rightarrow \infty}{\lim} s_n= s
\in X/_{\G}$ et une suite $t_n \in D$ tel que $p(t_n)= s_n$, il existe des \'el\'ements $\g_n
\in \G$ tel que la suite $(\g_n t_n)_{n \in \N}$ converge vers un
point $t \in D$ qui v\'erifie $p(t)=s$. Par cons\'equent si on consid\`ere $K$ un voisinage compact de $t$ alors pour $n$
assez grand on a $\g_n D \cap K \neq \varnothing$. L'ensemble $D$ est un
domaine fondamental localement fini, il n'y a donc qu'un nombre
fini de $\g \in \G$ qui v\'erifient $\g D \cap K \neq
\varnothing$. Par cons\'equent, la suite $(\g_n)_{n\in \N}$ est finie et la suite $(t_n)_{n\in \N}$ est incluse dans un compact de $X$.
\end{proof}

\subsection{Locale finitude \`a l'infini en dimension 2}

\begin{rem}
Dans cette partie on se place en dimension 2. De plus, \`a partir de maintenant si $A$ est une partie de $\O$ on d\'esignera par $\overline{A}$ son adh\'erence dans $\P$ et $\overline{A} \cap \O$ son adh\'erence dans $\O$. De cette façon, on \'evitera toute ambiguït\'e.
\end{rem}

\begin{defi}\label{secteur}
Soient $\O$ un ouvert proprement convexe de $\P$ et un \'el\'ement $\g$ de $\Aut(\O)$, un \emph{secteur} de $\g$ est l'adh\'erence dans $\O$ de l'enveloppe convexe dans $\O$ de l'orbite d'une partie compacte non vide de $\O$ sous l'action de $\g$.
\end{defi}

La forme des secteurs est tr\`es variable suivant la dynamique de l'\'el\'ement $\g$. On \'etudie dans la proposition suivante de façon exhaustive la forme de ces derniers. La d\'emonstration de cette proposition est une simple cons\'equence de l'\'etude de la dynamique des \'el\'ements de $\Aut(\O)$ faite dans la partie \ref{dyna}.

La figure \ref{formesecteur} illustre la proposition suivante.

\begin{figure}[!h]
\begin{center}
\includegraphics[trim=0cm 17cm 5cm 3cm, clip=true, width=10cm]{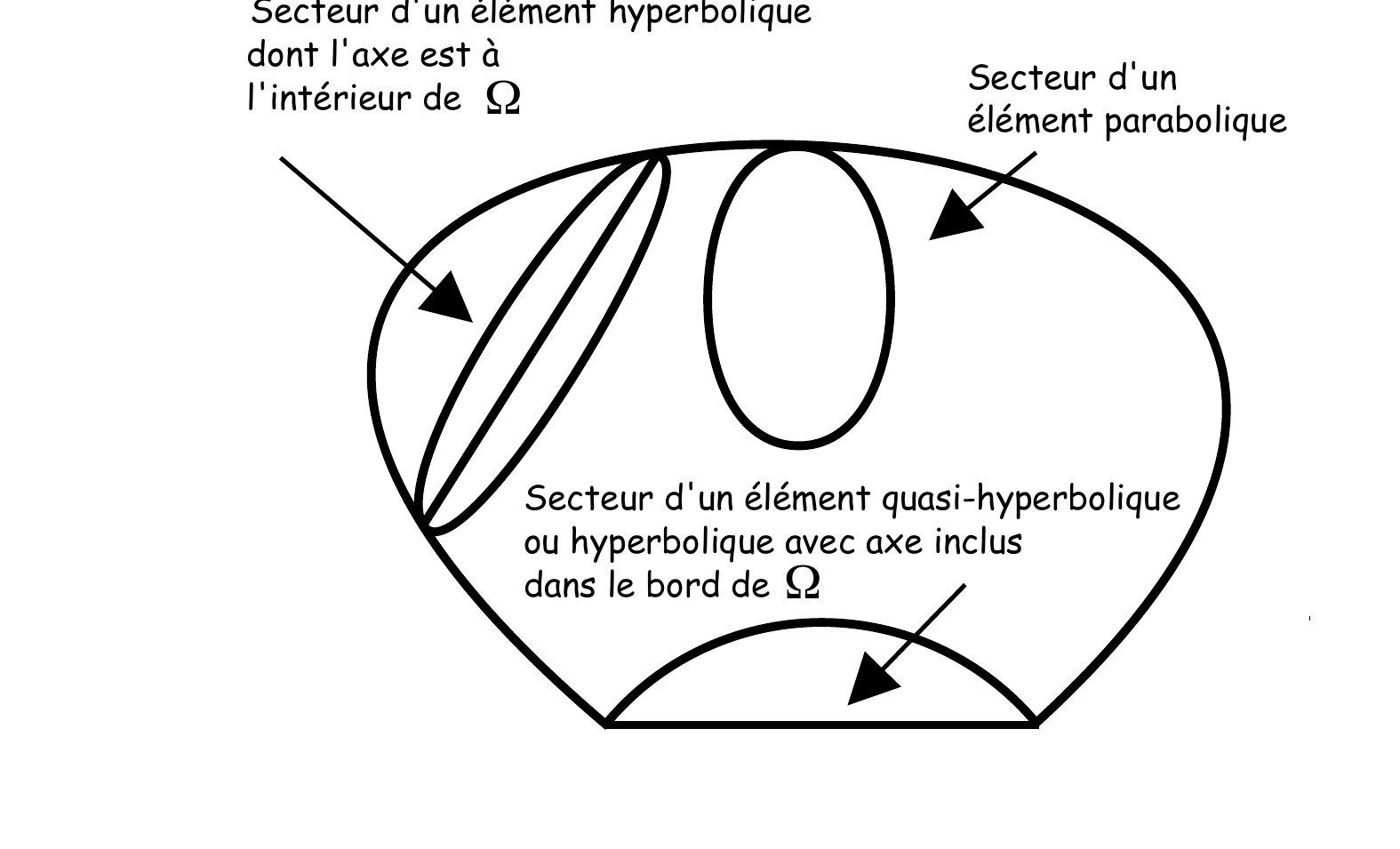}
%\centerline{\psfig{figure=formesecteur.pdf, width=6cm}}
\caption{Forme des secteurs}\label{formesecteur}
\end{center}
\end{figure}

\begin{prop}
Soient $\O$ un ouvert proprement convexe de $\P$ et $\G$ un
sous-groupe discret de $\s$ qui pr\'eserve $\O$ et un \'el\'ement $\g$ de $\G$.

\begin{itemize}
\item Si $\g$ est hyperbolique et $\Ax(\g) \subset \O$ alors tout secteur de $\g$ est un ferm\'e $\F$ de $\O$, $\g$-stable, convexe et tel que $\F \supset \Ax(\g)$.

\item Si $\g$ est hyperbolique, $\Ax(\g) \subset \partial \O$ et $p^0_{\g} \notin \partial \O$ alors tout secteur de $\g$ est un ferm\'e $\F$ de $\O$, $\g$-stable, convexe tel que $\overline{\F} \cap \partial \O =
\overline{\Ax(\g)}$.

\item Si $\g$ est quasi-hyperbolique alors tout secteur de $\g$ est un ferm\'e $\F$ de $\O$, $\g$-stable, convexe et tel que $\overline{\F} \cap \partial \O = \overline{\Ax(\g)}$.

\item Si $\g$ est parabolique alors tout secteur de $\g$ est un ferm\'e $\F$ de $\O$, $\g$-stable, convexe et tel que $\overline{\F} \cap \partial \O = \{ p_{\g} \}$.

\item Si $\g$ est elliptique alors tout secteur de $\g$ est un ferm\'e de $\O$, $\g$-stable et convexe.
\end{itemize}
\end{prop}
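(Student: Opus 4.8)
Le plan est de s\'eparer ce qui est purement formel de ce qui rel\`eve vraiment de la dynamique. \'Ecrivons un secteur sous la forme $\F = \overline{\mathrm{Conv}_{\O}\big(\bigcup_{n \in \Z} \g^n K\big)} \cap \O$, o\`u $K$ est une partie compacte non vide de $\O$ et $\mathrm{Conv}_{\O}$ l'enveloppe convexe dans $\O$. Je remarquerais d'abord que trois des conclusions sont automatiques et ind\'ependantes du type de $\g$: l'ensemble $\F$ est ferm\'e dans $\O$ par construction; l'orbite $\bigcup_n \g^n K$ \'etant $\g$-invariante, son enveloppe convexe puis son adh\'erence dans $\O$ le sont aussi, donc $\F$ est $\g$-stable; enfin l'adh\'erence dans $\O$ d'un convexe de $\O$ est convexe. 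Il ne reste donc qu'\`a identifier l'intersection de $\overline{\F}$ (adh\'erence dans $\P$) avec $\partial \O$.

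Je me placerais dans une carte affine contenant $\overline{\O}$ et j'\'etablirais deux faits. Le premier, issu de la partie \ref{dyna}, d\'ecrit les points d'accumulation de l'orbite d'un point: pour $\g$ hyperbolique l'orbite s'accumule en $p^+_{\g}$ et $p^-_{\g}$, pour $\g$ quasi-hyperbolique en $p^1_{\g}$ et $p^2_{\g}$, pour $\g$ parabolique au seul point $p_{\g}$; pour $\g$ elliptique le groupe $\overline{\langle \g \rangle}$ est compact, si bien que toute orbite est relativement compacte dans $\O$. La partie compacte $S = \overline{\bigcup_n \g^n K}$ est donc la r\'eunion de l'orbite, incluse dans $\O$, et d'un ensemble fini $L \subset \partial \O$ de points fixes de $\g$ (respectivement $\{p^+_{\g},p^-_{\g}\}$, $\{p^1_{\g},p^2_{\g}\}$, $\{p_{\g}\}$, $\varnothing$). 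Le second fait est un argument d'extr\'emalit\'e: comme $\F \subset \mathrm{Conv}(S)$ et que $\mathrm{Conv}(S)$ est compacte (Carath\'eodory), ses points extr\'emaux appartiennent \`a $S$, et ceux qui sont sur $\partial \O$ appartiennent donc \`a $L$.

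Le point essentiel de la preuve serait alors la majoration de $\overline{\F} \cap \partial \O$. Pour $q \in \overline{\F} \cap \partial \O$, ou bien $q$ est extr\'emal dans $\mathrm{Conv}(S)$ et alors $q \in L$, ou bien la propre convexit\'e fournit une droite d'appui $\ell$ de $\O$ en $q$; cette droite appuie aussi $\mathrm{Conv}(S)$, et la face $\mathrm{Conv}(S) \cap \ell$, dont les points extr\'emaux sont des points extr\'emaux de $\mathrm{Conv}(S)$ situ\'es sur $\ell \cap \overline{\O} \subset \partial \O$, est incluse dans $\mathrm{Conv}(L)$; dans les deux cas $q \in \mathrm{Conv}(L)$. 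Tout se ram\`ene ainsi au calcul de $\mathrm{Conv}(L)$, que les propositions \ref{hyp}, \ref{quasihyp} et \ref{para} rendent explicite:
\begin{itemize}
\item $\g$ hyperbolique avec $\Ax(\g) \subset \O$: les segments $[\g^n x, \g^{-m} x]$ convergent vers $[p^+_{\g}, p^-_{\g}]$, dont l'int\'erieur $\Ax(\g)$ est dans $\O$, d'o\`u $\Ax(\g) \subset \overline{\F} \cap \O = \F$.
\item $\g$ hyperbolique avec $\Ax(\g) \subset \partial \O$ et $p^0_{\g} \notin \partial \O$, ou $\g$ quasi-hyperbolique: ici $\overline{\Ax(\g)} = \mathrm{Conv}(L) \subset \partial \O$, le m\^eme argument de segments donne $\overline{\Ax(\g)} \subset \overline{\F}$, et la majoration ci-dessus fournit $\overline{\F} \cap \partial \O = \overline{\Ax(\g)}$.
\item $\g$ parabolique: $\mathrm{Conv}(L) = \{p_{\g}\}$, donc $\overline{\F} \cap \partial \O = \{p_{\g}\}$.
\item $\g$ elliptique: $L = \varnothing$ et $\F$ est compact dans $\O$.
\end{itemize}

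La principale difficult\'e me semble \^etre cette majoration, c'est-\`a-dire montrer que l'enveloppe convexe de l'orbite ne rencontre $\partial \O$ nulle part ailleurs que le long de l'axe (ou au point parabolique); c'est l'argument de face et d'extr\'emalit\'e, nourri par la description des (demi-)tangentes aux points fixes faite dans la partie \ref{dyna}, qui le garantit. Le cas planaire n'appara\^it pas, car il forcerait $\O$ \`a \^etre un triangle (proposition \ref{planaire}).
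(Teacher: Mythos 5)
Your proof is correct, but it is worth pointing out that the paper itself offers no proof of this proposition at all: it merely declares the statement to be a simple consequence of the dynamical study of partie \ref{dyna} and points the reader to figure \ref{formesecteur}. Your write-up is therefore the actual argument the paper gestures at, carried out in full. The formal reductions (closedness, $\g$-stability, convexity of $\F$) are indeed automatic from the definition of a secteur. The substantive content is your bound on $\overline{\F} \cap \partial \O$, obtained by combining Caratheodory (compactness of $\mathrm{Conv}(S)$), Milman (extreme points of $\mathrm{Conv}(S)$ lie in $S$), and the exposed-face argument along a support line of $\O$ at a boundary point of $\overline{\F}$; this is precisely the step that ``simple consequence de la dynamique'' sweeps under the rug, and your treatment of it is sound. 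Two things deserve to be made explicit when you invoke partie \ref{dyna}: first, the claim that the orbit of a compact subset of $\O$ accumulates only at the fixed points of $\g$ on $\partial \O$ requires that the other invariant lines of $\g$ avoid $\O$ (for instance $D^{+,0}_{\g}$ and $D^{-,0}_{\g}$ in the hyperbolic cases, $D_{\g}$ in the parabolic and quasi-hyperbolic cases), which is exactly the tangency and support-line content of propositions \ref{hyp}, \ref{quasihyp} and \ref{para}, so the appeal is legitimate; second, the accumulation must be uniform on the compact $K$, which follows from the explicit matrix form of $\g^n$. With those remarks your argument gives both inclusions of $\overline{\F} \cap \partial \O = \overline{\Ax(\g)}$ (extremality for one, the limiting-chords argument for the other), which is more than the paper ever writes down.
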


La proposition suivante d\'ecrit la propri\'et\'e essentielle des secteurs.

\begin{prop}
Soient $\O$ un ouvert proprement convexe de $\P$ et un \'el\'ement $\g$ de $\Aut(\O)$, si $\F\subset \F'$ sont deux secteurs de $\g$ alors $<\g>$ agit cocompactement sur $\overline{\F-\F'}\cap \O$.
\end{prop}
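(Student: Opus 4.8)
Le plan est de ramener l'\'enonc\'e \`a une propri\'et\'e de compacit\'e relative d'un domaine fondamental. Comme $\F\subset\F'$, c'est la r\'egion comprise entre les deux secteurs embo\^it\'es qui est en jeu; posons donc $R=\overline{\F'-\F}\cap\O$. D'apr\`es la proposition pr\'ec\'edente, $\F$ et $\F'$ sont des ferm\'es convexes $\g$-stables de $\O$, de sorte que $R$ est ferm\'e dans $\O$ et $\g$-stable. Le groupe $\langle\g\rangle$ agit proprement sur $\O$ (Fait de la partie \ref{Hil}), donc sur $R$. Je construirais un domaine fondamental ferm\'e $F$ pour l'action de $\langle\g\rangle$ sur $\O$ et je montrerais que la cocompacit\'e de l'action sur $R$ \'equivaut \`a la compacit\'e relative de $R\cap F$ dans $\O$: en effet, $R$ \'etant $\g$-stable et $\O=\bigcup_{n\in\Z}\g^nF$, on a $R=\bigcup_{n\in\Z}\g^n(R\cap F)$, donc $R$ est recouvert par les $\langle\g\rangle$-translat\'es du compact $\overline{R\cap F}$. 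Tout reviendrait alors \`a obtenir $\overline{R\cap F}\cap\partial\O=\varnothing$ pour un choix convenable de $F$.

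Ensuite, j'utiliserais l'inclusion $R\subset\F'$ pour localiser le bord: $\overline{R}\cap\partial\O\subset\overline{\F'}\cap\partial\O=:\Lambda$, l'ensemble $\Lambda$ \'etant donn\'e explicitement par la proposition d\'ecrivant la forme des secteurs, \`a savoir $\Lambda=\{p^+_{\g},p^-_{\g}\}$ si $\g$ est hyperbolique d'axe dans $\O$, $\Lambda=\overline{\Ax(\g)}$ si $\g$ est hyperbolique d'axe dans $\partial\O$ ou quasi-hyperbolique, et $\Lambda=\{p_{\g}\}$ si $\g$ est parabolique. Le cas elliptique se r\`egle d'embl\'ee: le lemme de Selberg permet de supposer $\g$ d'ordre fini, donc $\langle\g\rangle$ fini et, les secteurs elliptiques \'etant born\'es, $R$ compact.

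Pour la partie centrale de l'argument, je traiterais d'abord le cas mod\`ele o\`u $\g$ est hyperbolique avec $\Ax(\g)\subset\O$ et $p^0_{\g}\notin\overline\O$. Dans une base propre $(e^+,e^0,e^-)$ de $\g$, la fonction de hauteur $h(x^+e^++x^0e^0+x^-e^-)=\log|x^+/x^-|$ v\'erifie $h\circ\g=h+\tau$ avec $\tau=\log(\lambda^+/\lambda^-)>0$; ses lignes de niveau sont les droites passant par $p^0_{\g}=[e^0]\notin\overline\O$, et elle est finie sur tout $\O$ car les droites $\{x^+=0\}$ et $\{x^-=0\}$ sont les tangentes \`a $\partial\O$ en $p^-_{\g}$ et $p^+_{\g}$ (proposition \ref{hyp} (A)). Je prendrais pour $F$ la bande $\{h_0\leqslant h\leqslant h_0+\tau\}\cap\O$, qui est convexe, ferm\'ee, et un domaine fondamental localement fini pour $\langle\g\rangle$ (car $\g$ envoie $\{h=h_0\}$ sur $\{h=h_0+\tau\}$ et les bandes $\g^nF$ pavent $\O$). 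Sur $F$ la fonction $h$ est born\'ee, tandis que $h\to+\infty$ en $p^+_{\g}$ et $h\to-\infty$ en $p^-_{\g}$; ainsi $\overline{F}\cap\partial\O$ ne rencontre pas $\Lambda$, d'o\`u $\overline{R\cap F}\cap\partial\O=\varnothing$. Le cas parabolique est analogue, au moyen d'une hauteur adapt\'ee \`a sa dynamique.

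Le principal obstacle se concentre dans les cas o\`u $\Lambda$ est un segment du bord, soit $\Ax(\g)\subset\partial\O$ (hyperbolique ou quasi-hyperbolique): la bande $F$ rencontre alors $\Lambda$ et l'argument de hauteur ne contr\^ole plus la situation au-dessus de l'int\'erieur du segment. Il faudrait y adjoindre le fait que le collier $R$ compris entre les deux secteurs reste, le long de l'int\'erieur du segment, \`a distance de Hilbert \`a la fois strictement positive du petit secteur $\F$ (donc minor\'ee de $\partial\O$) et born\'ee de $\F$ (largeur born\'ee du collier), les extr\'emit\'es $p^\pm_{\g}$ du segment \'etant \'ecart\'ees comme pr\'ec\'edemment par la hauteur. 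C'est cette estimation uniforme de la position du collier au voisinage du segment, qui s'appuie sur l'\'etude dynamique de la partie \ref{dyna}, qui constitue le point d\'elicat; une fois acquise, $R\cap F$ est relativement compact et l'action de $\langle\g\rangle$ sur $R$ est cocompacte.
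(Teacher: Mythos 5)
Votre architecture de preuve est saine, et elle est m\^eme plus d\'etaill\'ee que la d\'emonstration du texte, qui se borne \`a renvoyer \`a une \'etude exhaustive des cas et \`a la figure \ref{formesecteur} : la lecture $R=\overline{\F'-\F}\cap\O$ de l'\'enonc\'e, la r\'eduction de la cocompacit\'e \`a la compacit\'e relative de $R\cap F$ dans $\O$, et le cas hyperbolique avec $\Ax(\g)\subset\O$ et $p^{0}_{\g}\notin\overline{\O}$ trait\'e par la fonction de hauteur sont corrects. Le probl\`eme est que votre r\'edaction s'interrompt exactement l\`a o\`u la proposition a un contenu : dans les cas o\`u l'axe est dans le bord, vous \'ecrivez vous-m\^eme que l'estimation cl\'e \emph{resterait \`a adjoindre} et qu'elle constitue \emph{le point d\'elicat} ; c'est donc une preuve inachev\'ee. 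Ce qu'il faut \'etablir est double. (i) Le petit secteur $\F$ contient d\'ej\`a tout point de $\O$ assez proche, au sens euclidien, de l'int\'erieur de l'axe : en effet $\F$, convexe, contient les segments $[\g^{-n}x,\g^{n}x]$ pour $x$ dans le compact qui l'engendre, et ces segments convergent au sens de Hausdorff vers $\overline{\Ax(\g)}$ ; on en d\'eduit $\overline{R}\cap\partial\O\subset\{p^{+}_{\g},p^{-}_{\g}\}$ (resp. $\{p^{1}_{\g},p^{2}_{\g}\}$). Notez que ce fait, joint \`a votre hauteur, r\`egle d\'ej\`a le cas hyperbolique avec $\Ax(\g)\subset\partial\O$, puisque les lignes de niveau passent alors par $p^{0}_{\g}\notin\overline{\O}$ et que la bande fondamentale \'evite $p^{\pm}_{\g}$. (ii) En revanche, pour un \'el\'ement quasi-hyperbolique, les lignes de niveau du cocycle naturel $h=\log|x_{2}/x_{3}|$ forment le faisceau des droites passant par $p^{2}_{\g}\in\partial\O$ : la bande fondamentale s'accumule elle-m\^eme en $p^{2}_{\g}$, et exclure $R\cap F$ d'un voisinage de ce point exige l'asymptotique pr\'ecise des deux courbes bordant le collier (tangence exponentielle \`a $D_{\g}$ en $p^{2}_{\g}$), c'est-\`a-dire la dynamique explicite de la partie \ref{dyna} ; c'est cela qui manque. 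Au passage, la propri\'et\'e que vous annoncez devoir d\'emontrer est mal formul\'ee : le collier est adjacent \`a $\F$, sa distance de Hilbert \`a $\F$ est nulle, et une minoration de cette distance n'aurait de toute fa\c{c}on aucun rapport avec la position du collier par rapport \`a $\partial\O$.

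Deux autres points faibles. D'abord, le cas parabolique n'est pas \emph{analogue} \`a votre cas mod\`ele : tout faisceau de droites invariant par un parabolique a pour point base $p_{\g}\in\partial\O$, son unique point fixe ; il n'existe donc pas de hauteur \'equivariante dont les niveaux \'evitent $\overline{\O}$, la bande fondamentale s'accumule en $p_{\g}$, et l'on retombe pr\'ecis\'ement sur la difficult\'e (ii) ci-dessus. Une issue naturelle, dans l'esprit du texte, serait d'encadrer le collier entre deux ellipses du faisceau $\g$-invariant du lemme \ref{faiselli} et de se ramener au calcul classique du plan hyperbolique (la r\'egion entre deux horoboules modulo le parabolique est compacte), mais cela doit \^etre r\'edig\'e. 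Ensuite, dans le cas elliptique, le lemme de Selberg est hors de propos : il ne rend pas d'ordre fini un \'el\'ement elliptique d'angle irrationnel. L'argument correct est que l'adh\'erence de $\langle\g\rangle$ dans $\Aut(\O)$ est un groupe compact, donc l'orbite d'un compact est relativement compacte dans $\O$ et tout secteur, donc $R$, est compact ; votre conclusion est bonne, pas sa justification. Enfin, votre hauteur couvre sans changement le cas hyperbolique avec $\Ax(\g)\subset\O$ et $p^{0}_{\g}\in\partial\O$, que vous omettez, car les droites $\{x^{+}=0\}$ et $\{x^{-}=0\}$ restent des droites d'appui de $\O$. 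En r\'esum\'e : le cadre est juste, mais les estimations de bord au cas par cas --- pr\'ecis\'ement ce que le texte lit sur la forme des secteurs et que vous reportez \`a plus tard --- sont le contenu r\'eel de la proposition, et elles manquent.
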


\begin{proof}
Une \'etude exhaustive en distinguant les cas:
\begin{itemize}
\item l'\'el\'ement $\g$ est hyperbolique avec $\Ax(\g) \subset \O$,

\item l'\'el\'ement $\g$ est quasi-hyperbolique ou hyperbolique avec $\Ax(\g) \subset \partial \O$,

\item l'\'el\'ement $\g$ est parabolique,

\item l'\'el\'ement $\g$ est elliptique,
\end{itemize}
et en regardant la figure \ref{formesecteur} rend cette proposition claire.
\end{proof}

\begin{prop}\label{inftyfini}
Soit $\G$ un sous-groupe discret de $\s$ qui pr\'eserve un ouvert
proprement convexe $\O$ de $\P$, on se donne un domaine Dirichlet-Lee $D$ pour l'action de $\G$ sur $\O$.
Soient $\g \in \G$ et $\F$ un secteur de $\g$, alors le domaine $D$ ne rencontre qu'un nombre fini d'images $(\delta \F)_{\delta \in \G}$.
\end{prop}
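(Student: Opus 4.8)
The plan is to reformulate the statement and then split according to the dynamics of $\g$. Since $\F$ is $\g$-stable we have $\delta\g^n\F=\delta\F$, so if $\eta=\delta^{-1}$ then $\delta\F\cap D\neq\varnothing$ is equivalent to $\eta D\cap\F\neq\varnothing$; moreover $\{\eta\in\G\,|\,\eta D\cap\F\neq\varnothing\}$ is stable under left multiplication by $\langle\g\rangle$, again because $\g\F=\F$. Hence it suffices to show that this set consists of finitely many left cosets of $\langle\g\rangle$: two elements in the same coset produce the same translate $\delta\F=\eta^{-1}\F$, so finitely many cosets means finitely many distinct translates meeting $D$. The tools I would use are the local finiteness of $D$ (Theorem~\ref{Lee}), the properness of $p:D\to\Quo$ (Proposition~\ref{proprete}), the properness of the action of $\G$ on $\O$, and the two propositions just proved, describing the shape of $\F$ and the cocompactness of $\langle\g\rangle$ on the region between two nested sectors.

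The easy cases are those where $\langle\g\rangle$ already acts cocompactly on $\F$ itself, namely $\g$ elliptic and $\g$ hyperbolic with $\Ax(\g)\subset\O$. There one writes $\F=\bigcup_n\g^nK$ with $K\subset\O$ compact, so that $\eta D\cap\F\neq\varnothing$ forces $\g^{-n}\eta D\cap K\neq\varnothing$ for some $n$. By Theorem~\ref{Lee} there are only finitely many $\zeta\in\G$ with $\zeta D\cap K\neq\varnothing$, so $\eta\in\langle\g\rangle\cdot(\text{finite set})$ and we are done.

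It remains to treat the cases where $\F$ runs out to the boundary: $\g$ parabolic, quasi-hyperbolic, or hyperbolic with $\Ax(\g)\subset\partial\O$. Here $\overline{\F}\cap\partial\O$ is reduced to $\{p_{\g}\}$ or to $\overline{\Ax(\g)}$, and $\langle\g\rangle$ fails to be cocompact on $\F$ because of the funnel converging to this fixed set. I would fix a strictly smaller subsector $\F_0\subset\F$ and set $A=\overline{\F-\F_0}\cap\O$, on which $\langle\g\rangle$ acts cocompactly by the preceding proposition. Exactly as above, the translates $\eta D$ meeting $A$ account for only finitely many $\langle\g\rangle$-cosets. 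Writing $\F=\F_0\cup A$, any translate meeting $\F$ but avoiding $A$ must meet the tip $\F_0$; so the entire difficulty is concentrated in $\F_0$, which is itself $\g$-invariant and hence cannot be pushed into the cocompact region $A$ by powers of $\g$.

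To control the tip I would argue by contradiction: assume infinitely many translates $\delta_p\F$, pairwise inequivalent modulo $\langle\g\rangle$, meet $D$ inside their tip, and pick $z_p\in D\cap\delta_p\F_0$, so that $w_p:=\delta_p^{-1}z_p\in\F_0$. By Proposition~\ref{proprete} I separate according to whether $p(z_p)$ stays in a compact part of $\Quo$ or escapes to an end. If it stays, then $z_p$ remains in a compact subset of $\O$ and, after extraction, $z_p\to z_\infty\in D$; if the $w_p$ also stayed compact the proper action of $\G$ on $\O$ would bound the number of possible $\delta_p$, contradicting the hypothesis, so $w_p\to p_{\g}$, and since $\g$ is a $d_{\O}$-isometry and $d_{\O}(w_p,\delta_p^{-1}z_\infty)=d_{\O}(z_p,z_\infty)\to 0$, also $\delta_p^{-1}z_\infty\to p_{\g}$. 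This reduces everything to the genuine core of the proposition: ruling out an orbit $\delta_p^{-1}z_\infty$ converging to $p_{\g}$ while remaining within bounded Hilbert distance of the $\g$-invariant tip $\F_0$. I expect this to be the main obstacle, precisely because powers of $\g$ preserve $\F_0$ and so cannot normalize such points into $A$. To break it I would use the explicit dynamics near $p_{\g}$: by Proposition~\ref{para} and its quasi-hyperbolic and hyperbolic analogues the relevant orbits accumulate on $p_{\g}$ along a well-understood curve tangent to the fixed line there (an ellipse in the parabolic case); comparing the rate at which $\delta_p^{-1}z_\infty$ approaches $p_{\g}$ with that of the $\g$-orbit, and invoking the proper convexity of $\O$, one should produce integers $n_p$ with $\g^{n_p}\delta_p^{-1}z_\infty$ bounded, whence the proper action of $\G$ confines $\g^{n_p}\delta_p^{-1}$ to a finite set and the $\delta_p$ to finitely many $\langle\g\rangle$-cosets. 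The escaping situation $z_p\to\partial\O$ is treated symmetrically, exchanging the roles of $z_p$ and $w_p$ through $\delta_p$.
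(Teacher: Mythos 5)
Your reformulation into left cosets of $\langle\g\rangle$ is fine, and your ``easy'' cases are correct: sectors of elliptic elements are compact, and for $\g$ hyperbolic with $\Ax(\g)\subset\O$ one can indeed show that $\langle\g\rangle$ acts cocompactly on any sector (e.g.\ by projecting along the pencil of lines through $p^0_{\g}$), after which local finiteness of $D$ concludes. The gap is in the remaining cases ($\g$ parabolic, quasi-hyperbolique, or hyperbolique avec $\Ax(\g)\subset\partial\O$): you correctly concentrate the difficulty on the translates of $D$ meeting only the tip $\F_0$, but the step you defer --- ``comparing the rate at which $\delta_p^{-1}z_\infty$ approaches $p_{\g}$ with that of the $\g$-orbit, one should produce integers $n_p$ with $\g^{n_p}\delta_p^{-1}z_\infty$ bounded'' --- is not a consequence of convergence to $p_{\g}$ and of the local dynamics. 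The element $\g$ preserves each invariant curve of its one-parameter group (the ellipses of the lemme \ref{faiselli} in the parabolic case) and only moves points \emph{along} these levels; a sequence converging to $p_{\g}$ tangentially, i.e.\ along levels that themselves shrink to $p_{\g}$, can never be brought back into a compact set by powers of $\g$. Excluding such deep excursions for points of a $\G$-orbit is precisely a statement of Shimizu--Margulis type about the discrete group $\G$ near the fixed point of $\g$; it is essentially equivalent to the proposition you are trying to prove (indeed, it amounts to the injectivity of $\F_0/_{\langle\g\rangle}\to\Quo$ deep in the cusp, which the paper deduces \emph{from} this proposition), so invoking it here is circular, and your closing remark that the case $z_p\to\partial\O$ ``is treated symmetrically'' cannot be right either, since at this stage nothing is known about $\overline{D}\cap\partial\O$.

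The paper's proof avoids this local analysis entirely by exploiting a global property your argument never uses: the \emph{convexity, hence connectedness, of the translates} $\delta^{-1}D$. By the lemme \ref{chiant} one may choose the subsector $\F_0\subset\F$ so that \emph{no} translate $\delta D$ is entirely contained in $\F_0$; then any $\delta^{-1}D$ meeting $\F$ must meet the collar $V=\overline{\F-\F_0}\cap\O$ (a connected set meeting $\F$ and avoiding $V$ would be trapped in $\F_0$, contradicting the choice of $\F_0$), and cocompactness of $\langle\g\rangle$ on $V$ together with local finiteness of $D$ yields the finitely many cosets exactly as in your easy case. All the difficulty you located at the tip is thus absorbed into the lemme \ref{chiant}, whose proof is again global --- an argument by contradiction with the local finiteness of the family of lines bounding the translates of $D$ (corollaire \ref{finitude}) --- rather than a dynamical estimate at $p_{\g}$. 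This use of convexity of the fundamental domain, via the lemme \ref{chiant}, is the missing idea in your proposal.
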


Nous allons avoir besoin de plusieurs lemmes pr\'eliminaires.

\begin{lemm}\label{rajout1}
Soit $\G$ un sous-groupe discret de $\s$ qui pr\'eserve un ouvert
proprement convexe $\O$ de $\P$ qui n'est pas un triangle. Si le groupe $\G$ contient un élément hyperbolique avec $\Ax(\g) \subset \partial \O$ alors il existe un ouvert proprement convexe $\O'$ préservé par $\G$ tel que $\Ax(\g) \subset \O'$.
\end{lemm}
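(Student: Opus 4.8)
The plan is to first use Proposition \ref{hyp} to pin down the geometry. Since $\O$ is not a triangle, case (D) is excluded, so a hyperbolic $\g$ with $\Ax(\g)\subset\partial\O$ must be in case (B): the point $p^0_{\g}$ lies outside $\overline\O$, the axis $\Ax(\g)$ is a maximal segment of $\partial\O$ carried by the line $D^{+,-}_{\g}$, and $\partial\O$ has corners at $p^+_{\g}$ and $p^-_{\g}$ whose half-tangents are $D^{+,-}_{\g}$ together with $D^{+,0}_{\g}$ (resp. $D^{-,0}_{\g}$). Thus $\overline\O$ is tangent to the triangle $T$ with vertices $p^+_{\g},p^0_{\g},p^-_{\g}$ along the edge $\overline{\Ax(\g)}$ and at the two opposite corners.

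The geometric idea is to \emph{double $\O$ across the axis}. Let $R\in\mathrm{PGL}_3(\R)$ be the involution fixing the line $D^{+,-}_{\g}$ pointwise and fixing $p^0_{\g}$ (in the eigenbasis of $\g$ it is $\mathrm{diag}(1,-1,1)$); it commutes with $\g$ and exchanges the two sides of $D^{+,-}_{\g}$. Set $W=\mathrm{int}\,\mathrm{Conv}(\overline\O\cup R\overline\O)$. In the case-(B) picture $R\overline\O$ is the mirror copy on the far side of $D^{+,-}_{\g}$, both copies being tangent to $D^{+,0}_{\g}$ at $p^+_{\g}$ and to $D^{-,0}_{\g}$ at $p^-_{\g}$ and staying away from $p^0_{\g}$; hence $W$ is a properly convex open set, invariant under $\langle\g,R\rangle$, and, since the interior points of $\Ax(\g)$ are smooth boundary points of $\O$ where the two copies glue across $D^{+,-}_{\g}$, one has $\Ax(\g)\subset W$.

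This $W$ is only $\langle\g\rangle$-invariant, not $\G$-invariant, because $R$ knows the eigenbasis of $\g$. To repair this I would spread the construction over the orbit: put $\O'=\mathrm{int}\,\mathrm{Conv}\big(\bigcup_{\delta\in\G}\delta W\big)$. Each $\delta W$ is the doubling of $\O$ across the translated axis-line $\delta D^{+,-}_{\g}$ (equivalently it uses the involution $\delta R\delta^{-1}$), so $\G$ permutes the family $\{\delta W\}_{\delta\in\G}$ and $\O'$ is $\G$-invariant. Moreover $\O\subset W\subset\O'$ and $\Ax(\g)\subset W\subset\O'$, so the two required properties are immediate; note also that when $\G$ is elementary (virtually cyclic) one has $\delta R\delta^{-1}=R$ and $\O'=W$, recovering the naive doubling.

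The one nontrivial point, and the main obstacle, is that $\O'$ is \emph{properly} convex, i.e. that enlarging $\O$ across every translate of the axis at once does not create a set containing a projective line. I would check this dually. Writing $g_\delta=\delta R\delta^{-1}$, dualizing the convex hull of a union into an intersection gives $(\O')^*=\O^*\cap\bigcap_{\delta\in\G} g_\delta\,\O^*$, so proper convexity of $\O'$ amounts to this intersection having nonempty interior. Here the dual dynamics is favourable: on $\O^*$ the element $\g$ acts hyperbolically with middle fixed point exactly $(D^{+,-}_{\g})^*$, a \emph{corner} of $\partial\O^*$ dual to the segment $\Ax(\g)$, and the dichotomy of Proposition \ref{hyp} now places $\g$ in case (C) on $\O^*$, so its dual axis lies in the \emph{interior} of $\O^*$. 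Each $g_\delta$ is the reflection fixing the corresponding conjugate dual axis-line, and $\O^*\cap g_\delta\O^*$ is precisely $\O^*$ with the corner $\delta(D^{+,-}_{\g})^*$ cut off while a symmetric neighbourhood of the interior dual axis is retained. Thus every cut preserves a neighbourhood of the dual-axis region; the delicate step is to produce a single open set surviving \emph{all} the cuts simultaneously, which I would extract from the properness and discreteness of the $\G$-action on $\O^*$ (so that the removed corners $\G\cdot(D^{+,-}_{\g})^*$, being the non-attracting fixed points, stay off the retained core), together with the already-settled virtually cyclic case. Once $(\O')^*$ is shown to have nonempty interior, $\O'$ is the desired $\G$-invariant properly convex open set containing $\Ax(\g)$.
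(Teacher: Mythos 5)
Your reduction to case (B) of Proposition \ref{hyp} and your single-element doubling are correct: $R$ commutes with $\g$, and $W=\mathrm{int}\,\mathrm{Conv}(\overline{\O}\cup R\overline{\O})$ is open, properly convex, $\langle\g,R\rangle$-invariant and contains $\Ax(\g)$. (For the proper convexity of $W$, which you only assert, note that $R$ is the harmonic homology with centre $p^0_{\g}$ and axis $D^{+,-}_{\g}$, hence fixes every line through $p^0_{\g}$; since $p^0_{\g}\notin\overline{\O}$ there is a line through $p^0_{\g}$ disjoint from $\overline{\O}$, it is then disjoint from $\overline{\O}\cup R\overline{\O}$, and the hull computed in the complementary affine chart is compact.) The genuine gap is the step you flag yourself: the proper convexity of $\O'=\mathrm{int}\,\mathrm{Conv}\bigl(\bigcup_{\delta\in\G}\delta W\bigr)$ is never proved, only announced as something you ``would extract'' from properness and discreteness. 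Since $\G$-invariance and $\Ax(\g)\subset\O'$ are automatic for any such saturation, this unproved step is the entire content of the lemma.

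Moreover, the mechanism you sketch does not work as stated. The objects that must be kept away from a prospective core of $(\O')^*$ are not the corners $\check\delta(D^{+,-}_{\g})^*$ --- these are boundary points of $\O^*$, so they trivially avoid any compact subset of the interior --- but the excised regions $\O^*\setminus{}^tg_\delta^{-1}\O^*$ attached to them, which are \emph{non-compact} subsets of $\O^*$ reaching into the interior. Properness and discreteness of the action concern compact sets, so they do not by themselves prevent infinitely many of these translated regions from sweeping through any candidate core; some uniform control of their size is needed, and nothing in your text provides it. (Incidentally, your duality formula should read $(g_\delta\O)^*={}^tg_\delta^{-1}\O^*$, though for these involutions this is harmless.) For comparison, the paper avoids reflections and duality altogether: it sets $\O'=\O\cup\G\cdot(T_{\g}\cup\Ax(\g))$, where $T_{\g}$ is the triangle with vertices $p^+_{\g},p^-_{\g},p^0_{\g}$ exterior to $\O$, i.e.\ $\overline{\O'}=\mathrm{Conv}\bigl(\overline{\O}\cup\G\cdot p^0_{\g}\bigr)$, and derives proper convexity from the hypothesis that $\O$ is not a triangle. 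Since $g_\delta\overline{\O}=\delta R\overline{\O}\subset\delta\overline{T_{\g}}$, your $\O'$ is contained in the paper's, so your construction would follow from the paper's argument --- but as written it does not close the one point that needed proof.
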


\begin{proof}
Le lemme \ref{hyp} montre qu'alors le point attractif et le point répulsif de $\g$ ne sont pas $\Cc^1$. On considère le triangle $T_{\g}$ de sommets $p^+_{\g}, p^-_{\g}, p^0_{\g}$ qui n'est pas inclus dans $\O$ et dont l'un des côtés est $\Ax(\g)$. On prend pour $\O'$ la réunion de $\O$ et de l'orbite de $T_{\g} \cup \Ax(\g)$ sous $\G$. Le convexe $\O'$ est proprement convexe car le convexe $\O$ n'est pas un triangle.
\end{proof}

\begin{lemm}\label{rajout2}
Soit $\G$ un sous-groupe discret de $\s$ qui pr\'eserve un ouvert
proprement convexe $\O$ de $\P$ qui n'est pas un triangle. Si le groupe $\G$ contient un élément parabolique ou un élément quasi-hyperbolique ou un élément hyperbolique avec $\Ax(\g) \subset \partial \O$ alors le quotient $\Quo$ n'est pas compact.
\end{lemm}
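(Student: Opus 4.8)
Je raisonne par l'absurde, en supposant que $\Quo$ est compact. L'idée directrice est que la compacité du quotient force la longueur de translation de tout élément d'ordre infini à être \emph{réalisée} en un point intérieur de $\O$, ce qui est impossible pour chacun des trois types d'éléments de l'énoncé. Pour $\delta \in \Aut(\O)$ je note $\ell(\delta) = \inf_{x \in \O} d_{\O}(x, \delta x)$ ; c'est l'infimum d'une fonction de déplacement continue et $\ell(\delta) = \log(\lambda^+/\lambda^-)$, où $\lambda^+$ et $\lambda^-$ désignent les modules des valeurs propres extrémales de $\delta$. Observons au passage que l'obstruction ne peut pas se lire uniquement sur la régularité du bord, car la proposition \ref{para} affirme au contraire que le point fixe d'un parabolique est $\Cc^1$ et n'appartient à aucun segment : c'est donc un argument métrique uniforme qu'il faut produire.

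\emph{Première étape.} Si $\Quo$ est compact, tout $\g \in \G$ d'ordre infini possède un conjugué $\g' = g\g g^{-1}$ (avec $g \in \G$) dont la fonction de déplacement atteint son infimum en un point $y \in \O$. En effet, soit $K$ un domaine fondamental compact pour l'action de $\G$ (fourni par le théorème \ref{Lee}, le quotient étant compact) et $(x_n)$ une suite minimisante pour $x \mapsto d_{\O}(x, \g x)$. On choisit $g_n \in \G$ tel que $g_n x_n \in K$ et on pose $\delta_n = g_n \g g_n^{-1}$, de sorte que $d_{\O}(g_n x_n, \delta_n g_n x_n) = d_{\O}(x_n, \g x_n) \to \ell(\g)$. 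Ainsi $\delta_n$ déplace le point $g_n x_n \in K$ d'une distance bornée ; comme $\G$ agit proprement, l'ensemble des $\delta \in \G$ déplaçant un point de $K$ d'au plus $\ell(\g)+1$ est fini. Quitte à extraire, $\delta_n = \g'$ est donc constant et $g_n x_n \to y \in K$, d'où $d_{\O}(y, \g' y) = \ell(\g') = \ell(\g)$. Comme $g_n \in \G$ préserve $\O$, l'élément $\g'$ a le même type dynamique que $\g$ et son axe (ou son point fixe) occupe la même position relative à $\O$ : en particulier $\g'$ est encore parabolique, resp. quasi-hyperbolique, resp. hyperbolique avec $\Ax(\g') \subset \partial \O$.

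\emph{Seconde étape.} Aucun de ces trois types ne réalise son infimum de déplacement en un point intérieur, ce qui contredit la première étape et conclut. Si $\g'$ est parabolique, alors $\ell(\g') = 0$ tandis que $\g'$ n'a aucun point fixe dans $\O$ (proposition \ref{para}), donc $d_{\O}(x, \g' x) > 0$ pour tout $x \in \O$ : l'infimum n'est pas atteint. Si $\g'$ est hyperbolique avec $\Ax(\g') \subset \partial \O$ (proposition \ref{hyp}, cas $(B)$), resp. quasi-hyperbolique (proposition \ref{quasihyp}), l'égalité $d_{\O}(x, \g' x) = \log(\lambda^+/\lambda^-)$ contraint $x$ à appartenir à la droite joignant les deux points fixes extrémaux $p^+_{\g'}, p^-_{\g'}$ (resp. $p^1_{\g'}, p^2_{\g'}$) ; or cette droite ne rencontre $\overline{\O}$ que le long de $\overline{\Ax(\g')} \subset \partial \O$, donc en aucun point intérieur. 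Le principal obstacle est précisément l'analyse du cas d'égalité dans cette minoration du déplacement de Hilbert : c'est elle qui relie la réalisation de $\ell(\g')$ à l'existence d'un axe rencontrant l'intérieur de $\O$, et c'est exactement ce qui fait défaut lorsque l'axe est contenu dans le bord.
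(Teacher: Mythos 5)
Your strategy is genuinely different from the paper's: the paper never looks at translation lengths. For a parabolic (and, it claims, a quasi-hyperbolic) element it exhibits points whose displacement tends to $0$ and invokes continuity and positivity of the injectivity radius on a compact manifold; for a hyperbolic element with $\Ax(\g) \subset \partial \O$ it enlarges $\O$ to a $\G$-invariant convex $\O'$ containing $\Ax(\g)$ (lemme \ref{rajout1}) and shows that the metric induced by $d_{\O'}$ makes the quotient incomplete. Your first step is correct and complete: compactness of $\Quo$ gives a compact Dirichlet-Lee domain (th\'eor\`eme \ref{Lee} and proposition \ref{proprete}), and properness of the action lets you extract a conjugate $\g'$ whose displacement function attains the infimum $\ell(\g)$.

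The gap is that your second step rests entirely on two statements that you assert but never prove, and that are proved nowhere in the paper: (i) the formula $\ell(\delta) = \log(\lambda^+/\lambda^-)$ for every $\delta \in \Aut(\O)$, in particular $\ell(\g') = 0$ when $\g'$ is parabolic, and (ii) the equality-case rigidity for quasi-hyperbolic elements and for hyperbolic elements with $\Ax(\g') \subset \partial\O$. Neither is formal. For (i), what your parabolic case needs is the upper bound, i.e. the existence of points displaced arbitrarily little; that is precisely the nontrivial content of the paper's own proof (a cross-ratio estimate in the pencil of lines through $p_{\g'}$, using the $\Cc^1$ regularity of $\partial \O$ at $p_{\g'}$ from proposition \ref{para}), so quoting it as a known formula makes your parabolic case essentially circular. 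You also cannot bypass (i) by the classical ``an attained positive minimum produces an invariant axis, hence two fixed points on $\partial\O$'' argument, since Hilbert metrics are in general not Busemann convex. For (ii) --- which you yourself call ``le principal obstacle'' and then do not carry out --- a real proof is required; for instance, for $\g'$ quasi-hyperbolic, project the chord through $x$ and $\g' x$ from $p^2_{\g'}$: on the pencil of lines through $p^2_{\g'}$, the element $\g'$ acts as a hyperbolic homography whose two fixed points, the lines $(p^1_{\g'} p^2_{\g'})$ and $D_{\g'}$, bound the interval of lines meeting $\O$; monotonicity of the cross-ratio then gives $d_{\O}(x, \g' x) \geq \log(\lambda^+/\lambda^-)$ with equality only if the chord ends on these two fixed lines, and one must still check, by an invariance argument, that $\overline{\O} \cap D_{\g'} = \{ p^2_{\g'} \}$ to rule equality out. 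Until (i) and (ii) are supplied the proof is incomplete. It is worth adding that this lower bound, once proved, shows the displacement of a quasi-hyperbolic element is bounded below by $\log(\lambda^+/\lambda^-) > 0$, so your route (unlike the paper's displacement-to-zero argument, which cannot work in that case) is actually the right way to handle quasi-hyperbolic elements; that makes it all the more important to actually prove the two facts on which it rests.
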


\begin{proof}
Commençons par supposer que le groupe $\G$ contient un élément parabolique ou un élément quasi-hyperbolique $\g$. On va exhiber une suite de points $(x_n)_{n \in \N}$ tel que $d_{\O}(x_n, \g (x_n)) \to 0$ lorsque $n \to \infty$. L'existence d'une telle suite sur une variété compacte est impossible, puisque le rayon d'injectivité d'un point d'une variété finslérienne est une fonction continue et positive.

On considère $\Delta$ une droite passant par $p= p_{\g}$ si $\g$ est parabolique et par $p =p^{1}_{\g}$ si $\g$ est quasi-hyperbolique. On suppose de plus que $\Delta \cap \O \neq \varnothing$. On considère une suite de points $(x_n)_{n \in \N}$ de $\Delta$ qui tend vers $p$. On suppose que la droite $(x_n \g(x_n))$ rencontre $\partial \O$ en $a_n$ et $b_n$ et que $x_n$ est entre $a_n$ et $\g(x_n)$.

La quantité $d_{\O}(x_n, \g (x_n))$ est égale à la moitié du logarithme du birraport des droites $\Delta^+_n, \Delta, \g(\Delta)$ et $\Delta^-_n$, où $\Delta^+_n$ (resp. $\Delta^-_n$) est la droite passant par $p$ et le point $a_n$ (resp. $b_n$). Le point $p$ est un point $\Cc^1$ de la courbe $\partial \O$ et le point $\g(x_n)$ converge vers $p$, par suite les droites $\Delta_n^+$ et $\Delta^-_n$ convergent vers la tangente à $\partial \O$ en $p$. En particulier, la quantité $d_{\O}(x_n, \g (x_n))$ tend vers $0$.

Supposons à présent que le groupe $\G$ contient un élément hyperbolique avec $\Ax(\g) \subset \partial \O$. On va exhiber une distance $d'$ sur $\Quo$ qui induit la même topologie mais qui fait de $(\Quo,d')$ un espace métrique incomplet. Ce qui est impossible sur une variété compacte.

Le lemme \ref{rajout1} montre qu'il existe alors un ouvert proprement convexe $\O'$ préservé par $\G$ tel que $\Ax(\g) \subset \O'$.  On munit $\Quo$ de la distance induite par $d_{\O'}$. Les composantes connexes $\O' \setminus \O$ sont permutées par $\g$ et le stabilisateur d'une composante connexe est engendré par une racine de $\g$ à indice 2 près (lemme \ref{centra}). En particulier, on peut trouver une suite de points de $\Quo$ qui converge vers un point de la géodésique $\Ax(\g)/_{<\g>}$ dans $\O'/_{\G}$. L'espace métrique $(\Quo,d')$ n'est donc pas complet.
\end{proof}

%Par conséquent, la projection de $\O' \setminus \O$ sur $\O'/_{\G}$ est un cylindre ouvert. De plus, considérons une suite de points $(x_n)_{n \in \N}$ (resp. $(y_n)_{n \in \N}$) de $\O$ qui convergent vers $p_{\g}^+$ (resp. $p_{\g}^-$). La suite de segments $[x_n,y_n]$ converge dans $\O'/_{\G}$ vers la géodésique $\Ax(\g)/_{<\g>}$. Par conséquent, cette suite ne sous-converge pas dans $\Quo$.

\begin{lemm}\label{chiant}
Soit $\G$ un sous-groupe discret de $\s$ qui pr\'eserve un ouvert proprement convexe $\O$ de $\P$ qui n'est pas un triangle. On se donne un domaine de Dirichlet-Lee $D$ pour l'action de $\G$ sur $\O$. Soit $\g \in \G$, il existe un secteur $\F_0$ de $\g$ tel qu'aucune image $(\delta D)_{\delta \in \G}$ n'est incluse dans $\F_0$.
\end{lemm}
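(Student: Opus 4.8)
Le plan est de ramener la question à un problème de classes, puis à un comptage de finitude. Comme tout secteur $\F$ de $\g$ est $\g$-stable, on a $\delta D \subset \F$ si et seulement si $\g^n(\delta D) = (\g^n\delta) D \subset \g^n\F = \F$ pour tout $n \in \Z$~; la propriété «~$\delta D \subset \F$~» ne dépend donc que de la classe $\langle\g\rangle\delta$ dans $\langle\g\rangle\backslash\G$. On note $C(\F)$ l'ensemble de ces classes. Trouver $\F_0$ avec $C(\F_0)=\varnothing$ est exactement l'énoncé cherché.

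La première observation est que si $\delta D \subset \F$, alors l'adhérence $\overline{\delta D}$ dans $\P$ vérifie $\overline{\delta D}\cap\partial\O \subset \overline{\F}\cap\partial\O$. Or la proposition décrivant la forme des secteurs (partie \ref{dyna}) montre que $\overline{\F}\cap\partial\O$ est toujours contenu dans l'ensemble des points fixes de $\g$ sur $\partial\O$~: le singleton $\{p_\g\}$ si $\g$ est parabolique, l'adhérence $\overline{\Ax(\g)}$ si $\g$ est quasi-hyperbolique ou hyperbolique d'axe dans $\partial\O$, et la paire $\{p^+_{\g},p^-_{\g}\}$ si $\g$ est hyperbolique d'axe dans $\O$. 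Seules peuvent donc être incluses dans un secteur les images $\delta D$ dont l'adhérence ne touche $\partial\O$ qu'aux points fixes de $\g$. Posons $C_0 = \{\langle\g\rangle\delta \,|\, \overline{\delta D}\cap\partial\O \subset \mathrm{Fix}(\g)\}$~; cette condition ne fait pas intervenir $\F$, donc $C(\F)\subset C_0$ pour \emph{tout} secteur.

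Le c\oe ur technique, et l'obstacle principal, est de montrer que $C_0$ est \textbf{fini}. Ces images sont précisément celles qui «~partent dans le bout de $\O/_\G$ associé à $\g$~». La proposition \ref{centra} et le lemme \ref{para-para} montrent que le stabilisateur de ce bout est virtuellement cyclique, engendré par une racine de $\g$~; c'est ici que sert l'hypothèse «~$\O$ n'est pas un triangle~», qui exclut les éléments planaires et garantit cette cyclicité. En ramenant toute telle image dans une région compacte de référence à l'aide d'une puissance convenable de $\g$, puis en invoquant la locale finitude du domaine $D$ (théorème \ref{Lee}), on conclut que $C_0$ est fini. Je m'attends à ce que la rédaction soigneuse de cette réduction à un compact soit la partie la plus délicate.

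Il reste alors à produire un secteur $\F_0$ évitant les finitement nombreuses classes de $C_0$. Pour une classe fixée, l'image $\delta D$ est un ouvert non vide de position fixe, et sa colonne $\{\g^n\delta D\}$ s'accumule sur $\mathrm{Fix}(\g)$~; en choisissant le compact générateur du secteur de façon à ce que le bord intérieur $\partial\F_0\cap\O$ contourne chacun des représentants $\delta_i D$ (un fin croissant cuspidal en $p_\g$, resp. un tube mince le long de $\overline{\Ax(\g)}$), on s'assure qu'aucune image $\g^n\delta_i D$ n'y est incluse. L'ensemble obtenu reste l'adhérence dans $\O$ de l'enveloppe convexe de l'orbite sous $\g$ d'un compact, donc bien un secteur, et il vérifie $C(\F_0)=\varnothing$. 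Enfin, si $\g$ est elliptique, le lemme de Selberg permet de supposer $\G$ sans torsion et l'énoncé est trivial~; si $\g$ est hyperbolique d'axe dans $\partial\O$, on se ramène au cas de l'axe intérieur via le lemme \ref{rajout1}.
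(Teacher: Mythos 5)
Your overall architecture (reduce to $\langle\g\rangle$-classes, show only finitely many "bad" classes exist, then shrink the sector to avoid them) is coherent, and the last step is indeed sound: since the intersection of all sectors of $\g$ meets $\O$ in the empty set, a decreasing sequence of sectors eventually excludes each of finitely many fixed images. But the heart of your proof --- the finiteness of $C_0$ --- is a genuine gap, and the tools you cite do not fill it. Proposition \ref{centra} and lemme \ref{para-para} compute \emph{centralizers}, not stabilizers of boundary points or of ends. In the parabolic case your $C_0$ is in bijection with $\langle\g\rangle\backslash\mathrm{Stab}_{\G}(p_{\g})$ (when $\overline{D}\cap\partial\O$ is a single point of the orbit of $p_{\g}$), and to bound this index you would have to (i) exclude hyperbolic and quasi-hyperbolic elements of $\G$ fixing $p_{\g}$, which is a non-discreteness computation appearing nowhere in the cited results, (ii) handle order-two elliptic elements, which can fix boundary points, and (iii) show that the discrete, locally cyclic group of parabolics fixing $(p_{\g},D_{\g})$ is actually cyclic --- lemme \ref{para-para} only treats pairs. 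Worse, in the quasi-hyperbolic and hyperbolic-with-axis-in-$\partial\O$ cases the condition $\overline{\delta D}\cap\partial\O\subset\overline{\Ax(\g)}$ is not a coset condition at all, so even a "virtually cyclic stabilizer" statement would not yield finiteness. The phrase "le bout de $\O/_{\G}$ associé à $\g$" is moreover circular at this point of the paper: the correspondence between $\g$ and an end, and the structure of its neighbourhoods, are proposition \ref{fonda1} and lemme \ref{injgeo}, which rest on proposition \ref{inftyfini}, which rests on the very lemma you are proving. Finally, your announced mechanism --- bring each $\delta D$ into a fixed compact by a power of $\g$, then invoke local finiteness --- fails as stated: $\langle\g\rangle$-orbits of points close to $p_{\g}$ stay close to $p_{\g}$ and never enter a fixed compact, so the normalization must exploit that the $\delta D$ are fundamental domains, which is exactly where the real work lies.

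For comparison, the paper's proof never needs any stabilizer finiteness. For $\g$ elliptic or hyperbolic with $\Ax(\g)\subset\O$ it chooses a sector whose quotient by $\langle\g\rangle$ has volume smaller than $\mu(D)$, so no image of $D$ can fit; lemme \ref{rajout2} shows these are the only types occurring when $D$ is compact. When $D$ is not compact, either $\overline{D}\cap\partial\O$ has at least two connected components, and then no image of $D$ fits in \emph{any} sector, because $\overline{\F}\cap\partial\O$ is a point or a segment and the convexity of $\delta D$ forbids two components of its boundary trace inside a single segment; or it has one component, and the paper argues by contradiction, producing infinitely many distinct fundamental domains all lying between $D_0$ and $\g D_0$, whose walls then violate the local finiteness of corollaire \ref{finitude}. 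Two further flaws in your write-up: dismissing the elliptic case via Selberg is not legitimate (passing to a torsion-free finite-index subgroup discards the element $\g$ the statement is about; the correct easy argument is that sectors of an elliptic element can be taken of arbitrarily small volume), and the reduction of the hyperbolic-axis-in-the-boundary case via lemme \ref{rajout1} does not work as stated, since replacing $\O$ by $\O'$ changes both the sectors and the Dirichlet-Lee domain.
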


\begin{proof}
Commençons par supposer que l'élément $\g$ est elliptique ou hyperbolique avec $\Ax(\g) \subset \O$. Comme l'élément $\g$ est elliptique ou hyperbolique avec $\Ax(\g) \subset \O$, on peut trouver un secteur  $\F$ de $\g$ tel que le quotient $\F/_{<\g>}$ soit d'aire aussi petite que voulue. Par conséquent, il existe un secteur $\F_0$ de $\g$ qui ne contient aucune image $(\delta D)_{\delta \in \G}$.

Le lemme \ref{rajout2} montre que si le domaine fondamental $D$ est compact alors les éléments de $\G$ sont elliptiques ou hyperboliques avec $\Ax(\g) \subset \O$. On a donc montré le lemme dans le cas $D$ compact.

%Si l'élément $\g$ est elliptique alors les secteurs de $\g$ sont compacts et leur intersection est le point fixe de $\g$. Par conséquent, s'il n'existait pas de secteurs $\F_0$ de $\g$ ne contenant aucun domaine fondamental tout entier alors le volume de $\Quo$ serait nul ce qui est absurde.
%
%Si l'élément $\g$ est hyperbolique avec $\Ax(\g) \subset \O$ alors l'action de $\g$ sur n'importe lequel de ces secteurs est cocompact. Par conséquent, pour tout secteur $\F$ de $\g$, il existe un compact convexe $K_{\F} \subset \F$, tel que pour tout secteur $\F'$ inclus dans $\F$, si un domaine fondamental convexe $D'$ pour l'action de $\G$ sur $\O$ est inclus dans $\F'$ alors il existe un entier $n \in \Z$ tel que $\g^n(D') \subset K_{\F}$. L'intersection des compacts $ K_{\F}$ est une partie de l'axe de $\g$, par suite, s'il n'existait pas de secteurs $\F_0$ de $\g$ ne contenant aucun domaine fondamental tout entier alors le volume de $\Quo$ serait nul ce qui est absurde.

Supposons à présent que le domaine fondamental $D$ n'est pas compact, commençons par montrer ce lemme lorsque l'intersection $\overline{D} \cap \partial \O$ possède au moins deux composantes connexes. On peut supposer que $\g$ est parabolique ou  hyperbolique avec $\Ax(\g) \subset \partial \O$  ou quasi-hyperbolique.

%Il faut distinguer deux cas: l'\'el\'ement $\g$ est hyperbolique avec $\Ax(\g) \subset \O$ ou le contraire.

%Si l'élément $\g$ est hyperbolique avec $\Ax(\g) \subset \O$ alors $\overline{\F} \cap \partial \O$ est la réunion de deux points. Par conséquent, si $D$ est inclus dans un secteur $\F$ de $\g$ alors les points attractifs et répulsifs de $\g$ sont dans l'adhérence de $D$, en particulier la géodésique qui les relie est incluse dans $D$ par convexité. Ce qui contredit manifestement le fait que $D$ est un domaine fondamental.

Soit $\F$ un secteur de l'élément $\g$. L'intersection $\overline{\F} \cap \partial \O$ est convexe est un point ou un segment. Par suite, si l'intersection $\overline{D} \cap \partial \O$ possède au moins deux composantes connexes alors le secteur $\F$ ne peut contenir un domaine fondamental $D$.

On suppose \`a pr\'esent que $\overline{D} \cap \partial \O$ poss\`ede une seule composante connexe. Supposons aussi qu'il existe un secteur $\F$ de $\g$ qui contient le domaine fondamental $D$. Nous allons montrer qu'il existe un secteur $\F_0$ de $\g$ inclus dans $\F$ qui ne contient aucune image de $D$. Il faut traiter les cas $\g$ parabolique, $\g$ quasi-hyperbolique, ou $\g$ hyperbolique avec $\Ax(\g) \subset \partial \O$. On ne fait que le cas $\g$ parabolique. Les autres cas sont analogues.

Si un domaine fondamental convexe $D$ est inclus dans la diff\'erence $\F-\F_0$ de deux secteurs d'un \'el\'ement $\g$ alors la convexit\'e de $D$ entra\^ine que $D$ est compact. Donc si un secteur $\F$ de $\g$ contient un domaine fondamental $D$ alors tout secteur $\F_0$ de $\g$ inclus dans $\F$ rencontre le domaine fondamental $D$. Comme l'intersection de l'adh\'erence des secteurs de $\g$ est le point fixe $p_{\g}$ de l'\'el\'ement parabolique $\g$, on a $\overline{D} \cap \partial \O = \{ p_{\g} \}$.

%Raisonnons par l'absurde. Si tout secteur $\F_0$ contient une image de $D$, alors il existe une suite $(\F_n)_{n \in \N}$ de secteur de $\g$ d\'ecroissante tel que l'intersection $\underset{n \in \N}{\bigcap} \F_n = \varnothing$ et une suite de domaine fondamental $(D_n)_{n \in \N}$ avec $D_n \subset \F_n$ et $D_n \not\subset \F_{n+1}$. Les domaines $D_n$ (qui sont tous diff\'erents) v\'erifient tous que $\overline{D_n} \cap \partial \O$ est le point fixe de $\g$. On peut supposer quitte \`a appliquer une puissance de $\g$ que le domaine $D_{n+1}$ est "entre" le domaine $D_n$ et le domaine $\g D_n$. Par cons\'equent, la famille des droites donn\'es par les c\^ot\'es des domaines $(D_n)_{n \in \N}$ n'est pas localement fini, ce qui contredit le lemme \ref{finitude}.

Raisonnons par l'absurde. Si tout secteur $\F_0$ contient une image de $D$, alors ils existent une suite $(\F_n)_{n \in \N}$ de secteurs de $\g$ d\'ecroissante tel que l'intersection $\underset{n \in \N}{\bigcap} \F_n = \varnothing$ et une suite de domaines fondamentaux $(D_n)_{n \in \N}$ avec $D_n \subset \F_n$ et $D_n \not\subset \F_{n+1}$. Les domaines $D_n$ (qui sont tous diff\'erents) v\'erifient tous que $\overline{D_n} \cap \partial \O= \{ p_{\g} \}$. Le domaine fondamental $D_{n+1}$ n'est pas une image du domaine $D_n$ par $<\g>$. 

On peut supposer quitte \`a appliquer une puissance de $\g$ que le domaine $D_{n+1}$ est "entre" le domaine $D_n$ et le domaine $\g D_n$. En particulier, les domaines fondamentaux $(D_n)_{n \in \N}$ sont "entre" les domaines $D_0$ et $\g D_0$. Par cons\'equent, la famille des droites donn\'es par les c\^ot\'es des domaines $(D_n)_{n \in \N}$ n'est pas localement fini, ce qui contredit le lemme \ref{finitude}.
\end{proof}

\begin{proof}[D\'emonstration de la proposition \ref{inftyfini}]
Commençons par faire le cas où l'ouvert $\O$ n'est pas un triangle.

Il existe alors d'apr\`es le lemme \ref{chiant} un secteur $\F_0 \subset \F$ qui ne contient aucun domaine fondamental. Le groupe $<\g>$ agit cocompactement sur $V= \overline{(\F-\F_0)} \cap \O$. On note $E$ un domaine fondamental compact pour cette action.

Si un \'el\'ement $\delta \in \G$ est tel que $\delta^{-1} D \cap \F  \neq \varnothing$ alors il v\'erifie
$\delta^{-1} D \cap V \neq \varnothing$, car sinon on aurait $\delta^{-1} D \subset \F_0$ et ceci contredit notre hypoth\`ese sur $\F_0$.

Il existe $n \in \Z$ tel que $\g^n\delta^{-1} D \cap E \neq \varnothing$. Or, $D$ est un domaine fondamental localement fini donc l'ensemble $\{ \delta \in \G \, | \, \delta^{-1} D \cap E \neq \varnothing \}$ est fini. Notons $\{ \delta_1, ... , \delta_r\}$ ces \'el\'ements.

On vient donc de montrer que si un \'el\'ement $\delta \in \G$ est tel que $D \cap \delta \F  \neq \varnothing$ alors il existe $i=1...r$ et $n \in \Z$ tel que $\delta = \delta_i \g^n$. Or, le secteur $\F$ est $\g$-invariant par cons\'equent $D$ rencontre un nombre fini d'image $(\delta \F)_{\delta \in \G}$.

Enfin, si l'ouvert $\O$ est un triangle alors le groupe $\Aut(\O)$ est virtuellement abélien et par conséquent un secteur à un nombre fini d'images distincts sous $\Aut(\O)$.
\end{proof}

On rappelle la d\'efinition d'\'el\'ement primitif.

\begin{defi}
Soient $\G$ un groupe et $\g \in \G$, on dit que $\g$ est \emph{primitif} lorsque l'existence d'un \'el\'ement $\delta \in \G$ et d'un entier $n \in \N$ tel que $\g = \delta^n$ entraine $\delta=\g$ et $n=1$ ou  $\delta=\g^{-1}$ et $n=-1$.
\end{defi}

Les lemmes suivants seront cruciaux dans la partie suivante pour montrer le lemme \ref{injgeo}.

\begin{lemm}\label{lemmeinj1}
Soit $\G$ un sous-groupe discret de $\s$ qui pr\'eserve un ouvert proprement convexe $\O$ de $\P$. Pour tout \'el\'ement primitif $\g$ de $\G$ et pour tout secteur $\F$ de $\g$, il existe un nombre fini d'\'el\'ements $h_1,...,h_N$ de $\G$ tel que $\{\delta \in \G \, | \, \delta \F \cap \F \neq \varnothing \} \subset \{ \delta \in \G \, | \, \exists n,p \in\Z, \exists i = 1,...,N ,\, \delta = \g^n h_i \g^p \}$.
\end{lemm}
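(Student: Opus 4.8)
The plan is to prove the slightly stronger assertion that $S=\{\delta\in\G\mid\delta\F\cap\F\neq\varnothing\}$ meets only finitely many double cosets of $\langle\g\rangle$; this is exactly the desired conclusion, with $h_1,\dots,h_N$ a set of representatives. The whole argument rests on two ingredients: the finiteness furnished by the Dirichlet--Lee domain in Proposition \ref{inftyfini}, and the fact that the stabiliser $H=\{h\in\G\mid h\F=\F\}$ of the sector is virtually $\langle\g\rangle$. I note first that a sector is $\g$-invariant, so $\langle\g\rangle\subset H$; moreover two translates $\alpha\F$ and $\beta\F$ coincide exactly when $\alpha H=\beta H$, so the translates of $\F$ are indexed by the left cosets of $H$.

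Fix a Dirichlet--Lee domain $D$ for the action of $\G$ on $\O$ (Theorem \ref{Lee}). By Proposition \ref{inftyfini} only finitely many translates of $\F$ meet $D$; write them $\alpha_1\F,\dots,\alpha_A\F$. The key step is a transit argument through $D$. Let $\delta\in S$ and choose a point $y\in\delta\F\cap\F$. Since the translates of $D$ cover $\O$ and $y\in\F\subset\O$, there is $g\in\G$ with $y\in gD$. Then $gD$ meets both $\F$ (at $y$) and $\delta\F$ (at $y$), so $g^{-1}\F$ and $g^{-1}\delta\F$ are two translates of $\F$ meeting $D$. Hence $g^{-1}\F=\alpha_a\F$ and $g^{-1}\delta\F=\alpha_b\F$ for some indices $a,b$, which yields $g\in H\alpha_a^{-1}$ and $\delta\in g\alpha_b H\subset H\,\alpha_a^{-1}\alpha_b\,H$. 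As $a,b$ range over a finite set, $S$ is contained in finitely many double cosets $H\beta H$.

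It remains to pass from $H$-double cosets to $\langle\g\rangle$-double cosets, for which it suffices that $\langle\g\rangle$ have finite index in $H$: each $H\beta H$ then splits into finitely many $\langle\g\rangle\mu\langle\g\rangle$, and the representatives $\mu$ are the sought $h_i$. Assume $\O$ is not a triangle. An element $h\in H$ preserves $\overline{\F}\cap\partial\O$, hence preserves the fixed points of $\g$ on $\partial\O$; since it also preserves $\O$ it preserves the associated tangent data, so $h$ has the same geometric characteristics as $\g$ (same fixed points and stable lines). Lemma \ref{para-para} then forces $\langle\g,h\rangle$ to be infinite cyclic, and primitivity of $\g$ gives $h\in\langle\g\rangle$, save possibly for the finitely many elements exchanging the two fixed points of $\g$; hence $[H:\langle\g\rangle]<\infty$.

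The subtle point --- and my reason for routing the argument through Proposition \ref{inftyfini} rather than through a fundamental domain for $\langle\g\rangle$ acting on $\F$ --- is that $\langle\g\rangle$ need \emph{not} act cocompactly on $\F$: when $\g$ is quasi-hyperbolic, or hyperbolic with $\Ax(\g)\subset\partial\O$, midpoints of far-apart orbit points accumulate on the interior of $\Ax(\g)\subset\partial\O$, so $\F/\langle\g\rangle$ is non-compact. The transit argument above never covers $\F$ by finitely many translates of $D$, so it is insensitive to this non-compactness; the genuine work is concentrated in the finite-index claim $[H:\langle\g\rangle]<\infty$, which is where the hypothesis that $\O$ is not a triangle enters (via Lemma \ref{para-para}, ruling out a copy of $\Z^2$ fixing the fixed points of $\g$). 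Finally, if $\O$ \emph{is} a triangle, then $\Aut(\O)$ is virtually abelian and a sector has only finitely many distinct images under $\G$, so the statement is immediate.
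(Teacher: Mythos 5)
Your proof is correct and follows essentially the same route as the paper's: a Dirichlet--Lee domain $D$, the finiteness given by Proposition \ref{inftyfini}, a transit through $D$ to place $\delta$ in finitely many double cosets, and primitivity of $\g$ (via Lemma \ref{para-para}) to control the stabiliser of $\F$. The only divergence is one of care rather than of method: where the paper silently identifies the stabiliser of $\F$ with $\langle\g\rangle$ (``comme $\g$ est primitif''), you prove the weaker but sufficient statement $[H:\langle\g\rangle]<\infty$, and you make explicit both the non-triangle hypothesis needed for Lemma \ref{para-para} and the separate (degenerate) triangle case, both of which the paper's own proof leaves implicit.
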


\begin{proof}
On se donne un domaine de Dirichlet-Lee $D$ pour l'action de $\G$ sur $\O$ (th\'eor\`eme \ref{Lee}) qui rencontre $\F$. La proposition \ref{inftyfini}  montre que $D$ rencontre un nombre fini d'image de $\F$. On peut les \'ecrire $g_1 \F, ..., g_r \F$, o\`u $g_i \in \G$ pour $i=1...r$ et $g_1 = Id$.

Comme $\g$ est primitif, on a $\F \subset \underset{n \in \Z, \, i = 1..r}{\bigcup}\g^n g_i^{-1} D$. Par cons\'equent, si $\delta \F \cap \F \neq \varnothing$ alors il existe $n_0 \in \Z$ et un $i_0=1...r$ tel que $g_{i_0} \g^{n_0} \delta \F \cap D \neq \varnothing$, par cons\'equent il existe $n_1 \in \Z$ et $i_1 = 1...r$ tel que $g_{i_0} \g^{n_0} \delta = g_{i_1} \g^{n_1}$. Autrement dit, $\delta = \g^{-n_0} g_{i_0}^{-1} g_{i_1} \g^{n_1}$, c'est ce qu'il fallait montrer.
\end{proof}

\begin{lemm}\label{lemmeinj2}
Soient $\G$ un sous-groupe discret de $\s$ qui pr\'eserve un ouvert proprement convexe $\O$ de $\P$, un \'el\'ement $\g \in \G$ et $\F$ un secteur de $\g$, pour tout \'el\'ement $\delta$ de $\G$ on a l'alternative suivante:
\begin{itemize}
\item $\delta \F \cap \F$ est compacte ou bien

\item $\delta \F = \F$.
\end{itemize}
\end{lemm}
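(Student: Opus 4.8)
The plan is to argue by cases on the dynamical type of $\g$ provided by Proposition \ref{classi}, using the key observation that $\delta\F$ is again a sector, but of the conjugate $\delta\g\delta^{-1}$: indeed $\delta\g^{n}K=(\delta\g\delta^{-1})^{n}\delta K$, so if $\F$ is the sector of $\g$ built from the compact set $K$, then $\delta\F$ is the sector of $\delta\g\delta^{-1}$ built from $\delta K$. In particular its trace at infinity is $\overline{\delta\F}\cap\partial\O=\delta(\overline{\F}\cap\partial\O)$. Write $I_{\F}=\overline{\F}\cap\partial\O$ for the trace at infinity of $\F$; by the classification of sectors it is empty when $\g$ is elliptic, the single point $\{p_{\g}\}$ when $\g$ is parabolic, the pair $\{p^{+}_{\g},p^{-}_{\g}\}$ when $\g$ is hyperbolic with $\Ax(\g)\subset\O$, and the closed segment $\overline{\Ax(\g)}$ when $\g$ is quasi-hyperbolic or hyperbolic with $\Ax(\g)\subset\partial\O$.

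First I record an elementary compactness criterion. The set $\delta\F\cap\F$ is convex and closed in $\O$, so it is compact if and only if its closure in $\P$ avoids $\partial\O$; and since $\delta\F\cap\F\subset\F$ and $\delta\F\cap\F\subset\delta\F$, we always have $\overline{\delta\F\cap\F}\cap\partial\O\subset I_{\F}\cap\delta I_{\F}$. Hence whenever $I_{\F}\cap\delta I_{\F}=\varnothing$ the intersection is automatically compact and the first alternative holds; this disposes at once of the elliptic case, and more generally of every $\delta$ moving the ideal data of $\g$ off itself.

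It remains to treat the case where $\delta\F\cap\F$ is genuinely non-compact. Here I would first show that non-compactness forces $\delta$ to fix the fixed-point data of $\g$: a sequence of $\delta\F\cap\F$ escaping to $\partial\O$ accumulates at a point of $I_{\F}\cap\delta I_{\F}$, and since these traces at infinity are exactly the fixed points and stable segments of $\g$ and of $\delta\g\delta^{-1}$, the convexity of both sectors near such a common accumulation point forces $\g$ and $\delta\g\delta^{-1}$ to share their fixed points and their stable lines. Consequently, when $\O$ is not a triangle, Lemma \ref{para-para} applies and shows that $\langle\g,\delta\g\delta^{-1}\rangle$ is infinite cyclic; combining this with the description of centralizers in Proposition \ref{centra} and with the discreteness of $\G$ (which makes the stabilizer in $\G$ of the ideal point or segment a cyclic group, after replacing $\g$ by a primitive root if necessary), one pins $\delta$ down to a power of that primitive root, and the $\g$-invariance of $\F$ then yields $\delta\F=\F$. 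The triangle case is elementary and is treated directly, since there $\Aut(\O)$ is virtually abelian and $\g$ is diagonalizable.

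The main obstacle is precisely this last step: converting the soft information ``$\delta$ preserves the ideal data of $\g$'' into the rigid conclusion ``$\delta\F=\F$''. The delicate point is that two \emph{distinct} sectors of one and the same parabolic or quasi-hyperbolic element can share their ideal point and still meet in a non-compact region, so the argument must genuinely use discreteness to forbid $\delta$ from contracting towards the fixed point, and must use that the relevant stabilizer in $\G$ reduces to the cyclic group generated by a primitive root of $\g$. This is the part I would write out most carefully, handling the parabolic case in detail and treating the quasi-hyperbolic and the two hyperbolic configurations by the same scheme.
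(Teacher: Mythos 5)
You follow the same route as the paper: classify the trace at infinity $\overline{\F}\cap\partial\O$ of a sector according to the dynamical type of $\g$, observe that $\delta\F$ is the analogous sector of $\delta\g\delta^{-1}$, conclude that $\delta\F\cap\F$ is compact when the two traces are disjoint, and otherwise invoke Lemma \ref{para-para} and Proposition \ref{centra} to force $\g$ and $\delta\g\delta^{-1}$ to share their fixed data. Your compactness criterion is correct, and up to that point your argument coincides with the paper's proof.

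The genuine gap is the step you yourself flag and then leave unwritten: passing from ``$\delta$ preserves the fixed data of $\g$'' to ``$\delta\F=\F$''. The closure you sketch --- pin $\delta$ down to a power of a primitive root $\g_0$ of $\g$, then use the $\g$-invariance of $\F$ --- is not only missing, it is false as stated: a sector of $\g=\g_0^{k}$ is $\g$-invariant by construction but need not be $\g_0$-invariant. Concretely, take $\O$ a disc, $\G=\langle\g_0\rangle$ with $\g_0$ hyperbolic and $\Ax(\g_0)\subset\O$, set $\g=\g_0^{2}$, $\delta=\g_0$, and let $\F$ be the sector of $\g$ generated by a point $x$ off the axis. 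Then $\delta$ is a power of the primitive root and even commutes with $\g$; both $\F$ and $\delta\F$ contain $\Ax(\g)$ (every sector of a hyperbolic element whose axis lies in $\O$ contains that axis), so $\delta\F\cap\F$ is not compact; and yet $\delta\F\neq\F$: near $x$ the set $\delta\F$ is bounded by the chord $[\g_0^{-1}x,\,\g_0 x]$, and $x$, lying on the strictly convex conic carrying the $\g_0$-orbit, is strictly on the far side of that chord, so $x\in\F\setminus\delta\F$. Thus the dichotomy already fails for non-primitive $\g$; what the argument genuinely needs is $\delta\in\langle\g\rangle$, i.e.\ that $\g$ itself generates the stabilizer in $\G$ of its fixed data, which is what primitivity of $\g$ (together with absence of torsion) provides, and which is exactly the setting in which the paper uses the lemma (Lemmas \ref{lemmeinj1} and \ref{injgeo} concern holonomies of simple loops). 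To be fair, the paper's own proof asserts $\delta\F=\F$ at precisely this point with no more justification than Proposition \ref{centra}, so the discomfort you expressed is well placed; but a complete proof must either import the primitivity hypothesis and deduce $\delta\in\langle\g\rangle$, or weaken the second alternative, and your proposal does neither.
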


\begin{proof}
Une \'etude exhaustive en distinguant les diff\'erentes dynamiques de l'\'el\'ement $\g$ va rendre ce lemme clair.

Si $\g$ est elliptique alors tout secteur de $\g$ est compact. Il n'y a donc rien à montrer.

Si $\g$ est hyperbolique avec $\Ax(\g) \subset \O$ alors l'intersection de l'adhérence de tout secteur $\F$ de $\g$ avec le bord de $\O$ est égale à deux points, le point attractif et le point répulsif de $\g$. Il vient que pour tout $\delta \in \G$, ou bien
\begin{itemize}
\item $\delta \g \delta^{-1}$ possède les mêmes points fixes que $\g$, alors $\delta \g \delta^{-1}$ est une puissance de $\g$ (proposition \ref{centra}) et $\delta \F = \F$.
\item les points fixes de $\delta \g \delta^{-1}$ et les points fixes de $\g$ forment des ensembles disjoints, et $\delta \F \cap \F$ est compacte.
\end{itemize}

Si $\g$ est parabolique alors l'intersection de l'adhérence de tout secteur $\F$ de $\g$ avec le bord de $\O$ est égale à un point, le point attractif de $\g$. Par conséquent, ou bien 
\begin{itemize}
\item $\delta \g \delta^{-1}$ possède le même point fixe que $\g$, alors $\delta \g \delta^{-1}$ est une puissance de $\g$ (proposition \ref{centra}) et $\delta \F = \F$.
\item le point fixe de $\delta \g \delta^{-1}$ et le point fixe de $\g$ sont différents, et $\delta \F \cap \F$ est compacte.
\end{itemize}

De même, si $\g$ est quasi-hyperbolique ou hyperbolique avec $\Ax(\g) \subset \partial \O$. La même méthode conclut.
\end{proof}

\section{Surface projective convexe d'aire finie}\label{def}

Tout au long de ce texte, une surface est une vari\'et\'e connexe orientable de dimension 2, avec \'eventuellement des bords. Si $S$ est une surface on notera son bord $\partial S$. On note $\mathbb{E}$ un demi-espace affine ferm\'e de $\P$.

\subsection{Structure projective}
\begin{defi}
Soit $S$ une surface, une \emph{structure projective r\'eelle \`a bord g\'eod\'esique} est la donn\'ee d'un atlas maximal
$\varphi_{\mathcal{U}}:\mathcal{U} \rightarrow \mathbb{E}$ sur $S$ tel que:
\begin{itemize} 
\item les fonctions de transitions $\varphi_{\mathcal{U}} \circ \varphi_{\mathcal{V}}^{-1}$ sont des \'el\'ements de $\s$, pour tous ouverts $\mathcal{U}$ et $\mathcal{V}$ de l'atlas de $S$ tel que $\mathcal{U} \cap \mathcal{V} \neq \varnothing$.
\item Pour tout ouvert $\mathcal{U}$ de l'atlas tel que $\mathcal{U} \cap \partial S \neq \varnothing$ et pour toute
composante connexe $B$ de $\mathcal{U} \cap \partial S$, $\varphi_{\mathcal{U}}(B)$ est inclus dans une droite projective de $\P$.
\end{itemize}
\end{defi}

\begin{defi}
Un \emph{isomorphisme} entre deux surfaces munies de structures projectives \`a bord g\'eod\'esique est un diff\'eomorphisme qui, lu dans les cartes, est
donn\'e par des \'el\'ements de $\s$.
\end{defi}

\begin{defi}
Soit $S$ une surface, une \emph{structure projective \`a bord g\'eod\'esique marqu\'ee sur
$S$} est la donn\'ee d'un diff\'eomorphisme $\varphi:S \rightarrow S'$ o\`u
$S'$ est une surface projective \`a bord g\'eod\'esique.

Deux structures projectives \`a bord g\'eod\'esique marqu\'ee sur $S$, $\varphi_1:S
\rightarrow S_1$ et $\varphi_2:S \rightarrow S_2$ sont dites
\emph{isotopiques} lorsqu'il existe un isomorphisme $h:S_1
\rightarrow S_2$ tel que $\varphi_2^{-1} \circ h \circ \varphi_1 : S
\rightarrow S$ est un diff\'eomorphisme isotope \`a l'identit\'e. On
note $\PP(S)$ \emph{l'ensemble des structures projectives \`a bord g\'eod\'esique marqu\'ees
sur $S$ modulo isotopie}.
\end{defi}
\`{A} tout \'el\'ement de $\PP(S)$, on peut associer deux objets:
\begin{itemize}
\item  Un diff\'eomorphisme local $\dev:\widetilde{S}\rightarrow \P$ appel\'ee
\emph{d\'eveloppante}, o\`u $\widetilde{S}$ est le rev\^etement
universel de $S$.

\item Une repr\'esentation $\Hol:\pi_1(S) \rightarrow \s$ appel\'ee
\emph{holonomie}.
\end{itemize}
De plus, la d\'eveloppante est $\pi_1(S)$-\'equivariante, c'est \`a dire que $\forall x \in \widetilde{S}$, $\forall \g \in
\pi_1(S)$ on a $\dev(\g \, x) = \Hol(\g) \dev(x)$. Enfin, le couple $(\dev, \Hol)$ est unique au sens o\`u
si $(\dev',\Hol')$ est une autre telle paire alors il existe un $g
\in \s$ tel que $\dev' = g \circ \dev$ et $\Hol' = g \circ \Hol \circ
g^{-1}$.

On pourra consulter l'article [Gold1] pour avoir plus de d\'etails
sur le couple d\'eveloppante et holonomie.

\begin{rem}
\`{A} partir de maintenant toutes les structures projectives seront
implicitement suppos\'ees marqu\'ees et \`a bord g\'eod\'esique.
\end{rem}

\subsection{Structure projective proprement convexe}

\begin{defi}\label{def}
Soit $S$ une surface, une structure projective est dite 
\emph{proprement convexe} sur $S$ lorsque la d\'eveloppante est un
diff\'eomorphisme sur une partie proprement convexe de $\P$. On note
$\beta(S)$ l'ensemble des structures projectives proprement convexes sur $S$ modulo isotopie.
\end{defi}

\subsection{Structure projective proprement convexe d'aire finie}

Soit $S$ une surface projective proprement convexe, l'application
d\'eveloppante  permet d'identifier le rev\^etement universel $\widetilde{S}$ de $S$ \`a une partie
$\C$ proprement convexe de $\P$. On notera $\pi:\C \rightarrow S$ le rev\^etement universel de $S$. On a construit au paragraphe
\ref{base} sur l'int\'erieur $\O = \overset{\circ}{\C}$ de $\C$ une
distance $d_{\O}$ et une mesure $\mu_{\O}$ qui sont invariantes
sous l'action du groupe fondamental $\pi_1(S)$ de $S$ sur $\O$ . Par cons\'equent, il existe
une unique distance $d_S$ et une unique mesure $\mu_S$ sur
$\overset{\circ}{S}$ l'int\'erieur de $S$ telles que:

\begin{itemize}
\item Pour tout $x,y \in \overset{\circ}{S}$, $d_S(x,y) = \underset{(\widetilde{x},\widetilde{y})\in \mathcal{E}}{\inf} d_{\O}(\widetilde{x},\widetilde{y})$, o\`u $\mathcal{E} = \{ (\widetilde{x},\widetilde{y}) \in \O^2 \, |\, \pi(\widetilde{x}) = x \textrm{ et } \pi(\widetilde{y}) = y \}$

\item $\forall \mathcal{A}$ bor\'elien de $\O$, si $\pi:\O
\rightarrow \overset{\circ}{S}$ restreinte \`a $\mathcal{A}$ est
injective alors $\mu_S(\pi(\mathcal{A}))=\mu_{\O}(\mathcal{A})$.
\end{itemize}

\begin{defi}
Soit $S$ une surface, on dit qu'une structure projective
proprement convexe sur $S$ est de \emph{volume fini} lorsque pour
tout ferm\'e $F$ de l'int\'erieur $\overset{\circ}{S}$ de $S$, on a
$\mu_S(F) < \infty$. On note $\beta_{f}(S)$ l'ensemble des
structures projectives marqu\'ees proprement convexes de volume fini sur $S$
modulo isotopie.
\end{defi}

\subsection{La th\'eorie des bouts d'un espace topologique}

On rappelle quelques d\'efinitions de la th\'eorie des bouts d'un espace topologique.

\begin{defi}
Soit $X$ un espace topologique localement compact, une \emph{base de voisinages d'un bout de $X$} est une suite d\'ecroissante d'ouverts connexes de $X$ qui sort de tout compact. Deux bases de voisinages d'un bout de $X$, $(U_i)_{i \in \N}$ et $(V_i)_{i \in \N}$ sont dites \emph{\'equivalentes} si pour tout $n,m \in \N$, il existe $N,M \in \N$ tel que $U_N \subset V_n$ et $V_M \subset U_m$. Les classes d'\'equivalence de base de voisinages de bout de $X$ forment un ensemble appel\'e \emph{l'espace des bouts de $X$} et dont les \'el\'ements sont \emph{les bouts de $X$}.
\end{defi}

\begin{rem}
Cet ensemble poss\`ede une topologie naturelle. On la construit de la façon suivante. Pour tout compact $K$ de $X$ on d\'efinit $\mathcal{U}_K$ l'ensemble des bouts de $X$ qui sont ultimement inclus dans $X-K$. Les $\mathcal{U}_K$ forment une base de la topologie de l'espace des bouts de $X$. On peut montrer que l'espace des bouts est un espace compact totalement discontinu.
\end{rem}

\subsection{Les lacets d'holonomie parabolique ou quasi-hyperbolique sont \'el\'ementaires}

Le lemme suivant est imm\'ediat.

\begin{lemm}
Soit $S$ une surface projective proprement convexe dont le groupe
fondamental n'est pas virtuellement ab\'elien alors aucun lacet
trac\'e sur $S$ ne poss\`ede une holonomie planaire.
\end{lemm}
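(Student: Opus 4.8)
The plan is to derive a contradiction from Proposition \ref{planaire}, which already contains the geometric heart of the statement. Suppose, for contradiction, that some loop $\g$ traced on $S$ has planar holonomy $\Hol(\g)$. Because the projective structure on $S$ is proprement convexe, its developing map identifies the universal cover $\widetilde{S}$ with a proprement convexe $\C \subset \P$; writing $\O = \overset{\circ}{\C}$, the deck action realizes $\pi_1(S)$ as a discrete group of projective transformations preserving $\O$, so $\G = \Hol(\pi_1(S))$ is a discrete subgroup of $\Aut(\O)$ and, the developing map being a diffeomorphism, the holonomy $\Hol : \pi_1(S) \to \s$ is injective. The chosen loop then yields a planar element $\Hol(\g) \in \Aut(\O)$.

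First I would feed this planar element into Proposition \ref{planaire}. That proposition asserts that as soon as $\Aut(\O)$ contains a planar element, $\O$ is forced to be a triangle (with the fixed point $p_{\g}$ as one of its vertices and the opposite side carried by $D_{\g}$). This is the single nontrivial input; everything that follows is bookkeeping.

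Next I would record that the automorphism group of a triangle is virtually abelian. In a basis adapted to the three vertices, the identity component of $\Aut(\O)$ is the group $D$ of positive diagonal matrices of determinant $1$, isomorphic to $\R^2$, exactly as in the simplex example of the introduction; the remaining components merely permute the three vertices, so $\pi_0(\Aut(\O))$ injects into the symmetric group on three letters and is finite. Consequently $\Aut(\O)$ admits $\R^2$ as an abelian subgroup of finite index, and so does its subgroup $\G$.

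Finally I would transport this back through the holonomy: since $\Hol$ is injective, $\pi_1(S) \cong \G$ is virtually abelian, contradicting the standing hypothesis that $\pi_1(S)$ is \emph{not} virtually abelian. Hence no loop traced on $S$ can have planar holonomy. I do not expect any genuine obstacle here: the only substantive ingredient is Proposition \ref{planaire}, which rigidifies $\O$ into a triangle the moment a planar element appears, and this is precisely why the lemma can be called immediate.
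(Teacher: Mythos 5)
Your proposal is correct and follows exactly the same route as the paper: a planar element in $\Aut(\O)$ forces $\O$ to be a triangle by Proposition \ref{planaire}, the automorphism group of a triangle is virtually $\R^2$ hence virtually abelian, and faithfulness of the holonomy then makes $\pi_1(S)$ virtually abelian, contradicting the hypothesis. The only difference is that you spell out the two steps the paper leaves implicit (injectivity of the holonomy and the finiteness of $\pi_0(\Aut(\O))$ via the permutation action on the vertices), which is fine.
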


\begin{proof}
La dynamique des \'el\'ements planaires  montre que si l'holonomie
d'un \'el\'ement de $\pi_1(S)$ est planaire alors $\O$ est un triangle
(proposition \ref{planaire}). Par cons\'equent, le groupe $\Aut(\O)$ est virtuellement isomorphe \`a $\R^2$ et donc virtuellement ab\'elien. Il vient que $\pi_1(S)$ est virtuellement ab\'elien, ce qui contredit l'hypoth\`ese sur la
topologie de $S$.
\end{proof}

\begin{rem}
Si l'ouvert proprement convexe $\O$ est un triangle alors le groupe $\Aut(\O)$ est isomorphe à $\Z^2$. Par conséquent, si $S$ est une surface projective convexe de caractéristique d'Euler strictement négative alors son revêtement universel $\O$ n'est pas un triangle.
\end{rem}

\begin{defi}\label{elem}
Soit $S$ une surface, on dit qu'un lacet trac\'e sur $S$, $c:\S^1
\rightarrow S$ est \emph{simple} s'il est injectif. On dit qu'un
lacet simple $c$ trac\'e sur $S$ est \emph{\'el\'ementaire} si $S-c$
poss\`ede deux composantes connexes et l'une d'elles est un
cylindre. Lorsque $S$ n'est pas un cylindre, on appellera l'adh\'erence de la composante hom\'eomorphe \`a un cylindre
\emph{la composante \'el\'ementaire associ\'ee \`a $c$}.
\end{defi}

\begin{rem}
Soient $S$ une surface et $c$ un lacet \'el\'ementaire trac\'e sur $S$
alors on a l'alternative suivante:
\begin{itemize}
\item La composante \'el\'ementaire associ\'ee \`a $c$ est un cylindre
avec un bord. On dira alors que $c$ \emph{fait le tour d'un bout}.

\item La composante \'el\'ementaire associ\'ee \`a $c$ est un cylindre
avec deux bords. Dans ce cas $c$ est librement homotope \`a une
composante connexe du bord de $S$.
\end{itemize}
\end{rem}

\begin{comment}
Le th\'eor\`eme suivant est du \`a Goldman (\cite{Gold1}) et caract\'erise les structures projectives convexe parmi les structures projectives.

\begin{theo}
Soit $S$ une surface projective, les propositions suivantes sont \'equivalentes:
\begin{itemize}
\item Tout chemin sur $S$ est homotope \`a extr\'emit\'es fix\'ees \`a une unique g\'eod\'esique.

\item La d\'eveloppante de $S$ est un diff\'eomorphisme sur un ouvert convexe de $\P$.
\end{itemize}
\end{theo}
\end{comment}

Les propositions suivantes sont tr\`es classiques en g\'eom\'etrie hyperbolique. On ne montre que la premi\`ere.

\begin{prop}\label{lacethomo}
Soit $S$ une surface, on munit $S$ d'une structure projective proprement convexe. On note $\pi:\C\rightarrow S$ le rev\^etement universel de $S$ donn\'e par la d\'eveloppante de $S$, o\`u $\C$ est une partie proprement convexe de $\P$. L'int\'erieur de $\C$ sera not\'e $\O$. Alors, tout lacet trac\'e sur $S$ dont l'holonomie est hyperbolique avec $\Ax(\g) \subset \O$ est librement homotope \`a une g\'eod\'esique.
\end{prop}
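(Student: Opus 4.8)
The plan is to identify the geodesic in question as the projection of the axis $\Ax(\g)$, and then to produce the free homotopy using the convexity of $\O$. Recall from the setup that the developing map realizes the universal cover of $S$ as the properly convex set $\C$ (which, being convex, is simply connected); consequently $\pi_1(S)$ is carried by $\Hol$ isomorphically onto a discrete subgroup $\G$ of $\s$ acting on $\O = \overset{\circ}{\C}$ with $\overset{\circ}{S} = \O/_{\G}$. The loop $c$ determines a conjugacy class in $\pi_1(S)$, and we denote by $\g$ an element of that class, whose holonomy is the given hyperbolic transformation.

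First I would check that $\pi(\Ax(\g))$ is a closed geodesic. By definition the Hilbert distance between two points of $\O$ lying on a common projective line is the logarithm of a cross-ratio computed along that line; this quantity is additive along the line, so every projective segment contained in $\O$ is a geodesic of $(\O,d_{\O})$. Since we assume $\Ax(\g) \subset \O$, the axis is a complete geodesic of $(\O,d_{\O})$ joining the two fixed points $p^+_{\g}, p^-_{\g} \in \partial \O$. The element $\g$ fixes $p^+_{\g}$ and $p^-_{\g}$, and in its diagonal real normal form (Proposition \ref{classi}) its restriction to the line $(p^-_{\g}p^+_{\g})$ has two distinct eigenvalue ratios, hence acts there as a hyperbolic transformation; it therefore translates $\Ax(\g)$ by a fixed amount $T>0$. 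Letting $\g_0$ be the primitive element of $\G$ with the same axis, the quotient $\Ax(\g)/_{\langle \g_0 \rangle}$ is a closed geodesic $\ell$ of $\overset{\circ}{S}$.

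Next I would verify that $\ell$ lies in the free homotopy class of $c$. Choosing a base point $\widetilde{x}_0 \in \Ax(\g)$, the segment $[\widetilde{x}_0,\g \widetilde{x}_0]$ is contained in $\Ax(\g)$ and projects onto $\ell$, so reading off the monodromy of this path shows that $\ell$ represents the conjugacy class of $\g$ in $\pi_1(S)$. Since free homotopy classes of loops on the path-connected surface $S$ are in bijection with conjugacy classes of $\pi_1(S)$, and both $c$ and $\ell$ represent the class of $\g$, they are freely homotopic.

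If one prefers an explicit homotopy to the abstract correspondence, I would lift $c$ to a path $\widetilde{c}$ of $\O$ from a point $\widetilde{y}_0$ to $\g \widetilde{y}_0$, fix an affine chart containing $\overline{\O}$, and use the affine interpolation $H(s,t) = (1-s)\widetilde{c}(t) + s\,\sigma(t)$, where $\sigma$ parametrizes the segment $[\widetilde{y}_0,\g \widetilde{y}_0]$. Convexity of $\O$ keeps $H$ inside $\O$, and the construction is $\langle \g \rangle$-equivariant, so $H$ descends to a free homotopy on $S$ between $c$ and the geodesic loop $\pi([\widetilde{y}_0,\g \widetilde{y}_0])$; sliding $\widetilde{y}_0$ onto the axis then yields $\ell$. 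The only genuine point to watch is that the restriction of $\g$ to its axis is a pure translation with no rotational part, which is exactly what the diagonal real normal form of a hyperbolic element guarantees and what makes the projection of the axis an honest closed geodesic.
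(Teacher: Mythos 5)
Your proof is correct and follows essentially the same route as the paper: the geodesic is the projection of $\Ax(\g)$, which joins $p^-_{\g}$ to $p^+_{\g}$ inside $\O$, and the free homotopy is obtained by projecting down a homotopy constructed inside the convex set $\O$ (the paper phrases this as a $\g$-equivariant homotopy from the lifted loop $\widetilde{c}$ to the axis, which is exactly what your straight-line interpolation argument supplies). One cosmetic point: if $\g$ is a proper power of a primitive element $\g_0$, then the loop representing the conjugacy class of $\g$ is the projection of $[\widetilde{x}_0,\g\widetilde{x}_0]$, i.e.\ the closed geodesic $\Ax(\g)/_{\langle \g_0\rangle}$ traversed several times, so you should quotient by $\langle \g\rangle$ rather than by $\langle \g_0\rangle$ --- this changes nothing in the conclusion, since that multiply-traversed loop is still a geodesic loop.
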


\begin{proof}
On note $\G$ l'image du groupe fondamental de $S$ dans $\s$. On choisit $\g \in \G$ qui repr\'esente $\Hol(c)$ et on note $\widetilde{c}$ le relev\'e correspondant de $c$. On a les convergences suivantes $\underset{t \rightarrow +\infty}{\lim} \widetilde{c}(t) = p^+_{\g}$ et $\underset{t \rightarrow -\infty}{\lim} \widetilde{c}(t) = p^-_{\g}$. Le chemin $\widetilde{c}$ est donc homotope via une homotopie $\g$-\'equivariante \`a une et une seule g\'eod\'esique de $\O$: $\Ax(\g)$. La projection de cette homotopie sur la surface $S$ donne le r\'esultat souhait\'e.
\end{proof}

La m\^eme d\'emonstration que dans le cas hyperbolique donne la proposition suivante:

\begin{prop}\label{lacethomo2}
Soit $S$ une surface, on munit $S$ d'une structure projective proprement convexe. On note $\pi:\C\rightarrow S$ le rev\^etement universel de $S$ donn\'e par la d\'eveloppante de $S$, o\`u $\C$ est une partie proprement convexe de $\P$. L'int\'erieur de $\C$ sera not\'e $\O$. Soient $c_1$ et $c_2$ sont deux lacets simples trac\'es sur $S$ dont l'holonomie est hyperbolique avec $\Ax(\g_1),\, \Ax(\g_2)  \subset \O$, on note $\lambda_1$ (resp. $\lambda_2$) l'unique g\'eod\'esique homotope \`a $c_1$ (resp. $c_2$). Si $c_1$ est simple alors $\lambda_1$ est simple. Si les lacets $c_1$ et $c_2$ ne s'intersectent pas alors les g\'eod\'esiques $\lambda_1$ et $\lambda_2$ ne s'intersectent pas.
\end{prop}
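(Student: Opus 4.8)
The plan is to reproduce verbatim the classical Fuchsian argument, using two structural facts available in our setting. First, in dimension $2$ the properly convex open set $\O$ is homeomorphic to an open disk and its boundary $\partial \O$ is a topological circle, while the geodesics of the Hilbert metric are the chords of $\O$, i.e. intersections of $\O$ with projective lines. Since two distinct projective lines meet in exactly one point and are never tangent, two distinct geodesics of $\O$ cross in at most one interior point, and that crossing is transverse; consequently any self-intersection of a closed geodesic is a transverse double point. Moreover, a chord is determined by its two endpoints on $\partial \O$, and by the Jordan curve theorem applied in the closed disk $\overline{\O}$, two chords meet in an interior point if and only if their four endpoints are distinct and separate one another (alternate) along $\partial \O$. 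Second, I would recall from the proof of Proposition \ref{lacethomo} that, writing $\G=\Hol(\pi_1(S))$ and choosing $\g_i\in\G$ representing $\Hol(c_i)$, each lift $\widetilde{c_i}$ that is $\g_i$-equivariantly homotopic to $\Ax(\g_i)$ has the \emph{same} pair of endpoints $p^+_{\g_i},p^-_{\g_i}$ on $\partial \O$ as the axis. Thus the cyclic order on $\partial \O$ of the endpoints of the axes and of the endpoints of the corresponding lifts coincide.

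For the simplicity of $\lambda_1$, suppose for contradiction that $\lambda_1$ is not simple. The full preimage $\pi^{-1}(\lambda_1)$ is $\bigcup_{\delta\in\G}\delta\,\Ax(\g_1)$, so a transverse double point of $\lambda_1$ lifts to an interior crossing of two distinct axes $\Ax(\g_1)$ and $\delta\,\Ax(\g_1)$ for some $\delta\in\G$ not stabilizing $\Ax(\g_1)$. By the combinatorial fact above, the endpoints $\{p^+_{\g_1},p^-_{\g_1}\}$ and $\{\delta p^+_{\g_1},\delta p^-_{\g_1}\}$ are distinct and alternate on $\partial \O$. Since $\widetilde{c_1}$ and $\delta\widetilde{c_1}$ share these same endpoints, the alternation forces the two proper arcs $\widetilde{c_1}$ and $\delta\widetilde{c_1}$ to intersect in $\O$. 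As $\delta$ does not stabilize the lift (otherwise $\delta\,\Ax(\g_1)=\Ax(\g_1)$, contrary to the choice of $\delta$), we have $\delta\widetilde{c_1}\neq\widetilde{c_1}$, and the intersection point projects to a self-intersection of $c_1$ on $S$, contradicting that $c_1$ is simple.

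For the disjointness statement the argument is identical, run across the two classes. If $\lambda_1$ and $\lambda_2$ met, an intersection point would lift to an interior crossing of $\Ax(\g_1)$ with some translate $\delta\,\Ax(\g_2)$; the four endpoints would then alternate on $\partial \O$, hence $\widetilde{c_1}$ and $\delta\widetilde{c_2}$ would cross in $\O$, producing an intersection of $c_1$ with a translate of $c_2$, i.e. an intersection point of $c_1$ and $c_2$ on $S$. This contradicts the hypothesis that $c_1$ and $c_2$ are disjoint.

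The main obstacle, as always in this kind of statement, is not any single computation but making the boundary-linking dictionary fully rigorous: one must check carefully that interior crossing of geodesics is detected \emph{solely} by the cyclic order of endpoints on $\partial \O$, and that this order is genuinely preserved when one replaces the axes by the freely homotopic lifts of the $c_i$ (this is exactly what the equivariant homotopy of Proposition \ref{lacethomo} provides). Two minor points need care along the way: one should observe that axes sharing a boundary endpoint meet only at that boundary point and therefore contribute no interior crossing, so that genuine self-intersections indeed require four distinct alternating endpoints; and one uses that, $c_1$ being simple, $\g_1$ is primitive and the stabilizer of $\Ax(\g_1)$ in $\G$ is exactly $\langle\g_1\rangle$, which follows from the centralizer computation of Lemma \ref{para-para}.
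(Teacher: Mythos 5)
Your argument is correct, and it is precisely the classical Fuchsian argument that the paper invokes without writing out: for Proposition \ref{lacethomo2} the paper only remarks that the statement is ``tr\`es classique en g\'eom\'etrie hyperbolique'' (proving only Proposition \ref{lacethomo}), and the dictionary you make explicit --- the lifts $\widetilde{c_i}$ share their endpoints $p^{+}_{\g_i},p^{-}_{\g_i}$ with the axes, and an interior crossing of two chords of $\O$ is equivalent to the alternation of their four endpoints on the topological circle $\partial \O$ --- is exactly the intended proof. The only point worth recording is the degenerate case where the two axes coincide (e.g.\ $\delta\,\Ax(\g_2)=\Ax(\g_1)$, which occurs when $c_1$ and $c_2$ are disjoint but isotopic, giving $\lambda_1=\lambda_2$); both your argument and the statement itself implicitly set this aside, so it is not a gap attributable to your proof.
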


\begin{lemm}\label{injgeo}
Soit $S$ une surface de caractéristique d'Euler strictement négative, on munit $S$ d'une structure projective proprement convexe. Soit $c$ un lacet simple trac\'e sur $S$, il existe un secteur $\F$ de $\Hol(c)$ tel que l'application naturelle de $\F/_{<\Hol(c)>} \rightarrow S$ est une injection.
\end{lemm}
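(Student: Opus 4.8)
Le plan est de poser $\G=\Hol(\pi_1(S))$ et de choisir un repr\'esentant \emph{primitif} $\g\in\G$ de $\Hol(c)$: c'est licite car l'holonomie d'une structure proprement convexe est fid\`ele et discr\`ete, et parce qu'un lacet simple sur une surface de caract\'eristique d'Euler strictement n\'egative repr\'esente une classe primitive de $\pi_1(S)$. Je noterais au passage que $\chi(S)<0$ force $\pi_1(S)$ \`a ne pas \^etre virtuellement ab\'elien, donc $\O$ n'est pas un triangle, et que $\Hol(c)$ n'est pas elliptique (un \'el\'ement elliptique fixe un point de $\O$ et engendrerait un sous-groupe discret d'un stabilisateur compact, donc fini, ce qui est exclu dans un groupe de surface sans torsion). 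L'observation de d\'epart est que, pour obtenir l'injectivit\'e de $\F/_{<\g>}\rightarrow S$, \emph{il suffit} de construire un secteur $\F$ de $\g$ v\'erifiant $\{\delta\in\G \,|\, \delta\F\cap\F\neq\varnothing\}\subset <\g>$: en effet, si $x,y\in\F$ ont m\^eme image dans $S$, c'est-\`a-dire $y=\delta x$ avec $\delta\in\G$, alors $y\in\delta\F\cap\F$, d'o\`u $\delta=\g^n$ et $y=\g^n x$, donc $x$ et $y$ d\'efinissent la m\^eme classe dans $\F/_{<\g>}$. Tout le travail consiste donc \`a produire un tel secteur.

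Je traiterais d'abord le cas o\`u $\g$ est hyperbolique avec $\Ax(\g)\subset\O$. Je prendrais alors pour $\F$ le secteur engendr\'e par un point de l'axe, c'est-\`a-dire $\F=\Ax(\g)$. Comme $c$ est simple, la proposition \ref{lacethomo2} assure que la g\'eod\'esique $\lambda=\Ax(\g)/_{<\g>}$ est simple, autrement dit que deux relev\'es distincts de $\lambda$ sont disjoints. Par cons\'equent, si $\delta\Ax(\g)\cap\Ax(\g)\neq\varnothing$ alors $\delta\Ax(\g)=\Ax(\g)$, de sorte que $\delta$ pr\'eserve la paire $\{p^+_{\g},p^-_{\g}\}$; la proposition \ref{centra} et la primitivit\'e de $\g$ donnent alors $\delta\in <\g>$, ce qui est exactement la condition voulue.

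Pour les autres dynamiques ($\g$ parabolique, quasi-hyperbolique, ou hyperbolique avec $\Ax(\g)\subset\partial\O$), il n'y a plus de g\'eod\'esique ferm\'ee dans $\O$ et j'utiliserais les lemmes combinatoires pr\'ec\'edents. Partant d'un secteur $\F$ quelconque, le lemme \ref{lemmeinj1} fournit des \'el\'ements $h_1,\dots,h_N$ tels que tout $\delta$ avec $\delta\F\cap\F\neq\varnothing$ s'\'ecrive $\delta=\g^n h_i\g^p$. Les $h_i\in <\g>$ ne posent pas de probl\`eme. Pour $h_i\notin <\g>$, le lemme \ref{lemmeinj2} laisse deux issues: si $h_i\F=\F$, alors $h_i$ pr\'eserve $\overline{\F}\cap\partial\O$, qui est le point fixe $p_{\g}$ (cas parabolique) ou le segment $\overline{\Ax(\g)}$ (autres cas), donc $h_i$ fixe les points fixes de $\g$ et la proposition \ref{centra} avec la primitivit\'e impose $h_i\in <\g>$, contradiction; il reste donc que $C_i=h_i\F\cap\F$ est compact. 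Je poserais $C=\bigcup_{h_i\notin <\g>} C_i$, partie compacte de $\O$. Il faudra ensuite \guillemotleft pousser \guillemotright\ le secteur vers l'infini: d'apr\`es la description de la forme des secteurs, les sous-secteurs de $\g$ forment une base de voisinages du bout de $\O/_{<\g>}$ associ\'e \`a $p_{\g}$ ou \`a $\overline{\Ax(\g)}$, et leurs images dans $\O/_{<\g>}$ sortent de tout compact; comme l'image de $C$ y est compacte, je choisirais un sous-secteur $\F_0\subset\F$ avec $\F_0\cap\bigcup_{n\in\Z}\g^n C=\varnothing$. En utilisant $\g^p\F_0=\F_0$, on aurait pour $h_i\notin <\g>$ l'inclusion $\g^n h_i\g^p\F_0\cap\F_0=\g^n h_i\F_0\cap\F_0\subset(\g^n h_i\F\cap\F)\cap\F_0=\g^n C_i\cap\F_0=\varnothing$, de sorte que seuls les $\delta\in <\g>$ rencontreraient $\F_0$, ce qui conclut.

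La principale difficult\'e sera ce dernier cas non hyperbolique-int\'erieur. La partie d\'elicate n'est pas l'argument alg\'ebrique (les lemmes \ref{lemmeinj1} et \ref{lemmeinj2} font le plus gros), mais la v\'erification g\'eom\'etrique que les sous-secteurs constituent bien une base de voisinages du bout correspondant, permettant d'\'echapper aux compacts $h_i\F\cap\F$, ainsi que l'exclusion propre de l'alternative $h_i\F=\F$ via le calcul de centralisateur. C'est l\`a que la simplicit\'e du lacet $c$ et l'hypoth\`ese $\chi(S)<0$ (donc $\O$ non triangulaire, $\G$ sans torsion) interviennent de mani\`ere essentielle.
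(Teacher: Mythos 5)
Votre preuve est correcte et suit pour l'essentiel la m\^eme route que celle de l'article : m\^eme r\'eduction (trouver un secteur dont l'ensemble de retour $\{\delta \,|\, \delta\F\cap\F\neq\varnothing\}$ est contenu dans $<\g>$), m\^emes outils (lemmes \ref{lemmeinj1} et \ref{lemmeinj2}, calcul de centralisateur \ref{para-para}/\ref{centra} combin\'e \`a la primitivit\'e, proposition \ref{lacethomo2} pour la simplicit\'e de la g\'eod\'esique ferm\'ee), et m\^eme division en cas selon la dynamique de $\g$. Deux \'ecarts m\'eritent d'\^etre signal\'es. D'abord, dans le cas $\g$ hyperbolique avec $\Ax(\g)\subset\O$, vous prenez le secteur d\'eg\'en\'er\'e $\F=\Ax(\g)$ (qui est bien un secteur : enveloppe convexe de l'orbite d'un point de l'axe) et concluez directement par la disjonction des relev\'es ; l'article garde au contraire un secteur quelconque et le r\'etr\'ecit via le lemme \ref{lemmeinj1}. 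Votre raccourci d\'emontre le lemme tel qu'\'enonc\'e, mais l'article utilise ensuite (proposition \ref{fonda1}) le fait que le secteur produit est un \emph{voisinage} de $\Ax(\g)$, ce que votre choix d\'eg\'en\'er\'e ne fournit pas ; la version de l'article est donc celle dont les applications ont r\'eellement besoin. Ensuite, vous excluez d'embl\'ee l'alternative $h_i\F=\F$ pour $h_i\notin<\g>$ puis poussez le secteur hors du compact $\bigcup C_i$, tandis que l'article tol\`ere $g_j\F_0=\F_0$ tout du long et ne l'\'elimine qu'\`a la fin (\og $\delta\F_0=\F_0$ si et seulement si $\delta\in<\g>$ \fg, car $\O$ n'est pas un triangle) : c'est le m\^eme argument, ordonn\'e diff\'eremment. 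Deux points que vous survolez sont \`a verrouiller, mais l'article est tout aussi elliptique \`a leur sujet : d'une part $\delta$ pourrait a priori \'echanger $p^+_{\g}$ et $p^-_{\g}$ (on l'exclut car il fixerait alors un point de $\O$, ce qui est impossible dans un groupe discret sans torsion), d'autre part \og $\delta$ fixe les points fixes de $\g$ \fg\ ne donne \ref{para-para} qu'apr\`es avoir v\'erifi\'e que $\delta$ est du m\^eme type que $\g$ avec le m\^eme troisi\`eme point fixe (ce qui se d\'eduit de la classification \ref{classi} et de la proposition \ref{hyp}). Enfin, le point g\'eom\'etrique que vous identifiez comme d\'elicat --- les sous-secteurs sortent de tout compact de $\O/_{<\g>}$ --- est exactement ce sur quoi repose aussi la phrase laconique de l'article \og comme les $g_j$ sont en nombre fini, on peut trouver un secteur $\F_0$... \fg ; vous n'\^etes donc pas moins rigoureux que l'original sur ce point.
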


\begin{proof}
On pose $\g= \Hol(c)$. Le lemme \ref{lemmeinj1}  montre qu'il existe des \'el\'ements $g_j$ pour $j=1...N$ tel que si $\delta \F \cap \F \neq \varnothing$ alors il existe $n\in \Z$ et $j_0=1...N$ tel que $\delta \F = \g^n g_{j_0} \F$.

De plus, le lemme \ref{lemmeinj2}  montre que si $g_j \F \cap \F \neq \varnothing$ ($j=1...N$) alors cette intersection est ou bien $\F$ ou une partie compacte de $\O$. Il faut \`a pr\'esent distinguer les deux cas suivants:
\begin{itemize}
\item Si $\g$ est hyperbolique avec $\Ax(\g) \subset \partial \O$ ou quasi-hyperbolique ou parabolique alors comme les $g_j$ sont en nombre fini, on peut trouver un secteur $\F_0$ inclus dans $\F$ tel que $g_j \F_0 \cap \F_0 \neq \varnothing$ si et seulement si $g_j \F_0 = \F_0$.

\item Si $\g$ est hyperbolique avec $\Ax(\g) \subset \O$ alors on peut supposer d'apr\`es le lemme \ref{lacethomo} que $c$ est la g\'eod\'esique simple donn\'e par l'axe de $\g$. Le secteur $\F$ est un voisinage de l'axe de $\g$. De plus, comme $c$ est une g\'eod\'esique simple le lemme \ref{lacethomo2} montre que les relev\'es de $c$ ne s'intersectent pas. Par cons\'equent, comme les $g_j$ sont en nombre fini, on peut trouver un secteur $\F_0$ inclus dans $\F$ tel que $g_j \F_0 \cap \F_0 \neq \varnothing$ si et seulement si $g_j \F_0 = \F_0$.
\end{itemize}
Dans tous les cas, on obtient que pour tout $\delta \in \G$, $\delta \F_0 \cap \F_0 \neq \varnothing$ si et seulement si $\delta \F_0 = \F_0$. Mais, l'ouvert $\O$ n'est pas un triangle par conséquent $\delta \F_0 = \F_0$ si et seulement si $\delta \in <\g>$.
\end{proof}

\begin{prop}\label{fonda1}
Soit $S$ une surface  de caractéristique d'Euler strictement négative, on munit $S$ d'une structure projective proprement convexe. Soit $c$ un lacet simple trac\'e sur $S$, on note $\pi : \C
\rightarrow S$ le rev\^etement universel de $S$ donn\'e par la
d\'eveloppante de $S$. On note $\overset{\circ}{S}$ l'int\'erieur de $S$.
\begin{itemize}
\item Si l'holonomie de $c$ est parabolique alors $c$ fait le tour d'un bout $B$ et l'image de tout secteur de $\Hol(c)$ dans $S$ contient un voisinage du bout $B$.

\item Si l'holonomie de $c$ est quasi-hyperbolique alors $c$ est \'el\'ementaire et l'image de tout secteur de $\Hol(c)$ dans $S$ contient un voisinage du bout correspondant dans la surface $\overset{\circ}{S}$.

\item Si l'holonomie de $c$ est hyperbolique et $\Ax(Hol(c)) \subset \partial \C$ alors $c$ est \'el\'ementaire et l'image de tout secteur de $\Hol(c)$ dans $S$ contient un voisinage du bout correspondant dans la surface $\overset{\circ}{S}$.

\item Si l'holonomie de $c$ est hyperbolique, $\Ax(Hol(c)) \subset \overset{\circ}{\C}$ et $p^0_{Hol(c)} \in \partial \C$ alors $c$ est \'el\'ementaire et l'image dans $S$ de la r\'eunion de n'importe quel secteur de $\Hol(c)$ et de l'unique triangle inclus dans $\C$ d\'efini par les points fixes de $\Hol(c)$ contient un voisinage du bout correspondant dans la surface $\overset{\circ}{S}$.
\end{itemize}
En particulier, si $c$ n'est pas \'el\'ementaire alors l'holonomie de $c$ est
hyperbolique, $\Ax(Hol(c)) \subset \overset{\circ}{\C}$ et $p^0_{Hol(c)} \notin \partial \C$.
\end{prop}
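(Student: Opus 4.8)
Le plan est de traiter les quatre cas de mani\`ere uniforme \`a partir du lemme~\ref{injgeo}. Posons $\g=\Hol(c)$; comme $c$ est simple, $\g$ est primitif, et le lemme~\ref{injgeo} fournit un secteur $\F$ tel que l'application naturelle $\F/_{<\g>}\rightarrow S$ soit injective, la fin de sa d\'emonstration donnant m\^eme l'\'equivalence $\delta\F\cap\F\neq\varnothing\Leftrightarrow\delta\in<\g>$. Comme $\chi(S)<0$, l'ouvert $\O=\overset{\circ}{\C}$ n'est pas un triangle, de sorte que les r\'esultats de la partie~\ref{dyna} (notamment la proposition~\ref{centra}) s'appliquent. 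Pour un secteur arbitraire $\F''$ de $\g$, l'intersection $\F''\cap\F$ contient un secteur $\F'$ dont le quotient s'injecte encore et dont l'image est contenue dans celle de $\F''$; il suffira donc d'\'etablir l'assertion pour le secteur $\F$ donn\'e par le lemme~\ref{injgeo}.

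D'abord, je d\'eterminerai $\overline{\F}\cap\partial\O$ \`a l'aide de la proposition d\'ecrivant la forme des secteurs et des propositions~\ref{hyp}, \ref{quasihyp}, \ref{para}: c'est $\{p_{\g}\}$ dans le cas parabolique et le segment $\overline{\Ax(\g)}$ dans les cas quasi-hyperbolique et hyperbolique avec $\Ax(\g)\subset\partial\O$. Dans le dernier cas (hyperbolique, $\Ax(\g)\subset\overset{\circ}{\C}$, $p^0_{\g}\in\partial\C$), la proposition~\ref{hyp} montre que $[p^+_{\g},p^0_{\g}]$ et $[p^-_{\g},p^0_{\g}]$ sont dans $\partial\O$, et je remplacerai $\F$ par $\F\cup T$, o\`u $T$ est le triangle de sommets $p^+_{\g},p^0_{\g},p^-_{\g}$: cet ajout est pr\'ecis\'ement l\`a pour que l'adh\'erence rencontre $\partial\O$ le long de ces deux segments. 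Dans tous les cas, $\F/_{<\g>}$ est un cylindre dont la fronti\`ere dans $\overset{\circ}{S}$ est l'image de la fronti\`ere de $\F$, laquelle est compacte puisque $<\g>$ agit cocompactement sur la r\'egion comprise entre deux secteurs (proposition sur la propri\'et\'e essentielle des secteurs).

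Le point central, et la difficult\'e principale, sera de montrer que l'injection $\F/_{<\g>}\hookrightarrow S$ est \emph{propre}, c'est-\`a-dire que le bout infini du cylindre est bien un bout de $\overset{\circ}{S}$. Je raisonnerai par l'absurde: sinon, il existerait une suite $z_n$ s'\'echappant dans ce bout, donc se relevant en $\zeta_n\in\F$ avec $\zeta_n\rightarrow p_\infty\in\partial\O$, mais dont l'image resterait born\'ee dans $S$, de sorte qu'il existerait $\delta_n\in\G$ avec $\delta_n\zeta_n\rightarrow w\in\O$. On v\'erifie que les $\delta_n$ sont hors de $<\g>$ (sinon $z_n$ ne s'\'echapperait pas) et deux \`a deux distincts (sinon $\delta p_\infty=w$ avec $\delta p_\infty\in\partial\O$ et $w\in\O$, absurde). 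Ainsi une infinit\'e de translat\'es $\delta_n\F$ rencontreraient un voisinage compact fix\'e de $w$; or la proposition~\ref{inftyfini}, jointe \`a la finitude locale d'un domaine de Dirichlet--Lee, entra\^ine qu'un compact de $\O$ ne rencontre qu'un nombre fini de translat\'es $(\delta\F)_{\delta\in\G}$: contradiction. L'injection \'etant propre, d'image un cylindre \`a fronti\`ere compacte, son bout infini est un voisinage d'un bout de $\overset{\circ}{S}$.

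Il restera \`a en d\'eduire que $c$ est \'el\'ementaire. Dans le cas hyperbolique avec $\Ax(\g)\subset\overset{\circ}{\C}$, la proposition~\ref{lacethomo} identifie $c$ \`a l'\^ame $\Ax(\g)/_{<\g>}$ du cylindre, et dans les autres cas $c$ est librement homotope \`a cette m\^eme \^ame; comme $c$ est simple et que le cylindre est un voisinage d'un bout, $c$ d\'ecoupe une composante cylindrique, donc $c$ est \'el\'ementaire (faisant le tour d'un bout dans le cas parabolique). Enfin l'assertion finale r\'esulte par contraposition: pour un lacet simple essentiel sur une surface de caract\'eristique d'Euler strictement n\'egative, la proposition~\ref{classi} ne laisse que les dynamiques hyperbolique, quasi-hyperbolique et parabolique — le cas planaire est exclu car $\O$ n'est pas un triangle, et les cas elliptique et identit\'e car $\G\cong\pi_1(S)$ est sans torsion et qu'un sous-groupe discret ne contient pas d'\'el\'ement elliptique d'ordre infini. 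Les quatre configurations ci-dessus rendant $c$ \'el\'ementaire, la non-\'el\'ementarit\'e de $c$ force $\g$ \`a \^etre hyperbolique avec $\Ax(\Hol(c))\subset\overset{\circ}{\C}$ et $p^0_{\Hol(c)}\notin\partial\C$.
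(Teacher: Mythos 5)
Votre strat\'egie diff\`ere r\'eellement de celle du papier. Apr\`es l'appel au lemme \ref{injgeo} (commun aux deux preuves), le papier traite chaque type dynamique en construisant un domaine fondamental \emph{explicite} pour l'action de $<\g>$ sur le secteur : il choisit une droite $L$ passant par $p_{\g}$ (resp. coupant l'int\'erieur de $\Ax(\g)$, resp. passant par $p^0_{\g}$ dans le quatri\`eme cas, apr\`es avoir remplac\'e $\F$ par $\F\cup T$ et v\'erifi\'e l'injectivit\'e pour cette r\'eunion), puis montre que l'adh\'erence dans $\O$ d'une composante connexe de $\F-\bigcup_{n\in\Z}\g^{n}L$ est un domaine fondamental dont l'adh\'erence dans $\P$ ne touche $\partial\O$ qu'aux points fixes de $\g$ ; le cylindre et le voisinage du bout en d\'ecoulent. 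Vous remplacez cette construction cas par cas par un argument uniforme de propret\'e fond\'e sur la proposition \ref{inftyfini} et la locale finitude du domaine de Dirichlet--Lee. Cette architecture est l\'egitime, et elle a le m\'erite d'expliciter un point que le papier laisse implicite (pourquoi le cylindre inject\'e est ferm\'e dans $S$, donc voisinage d'un bout), ainsi que le quantificateur sur \emph{tous} les secteurs.

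Il y a toutefois une lacune concr\`ete au centre de l'argument. Du fait que les $\delta_n$ sont hors de $<\g>$ et deux \`a deux distincts, vous concluez qu'une infinit\'e de translat\'es $\delta_n\F$ rencontrent un voisinage compact de $w$, ce qui contredirait la locale finitude. Or la proposition \ref{inftyfini} borne le nombre d'images distinctes $\delta\F$ \emph{comme ensembles}, et le stabilisateur de $\F$ dans $\G$ est exactement $<\g>$ : les ensembles $\delta_n\F$ ne sont deux \`a deux distincts que si les classes $\delta_n<\g>$ le sont. Des \'el\'ements $\delta_n$ deux \`a deux distincts, hors de $<\g>$, peuvent parfaitement appartenir \`a une seule classe $\delta<\g>$ ; il n'y a alors qu'un seul translat\'e $\delta\F$ et aucune contradiction n'appara\^it. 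Ce sous-cas doit \^etre trait\'e \`a part, et il se traite par le m\^eme proc\'ed\'e que votre parenth\`ese sur $<\g>$ : si $\delta_n=\delta\g^{k_n}$ avec des $k_n$ distincts, alors $\g^{k_n}\zeta_n=\delta^{-1}\delta_n\zeta_n\rightarrow\delta^{-1}w\in\O$, donc $\delta^{-1}w\in\F$ (car $\F$ est ferm\'e dans $\O$ et $\g$-invariant) et $z_n=[\g^{k_n}\zeta_n]\rightarrow[\delta^{-1}w]$ dans $\F/_{<\g>}$, ce qui contredit la fuite de $z_n$ vers le bout ; la dichotomie (infinit\'e de classes distinctes, ou infinit\'e de $\delta_n$ dans une m\^eme classe) compl\`ete alors votre contradiction. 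Signalons aussi deux dettes secondaires : vous affirmez sans preuve que deux secteurs de $\g$ s'intersectent toujours (vrai : dans le cas hyperbolique avec $\Ax(\g)\subset\O$ les deux contiennent $\Ax(\g)$ ; dans les autres cas une droite qui les s\'eparerait devrait contenir $p_{\g}$ ou le segment $\overline{\Ax(\g)}$, ce qui est impossible, mais il faut le dire) ; et dans le quatri\`eme cas $\F\cup T$ n'est pas un secteur, de sorte que la proposition \ref{inftyfini} ne s'applique pas verbatim \`a ses translat\'es -- le papier contourne ce point en remarquant que $\delta(\F\cup T)\cap(\F\cup T)\neq\varnothing$ entra\^ine $\delta\Ax(\g)\cap\Ax(\g)\neq\varnothing$.
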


\begin{proof}
On note $\O$ l'int\'erieur de $\C$. On consid\`ere $\g = Hol(c)$ et on note $\F$ un secteur de $\g$ tel que l'application naturelle de $\F/_{<\g>} \rightarrow S$ est une injection. L'existence d'un tel secteur est assur\'e par le lemme \ref{injgeo}.

Commençons par le cas $\g$ est parabolique. Construisons un domaine fondamental pour l'action de $\g$ sur $\F$. On consid\`ere une droite $L$ passant par $p_{\g}$. L'adh\'erence $D'$ dans $\O$ de n'importe quelle composante connexe de $\F - \underset{n \in \Z}{\bigcup} \g^n L$ est un domaine fondamental pour l'action de $\g$ sur $\F$. Le domaine fondamental $D'$ est l'intersection d'un triangle ferm\'e de $\P$ et de $\F$. De plus son adh\'erence dans $\P$ contient le point $p_{\g}$. L'image de $\F/_{<\g>}$ dans $S$ est donc un cylindre qui contient un voisinage d'un bout $B$ de la surface $S$ et le lacet $c$ fait le tour du bout $B$.

Ensuite, on peut traiter en m\^eme temps les cas, ou $\g$ est quasi-hyperbolique, ou hyperbolique avec $\Ax(\g) \subset \partial \C$. On proc\`ede de la m\^eme façon. Construisons un domaine fondamental pour l'action de $\g$ sur $\F$. On note $p_{\g}^+$ le point attractif de $\g$ et $p_{\g}^-$ le point r\'epulsif de $\g$. L'axe de $\g$ est le segment d'extr\'emit\'e $p_{\g}^+$ et $p_{\g}^-$ inclus dans $\partial \O$. On consid\`ere une droite $L$ qui intersecte $\Ax(\g)$ sur son int\'erieur. L'action de $\g$ sur $\partial \O$ v\'erifie que pour tout $x \in \partial \O$ diff\'erent de $p_{\g}^+,p_{\g}^-$, le point $\g x$ appartient \`a la composante connexe $V$ de $\partial \O -\{ p^+_{\g},p^-_{\g} \}$ qui contient $x$. Et, le point $\g x$ appartient \`a la composante connexe de $V-\{ x \}$ qui contient $p_{\g}^+$ dans son adh\'erence. Par cons\'equent, les droites $L$ et $\g L$ ne s'intersectent pas dans $\O$. Il vient donc que
l'adh\'erence $D'$ dans $\O$ de n'importe quelle composante connexe de $\F - \underset{n \in \Z}{\bigcup} \g^n L$ est un domaine fondamental pour l'action de $\g$ sur $\F$. Le domaine fondamental $D'$ contient un sous-segment non trivial de $\Ax(\g)$. L'image de $\F/_{<\g>}$ dans $S$ est donc un cylindre qui contient un voisinage d'un bout $B$ de la surface $\overset{\circ}{S}$ et le lacet $c$ est fait le tour de ce bout.

Enfin, il faut traiter le cas o\`u $\g$ est hyperbolique avec $\Ax(\g) \subset \overset{\circ}{\C}$ et $p^0_{\g} \in \partial \C$. La difficult\'e de ce cas vient du fait que cette fois-ci les secteurs de $\g$ "ne vont pas jusqu'\`a l'infini". On consid\`ere la partie $\widetilde{\F}$ de $\O$ obtenue en ajoutant \`a $\F$ le triangle ouvert inclus dans $\O$ d\'efini par les points $p^+_{\g}, p^-_{\g}, p_{\g}^0$. Pour appliquer le m\^eme raisonnement que pr\'ec\'edemment il faut montrer que l'application naturelle de $\widetilde{\F}/_{<\g>}$ vers $S$ est injective. Comme $\F$ est un voisinage de $\Ax(\g)$ et que pour tout \'el\'ement $\delta \in \G$ si $\delta \F \cap \F \neq \varnothing$ alors $\delta \in <\g>$, l'ensemble $\widetilde{\F}$ v\'erifie aussi que pour tout \'el\'ement $\delta \in \G$ si $\delta  \widetilde{\F} \cap \widetilde{\F} \neq \varnothing$ alors $\delta \in <\g>$, car $\delta  \widetilde{\F} \cap \widetilde{\F} \neq \varnothing$ entraîne $\Ax(\g) \cap \delta \Ax(\g) \neq \varnothing$. Il  reste \`a construire un domaine fondamental pour l'action de $\g$ sur $\widetilde{\F}$. Pour cela on consid\`ere une droite $L$ passant par $p^0_{\g}$. L'adh\'erence $D'$ dans $\O$ de n'importe quelle composante connexe de $\widetilde{\F} - \underset{n \in \Z}{\bigcup} \g^n L$ est un domaine fondamental pour l'action de $\g$ sur $\F$. Le point $p^0_{\g}$ est adh\'erent au domaine fondamental $D'$. L'image de $\F/_{<\g>}$ dans $S$ est donc un cylindre qui contient un voisinage d'un bout $B$ de la surface $\overset{\circ}{S}$ et le lacet $c$ fait le tour de ce bout.
\end{proof}

\subsection{Le groupe fondamental d'une surface de volume fini est de type fini}

\subsubsection{Un peu de topologie des surfaces}

On note $P$ la surface \`a bord obtenue en retirant 3 disques ouverts disjoints \`a la sph\`ere euclidienne $\mathbb{S}^2$.

\begin{defi}
Soit $S$ une surface, un \emph{pantalon de $S$} est une sous-surface \`a bord de $S$ hom\'eomorphe \`a $P$. Un \emph{pantalon non \'el\'ementaire de $S$} est un pantalon de $S$ dont le bord d\'efini 3 lacets non \'el\'ementaires.
\end{defi}

Richards a classifi\'e les surfaces en construisant des invariants \`a l'aide de l'espace des bouts de celles-ci (\cite{Rich}). Il r\'esulte de cette classification la proposition suivante.

\begin{prop}\label{toposurf}
Soit $S$ une surface, on a l'alternative suivante:
\begin{itemize}
\item L'espace des bouts de $S$ est infini.

\item L'espace des bouts de $S$ est fini mais $S$ contient une infinit\'e de pantalons non \'el\'ementaires deux \`a deux disjoints.

\item La surface $S$ est de type fini.
\end{itemize}
\end{prop}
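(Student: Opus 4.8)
Le plan est de d\'eduire la proposition de la classification topologique des surfaces due \`a Richards (\cite{Rich}). Cette classification d\'ecrit une surface orientable, \`a hom\'eomorphisme pr\`es, par la donn\'ee de son genre (entier ou infini), du nombre de ses composantes de bord et de son espace des bouts muni du sous-ensemble des bouts accumul\'es par du genre. Nous n'en retiendrons que le fait suivant : une surface est de type fini si et seulement si son genre est fini, son nombre de composantes de bord est fini et son espace des bouts est fini. Il suffit alors de montrer que si l'espace des bouts de $S$ est fini et si $S$ n'est pas de type fini, alors $S$ contient une infinit\'e de pantalons non \'el\'ementaires deux \`a deux disjoints.

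Supposons donc l'espace des bouts de $S$ fini et $S$ non de type fini. D'apr\`es le fait rappel\'e ci-dessus, $S$ est alors de genre infini ou poss\`ede une infinit\'e de composantes de bord. Je fixe une exhaustion $S = \bigcup_{n \in \N} K_n$ par des sous-surfaces compactes avec $K_n \subset \overset{\circ}{K_{n+1}}$ et je consid\`ere les couronnes $R_n = \overline{K_{n+1} - K_n}$, qui sont deux \`a deux disjointes. Comme chaque $K_n$ est compact (donc de genre fini et \`a bord fini) alors que $S$ porte du genre ou des composantes de bord en quantit\'e infinie, une infinit\'e de couronnes $R_n$ contiennent du genre ou regroupent des composantes de bord de $S$. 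Quitte \`a regrouper un nombre fini de couronnes cons\'ecutives, on peut supposer que chaque couronne retenue contient soit une sous-surface de genre $\geqslant 2$, soit au moins quatre composantes de bord de $S$. Les couronnes \'etant disjointes, il suffit d'extraire de chacune un pantalon non \'el\'ementaire pour conclure.

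Le point central de la d\'emonstration est cette extraction. Dans une couronne contenant une sous-surface de genre $\geqslant 2$, on choisit un pantalon plong\'e $P$ dont les trois courbes de bord sont non s\'eparantes : un tel pantalon existe dans toute surface de genre $\geqslant 2$. Comme $S-c$ est connexe pour une courbe non s\'eparante $c$, une telle courbe ne peut \^etre un lacet \'el\'ementaire, la d\'efinition \ref{elem} exigeant au contraire que $S-c$ poss\`ede deux composantes connexes ; le pantalon $P$ est donc non \'el\'ementaire. Dans une couronne regroupant au moins quatre composantes de bord de $S$, on prend pour $P$ le pantalon d\'elimit\'e par une courbe entourant ces quatre composantes et par deux courbes entourant chacune deux d'entre elles : chacune de ces trois courbes s\'epare $S$ en deux morceaux dont aucun n'est un cylindre, elles sont donc non \'el\'ementaires, et $P$ est encore un pantalon non \'el\'ementaire.

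Je m'attends \`a ce que la principale difficult\'e soit cette derni\`ere \'etape : garantir que les trois courbes de bord du pantalon extrait sont non \'el\'ementaires, c'est-\`a-dire ne bordent aucun cylindre menant vers un bout ou vers une composante de bord de $S$. C'est pr\'ecis\'ement pour cela que l'on a exig\'e de chaque couronne retenue qu'elle contienne du genre en quantit\'e suffisante (genre $\geqslant 2$) ou un nombre suffisant de composantes de bord (au moins quatre) : cette marge assure que de part et d'autre de chaque courbe de bord de $P$ se trouve assez de topologie pour que cette courbe ne soit ni triviale ni p\'eriph\'erique. Le reste n'est que comptabilit\'e sur les invariants de Richards. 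On obtient ainsi une infinit\'e de pantalons non \'el\'ementaires deux \`a deux disjoints, ce qui \'etablit le deuxi\`eme point de l'alternative.
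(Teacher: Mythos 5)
Your proof is correct, and it follows the same route the paper intends: the paper itself gives no argument at all for Proposition \ref{toposurf}, presenting it as an immediate consequence of Richards' classification, so the content of your proposal (the reduction to ``infinite genus or infinitely many boundary components'' and, above all, the explicit extraction of pairwise disjoint pants whose three boundary curves are non-elementary) is exactly the deduction the paper leaves to the reader. Your two constructions are sound: a non-separating curve $c$ cannot be elementary since $S-c$ is connected, and a curve enclosing two (resp.\ four) boundary circles of $S$ separates $S$ into two pieces neither of which is a cylinder; the regrouping of consecutive annular regions also correctly handles the fact that genus may be created by the gluing pattern rather than sitting inside a single region. Two caveats are worth recording. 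First, the regions $R_n=\overline{K_{n+1}-K_n}$ are not pairwise disjoint (consecutive ones share the frontier $\partial K_{n+1}$); this is harmless --- keep every other region, or push each pants into the interior. Second, Richards classifies surfaces \emph{without} boundary, so both the ``fact'' you quote (finite type iff finite genus, finitely many ends and finitely many boundary components) and the enclosing-curve construction tacitly assume that boundary components are circles; since the paper allows surfaces with boundary, a boundary component can be a properly embedded line, which no compact curve can enclose. That case must either be excluded (one should check that infinitely many non-compact boundary components force infinitely many ends, which places one in the first alternative) or handled by doubling along the boundary so as to reduce to Richards. This gap is minor --- and invisible in the paper's own application, where the proposition is invoked for surfaces without boundary --- but since the proposition is stated for arbitrary surfaces it deserves a sentence.
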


\begin{defi}
Soit $S$ une surface projective proprement convexe, un \emph{pantalon de $S$ \`a bord g\'eod\'esique} est un pantalon $P$ de $S$ tel que le bord de $P$ est d\'efini par trois g\'eod\'esiques non \'el\'ementaires de $S$.
\end{defi}

Les lemmes \ref{lacethomo}, \ref{lacethomo2} et la proposition \ref{fonda1} donne la proposition suivante:

\begin{prop}\label{homopant}
Soit $S$ une surface projective proprement convexe, tout pantalon non \'el\'ementaire de $S$ est homotope \`a un unique pantalon \`a bord g\'eod\'esique. Si $P_1$ et $P_2$ sont deux pantalons non \'el\'ementaires de $S$ disjoints alors les uniques pantalons homotopes \`a $P_1$ et $P_2$ sont aussi disjoints.
\end{prop}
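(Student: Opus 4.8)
La strat\'egie est de se ramener aux trois composantes de bord du pantalon et d'appliquer successivement les trois r\'esultats cit\'es. Je commencerais par fixer un pantalon non \'el\'ementaire $P$ de $S$, de bord $\partial P = c_1 \cup c_2 \cup c_3$ form\'e de trois lacets simples deux \`a deux disjoints. Par d\'efinition d'un pantalon non \'el\'ementaire, aucun des $c_i$ n'est \'el\'ementaire; la derni\`ere assertion de la proposition \ref{fonda1} s'applique donc \`a chacun d'eux et montre que $\Hol(c_i)$ est hyperbolique avec $\Ax(\Hol(c_i)) \subset \overset{\circ}{\C}$. C'est exactement l'hypoth\`ese requise par la proposition \ref{lacethomo}, qui fournit pour chaque $i$ une unique g\'eod\'esique $\lambda_i$ de $S$ librement homotope \`a $c_i$. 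Comme chaque $c_i$ est simple et que les $c_i$ sont deux \`a deux disjoints, la proposition \ref{lacethomo2} assure que chaque $\lambda_i$ est simple et que $\lambda_1, \lambda_2, \lambda_3$ sont deux \`a deux disjointes.

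Pour l'existence du pantalon g\'eod\'esique, j'utiliserais le fait topologique classique (Baer--Epstein) selon lequel, sur une surface, deux courbes ferm\'ees simples essentielles librement homotopes sont ambiant-isotopes, et une famille finie de telles courbes deux \`a deux disjointes peut \^etre isotop\'ee simultan\'ement sur la famille (disjointe) de ses repr\'esentantes g\'eod\'esiques. En appliquant une telle isotopie ambiante $\Phi$ de $S$ au pantalon plong\'e $P$, on obtient une sous-surface $P' = \Phi(P)$ hom\'eomorphe \`a un pantalon, de bord $\lambda_1 \cup \lambda_2 \cup \lambda_3$ et isotope (donc homotope) \`a $P$: c'est un pantalon \`a bord g\'eod\'esique homotope \`a $P$.

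L'unicit\'e se d\'eduit de l'unicit\'e des $\lambda_i$: tout pantalon \`a bord g\'eod\'esique homotope \`a $P$ a ses trois g\'eod\'esiques de bord librement homotopes respectivement \`a $c_1, c_2, c_3$, donc \'egales \`a $\lambda_1, \lambda_2, \lambda_3$ par la proposition \ref{lacethomo}; le c\^ot\'e o\`u se trouve $P$ \'etant d\'etermin\'e par la classe d'homotopie, la r\'egion est elle aussi d\'etermin\'ee.

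Enfin, pour la derni\`ere affirmation, je traiterais $P_1$ et $P_2$ simultan\'ement: leurs six lacets de bord forment une seule famille de courbes simples deux \`a deux disjointes, dont les repr\'esentantes g\'eod\'esiques sont encore simples et deux \`a deux disjointes (ou confondues, lorsque deux anses cobordent un cylindre) par la proposition \ref{lacethomo2}. En isotopant simultan\'ement toute cette famille vers ses g\'eod\'esiques, on transporte le couple disjoint $(P_1, P_2)$ sur le couple de pantalons g\'eod\'esiques $(P_1', P_2')$, qui ont donc des int\'erieurs disjoints. Le point d\'elicat est pr\'ecis\'ement cette \'etape d'isotopie simultan\'ee: il faut transformer l'homotopie libre des bords en isotopie ambiante et le faire de fa\c{c}on compatible pour toute la famille; le contenu g\'eom\'etrique qui le rend possible --- que les repr\'esentantes g\'eod\'esiques restent simples et disjointes --- est exactement ce que fournit la proposition \ref{lacethomo2}, le reste relevant de la topologie classique des surfaces.
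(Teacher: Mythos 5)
Your proposal is correct and takes essentially the same approach as the paper: the paper offers no written proof of Proposition \ref{homopant}, merely asserting that it follows from Propositions \ref{fonda1}, \ref{lacethomo} and \ref{lacethomo2}, which is exactly the chain you spell out (non-elementary boundary loops have hyperbolic holonomy with principal axis in the interior, hence unique geodesic representatives that remain simple and pairwise disjoint). The Baer--Epstein simultaneous-isotopy step you add is precisely the standard surface-topology content the paper leaves implicit.
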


\subsubsection{Minoration de l'aire d'un pantalon projectif proprement convexe}

Pour montrer que l'aire de tout pantalon \`a bord g\'eod\'esique est minor\'ee par une constante universelle, nous allons chercher des triangles id\'eaux, pour cela on utilise des "chemins en spirales".

\begin{prop}\label{minaire}
Il existe une constante universelle $K_{\P}$ tel que pour toute surface $S$ projective proprement convexe et tout pantalon non \'el\'ementaire $P$ \`a bord g\'eod\'esique inclus dans $S$, on ait: $\mu_{S}(P) \geqslant K_{\P}$.
\end{prop}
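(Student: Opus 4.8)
Le plan est de se ramener au th\'eor\`eme \ref{airetri}, qui fournit une constante universelle $C_{\P}>0$ minorant l'aire $\mu_{\O}(\Delta)$ de tout triangle id\'eal $\Delta$ d'un ouvert proprement convexe $\O$ \emph{quelconque}. Il suffit donc de plonger un triangle id\'eal dans le pantalon $P$ : on posera alors $K_{\P}=C_{\P}$, la minoration \'etant automatiquement ind\'ependante de $S$ et de $P$ puisque celle du th\'eor\`eme \ref{airetri} l'est. C'est exactement l'analogue projectif de la d\'ecomposition d'un pantalon hyperbolique \`a bord g\'eod\'esique en deux triangles id\'eaux, mais men\'ee sans disposer de Gauss--Bonnet.

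Je fixe d'abord la situation dans le rev\^etement universel. Le pantalon $P$ \'etant non \'el\'ementaire, ses trois lacets de bord $c_1,c_2,c_3$ ont, d'apr\`es la proposition \ref{fonda1}, une holonomie hyperbolique dont l'axe est inclus dans $\O=\overset{\circ}{\C}$. On rel\`eve $P$ via la d\'eveloppante et on note $\g_1,\g_2$ les holonomies de deux des lacets de bord, de sorte que $\pi_1(P)=<\g_1,\g_2>$ est libre de rang $2$ et que $\g_3=(\g_1\g_2)^{-1}$ est l'holonomie du troisi\`eme. Chaque $\g_i$ est hyperbolique, d'axe $\Ax(\g_i)\subset\O$ et de points fixes $p^{\pm}_{\g_i}\in\partial\O$ ; par la proposition \ref{lacethomo2}, les axes $\Ax(\g_1),\Ax(\g_2),\Ax(\g_3)$ et, plus g\'en\'eralement, leurs translat\'es sous $\pi_1(P)$ sont deux \`a deux disjoints.

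Ensuite je construis le triangle id\'eal \`a l'aide des chemins en spirale. Un tel chemin est un rayon g\'eod\'esique de $P$ qui s'accumule sur l'une des g\'eod\'esiques de bord ; son relev\'e dans $\O$ est un rayon g\'eod\'esique convergeant vers l'un des points fixes $p^{\pm}_{\g_i}$. Une g\'eod\'esique compl\`ete de $P$ spiralant vers $c_i$ d'un c\^ot\'e et vers $c_j$ de l'autre se rel\`eve donc en une g\'eod\'esique de $\O$ joignant un point fixe de $\g_i$ \`a un point fixe de $\g_j$. En choisissant les signes de mani\`ere coh\'erente (les trois points fixes attractifs convenablement conjugu\'es), les trois g\'eod\'esiques associ\'ees aux paires $\{1,2\}$, $\{2,3\}$, $\{3,1\}$ bordent un triangle id\'eal $\Delta\subset\O$ dont les trois sommets sont sur $\partial\O$. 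Ce triangle est inscrit entre les trois axes, donc contenu dans le relev\'e $\widetilde{P}$ de $P$ d\'etermin\'e par le choix de $\g_1,\g_2$ (enveloppe convexe des axes).

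Il reste \`a \'etablir le point essentiel, qui est aussi l'obstacle principal : la projection $\pi:\O\to S$ est injective en restriction \`a $\Delta$, et son image est incluse dans $P$. L'inclusion r\'esulte de ce que $\Delta$ est dans l'enveloppe convexe des trois axes, donc dans $\widetilde{P}$. Pour l'injectivit\'e, il faut montrer que $\delta\Delta\cap\Delta=\varnothing$ pour tout $\delta\in\pi_1(P)\setminus\{1\}$ : c'est l\`a qu'intervient un argument de ping-pong pour l'action de $<\g_1,\g_2>$ sur $\partial\O$, reposant sur la disjonction des translat\'es des axes (proposition \ref{lacethomo2}) et sur le fait que les sommets de $\Delta$ sont des points fixes des g\'en\'erateurs, donc des points ``extr\^emes'' pour cette dynamique. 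Une fois l'injectivit\'e acquise, la d\'efinition de $\mu_S$ donne $\mu_S(P)\geqslant\mu_S(\pi(\Delta))=\mu_{\O}(\Delta)\geqslant C_{\P}$, d'o\`u le r\'esultat avec $K_{\P}=C_{\P}$ (on pourrait m\^eme loger deux triangles id\'eaux disjoints et obtenir $2C_{\P}$).
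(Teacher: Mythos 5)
Votre sch\'ema est celui du texte : la construction de Goldman par chemins en spirale, qui r\'ealise un triangle id\'eal $\Delta$ dont les sommets sont des points fixes des holonomies du bord, puis la minoration uniforme du th\'eor\`eme \ref{airetri}. La r\'eduction \`a \ref{airetri} est correcte, et l'inclusion de $\Delta$ dans le relev\'e de $P$ aussi (modulo la convexit\'e de la composante $\U$ de $\pi^{-1}(P)$, que vous utilisez sans la justifier : elle se d\'emontre en \'ecrivant $\U$ comme intersection de $\O$ et des demi-espaces d\'efinis par les relev\'es du bord). Mais l'\'etape que vous identifiez vous-m\^eme comme l'obstacle principal --- l'injectivit\'e de la projection restreinte \`a $\Delta$ --- n'est pas d\'emontr\'ee : vous annoncez un \emph{argument de ping-pong} sans le mettre en place (quels sont les ensembles de ping-pong ? comment un mot r\'eduit en $\g_1,\g_2$ d\'eplace-t-il $\Delta$ ?), et les ingr\'edients que vous invoquez ne suffisent pas. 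La proposition \ref{lacethomo2} donne la disjonction des translat\'es des \emph{axes}, c'est-\`a-dire des relev\'es de g\'eod\'esiques ferm\'ees simples disjointes ; or ce qu'il faut ici est le non-croisement des translat\'es des \emph{c\^ot\'es} de $\Delta$, les g\'eod\'esiques en spirale, qui ne sont pas des relev\'es de g\'eod\'esiques ferm\'ees et auxquelles \ref{lacethomo2} ne s'applique donc pas. Et le seul fait que les sommets de $\Delta$ soient des points fixes des $\g_i$ n'interdit pas, a priori, qu'un translat\'e $\delta\Delta$ chevauche $\Delta$ pour un mot long $\delta$.

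C'est pr\'ecis\'ement ce point que la d\'emonstration du texte \'etablit, par une voie diff\'erente de celle que vous esquissez. On munit d'abord $S$ d'une structure hyperbolique auxiliaire pour lire la combinatoire topologique : les chemins $l_1,l_2,l_3$ et leurs images par $\pi_1(P)$ d\'ecoupent le rev\^etement universel $\widetilde{P}$ en une triangulation (de Farey) dont deux triangles ferm\'es adjacents forment un domaine fondamental. On r\'ealise ensuite cette triangulation g\'eod\'esiquement dans le convexe $\U$ : chaque relev\'e $\widetilde{l_i}$ est remplac\'e par le segment de m\^emes extr\'emit\'es, et deux segments d'un convexe dont les extr\'emit\'es ne s'entrelacent pas sur le bord ne se croisent pas, de sorte que la combinatoire topologique se transf\`ere \`a la r\'ealisation g\'eod\'esique. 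L'injectivit\'e sur l'int\'erieur du triangle $T$ de sommets $p^-_{\g_1}, p^-_{\g_2}, p^-_{\g_3}$ devient alors automatique, puisque $T$ est une cellule d'une triangulation $\pi_1(P)$-\'equivariante ; mieux, $T \cup \g_1 T$ est un domaine fondamental pour l'action de $\pi_1(P)$ sur l'int\'erieur de $\U$, ce qui donne la minoration par $2C_{\P}$ que vous mentionnez en passant. Pour compl\'eter votre r\'edaction, il faudrait soit effectuer ce transfert combinatoire depuis le mod\`ele hyperbolique, soit \'ecrire effectivement le ping-pong en contr\^olant de quel c\^ot\'e des g\'eod\'esiques en spirale tombent les images $\delta\Delta$ ; en l'\'etat, l'\'etape essentielle de la d\'emonstration manque.
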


\begin{proof}
Nous allons montrer que tout pantalon est la r\'eunion de deux triangles id\'eaux. Pour cela on utilise une construction de Goldman (\cite{Gold1}). On part de la figure \ref{genre}.

%prob_fig
\begin{figure}[!h]
\begin{center}
\includegraphics[trim=0cm 8cm 0cm 0cm, clip=true, width=10cm]{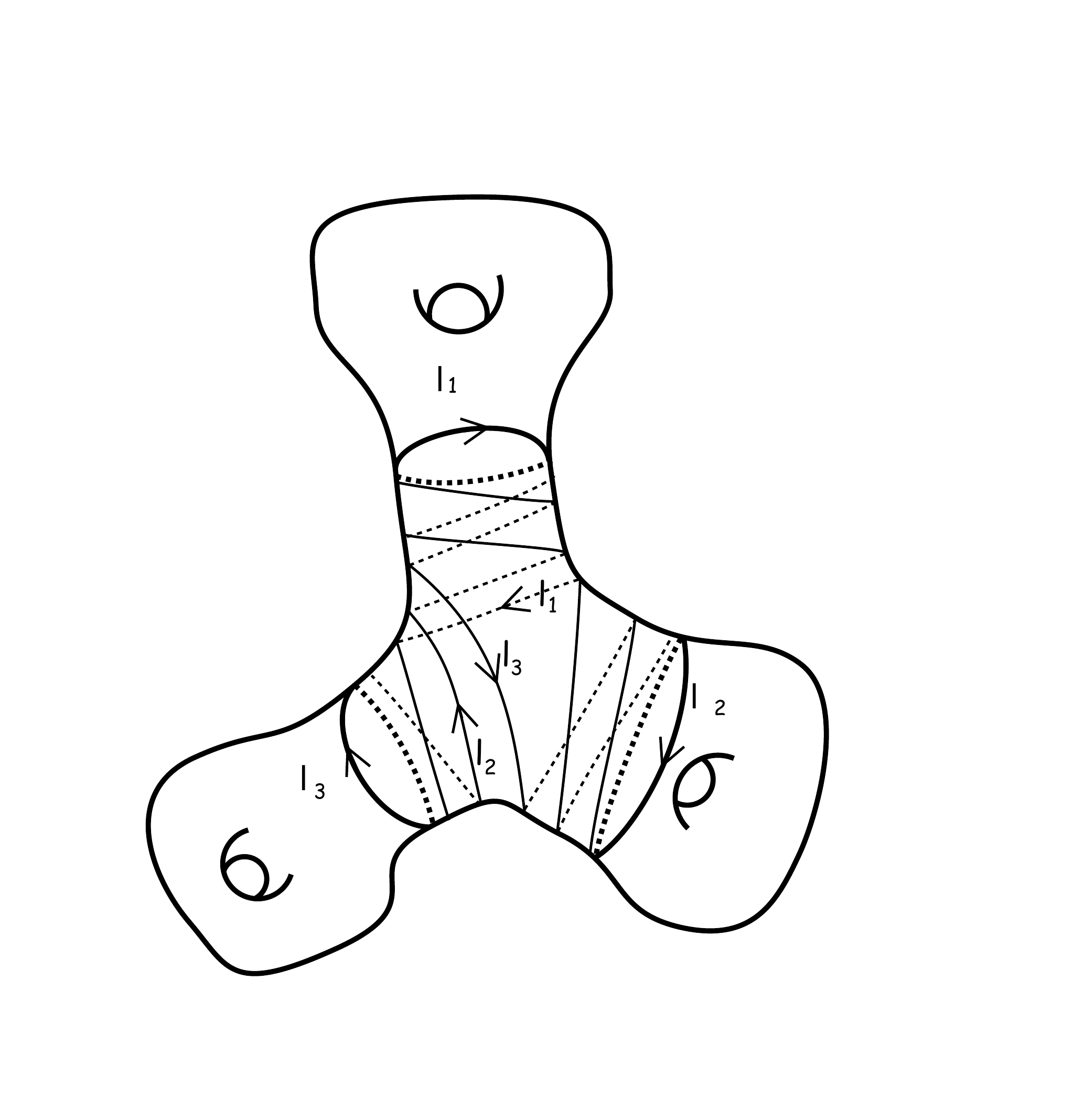}
%\centerline{\psfig{figure=genre3.pdf, width=6cm}}
\caption{D\'emonstration de la proposition \ref{minaire}}\label{genre}
\end{center}
\end{figure}

%version post these cf rk bergeron

Commençons par donner une définition précise de la figure \ref{genre} d'un point de vue topologique via la géométrie hyperbolique. On munit la surface $S$ d'une structure hyperbolique, c'est possible car $S$ est de caractéristique d'Euler strictement négative. On note $\lambda^{\mathbb{H}}_1$, $\lambda^{\mathbb{H}}_2$ et $\lambda^{\mathbb{H}}_3$ les géodésiques données par le bord de $P$, et orienté comme sur la figure \ref{genre}. Le chemin $l_i$ est l'unique géodésique qui s'accumule sur les géodésiques $\lambda^{\mathbb{H}}_{i+1}$ et $\lambda^{\mathbb{H}}_{i+2}$ (Les indices sont calculés modulo 3).

%On choisit un relevé $\widetilde{\lambda^{\mathbb{H}}_i}$ de chaque $\lambda^{\mathbb{H}}_i$ de tel %sorte que ces relevés bordent la même composante connexe de l'image réciproque de $P$ dans le plan %hyperbolique $\mathbb{H}^2$. Les chemins $l_1$, $l_2$ et $l_3$ sont les projections des segments %joignants les points fixes répulseurs des éléments $\g_i$ correspondant aux géodésiques %$\widetilde{\lambda^{\mathbb{H}}_i}$.

Nous allons montrer que dans le cas projectif la situation est analogue. On note $\C$ la partie proprement convexe donné par la développante de $S$ et $\pi:\C \rightarrow S$ le revêtement universel associé. On note $\lambda_1$, $\lambda_2$ et $\lambda_3$ les géodésiques données par le bord de $P$, et orienté comme sur la figure \ref{genre}.

Les chemins $l_1$, $l_2$ et $l_3$ se relèvent en des chemins simples $\widetilde{l_1}$, $\widetilde{l_2}$ et $\widetilde{l_3}$ tracé sur le revêtement universel $\widetilde{P}$ de $P$. Chacun des chemins $\widetilde{l_i}$ vérifient que $\widetilde{P}-\widetilde{l_i}$ possède deux composantes connexes. Ces chemins et leurs images par le groupe fondamental $\G$ de $P$ définissent une triangulation de $\widetilde{P}$ (triangulation de Faray). Deux triangles fermés adjacents de cette triangulation définissent un domaine fondamental pour l'action de $\G$ sur $\widetilde{P}$.

Nous allons montrer que chacun de ces chemins peuvent être supposés géodésiques. On note $\g_1$ (resp. $\g_2$ resp. $\g_3$) les représentants de l'holonomie des lacets $\lambda_1$ (resp. $\lambda_2$ resp. $\lambda_3$) donnés par le choix du point base $x_0$ sur $P$. Ils vérifient la relation $\g_3\g_2\g_1=1$.

On note $\widetilde{\lambda}_1$ (resp. $\widetilde{\lambda}_2$, resp. $\widetilde{\lambda}_3$) un relevé de $\lambda_1$, (resp. $\lambda_2$ resp. $\lambda_3$). On peut supposer que $\widetilde{\lambda}_1$, $\widetilde{\lambda}_2$ et $\widetilde{\lambda}_3$ bordent la même composante connexe $\U$ de $\pi^{-1}(P)$. La partie $\U$ de $\C$ est convexe puisque c'est l'intersection de l'intérieur $\O$ de $\C$ et d'une infinité de demi-espaces définis par les relevés de $\lambda_1$, $\lambda_2$ et $\lambda_3$ qui bordent $\U$. Ainsi, le convexe $\U$ est le revêtement universel du pantalon $P$.

Comme $P$ est un pantalon non élémentaire, la proposition \ref{fonda1} montre que $\g_1$, $\g_2$ et $\g_3$ sont hyperboliques et leurs axes principaux sont inclus dans $\O$. Notre choix d'orientation fait que les points $p_{\g_1}^-,p_{\g_1}^+,p_{\g_2}^-,p_{\g_2}^+,$  $p_{\g_3}^-,p_{\g_3}^+ \in \partial \O$ sont sur $\partial \O$ dans cet ordre. Les axes de $\g_1$, $\g_2$ et $\g_3$ sont inclus dans le bord de $\U$. Tout relevé du chemin $l_3$ converge en $+\infty$ vers le point $p^-_{\g_1}$ et en $-\infty$ vers le point $p^-_{\g_2}$. On peut donc supposer que le chemin $\widetilde{l_3}$ est le segment ouvert d'extrémité $p^-_{\g_1}$ et $p^-_{\g_2}$ qui est inclus dans $\U$. De la même façon, on peut supposer que $\widetilde{l_1}$ est le segment ouvert d'extrémité $p^-_{\g_2}$ et $p^-_{\g_3}$ inclus dans $\U$ et $\widetilde{l_2}$ le segment ouvert d'extrémité $p^-_{\g_3}$ et $p^-_{\g_1}$ inclus dans $\U$. La triangulation topologique de $\widetilde{P}$ peut donc être réalisée dans $\U$ par des géodésiques.

On note $T$ le triangle idéal fermé dans $\O$ de sommets $p_{\g_1}^-,p_{\g_2}^-,p_{\g_3}^-$ et $T' = \g_1 T$. Ces deux triangles sont inclus dans $\U$ car $\U$ est convexe. Le groupe fondamental de $P$ est le groupe $\G'$ engendré par $\g_1,\g_2$ et $\g_3$. Par conséquent, $T \cup T'$ est un domaine fondamental pour l'action de $\G'$ sur l'intérieur de $\U$. La proposition \ref{airetri} conclut notre démonstration.
\end{proof}

On obtient donc la proposition suivante:

\begin{theo}\label{typefini}
Toute surface $S$ projective proprement convexe de volume fini est de type fini (i.e le groupe fondamental $\pi_1(S)$ de $S$ est de type fini).
\end{theo}

\begin{proof}
L'application d\'eveloppante permet d'identifier $S$ au quotient d'une partie proprement convexe $\C$ par un sous-groupe discret $\G$ de $\s$. Commençons par faire le cas o\`u $S$ est une surface sans bord. Le th\'eor\`eme \ref{Lee}  montre que l'action de $\G$ sur l'ouvert $\O=\C$ admet un domaine fondamental convexe et localement fini. Si $\overline{D} \cap \partial \O$ admet un nombre infini de composantes connexes, $D$ contient une infinit\'e de triangles id\'eaux disjoints. La proposition \ref{airetri} montre que l'aire de chacun de ces triangles est minor\'ee par une constante strictement positive. La surface $S$ est donc de volume infini. Par cons\'equent, $\overline{D} \cap \partial \O$ admet un nombre  fini de composantes connexes et la surface $S$ poss\`ede donc un nombre fini de bouts. La proposition \ref{toposurf}  montre qu'il suffit donc de montrer que la surface $S$ ne peut contenir une infinit\'e de pantalons non \'el\'ementaires disjoints. Les propositions \ref{homopant} et \ref{minaire} montre que $S$ contient un nombre fini de pantalons non \'el\'ementaires.

Supposons \`a pr\'esent que $S$ est une surface \`a bord. Le th\'eor\`eme \ref{Lee}  montre que l'action de $\G$ sur l'int\'erieur $\O$ de $\C$ admet un domaine fondamental convexe et localement fini. Commençons par montrer que $\overline{D} \cap \partial \O$ poss\`ede un nombre fini de composante connexe. Sinon, comme dans le cas sans bord, le domaine $D$ contient une infinit\'e de triangles id\'eaux disjoints. Le probl\`eme dans le cas avec bord est que la projection de ces triangles n'est pas un ferm\'e inclus dans l'int\'erieur de $S$. Mais, on peut retirer un petit voisinage de chaque sommet et ainsi on peut construire une infinit\'e d'hexagones disjoints inclus dans $D$ et de volume sup\'erieure \`a $C_{\P}/2$ et ces hexagones se projettent sur un ferm\'e de $\overset{\circ}{S}$. Par cons\'equent, $\overline{D} \cap \partial \O$ admet un nombre  fini de composantes connexes et la surface $\overset{\circ}{S}$ poss\`ede donc un nombre fini de bouts. Son genre est fini pour les m\^emes raisons que dans le cas sans bord.
\end{proof}

\subsubsection{Le domaine fondamental est un poly\`edre fini}

\begin{defi}
Soit $\O$ un ouvert proprement convexe de $\P$, un \emph{poly\`edre} (resp. \emph{poly\`edre fini})de $\O$ est un ferm\'e d'int\'erieur non vide d\'efini comme l'intersection d'une famille (resp. famille finie) de demi-espaces de $\O$.
\end{defi}

\begin{prop}\label{polyfini}
Soient $\O$ un ouvert proprement convexe de $\P$ et $\G$ un sous-groupe discret sans torsion de $\s$ qui pr\'eserve $\O$, on suppose que l'action de $\G$ sur $\O$ est de covolume fini. Alors, tout domaine fondamental de Dirichlet-Lee est un poly\`edre fini.
\end{prop}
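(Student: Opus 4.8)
The plan is to show that the convex domain $D$, which by Theorem~\ref{Lee} is the intersection of the half-spaces bounded by the bisecting hyperplanes $M^{\g}_{x_0}$, $\g \in \G$, actually possesses only finitely many faces; since a convex polyhedron supporting finitely many faces is defined by finitely many half-spaces, this proves it is a finite polyhedron. First I would dispose of the compact case: if $\O$ is a triangle then $\G$ is virtually abelian and finite covolume forces $\Quo$ to be compact, so $D$ may be taken with $\overline{D} \subset \O$ compact, and Corollary~\ref{finitude} (local finiteness of the family $\{M^{\g}_{x_0}\}_{\g \in \G}$ in $\O$) immediately bounds the number of faces. More generally, Corollary~\ref{finitude} shows that any compact subset of $\O$ meets only finitely many faces of $D$, so the only possible source of infinitely many faces is an accumulation towards the boundary $\partial \O$, that is, towards the set $\overline{D} \cap \partial \O$.

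The next step is to control this boundary set. Since the action is of finite covolume, the surface $S = \Quo$ is of finite type by Theorem~\ref{typefini}; moreover, as in the proof of that theorem, $\overline{D} \cap \partial \O$ has only finitely many connected components, for otherwise $D$ would contain infinitely many disjoint ideal triangles, each of area at least $C_{\P} > 0$ by Theorem~\ref{airetri}, contradicting $\mu(\Quo) < \infty$. Each such component $C_i$ corresponds to an end of $\overset{\circ}{S}$, and by Proposition~\ref{fonda1} the holonomy $\g$ of the associated elementary loop is parabolic, quasi-hyperbolic, or hyperbolic (with $\Ax(\g) \subset \partial \O$, or with $\Ax(\g) \subset \O$ and $p^0_{\g} \in \partial \O$), and there is a sector $\F$ of $\g$ (its enlargement $\widetilde{\F}$, obtained by adjoining the triangle with vertices $p^+_{\g},p^-_{\g},p^0_{\g}$, in the last case) whose image in $S$ is a neighbourhood of that end, with $\F/_{<\g>} \to S$ injective by Lemma~\ref{injgeo}. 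I would then fix, for each $C_i$, a neighbourhood $V_i$ of $C_i$ in $\overline{\O}$ small enough that $V_i \cap \overline{D}$ lies in the corresponding sector region $\overline{\F}_i$ (or $\overline{\widetilde{\F}}_i$).

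The heart of the argument is a finiteness count via Proposition~\ref{inftyfini}. By that proposition the set $A = \{\beta \in \G \mid D \cap \beta \F_i \neq \varnothing\}$ is finite, hence so is $A^{-1} = \{\delta \in \G \mid \delta D \cap \F_i \neq \varnothing\}$. Every face of $D$ accumulating at $C_i$ lies in $V_i \cap \overline{D} \subset \overline{\F}_i$ and is shared with an adjacent translate $\delta D$; such a $\delta$ satisfies $\overline{\delta D} \cap \overline{\F}_i \neq \varnothing$, hence belongs to the finite set $A^{-1}$. Thus $D$ has only finitely many faces near each $C_i$. Removing the open neighbourhoods $V_i$ from $\overline{D}$ leaves a compact subset of $\O$, which by Corollary~\ref{finitude} meets finitely many faces. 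Summing the finitely many contributions over the finitely many ends and the compact core shows that $D$ is a finite polyhedron. The main obstacle I anticipate is the bookkeeping of the last case of Proposition~\ref{fonda1}, where the sectors do not reach $\partial \O$ and one must run the count on the enlarged region $\widetilde{\F}_i$ (noting that $\widetilde{\F}_i$ differs from a genuine sector only by one fixed triangle, so Proposition~\ref{inftyfini} still applies), together with a careful verification that a face truly accumulating at $C_i$ is captured by a translate meeting $\F_i$ rather than leaking into the interior, where local finiteness already suffices.
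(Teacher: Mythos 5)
Your plan follows the same skeleton as the paper's proof (finitely many components of $\overline{D}\cap\partial\O$ via the ideal-triangle bound of Theorem \ref{airetri}, sectors attached to the ends via Proposition \ref{fonda1} and Lemma \ref{injgeo}, then a finiteness count through Proposition \ref{inftyfini}), but the two steps that actually carry the proof are defective. The most concrete failure is the counting step: Proposition \ref{inftyfini} asserts that $D$ meets only finitely many \emph{images} $\delta\F_i$, not that the set $A=\{\beta\in\G\mid D\cap\beta\F_i\neq\varnothing\}$ is finite. A sector is by construction $\g_i$-invariant, so $D\cap\beta\F_i\neq\varnothing$ forces $D\cap\beta\g_i^{n}\F_i\neq\varnothing$ for every $n\in\Z$; hence $A$ contains the whole coset $\beta\langle\g_i\rangle$ and is infinite whenever it is nonempty, and the same goes for your $A^{-1}$. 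What the proposition legitimately gives is that an adjacent translate $\delta D$ meeting $\F_i$ satisfies $\delta\in\langle\g_i\rangle\beta_j^{-1}$ for one of finitely many $\beta_j$; but that leaves infinitely many candidates $\g_i^{n}\beta_j^{-1}$ in each coset, and your argument supplies no mechanism to exclude that infinitely many of them produce distinct faces of $D$ accumulating at $C_i$. So "Thus $D$ has only finitely many faces near each $C_i$" does not follow from what you have established; ruling out the powers of $\g_i$ inside a fixed coset is precisely the delicate point (the paper handles the accumulation question by working with the images of $\widetilde{\F}$, which are pairwise disjoint or equal thanks to the injectivity of $\widetilde{\F}/_{\langle\g_i\rangle}\rightarrow S$, rather than with group elements).

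The second gap is that you "fix" a neighbourhood $V_i$ of $C_i$ with $V_i\cap\overline{D}\subset\overline{\F}_i$ as though this were a choice; it is a claim, and it is the heart of the proposition. The paper's proof is mostly devoted to establishing exactly this kind of statement: it takes the segment $]x_0,x_\infty[$ (inside $D$ by convexity), projects it to a half-geodesic of $\Quo$, uses the \emph{properness} of $D\rightarrow\Quo$ (Proposition \ref{proprete}) to see that this geodesic enters an end, and then combines Proposition \ref{inftyfini} (extraction plus conjugation of $\g$) with the convexity of $\widetilde{\F}$ to trap the ray in a single translate of $\widetilde{\F}$. You never invoke Proposition \ref{proprete}; without it there is no link between "lying in $D$ close to $\partial\O$" and "projecting into an end neighbourhood of $S$", so nothing prevents, a priori, points of $D$ tending to $C_i$ from projecting into a compact part of $S$ and escaping every sector. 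You also skip the paper's intermediate step (Proposition \ref{mubord}) showing that each component $C_i$ is a single point, which is needed before one can even match $C_i$ with the closure of a sector, since the closure of a sector meets $\partial\O$ in a point or in an axis. In short: right architecture, but one load-bearing step rests on a false finiteness statement and the other on an unproved assertion.
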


\begin{proof}
Le th\'eor\`eme \ref{typefini} montre que la surface $\Quo$ est une surface de type fini, en particulier elle poss\`ede un nombre fini de bout. Soit $D$ un domaine fondamental de Dirichlet-Lee, on sait que $D$ est un poly\`edre. La d\'emonstration se fait en trois \'etapes.

Tout d'abord le nombre de composantes connexes de $\overline{D} \cap \partial \O$ est fini. Sinon on pourrait construire une infinit\'e de triangles id\'eaux disjoints deux \`a deux inclus dans $D$. Et, la proposition \ref{airetri}  montre qu'alors $D$ est de volume infini.

De plus, la proposition \ref{mubord}  montre si l'une des composantes connexes de $\overline{D} \cap \partial \O$ n'est pas r\'eduite \`a un point, alors $D$ est de volume infini.

Comme $D$ est un domaine fondamental localement fini, il ne  reste plus qu'\`a montrer que pour tout point $x_{\infty} \in \overline{D} \cap \partial \O$, il existe un voisinage $V$ de $x_{\infty}$ tel que $V$ ne rencontre qu'un nombre fini de faces de $D$.

Soit $x_{\infty} \in \overline{D} \cap \partial \O$, et $x_0$ un point de $D$, la projection du segment $]x_0,x_{\infty}[$ sur la surface $\Quo$ est une demi-g\'eod\'esique $\lambda$ non born\'ee et simple. On note $\widetilde{\lambda}$ le segment $]x_0,x_{\infty}[$ param\'etr\'e par la longueur d'arc pour la distance de Hilbert. La proposition \ref{proprete} montre qu'il existe une suite $t_n$ tendant vers l'infini et un bout $B$ de $S$ tel que la suite $(\lambda(t_n))_{n \in \N}$ est ultimement incluse dans le bout $B$. Nous allons montrer qu'en fait la demi-géodésique $\lambda$ est ultimement incluse dans le bout $B$. Au bout $B$ de $\Quo$ est associ\'e un lacet \'el\'ementaire $c$, bien d\'efini \`a homotopie pr\`es et \`a orientation pr\`es. On choisit un \'el\'ement $\g$ qui repr\'esente l'holonomie de $c$.

On note $\F$ un secteur de $\g$ tel que l'application naturelle de $\F/_{<\g>}$ vers $S$ est injective (lemme \ref{injgeo}). La proposition \ref{fonda1} montre que $\F/_{<\g>}$ contient un voisinage du bout $B$, sauf si $\g$ est hyperbolique et $p^0_{\g} \in \O$. Dans ce cas, on note $\widetilde{\F}$ l'union de $\F$ et de l'unique triangle inclus dans $\O$ d\'efini par les points $p^+_{\g},p^-_{\g},p^0_{\g}$. On peut donc toujours supposer qu'il existe un ferm\'e convexe $\widetilde{\F}$ tel que l'application naturelle de $\widetilde{\F}/_{<\g>}$ vers $S$ est injective (lemme \ref{injgeo}) et que son image contient un voisinage du bout $B$ de $S$.

Il existe donc un entier $N_0 \in \N$ tel que pour tout $n \geqslant N_0$, il existe un \'el\'ement $\delta_n \in \G$ tel que $\delta_n \widetilde{\lambda}(t_n) \in \widetilde{\F}$. Le domaine $D$ ne rencontre qu'un nombre fini d'images de $\widetilde{\F}$, quitte \`a extraire et \`a conjuguer $\g$, on peut donc supposer que la suite $\widetilde{\lambda}(t_n)$ est ultimement incluse dans $\widetilde{\F}$. Or, $\widetilde{\lambda}$ est une demi-g\'eod\'esique incluse dans $D$ et $\widetilde{\F}$ est convexe donc $\widetilde{\F}$ contient ultimement $\widetilde{\lambda}$.

Le domaine $D$ rencontre un nombre fini d'image du ferm\'e $\widetilde{\F}$. Il existe donc un voisinage de $x_{\infty}$ qui ne rencontre qu'un nombre fini de c\^ot\'es de $D$. C'est ce qu'il fallait montrer.
\end{proof}

\subsection{Holonomie des lacets \'el\'ementaires et volume des composantes \'el\'ementaires associ\'ees}

\subsubsection{Estimation du volume des pics}

\begin{prop}\label{pic}
Soient $\O$ un ouvert proprement convexe de $\P$ et $P$ un pic de $\O$, on suppose que le sommet $p$ \`a l'infini de $P$ est fix\'e par un \'el\'ement $\g$ non trivial de $\Aut(\O)$. Alors,
\begin{itemize}
\item Si $\g$ est parabolique alors $\mu_{\O}(P) < \infty$.

%\item Si $\g$ est hyperbolique et $\Ax(\g) \subset \O$ alors $\mu_{\O}(P) < \infty$.

\item Si $\g$ est quasi-hyperbolique alors $\mu_{\O}(P) = \infty$

\item Si $\g$ est hyperbolique et $\Ax(\g) \subset \partial \O$ alors $\mu_{\O}(P) = \infty$.

\item Si $\g$ est hyperbolique et $\Ax(\g) \subset \O$ et $p=p^0_{\g} \in \partial \O$ alors $\mu_{\O}(P) = \infty$.
\end{itemize}
\end{prop}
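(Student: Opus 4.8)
The plan is to read off, in each of the four cases, the local regularity of $\partial\O$ at the vertex $p$ from the dynamical description of $\g$ obtained in Section~\ref{dyna}, and then to invoke the CVV estimate (Theorem~\ref{mupointe}): a pic whose vertex at infinity is a non-$\Cc^1$ point of $\partial\O$ has infinite volume, while a pic whose vertex is a $\Cc^{1,\alpha}$ point with $\alpha>0$ has finite volume. Thus everything reduces to deciding whether $p$ is $\Cc^1$ and how regular it is, and the whole difficulty is concentrated in the parabolic case.

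For the three divergent cases I would argue that $p$ is a non-$\Cc^1$ point and quote the first part of Theorem~\ref{mupointe}. If $\g$ is quasi-hyperbolic, Proposition~\ref{quasihyp} says precisely that $\partial\O$ fails to be $\Cc^1$ at the fixed point $p^2_{\g}$, its two demi-tangents being $(p^1_{\g}p^2_{\g})$ and $D$, so any pic with vertex there has infinite volume. If $\g$ is hyperbolic with $\Ax(\g)\subset\partial\O$, case (B) of Proposition~\ref{hyp} gives that $\partial\O$ is not $\Cc^1$ at $p^+_{\g}$ nor at $p^-_{\g}$, and Theorem~\ref{mupointe} applies again. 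Finally, if $\g$ is hyperbolic with $\Ax(\g)\subset\O$ and $p=p^0_{\g}\in\partial\O$, case (C) of Proposition~\ref{hyp} tells us that the two segments $[p^+_{\g},p^0_{\g}]$ and $[p^-_{\g},p^0_{\g}]$ lie in $\partial\O$; being carried by the two distinct lines $(p^+_{\g}p^0_{\g})$ and $(p^-_{\g}p^0_{\g})$, they meet at $p^0_{\g}$ forming a genuine corner, so $p^0_{\g}$ is a non-$\Cc^1$ point and $\mu_{\O}(P)=\infty$.

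The substantial case is the parabolic one, where I must show $\mu_{\O}(P)<\infty$; by Theorem~\ref{mupointe} it suffices to prove that $\partial\O$ is $\Cc^{1,\alpha}$ at $p$ for some $\alpha>0$. I would normalize $\g$ to its Jordan form and work in an affine chart centred at $p=p_{\g}$ in which the fixed line $D_{\g}=T_{p}\partial\O$ (Proposition~\ref{para}) is the horizontal axis and $\O$ lies locally in the upper half-plane, with coordinates $(u,v)$. A direct computation of the orbit $(\g^n w)_{n\in\Z}$ of a boundary point $w$ near $p$ shows that it accumulates at $p$ as $n\to\pm\infty$ and that, in these coordinates, $v_n\sim\frac{1}{2}u_n^2$ with $u_n\to 0$: the orbit is asymptotic to a parabola, in accordance with the fact that its closure is the $\g$-invariant conic exhibited in the proof of Proposition~\ref{para}. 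Since $\partial\O$ is convex and passes through all the points $\g^n w$, it lies below the chords joining consecutive orbit points; as these chords satisfy $v\leqslant Cu^2$ near $p$, so does $\partial\O$. Writing the $\Cc^{1,\alpha}$ condition as $d_{\mathbb{E}}(y,T_{p}\O)\leqslant K\,d_{\mathbb{E}}(p,y)^{\alpha}$, one has $d_{\mathbb{E}}(y,T_{p}\O)=v\leqslant Cu^2$ and $d_{\mathbb{E}}(p,y)\sim|u|$, so the inequality holds for every $\alpha\in(0,1)$. Hence $p$ is $\Cc^{1,\alpha}$ and the pic has finite volume.

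The main obstacle is exactly this last estimate: turning the $\g$-invariance of $\O$ into the quantitative bound $v\leqslant Cu^2$ for $\partial\O$ at $p$. The convexity argument above (boundary below chords, orbit points on a parabola) is the cleanest route and uses only the dynamical computations of Section~\ref{dyna}; alternatively one could sandwich $\O$ near $p$ between two $\g$-invariant ellipses and compare volumes through Proposition~\ref{compa}, but producing such ellipses is more cumbersome than the direct orbit estimate.
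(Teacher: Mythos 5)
Your reduction misses the hardest case of the proposition, and the gap cannot be filled within your framework. A quasi-hyperbolic element $\g$ has \emph{two} fixed points on $\partial\O$, namely $p^1_{\g}$ and $p^2_{\g}$, and the second bullet of the statement requires $\mu_{\O}(P)=\infty$ for a pic whose vertex is either one. You only treat $p=p^2_{\g}$, where Proposition \ref{quasihyp} indeed gives a non-$\Cc^1$ point and the first part of Theorem \ref{mupointe} applies. But the same Proposition \ref{quasihyp} states that $\partial\O$ \emph{is} $\Cc^1$ at $p^1_{\g}$ (with tangent $(p^1_{\g}p^2_{\g})$), so at that vertex the first part of Theorem \ref{mupointe} is unavailable; and the second part cannot help either, since its conclusion is the opposite one (finiteness), and in fact $p^1_{\g}$ is $\Cc^1$ without being $\Cc^{1,\alpha}$ for any $\alpha>0$. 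This is exactly the regularity regime on which Theorem \ref{mupointe} is silent, so your guiding principle that ``everything reduces to deciding whether $p$ is $\Cc^1$ and how regular it is'' is not a valid reduction. The paper's treatment of this case is the technical heart of the proof: it computes the orbit curves of a quasi-hyperbolic element, introduces the model convex set $\O_0=\{(x,y)\in\R^2 \,|\, x>0,\ y>x\ln(x)\}$, shows by a direct estimate on Busemann balls that every pic of $\O_0$ with vertex at the origin has infinite volume (Lemma \ref{quhyp}, where the integral $\int_0^{\epsilon}\frac{1}{4x}\ln\bigl(\frac{b-\ln(x)}{a-\ln(x)}\bigr)dx$ diverges logarithmically), and then finds a point whose orbit under the one-parameter group through $\g$ avoids $\O$, so that $\O$ is contained in a projective copy of $\O_0$ and Proposition \ref{compa} concludes. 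Without an argument of this type your proof does not establish the quasi-hyperbolic case.

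The other three cases are sound. The two hyperbolic cases and the vertex $p^2_{\g}$ coincide with the paper's own argument. Your parabolic case is correct but follows a genuinely different route: you trap $\partial\O$ under a parabola near $p$ using the $\langle\g\rangle$-orbit of a \emph{boundary} point (which lies on one of the invariant conics of Lemma \ref{faiselli}) together with convexity of the boundary graph below its chords, deduce $\Cc^{1,\alpha}$ regularity at $p$, and invoke the second part of Theorem \ref{mupointe}. The paper instead proves that the interior of one invariant ellipse of the pencil is entirely contained in $\O$ (pushing a small triangle around by $\g^t$) and compares with the hyperbolic plane, where every pic sits inside an ideal triangle of area $\pi$ (Lemma \ref{compaelli}). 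Both work; note however that the paper explicitly declines to use the second part of Theorem \ref{mupointe} (its proof is only referenced to \cite{CVV1}), so your variant makes that imported result load-bearing, while the paper's route stays self-contained.
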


Nous allons montrer cette proposition \`a l'aide de plusieurs lemmes.

\begin{lemm}\label{compaelli}
Soient $\O$ un ouvert proprement convexe de $\P$ et $P$ un pic de $\O$, on suppose que le sommet $p$ de $P$ qui appartient au bord $\partial \O$ de $\O$ est sur une ellipse $\mathcal{E}$ dont l'int\'erieur (i.e la composante connexe orientable de $\P- \mathcal{E}$) est dans $\O$. Alors, $\mu_{\O}(P) < \infty$.
\end{lemm}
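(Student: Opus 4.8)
The plan is to bound $\mu_{\O}(P)$ by splitting the pic into a piece near its ideal vertex $p$, where I can trap it inside the ellipse and exploit the hyperbolic geometry of the latter, and a piece away from $p$, which is relatively compact in $\O$. Write $E$ for the open interior of $\mathcal{E}$, so that $E \subset \O$ by hypothesis. Proposition \ref{compa} then gives $\mu_{\O}(\mathcal{A}) \leqslant \mu_E(\mathcal{A})$ for every Borel set $\mathcal{A} \subset E$, and this is the only comparison I will need. In particular I do not invoke the second point of Theorem \ref{mupointe}.

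First I would set up an affine chart in which $p$ is the origin, the tangent to $\mathcal{E}$ at $p$ is the $x$-axis, and $E$ lies locally in the upper half-plane; near $p$ the boundary of $E$ is then a graph $y = h(x)$ with $h(0)=h'(0)=0$ and $h''(0)>0$, so $h(x) \leqslant C x^2$ for small $|x|$. The two edges of $P$ issuing from $p$ run to the other two vertices $v_1,v_2$, which lie in the open set $\O$ and hence strictly above the $x$-axis; thus these edges have directions $\theta_1,\theta_2 \in (0,\pi)$ and, near $p$, $P$ is contained in the angular sector they span. On that sector one has $y = r\sin\theta \geqslant s_0\, r$ with $s_0 = \min_{[\theta_1,\theta_2]}\sin\theta > 0$, while $h(x) \leqslant C r^2$. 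Hence for $r$ smaller than some $\rho>0$ every such point satisfies $y > h(x)$, i.e. lies in $E$. Choosing $V$ to be the $\rho$-ball about $p$, I obtain $P \cap V \subset E$, so $\mu_{\O}(P\cap V) \leqslant \mu_E(P\cap V)$.

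It then remains to see that both pieces have finite measure. The set $P\cap V$ is bounded by two chords of $E$ through the ideal point $p$; since the Hilbert metric of $E$ is the Beltrami--Klein model of $\mathbb{H}^2$, these chords are geodesics, $\mu_E$ is proportional to the hyperbolic area, and $P\cap V$ is contained in an ideal triangle of $\mathbb{H}^2$, whose area is finite --- so $\mu_E(P\cap V)<\infty$. For the other piece, $\overline{P}\cap\partial\O = \{p\}$ because $P$ is a pic, so for $V$ a small enough neighbourhood of $p$ the closure of $P\setminus V$ is a compact subset of $\O$; since $\mu_{\O}$ is a locally finite measure on $\O$ (its density being continuous and positive on $\O$), $\mu_{\O}(P\setminus V)<\infty$. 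Adding the two bounds yields $\mu_{\O}(P) = \mu_{\O}(P\cap V) + \mu_{\O}(P\setminus V) < \infty$. The only delicate point is the trapping step: I must ensure that the angular opening of $P$ at $p$ stays away from the tangent direction, which is exactly guaranteed by the fact that the two finite vertices of the pic lie in the interior $\O$; the quadratic flatness of $\partial E$ then does the rest.
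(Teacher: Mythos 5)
Your proof is correct and is essentially the paper's own argument: compare $\mu_{\O}$ with $\mu_{E}$ via Proposition \ref{compa}, then use that $E$ with its Hilbert metric is the Klein model of $\mathbb{H}^2$, where the part of the pic near $p$ lies in an ideal triangle of finite area. The only difference is that you make explicit (quadratic flatness of $\partial E$ at $p$ to trap $P\cap V$ inside $E$, plus relative compactness of $P\setminus V$ in $\O$) the reduction that the paper compresses into ``on peut supposer que $P \subset E$''.
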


\begin{proof}
On note $E$ l'int\'erieur de $\mathcal{E}$. On peut supposer que $P \subset E$. La proposition \ref{compa}  montre que $\mu_{\O}(P) \leqslant \mu_{E}(P)$. Comme tout ellipsoïde muni de la distance de Hilbert est isométrique au plan hyperbolique, on est ramen\'e \`a montrer que tout pic du plan hyperbolique est d'aire fini. Mais tout pic du plan hyperbolique est inclus dans un triangle idéal, et, tout triangle idéal est d'aire $\pi$ en géométrie hyperbolique.
\end{proof}

\begin{lemm}\label{faiselli}
Soit $\g \in \s$ un \'el\'ement parabolique, l'\'el\'ement $\g$ pr\'eserve un faisceau d'ellipses tangentes \`a la droite $D_{\g}$ au point $p_{\g}$.
\end{lemm}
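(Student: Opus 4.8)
The plan is to pass to a normal form and then exhibit the invariant pencil by a direct computation. By Proposition \ref{classi} we may conjugate $\g$ to the standard parabolic matrix
\[
\g = \left(\begin{array}{ccc} 1 & 1 & 0 \\ 0 & 1 & 1 \\ 0 & 0 & 1 \end{array}\right),
\]
whose only fixed point is $p_{\g}=[1:0:0]$ and whose only fixed line is $D_{\g}=\{z=0\}$; in particular $p_{\g}\in D_{\g}$. A conic of $\P$ is the zero locus of a quadratic form $x\mapsto {}^tx\,M\,x$ for a symmetric matrix $M$, and $\g$ sends this conic to the conic of ${}^t\g\,M\,\g$. Thus $\g$ preserves the conic as soon as ${}^t\g\,M\,\g=\lambda M$ for some $\lambda\in\R$; taking determinants and using $\det\g=1$ forces $\lambda^3=1$, hence $\lambda=1$ whenever $M$ is invertible. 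I would therefore determine all symmetric $M$ with ${}^t\g\,M\,\g=M$.

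Writing $N=\g-\mathrm{Id}$ (nilpotent, $N^3=0$), this condition is the homogeneous linear system ${}^tN\,M+M\,N+{}^tN\,M\,N=0$ in the six entries of $M$. Solving it is the computational core of the argument; a direct expansion shows that the solution space is exactly two-dimensional, spanned by
\[
M_0 = \left(\begin{array}{ccc} 0 & 0 & 1 \\ 0 & -1 & 1/2 \\ 1 & 1/2 & 0 \end{array}\right) \quad\textrm{and}\quad M_{\infty} = \left(\begin{array}{ccc} 0 & 0 & 0 \\ 0 & 0 & 0 \\ 0 & 0 & 1 \end{array}\right).
\]
The form $M_{\infty}$ is $z^2$, i.e. the double line $D_{\g}$, so the $\g$-invariant conics form the projective pencil $\{\,M_0+c\,M_{\infty}\,\}_{c\in\R}$ generated by a smooth conic and the double tangent line --- which is precisely the classical pencil of conics having contact with $D_{\g}$ at one point.

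It then remains to see that the members are ellipses tangent to $D_{\g}$ at $p_{\g}$. For $M_c=M_0+c\,M_{\infty}$ one computes $\det M_c=1$ for every $c$, so each member is nondegenerate; since ${}^te_2\,M_c\,e_2=-1<0$ the form is not definite, and together with $\det M_c>0$ this forces signature $(1,2)$, whose real zero locus is a genuine ellipse. Finally the vanishing $(1,1)$-entry gives $p_{\g}=e_1$ on every conic, and the tangent line there is $\{\,{}^te_1\,M_c\,x=0\,\}=\{z=0\}=D_{\g}$, so the contact is exactly at $p_{\g}$ along $D_{\g}$. This produces the desired $\g$-invariant pencil of ellipses.

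The only genuine obstacle is bookkeeping: solving the linear system without error and checking nondegeneracy and the tangency for the whole family rather than for a single conic. As an a priori check on the dimension count, the decomposition $\mathrm{Sym}^2(\R^3)=V_4\oplus V_0$ of symmetric forms under the principal $\mathfrak{sl}_2$ containing $\log\g$ predicts a two-dimensional space of invariant forms, matching the size of the pencil found above.
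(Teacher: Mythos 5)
Your proof is correct and is essentially the paper's argument: both conjugate $\g$ to the standard unipotent matrix and produce the very same invariant pencil, since your $M_\infty$ is $z^2$ and your $M_0$ is $-(y^2-z(y+2x))$, which are exactly the two $\g$-invariant polynomials the paper exhibits directly. The only difference is presentational — you derive the pencil by solving ${}^t\g M\g=M$ (with the $\mathfrak{sl}_2$ dimension count and the signature check making the classification and the word \emph{ellipse} explicit), whereas the paper simply writes down the two invariants and checks the common point and tangent.
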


\begin{proof}
On peut supposer que l'\'el\'ement $\g$ est donn\'e par la matrice:
$$
\left(\begin{array}{ccc}
1  & 1  & 0  \\
0  & 1  & 1 \\
0  & 0  & 1 \\
\end{array}
\right)
$$
Si on utilise les coordonn\'ees $x,y,z$ alors l'\'el\'ement $\g$ pr\'eserve les polyn\^omes $z$ et $y^2-z(y+2x)$. Par cons\'equent, l'\'el\'ement $\g$ pr\'eserve le faisceaux d'ellipses d'\'equation $\{ \lambda z^2 + \mu (y^2-z(y+2x))=0 \}_{\lambda,\mu \in \R}$. Ces ellipses ont pour point commun le point $p_{\g}=[1:0:0]$ et leurs tangentes en $p_{\g}$ est la droite $D_{\g}$ d'\'equation $z=0$.
\end{proof}

\begin{proof}
[D\'emonstration de la proposition \ref{pic} dans le cas parabolique]
La premi\`ere chose \`a remarquer est que la proposition \ref{para}  montre que $\partial \O$ est $\C^1$ en $p$. On note $D$ la tangente \`a $\partial \O$ en $p$. Le lemme \ref{faiselli}  montre que l'\'el\'ement $\g$ pr\'eserve un faisceau d'ellipses  $(\mathcal{E}_i)_{i \in I}$ tangentes \`a la droite $D$ au point $p$. L'unique point commun des ellipses $(\mathcal{E}_i)_{i \in I}$ est le point $p$, leurs tangentes en ce point est la droite $D$. Nous allons montrer que l'int\'erieur de l'une de ces ellipses est inclus dans $\O$, ce qui conclura la d\'emonstration grâce au lemme \ref{compaelli}.

On se place dans une carte affine qui contient $\overline{\O}$, on munit cette carte d'un produit scalaire qui fait du faisceau d'ellipses tangentes $(\mathcal{E}_i)_{i \in I}$ un faisceau de cercles tangents. On note $A$ la droite perpendiculaire \`a la droite $D$ pour le produit scalaire choisi. La droite $A$ est donc perpendiculaire \`a chacun des cercles $(\mathcal{E}_i)_{i \in I}$. On notera l'image de $A$ par $\g$, $A'=\g A$.

Soit $x \in A \cap \O$, on note $T_x$ la tangente en $x$ \`a l'unique cercle $\mathcal{E}_i$ passant par $x$. \`{A} pr\'esent, les droites $A$, $A'$ et $T_x$ d\'efinissent 4 triangles ferm\'es de $\P$, on note $\mathcal{T}_x$ celui qui est born\'e dans la carte $A$.  Si le point $x$ est suffisament proche de $p$ et dans $\O$ alors le triangle $\mathcal{T}_x$ est inclus dans $\O$. On se donne donc un tel point $x$ et on va montrer que l'int\'erieur de l'unique cercle $\mathcal{E}_i$ passant par $x$ est inclus dans $\O$.

Pour cela, il suffit de remarquer que la famille des $(\g^{t} x)_{0 \leqslant t \leqslant 1}$ est incluse dans $\mathcal{T}_x$ et donc dans $\O$. L'ouvert $\O$ est $\g$-invariant, par cons\'equent l'orbite $(\g^{t} x)_{t \in \R}$ est incluse dans $\O$, et cette orbite est l'unique cercle $\mathcal{E}_i$ passant par $x$ priv\'e du point $p$.
\end{proof}

\begin{proof}[D\'emonstration de la proposition \ref{pic} dans les cas hyperboliques et le cas quasi-hyperbolique avec $p=p^2_{\g}$]
Ce cas l\`a est beaucoup plus facile. En effet, si $\g$ est hyperbolique et $\Ax(\g) \subset \partial \O$ la proposition \ref{hyp}  montre que les points $p^+_{\g}$ et $p^-_{\g}$ ne sont pas des points $C^1$ de $\partial \O$. De même, si $\g$ est hyperbolique et $\Ax(\g) \subset \O$ et $p=p^0_{\g} \in \partial \O$, alors le point $p$ n'est pas un point $C^1$ de $\partial \O$. Et, si $\g$ est quasi-hyperbolique alors la proposition \ref{quasihyp}  montre que $\partial \O$ n'est pas $C^1$ en $p^2_{\g}$. Par cons\'equent $\mu_{\O}(P) = \infty$ par le th\'eor\`eme \ref{mupointe}.
\end{proof}

Il  reste le cas o\`u $\g$ est quasi-hyperbolique et $p=p^1_{\g}$. Pour cela, on a besoin de conna\^itre l'allure de l'orbite d'un point $x \in \P$ sous l'action d'un \'el\'ement quasi-hyperbolique. Soit $\g$ un \'el\'ement quasi-hyperbolique, on peut supposer que $\g$ est donn\'e par la matrice suivante:
$$
\left(\begin{array}{ccc}
\alpha  & \alpha       & 0  \\
0       & \alpha  & 0 \\
0       & 0       & \beta \\
\end{array}
\right)
$$
O\`u, $\alpha, \beta > 0$ et $\alpha^2 \beta = 1$.
Nous allons donner les \'equations des orbites d'un point de $\P$ sous l'action du groupe \`a un param\`etre engendr\'e par $\g$. Pour cela, on se place dans la carte affine: $A= \{ [x:y:z] \in \P \,|\, z \neq 0 \}$. L'action de $\g$ dans cette carte est donn\'e par:
$$(X,Y) \mapsto (\frac{\alpha}{\beta} X + \frac{\alpha}{\beta} Y, \frac{\alpha}{\beta} Y)$$
Si $Y_0 \neq 0$, un calcul simple montre que l'\'equation de l'orbite du point $(X_0,Y_0) \in A$ est:
$$\frac{X}{X_0} = \frac{Y}{Y_0} + \frac{Y}{X_0} \frac{\ln(\frac{Y}{Y_0})}{\ln(\frac{\alpha}{\beta})}$$
Si $Y_0 = 0$, l'\'equation de l'orbite du point $(X_0,Y_0) \in A$ est la droite:
$$Y=0$$

Nous allons avoir besoin du lemme ci-dessous pour estimer l'aire d'un pic dont le sommet \`a l'infini est le $p^1_{\g}$ d'un \'el\'ement quasi-hyperbolique.

\begin{lemm}\label{quhyp}
On consid\`ere l'ouvert $\O_0$ de $\R^2$ d\'efinit par $\O_0 = \{ (x,y) \in \R^2 \,|\,  x > 0 \, , \, y >  x \ln(x)  \}$. Tout pic de $\O_0$ dont le sommet \`a l'infini est le point $(0,0)$ est de volume infini pour la mesure de Busemann associ\'ee \`a l'ouvert $\O_0$.
\end{lemm}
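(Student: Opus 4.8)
The plan is to estimate the Busemann density of $\O_0$ near the origin directly and to integrate it over the pic. The geometric point is that $(0,0)$ is a $\Cc^1$ point of $\partial\O_0$ whose tangent is the vertical axis (because $\frac{d}{dx}(x\ln x)=\ln x+1\to-\infty$ as $x\to0^+$) but which is \emph{not} $\Cc^{1,\alpha}$ for any $\alpha>0$; it is in fact the attracting fixed point $p^1_\g$ of the quasi-hyperbolic one-parameter group whose orbits are the curves $y=x\ln x+cx$, the boundary arc being the orbit $c=0$. The whole difficulty is concentrated in this borderline regularity, which will surface as a logarithmic divergence.

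First I would bound the Finsler unit ball. Write $x=(X,Y)$ and let $B_x(1)=\{v:\ \|v\|_x<1\}$. By the defining formula $\|v\|_x=\big(\frac1{\|x-p^-\|}+\frac1{\|x-p^+\|}\big)\|v\|$, the radial size of $B_x(1)$ in a direction $\hat v$ equals $\frac{d^+d^-}{d^++d^-}\le\min(d^+,d^-)$, where $d^\pm$ denote the distances from $x$ to $\partial\O_0$ along $\pm\hat v$. Hence every $x+v\in B_x(1)$ satisfies both $x+v\in\O_0$ and $x-v\in\O_0$, i.e.
$$B_x(1)\ \subset\ \O_0\cap(2x-\O_0),$$
the intersection of $\O_0$ with its reflection through $x$. (Alternatively one may confine $B_x(1)$ between explicit parallel lines using Proposition \ref{compa} and Lemma \ref{barriere}, but the reflection is the cleanest device.)

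Next I would estimate the area of this set for a point of a pic, i.e. for $X$ small and $Y=O(X)$. The vertical distance down to the arc $\Gamma=\{y=x\ln x\}$ is $d^{\textrm{down}}=Y-X\ln X\asymp X|\ln X|$, while the horizontal room inside $\O_0$ is $\asymp X$ (bounded on the left by the vertical axis, or by $\Gamma$ if $Y<0$). Reflecting through $x$ turns these into confining walls on the opposite sides, so $\O_0\cap(2x-\O_0)$ lies in the strip $0<X'<2X$ and, over each $X'$, in a band bounded below by $\Gamma$ and above by its reflection, of height $\lesssim X|\ln X|$ (an elementary estimate on $w\mapsto w\ln w$). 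Therefore $\textrm{aire}(B_x(1))\le\textrm{aire}\big(\O_0\cap(2x-\O_0)\big)\le C\,X^2|\ln X|$. Since the normalisations of $\Vol$ cancel between numerator and denominator, $\mu_{\O_0}=\frac{dX\,dY}{\textrm{aire}(B_x(1))}$, so the density is $\ge\frac1{CX^2|\ln X|}$. A pic with vertex $(0,0)$ has its two sides on lines $y=\tan\theta_i\,x$ with $-\pi/2<\theta_1<\theta_2<\pi/2$, hence contains the sector $\{0<x<X_0,\ \tan\theta_1\,x<y<\tan\theta_2\,x\}$ for $X_0$ small; its vertical section at abscissa $X$ has length $(\tan\theta_2-\tan\theta_1)X$. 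Thus
$$\mu_{\O_0}(P)\ \ge\ \frac1C\int_0^{X_0}\frac{(\tan\theta_2-\tan\theta_1)X}{X^2|\ln X|}\,dX\ =\ \frac{\tan\theta_2-\tan\theta_1}{C}\int_0^{X_0}\frac{dX}{X|\ln X|}\ =\ +\infty,$$
the last integral diverging (set $u=\ln X$). This gives $\mu_{\O_0}(P)=\infty$ for every pic.

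The hard part is the area bound $\textrm{aire}(B_x(1))\le CX^2|\ln X|$: one must check that the unit ball is squeezed to Euclidean dimensions $\asymp X$ across and $\asymp X|\ln X|$ along the tangent, the vertical size being governed by the true gap $Y-X\ln X$ to $\Gamma$ and not by some larger quantity coming from the directions that escape to infinity. It is precisely the factor $|\ln X|$ — rather than a power $X^{\alpha}$, which is what a $\Cc^{1,\alpha}$ point would produce — that makes $\int_0^{X_0}\frac{dX}{X|\ln X|}$ diverge. This is why this borderline $\Cc^1$ point falls on the infinite-volume side of Theorem \ref{mupointe}, and the reflection inclusion $B_x(1)\subset\O_0\cap(2x-\O_0)$ is exactly the tool that turns the regularity of $\partial\O_0$ at $(0,0)$ into the required area estimate.
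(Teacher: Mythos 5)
Your proof is correct and takes essentially the same route as the paper: both reduce the lemma to the pointwise bound $\mathrm{aire}\bigl(B_{(X,Y)}(1)\bigr) \leqslant C\, X\,(Y - X\ln X) \asymp X^{2}\,|\ln X|$ for $(X,Y)$ in the pic, and then integrate the reciprocal density over the pic, the divergence being exactly your $\int_0 \frac{dX}{X|\ln X|} = +\infty$. The only difference is the device used to confine the Finsler unit ball: the paper traps it in the vertical strip $\{|u| < X\}$ (comparison with the half-plane $\{x>0\}$) and, using the central symmetry of the ball, between the two parallel tangent lines at the antipodal boundary points $\pm(0,\, Y - X\ln X)$, which yields the exact bound $4X(Y - X\ln X)$ and an exact logarithmic inner integral, while you use the reflection inclusion $B_x(1) \subset \O_0 \cap (2x - \O_0)$ together with an elementary estimate on $w \mapsto w\ln w$ --- a slightly cruder bound of the same order, equally sufficient.
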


\begin{rema}
La courbe $y(x) = x \ln(x)$ est affinement équivalente à la courbe $y'(x') = x' + a x' \ln(x')$ avec $a \neq 0$, via le changement de variable $x = e^{\frac{1}{a}}x'$ et $y = \frac{1}{a}e^{\frac{1}{a}} y'$.
\end{rema}

\begin{proof}
L'ouvert $\O_0$ ne contient aucune droite affine, c'est donc un ouvert proprement convexe de $\P$. Soit $(x,y) \in \O_0$, commençons par \'evaluer le volume de la boule $B^{\O_0}_{(x,y)}(1)$. On identifie l'espace tangent \`a $\O_0$ en $(x,y)$ \`a l'espace vectoriel $\R^2$.

La premi\`ere chose \`a remarquer est que $\O_0$ est inclus dans l'ouvert convexe $\O_0'=\{ (x,y) \in \R^2 \,|\, x > 0 \}$. Par cons\'equent, $B^{\O_0}_{(x,y)}(1)$ est incluse dans la bande $\{ (u,v) \in \R^2 \,|\, -x<u<x\}$.

De plus, un calcul simple montre que le vecteur $u =(0,y- f(x))$ appartient au bord de la boule $B^{\O_0}_{(x,y)}(1)$.

Enfin, la boule $B^{\O_0}_{(x,y)}(1)$ est sym\'etrique par rapport \`a l'origine. Par cons\'equent, les tangentes \`a $\partial B^{\O_0}_{(x,y)}(1)$ en $u$ et $-u$ sont parall\`eles. Il vient que le volume de $B^{\O_0}_{(x,y)}(1)$ v\'erifie:

$$\Vol(B^{\O_0}_{(x,y)}(1)) \leqslant 4x(y-f(x))$$

Le volume d'un pic $P= \{(x,y) \in \R^2 \,|\, 0<x < \epsilon  \, , \, ax < y < bx \}$ avec $0 <a < b$ et $\epsilon > 0$ est donn\'e par:

\[
\begin{array}{lcl}
\mu_{\O_0}(P)= \int_0^{\epsilon} \int_{ax}^{bx} \frac{1}{\Vol(B^{\O_0}_{(x,y)}(1))} dy dx & \geqslant & \int_0^{\epsilon} \int_{ax}^{bx}\frac{1}{4x(y-f(x))}dydx\\
 & \geqslant & \int_0^{\epsilon} \frac{1}{4x} \ln\Big(\frac{b-\ln(x)}{a-\ln(x)} \Big) dx = \infty
\end{array}
\]

La quantit\'e $\frac{b-\ln(x)}{a-\ln(x)}$ tend vers 1 lorsque $x$ tend vers $0$. Les fonctions $x \mapsto \frac{1}{4x} \ln\Big(\frac{b-\ln(x)}{a-\ln(x)}\Big)$ et $x \mapsto -\frac{b-a}{4x\ln(x)}$ sont \'equivalentes et positives au voisinage de $0$. Par cons\'equent, l'int\'egrale $\int_0^{\epsilon} \frac{1}{4x} \ln\Big( \frac{b-\ln(x)}{a-\ln(x)}\Big)$ est infini car l'int\'egrale $\int_0^{\epsilon} -\frac{b-a}{4x\ln(x)}$ est infini.
\end{proof}

\`{A} pr\'esent, nous allons utiliser la proposition \ref{compa} et le lemme \ref{quhyp} pour montrer la proposition \ref{pic} dans le cas quasi-hyperbolique.

\begin{proof}
[D\'emonstration de la proposition \ref{pic} dans le cas quasi-hyperbolique avec $p=p^1_{\g}$]
On rappelle que l'\'el\'ement $\g$ poss\`ede deux droites stables. Ces deux droites d\'efinissent une partition de $\P$ en deux demi-espaces ferm\'es. On a vu au lemme \ref{quasihyp} que $\O$ \'etait inclus dans l'un de ces deux demi-espaces ouverts, on note $A$ celui qui le contient. Nous allons montrer qu'il existe un point $x \in A$ tel que l'orbite de $x$ sous l'action du groupe \`a un param\`etre $H$ engendr\'e par $\g$ n'intersecte pas $\O$.

Commençons par voir pourquoi l'existence d'un tel point  permet de conclure. Soit $x \in A$ tel que l'orbite $\mathcal{O}$ de $x$ sous l'action $H$ n'intersecte pas $\O$. L'union de l'orbite $\mathcal{O}$ et de l'adh\'erence de l'axe de $\g$ forment une courbe convexe qui d\'efinit un ouvert proprement convexe projectivement \'equivalent \`a l'ouvert proprement convexe $\O_0$ du lemme \ref{quhyp}. La proposition \ref{compa} et le lemme \ref{quhyp}  montrent que tout pic de $\g$ dont le sommet \`a l'infini est stabilis\'e par un \'el\'ement quasi-hyperbolique est de volume infini.

Montrons \`a pr\'esent qu'un tel point $x$ existe. Il  suffit de trouver un point $x \in A$ tel que l'ensemble $(\g^t x)_{0 \leqslant t \leqslant 1}$ est inclus dans $A - \O$, puisque l'ensemble $A - \O$ est préservé par $\g$. Pour cela on consid\`ere une droite quelconque $D$ passant par $p^1_{\g}$ et son image $D'$ par $\g$. On se donne un point $x \in D \cap A$, et on consid\`ere le segment $S$ inclus dans $A$ et d\'efinit par les points $x$ et $p^1_{\g}$. La r\'egion $R= \{ y \in A \,|\, \exists t, \, 0 \leqslant t \leqslant 1, \, \exists z \in S,\, \textrm{ tel que } y = \g^t(z) \}$ est une partie convexe dont le bord est form\'e de trois courbes. Une incluse dans $D$, une autre dans $D'$ et la derni\`ere est la courbe $(\g^t x)_{0 \leqslant t \leqslant 1}$. On note $\mathcal{T}_x$ le triangle ferm\'e de $A$ d\'efini par les points $p^1_{\g}, x, \g x$. Si le point $x \in D$ tend vers l'infini, alors le point $\g x \in D'$ tend aussi vers l'infini. Par conséquent, il existe un point $x\in D \subset A$ tel que le c\^ot\'e de $\mathcal{T}_x$ oppos\'e \`a $p^1_{\g}$ soit inclus dans $A - \O$. Mais le segment $[x,\g x]$ est une corde de la courbe convexe $(\g^t x)_{0 \leqslant t \leqslant 1}$. Par cons\'equent, si on se donne un tel $x$ alors la courbe $(\g^t x)_{0 \leqslant t \leqslant 1}$ est incluse dans $A - \O$.
\end{proof}

\subsubsection{L'holonomie des bouts des surfaces projectives proprement convexes de volume fini est parabolique}

\begin{defi}
Soient $\C$ une partie proprement convexe et $\O=
\overset{\circ}{\C}$, on appelle bord r\'eel de $\C$ la partie $\C
\cap \partial \O$, on le note $\partial_r \O$.
% On note $\partial_o \O$ le compl\'ementaire de $\partial_r \O$.
\end{defi}

\begin{prop}\label{fonda2}
Soit $S$ une surface de caractéristique d'Euler strictement négative, on munit $S$ d'une structure projective proprement convexe. Soit $c$ un lacet \'el\'ementaire trac\'e sur $S$, on note $\pi : \C \rightarrow S$ le rev\^etement universel de $S$ donn\'e par la
d\'eveloppante de $S$.

\begin{itemize}
\item Si $c$ fait le tour d'un bout et l'holonomie de $c$ est
hyperbolique ou quasi-hyperbolique alors le volume de la
composante \'el\'ementaire associ\'ee \`a $c$ est infini.

\item Si $c$ fait le tour d'un bout et l'holonomie de $c$ est
parabolique alors le volume de la composante \'el\'ementaire associ\'ee
\`a $c$ est fini.

\item Si $c$ est librement homotope \`a une composante connexe du
bord de $S$ alors l'holonomie de $c$ est:
\begin{itemize}
\item ou quasi-hyperbolique avec $\Ax(Hol(c)) \subset \partial_r \C$

\item ou hyperbolique avec $\Ax(Hol(c)) \subset \partial_r \C$

\item ou hyperbolique avec $p^0_{\g} \in \partial \C$ et l'un des axes secondaires de $\g$ est inclus dans $\partial_r \C$.
\end{itemize}
\end{itemize}
\end{prop}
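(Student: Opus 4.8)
Le plan est de traiter séparément les deux premiers points (lacet faisant le tour d'un bout) et le troisième (lacet homotope à un bord), en ramenant tout aux estimations de volume des pics (proposition \ref{pic}) et à la proposition \ref{mubord}. Pour les deux premiers points, je partirais de la proposition \ref{fonda1} : comme $c$ fait le tour d'un bout, l'image dans $S$ de n'importe quel secteur $\F$ de $\g = \Hol(c)$ (ou de $\widetilde{\F} = \F \cup T$, où $T$ est le triangle de sommets $p^+_{\g}, p^-_{\g}, p^0_{\g}$, lorsque $\g$ est hyperbolique avec $p^0_{\g} \in \partial \C$) contient un voisinage du bout correspondant, et le lemme \ref{injgeo} assure que cette image est injective. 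Par conséquent la composante élémentaire associée à $c$ et le quotient $\F/_{<\g>}$ coïncident à un compact près, lequel est de volume fini puisque $\mu_S$ est finie sur les compacts. Le volume de la composante élémentaire est donc fini si et seulement si $\mu_{\O}(D') < \infty$, où $D'$ est un domaine fondamental pour l'action de $<\g>$ sur $\F$ (resp. $\widetilde{\F}$), construit comme dans la démonstration de la proposition \ref{fonda1}, borné par deux droites $L$ et $\g L$.

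Je distinguerais ensuite la dynamique de $\g$. Si $\g$ est parabolique, le domaine $D'$ est inclus dans un pic $P$ de sommet à l'infini $p_{\g}$ (ses deux côtés issus de $p_{\g}$ étant portés par $L$ et $\g L$, le côté restant étant une corde de $\O$ puisque $\overline{\F} \cap \partial \O = \{ p_{\g} \}$) ; la proposition \ref{pic} donne $\mu_{\O}(D') \leqslant \mu_{\O}(P) < \infty$, d'où la finitude. Si au contraire $\g$ est quasi-hyperbolique, ou hyperbolique avec $\Ax(\g) \subset \partial \O$, ou hyperbolique avec $\Ax(\g) \subset \O$ et $p^0_{\g} \in \partial \O$, alors la forme des secteurs et les propositions \ref{hyp} et \ref{quasihyp} montrent que $\overline{D'} \cap \partial \O$ contient un segment non trivial (un sous-segment de $\Ax(\g)$, respectivement d'un axe secondaire $[p^{\pm}_{\g}, p^0_{\g}]$). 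Comme $D'$ contient alors $V \cap \O$ pour un voisinage $V$ d'un point intérieur à ce segment, la proposition \ref{mubord} donne $\mu_{\O}(D') = \infty$ ; on peut aussi bien invoquer directement la proposition \ref{pic}, qui affirme qu'un pic de tel sommet à l'infini est de volume infini. Ceci règle les deux premiers points.

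Pour le troisième point, j'utiliserais que $c$ est librement homotope à une composante connexe $b$ de $\partial S$, qui est géodésique ; ainsi $\g$ est conjugué à $\Hol(b)$, lequel préserve le relevé $\sigma$ de $b$. Ce relevé est un segment non trivial inclus dans $\partial_r \C = \C \cap \partial \O$, et $\g$ y agit par translation en fixant ses deux extrémités distinctes. Je parcourrais alors la classification de la proposition \ref{classi} : l'identité et les éléments elliptiques sont exclus car aucun ne translate le long d'un segment non trivial du bord en fixant ses deux extrémités ; les éléments planaires sont exclus car $\chi(S) < 0$ entraîne que $\O$ n'est pas un triangle (proposition \ref{planaire}) ; les éléments paraboliques sont exclus car ils ne fixent qu'un seul point (proposition \ref{para}), alors que $\sigma$ en possède deux. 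Il reste que $\g$ est hyperbolique ou quasi-hyperbolique, et l'identification de $\sigma$ parmi les segments invariants du bord décrits par les propositions \ref{hyp} et \ref{quasihyp} fournit exactement les trois possibilités annoncées : $\sigma = \Ax(\g) \subset \partial_r \C$ (cas quasi-hyperbolique, ou hyperbolique avec axe au bord), ou bien $\sigma$ est un axe secondaire inclus dans $\partial_r \C$ avec $p^0_{\g} \in \partial \C$ (cas hyperbolique avec $\Ax(\g) \subset \O$).

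La principale difficulté sera dans les réductions du premier paragraphe : vérifier soigneusement que la composante élémentaire et $\F/_{<\g>}$ ne diffèrent que d'un ensemble de volume fini, et que dans les cas de volume infini le domaine fondamental $D'$ contient réellement tout un voisinage (dans $\O$) d'un point intérieur au segment de bord, afin que la proposition \ref{mubord} s'applique. Ces deux points reposent sur la forme précise des secteurs et sur la locale finitude du domaine de Dirichlet-Lee (corollaire \ref{finitude}, proposition \ref{inftyfini}). Le troisième point, en revanche, est essentiellement une comptabilité des points fixes une fois établi que $\sigma \subset \partial_r \C$.
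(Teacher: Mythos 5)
Your overall strategy matches the paper's: via Proposition \ref{fonda1} and Lemma \ref{injgeo} you identify the elementary component, up to a set of finite volume, with the quotient $\F/_{<\g>}$ of a sector (or of $\widetilde{\F}=\F\cup T$ in the hyperbolic case with $p^0_{\g}\in\partial\C$), and you conclude with the pic estimates (Proposition \ref{pic}) and Proposition \ref{mubord}; the boundary case is handled, as in the paper, by noting that a lift of the geodesic boundary component is a nontrivial $\g$-invariant segment of $\partial_r\C$ and eliminating all dynamical types but the three listed. The implementations differ in the reduction step: the paper computes $\mu(\F/_{<\g>})$ as the volume of $(\bigcup_{i=1}^{r}\delta_i^{-1}D)\cap\F$, where $D$ is a Dirichlet--Lee domain and the finiteness of the $\delta_i$ comes from Proposition \ref{inftyfini}, then runs its dichotomy on whether the two faces of $D$ meeting $\F$ share an endpoint on $\overline{\O}$; you instead work with the explicit cyclic fundamental domain $D'$ bounded by $L$ and $\g L$, which is legitimate provided the ``up to a compact set'' comparison is justified (the image of $\F/_{<\g>}$ must be closed in $S$, which is where Proposition \ref{inftyfini} re-enters, as you acknowledge). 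Your exclusion of parabolic holonomy in the boundary case by fixed-point counting is more direct than the paper's citation of Proposition \ref{fonda1}, but to exclude elliptic elements of order two and endpoint swaps you also need the freeness of the deck action, not merely the count of fixed points.

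One claim is wrong as stated: when $\g$ is hyperbolic with $\Ax(\g)\subset\O$ and $p^0_{\g}\in\partial\O$, the domain $D'$ is a wedge bounded by two lines through $p^0_{\g}$; since all the lines $\g^n L$ pass through $p^0_{\g}$, the closure of $D'$ meets $\partial\O$ only at $p^0_{\g}$ and contains no sub-segment of a secondary axis, so Proposition \ref{mubord} cannot be applied to it. Only your fallback works there: $D'$ contains a pic with vertex at infinity $p^0_{\g}$, and the fourth case of Proposition \ref{pic} gives infinite volume --- which is exactly the paper's argument. Conversely, in the quasi-hyperbolic case and the hyperbolic case with $\Ax(\g)\subset\partial\O$, it is your \ref{mubord} route that works (the closure of $D'$ contains a sub-segment of the axis, made of $\Cc^1$ boundary points), while Proposition \ref{pic} does not apply directly to $D'$, whose closure avoids the fixed points of $\g$. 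The two arguments you present as interchangeable are in fact complementary, each covering disjoint cases; with this case split made explicit, your proof goes through.
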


\begin{proof}
On note $\g = Hol(c)$. On sait par le th\'eor\`eme \ref{Lee} qu'il existe un domaine fondamental $D$ convexe et localement fini pour l'action du groupe fondamental $\G$ de $S$ sur l'int\'erieur $\O$ de $\C$. La proposition \ref{fonda1} montre qu'il existe un ferm\'e $\g$-invariant et convexe $\F$ de $\O$ tel l'application naturelle de $\F/_{<\g>}$ vers $S$ est injective et son image est une composante \'el\'ementaire associ\'ee \`a un lacet homotope \`a $c$. Le lemme \ref{inftyfini}  montre que $D$ rencontre un nombre fini d'image de $\F$. On les note $\delta_i \F$ avec $\delta_i \in \G$ et $\delta_1 = Id$. Le volume de $\F/_{<\g>}$ est \'egale au volume de $(\underset{i=1...r}{\bigcup} \delta_i^{-1}D) \cap \F$.

Il  reste donc \`a estimer le volume des $\delta_i^{-1}D \cap \F$ pour $i=1...r$. Il suffit de calculer le volume de $D \cap \F$. Traitons les 3 cas s\'eparement: 
\begin{itemize}
\item Si $c$ fait le tour d'un bout $B$ et l'holonomie de $c$ est
hyperbolique ou quasi-hyperbolique alors on a deux cas \`a distinguer.
\begin{itemize}
\item Les deux faces du poly\`edre $D$ intersectant $\F$ ont une extr\'emit\'e commune $p$ appartenant \`a $\overline{\O}$. Le point $p$ est alors fix\'e par $\g$. Si l'élément $\g$ est hyperbolique alors, il y a trois sous cas: a) $\Ax(\g) \subset \partial \O$,  b) $\Ax(\g) \subset \O$ et $p_{\g}^0 \in \partial \O$, ou bien c) $\Ax(\g) \subset \O$ et $p_{\g}^0 \notin \partial \O$.

Si l'élément $\g$ est quasi-hyperbolique ou hyperbolique cas a) , alors les cas 2 et 3 de la proposition \ref{pic} montre que la composante \'el\'ementaire associ\'ee \`a $c$ est de volume infini.

Si l'élément $\g$ est hyperbolique cas b) alors $p=p_{\g}^0$, car l'ensemble $\F/_{<\g>}$ est un voisinage du bout $B$.
Le cas 4 de la proposition \ref{pic} montre que la composante \'el\'ementaire associ\'ee \`a $c$ est de volume infini.

Enfin, si l'élément $\g$ est hyperbolique cas c) alors comme l'ensemble $\F/_{<\g>}$ est un voisinage du bout $B$, le point $p$ ne peut-être fixé par $\g$. Par conséquent, on ne peut pas être dans ce cas.

\item Les deux faces du poly\`edre $D$ incluse dans $\F$ n'ont pas d'extr\'emit\'es commune appartenant \`a $\overline{\O}$. Dans ce cas, c'est le th\'eor\`eme \ref{mubord} qui  montre que la composante \'el\'ementaire associ\'ee \`a $c$ est de volume infini.
\end{itemize}

\item  Si $c$ fait le tour d'un bout et l'holonomie de $c$ est
parabolique alors les deux faces du poly\`edre $D$ incluse dans $\F$ s'intersectent en $p_{\g}$ le point fixe de $\g$. Dans ce cas la proposition \ref{pic}  montre que la composante \'el\'ementaire associ\'ee \`a $c$ est de volume fini.

\item Si $c$ est librement homotope \`a une composante connexe $L$ du bord de $S$ alors on a vu que l'holonomie de $c$ ne peut-\^etre parabolique (proposition \ref{fonda1}). La surface $S$ est une surface projective \underline{\`a bord g\'eod\'esique} par cons\'equent tout relev\'e de $L$ est un segment $T$ inclus dans le bord de $\C$ et pr\'eserv\'e par $\g$. Si $\g$ est quasi-hyperbolique alors il n'y a rien \`a montrer car $\Ax(\g)$ est l'unique segment pr\'eserv\'e par $\g$ inclus dans $\partial \C$. On suppose donc que $\g$ est hyperbolique. Si le point $p^0_{\g}$ n'appartient pas \`a $\partial \O$ alors $T$ est n\'ecessairement l'axe principal de $\g$, car dans ce cas $\Ax(\g)$ est l'unique segment pr\'eserv\'e par $\g$ inclus dans $\partial \C$. Et, si $p^0_{\g}$ appartient \`a $\partial \O$ alors les axes secondaires de $\g$ sont les seuls segments pr\'eserv\'es par $\g$ et inclus dans $\partial \C$. Ils ne peuvent pas \^etre tous les deux inclus dans $\partial \C$ cas sinon le quotient ne serait pas une surface. Par cons\'equent, $T$ est l'un des deux axes secondaires de $\g$.
\end{itemize}
\end{proof}

On peut \`a pr\'esent montrer le r\'esultat principal de cet article.

\begin{theo}\label{le resultat}
Soit $S$ une surface qui n'est pas un cylindre, on munit $S$ d'une structure projective proprement convexe, cette structure est de volume fini si et seulement si la surface $S$ est de type fini et l'holonomie de tous les lacets qui font le tour d'un bout est parabolique.
\end{theo}

\begin{proof}
On commence par se placer dans le cas où la surface $S$ est de caractéristique d'Euler strictement négative.
Supposons que que l'on s'est donn\'e une structure projective proprement convexe sur $S$ de volume fini. Alors, la surface $S$ est de type fini d'apr\`es le th\'eor\`eme \ref{typefini}, et l'holonomie de chaque bout est parabolique d'apr\`es la proposition \ref{fonda2}.

Supposons \`a pr\'esent que $S$ est de type fini et que l'holonomie de tous les bouts de $S$ est parabolique. La proposition \ref{fonda2}  montre que la surface $S$ est de volume fini.

Enfin, supposons la surface $S$ est de caractéristique d'Euler positive. Si l'intérieur de $S$ est compacte le théorème est évident. Si l'intérieur $S$ n'est pas compact alors $S$ est un cylindre avec ou sans bord. On a exclu ce cas.
\end{proof}

Le corollaire suivant est imm\'ediat.

\begin{coro}\label{caract}
Soit $S$ une surface sans bord, on munit $S$ d'une structure projective proprement convexe. Alors, cette structure est de volume fini si et seulement si la surface $S$ est de type fini et l'holonomie de tous les lacets \'el\'ementaires est parabolique.
\end{coro}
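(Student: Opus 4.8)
The plan is to reduce the statement directly to Theorem~\ref{le resultat}. The only difference between the two is terminology: the theorem speaks of loops that make the tour of an end, whereas the corollary speaks of elementary loops. So the whole content of the proof is the observation that, on a surface \emph{sans bord}, these two families of loops coincide; once this is established, the equivalence is read off from the theorem, together with Theorem~\ref{typefini}, which guarantees that finite volume forces finite type and so makes the two finiteness hypotheses match up.

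To make the identification precise I would invoke the dichotomy recorded in the remark following Definition~\ref{elem}: an elementary loop $c$ of $S$ either makes the tour of an end, its elementary component being a cylinder with a single boundary circle, or it is freely homotopic to a connected component of $\partial S$, its elementary component being a cylinder with two boundary circles. When $\partial S = \varnothing$ the second alternative cannot occur, as there is no boundary component to which $c$ could be homotopic, and the two-boundary cylinder has no second boundary circle to attach. Hence every elementary loop makes the tour of an end; conversely, by the very definition of ``faire le tour d'un bout'', every such loop is elementary. Consequently, for a boundaryless surface the condition ``the holonomy of every elementary loop is parabolic'' is literally the same as ``the holonomy of every loop making the tour of an end is parabolic''. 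With this dictionary, the corollary is immediate from Theorem~\ref{le resultat} for every boundaryless surface that is not a cylinder.

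The remaining work, and where I expect the only real difficulty, is to dispose of the low-complexity types excluded from Theorem~\ref{le resultat}. For the plane the simple loop bounding a disc is elementary with trivial, hence non-parabolic, holonomy, so the right-hand condition fails, in agreement with Proposition~\ref{mubord} which forces infinite volume; for the torus there are no essential elementary loops, so the parabolicity condition holds vacuously, matching the fact that a properly convex torus is a compact triangle quotient of finite volume; and the sphere carries no properly convex structure at all. The genuinely delicate case is the open cylinder: here $\pi_1(S) = \Z$ is virtually abelian, and the lemma stating that a finite-volume quotient with virtually abelian fundamental group forces $\O$ to be a triangle containing a finite-index copy of $\Z^2$ shows that \emph{no} finite-volume structure exists, whereas a structure whose core holonomy is parabolic still satisfies the right-hand condition. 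I would therefore carry out the argument under the standing assumption that $S$ is not a cylinder, exactly as in the hypothesis of Theorem~\ref{le resultat}, and flag the open cylinder as the sole boundaryless type that must be set aside by hand.
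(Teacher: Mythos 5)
Your reduction of the main case is exactly the paper's: for a boundaryless surface other than the cylinder, the remark following Definition~\ref{elem} identifies elementary loops with loops making the tour of an end (your argument that the two-boundary alternative cannot occur when $\partial S = \varnothing$ is precisely the point), and the corollary is then read off from Theorem~\ref{le resultat}. Your separate checks for the plane, the torus and the sphere are not in the paper's proof, which absorbs them into Theorem~\ref{le resultat}, but they are correct and harmless.

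The real divergence is the cylinder, and there your treatment is sharper than the paper's own. The paper disposes of the boundaryless cylinder in one line, asserting that one of its two elementary components has infinite volume. That establishes only that the left-hand side of the equivalence never holds for a cylinder --- the same fact you derive, equally validly, from the unnumbered lemma asserting that a virtually abelian group acting with finite covolume forces $\O$ to be a triangle and the group to contain $\Z^2$ with finite index, which $\Z$ does not. But to prove the stated equivalence one must also show that the right-hand side never holds on a cylinder, and this is false: the quotient of an ellipse by a single parabolic element is a properly convex structure on the boundaryless cylinder, of finite type and of infinite volume, whose elementary loops are exactly the loops isotopic to the core and hence all have parabolic holonomy. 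So the backward implication of Corollary~\ref{caract} genuinely fails for the cylinder; the underlying reason is that the cylinder has two ends but a single elementary free homotopy class, so the end-by-end analysis of Proposition~\ref{fonda2} (stated only for surfaces of negative Euler characteristic) cannot be applied to both ends. Your decision to exclude the cylinder and flag it explicitly is therefore not a gap in your argument but a correct diagnosis of a gap in the paper's: as written, the paper's one-line treatment of the cylinder proves only the forward direction (vacuously), and the reverse direction cannot be proved because it is false.
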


\begin{proof}
Si la surface $S$ n'est pas un cylindre alors ce corollaire est une conséquence direct du théorème \ref{le resultat}. Si la surface $S$ est un cylindre, c'est un cylindre sans bord, il est alors assez facile de voir que l'une des deux composantes élémentaires de $S$ est de volume infini.
\end{proof}

\begin{prop}
Soient $\O$ un ouvert proprement convexe de $\P$ et $\G$ un sous-groupe discret sans torsion de $\s$ qui pr\'eserve $\O$, si $\G$ contient un \'el\'ement quasi-hyperbolique alors l'action de $\G$ sur $\O$ est de covolume infini.
\end{prop}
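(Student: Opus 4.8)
The plan is to show directly that the covolume $\mu(\Quo)$ is infinite, by exhibiting inside $\Quo$ the injective, measure preserving image of a region of infinite $\mu_{\O}$-volume; in particular I never assume the covolume is finite, so no separate treatment of the virtually abelian case is needed. First I would record that $\O$ is \emph{not} a triangle: the automorphism group of a triangle is conjugate to the group of positive diagonal matrices, all of which are diagonalizable, whereas a quasi-hyperbolic element is not. I would then reduce to a primitive generator. By Proposition \ref{centra} the centralizer of the given quasi-hyperbolic $\g$ in $\Aut(\O)$ consists of quasi-hyperbolic elements sharing its geometric characteristics, and Lemma \ref{para-para} shows that any two elements of the centralizer of $\g$ in $\G$ generate an infinite cyclic group; being discrete in a one-parameter group, this centralizer is itself infinite cyclic. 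Replacing $\g$ by a generator $\g_0$ keeps it quasi-hyperbolic, with the same fixed points and $\Ax(\g_0)\subset\partial\O$ (Proposition \ref{quasihyp}), and makes it primitive. So I assume from now on that $\g$ is primitive.

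The heart of the argument is to produce a sector $\F$ of $\g$ whose quotient $\F/_{\langle\g\rangle}$ injects into $\Quo$. For this I would run the construction of Lemma \ref{injgeo}: Lemma \ref{lemmeinj1} (available precisely because $\g$ is now primitive) gives finitely many elements $h_1,\dots,h_N$ controlling which translates of a sector meet it, and Lemma \ref{lemmeinj2} says each intersection $h_i\F\cap\F$ is either compact or equal to $\F$. Shrinking $\F$ to a sub-sector $\F_0$ pushed towards the boundary removes the finitely many compact intersections, so that $\delta\F_0\cap\F_0\neq\varnothing$ forces $\delta\F_0=\F_0$; and since $\O$ is not a triangle this in turn forces $\delta\in\langle\g\rangle$. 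I want to stress that, unlike the \emph{statement} of Lemma \ref{injgeo}, the quasi-hyperbolic case of this construction uses only the dynamics of $\g$ and never that a loop is simple. This is the step I expect to be the main obstacle: one must check carefully that the sector can be shrunk so as to trivialise all the finitely many obstructions $h_i$ simultaneously.

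With such an $\F_0$ in hand I would use that every sector of a quasi-hyperbolic element satisfies $\overline{\F_0}\cap\partial\O=\overline{\Ax(\g)}$, so that the fixed point $p^2_{\g}$ lies in $\overline{\F_0}$. Taking a line $L$ through $p^2_{\g}$ that meets the interior of $\F_0$, all translates $\g^n L$ again pass through $p^2_{\g}$, and the closure $D'$ in $\O$ of a connected component of $\F_0-\bigcup_{n\in\Z}\g^n L$ is a fundamental domain for the action of $\langle\g\rangle$ on $\F_0$ whose closure contains $p^2_{\g}$. Inside $D'$, near $p^2_{\g}$, I would inscribe a pic $P$ with vertex at infinity $p^2_{\g}$. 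Since $p^2_{\g}$ is fixed by the quasi-hyperbolic element $\g$, Proposition \ref{pic} yields $\mu_{\O}(P)=\infty$; equivalently, $\partial\O$ is not $\Cc^1$ at $p^2_{\g}$ by Proposition \ref{quasihyp}, so Theorem \ref{mupointe} applies.

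To conclude, because $P$ lies in the interior of the fundamental domain $D'$ of the $\langle\g\rangle$-action on $\F_0$, and $\F_0/_{\langle\g\rangle}$ injects into $\Quo$, the projection $\pi:\O\rightarrow\Quo$ is injective and measure preserving on $P$. Hence $\mu(\Quo)\geqslant\mu_{\O}(P)=\infty$, and the action of $\G$ on $\O$ is of infinite covolume.
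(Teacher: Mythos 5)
Your proof is correct, but it takes a genuinely different route from the paper's. The paper disposes of the statement in a few lines: since $\G$ contains a quasi-hyperbolic element, $\O$ is not a triangle; one passes to a finite cover of $\Quo$ in which $\g$ is represented by a \emph{simple} loop; Proposition \ref{fonda1} then says this loop is elementary, and Proposition \ref{fonda2} says the elementary component around a bout with quasi-hyperbolic holonomy has infinite volume, so the cover, hence $\Quo$, has infinite volume. You instead bypass the surface topology entirely: you normalize $\g$ to a primitive element via Proposition \ref{centra} and Lemma \ref{para-para}, re-run the sector-injection argument of Lemma \ref{injgeo} directly from Lemmas \ref{lemmeinj1} and \ref{lemmeinj2} (correctly observing that in the quasi-hyperbolic case this construction never uses simplicity of a loop, only primitivity), and then exhibit, inside an explicit fundamental domain for $\langle \g \rangle$ acting on the sector, a pic with vertex at $p^2_{\g}$, whose volume is infinite by Proposition \ref{pic} (equivalently Proposition \ref{quasihyp} plus Theorem \ref{mupointe}). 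Both arguments ultimately rest on the same volume estimate (Proposition \ref{pic}) and the same injection mechanism (Lemmas \ref{lemmeinj1} and \ref{lemmeinj2}); what differs is the packaging. The paper's route buys brevity, since Propositions \ref{fonda1} and \ref{fonda2} are already in place. Your route buys robustness: the paper's proof silently invokes the existence of a finite cover in which $\g$ becomes simple (a separability-type fact it never justifies), and Propositions \ref{fonda1} and \ref{fonda2} carry a strictly-negative-Euler-characteristic hypothesis that excludes the admissible case where $\G$ is cyclic and $\Quo$ is a cylinder, whereas your argument treats that case with no extra work because you never assume anything about the topology of the quotient. The two steps you flag as delicate, namely shrinking the sector to kill the finitely many compact intersections and the implication $\delta \F_0 = \F_0 \Rightarrow \delta \in \langle \g \rangle$ when $\O$ is not a triangle, are asserted at exactly the same level of detail in the paper's own proof of Lemma \ref{injgeo}, so your write-up is no less rigorous than the original.
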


\begin{proof}
Supposons que le groupe $\G$ contienne un \'el\'ement $\g$ quasi-hyperbolique. L'ouvert $\O$ n'est donc pas un triangle et la surface quotient $\Quo$ est de caractéristique d'Euler négative. Il existe un revêtement fini de $\Quo$ dans lequel $\g$ est représenté par un lacet simple. La proposition \ref{fonda1}  montre que le lacet correspondant \`a $\g$ sur la surface $\Quo$ est \'el\'ementaire. Par cons\'equent, la proposition \ref{fonda2}  montre que $\Quo$ est de volume infini.
\end{proof}

\section{Applications}

\subsection{Stricte convexit\'e}

Le but de cette partie est d'\'etudier la stricte-convexit\'e de l'ouvert $\O$. Nous allons avoir besoin du lemme suivant.

\begin{lemm}\label{lemstrict}
Soient $\C$ une partie proprement convexe et un sous-groupe sans torsion discret $\G$ de $\s$ qui pr\'eserve $\C$ et tel que le quotient $S = \Quc$ est une surface \`a bord g\'eod\'esique. Soient $x_0$ un point de l'int\'erieur $\O$ de $\C$ et un point $x_{\infty}$ de $\partial \O$. On note $\widetilde{\lambda}$ le segment $[x_0,x_{\infty}[$ param\'etr\'e par la longueur d'arc (pour la distance de Hilbert) et $\lambda$ la demi-g\'eod\'esique obtenue en projetant $\widetilde{\lambda}$ sur $S$. Si la demi-g\'eod\'esique $\lambda$ est ultimement incluse dans un bout $B$ de $\overset{\circ}{S}$ alors il existe $\g \in \G$ repr\'esentant l'holonomie du bout $B$ tel que:
\begin{itemize}
\item si $\g$ n'est pas hyperbolique avec $p^0_{\g} \in \partial \O$ alors $\widetilde{\lambda}$ est ultimement inclus dans tout secteur de $\g$.

\item si $\g$ est hyperbolique avec $p^0_{\g} \in \partial \O$ alors $\widetilde{\lambda}$ est ultimement inclus dans la r\'eunion de tout secteur de $\g$ et de l'unique triangle $T_{\g}$ inclus dans $\C$ d\'efinit par les points $p^+_{\g},p^-_{\g},p^0_{\g}$.
\end{itemize}
\end{lemm}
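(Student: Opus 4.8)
The plan is to follow the strategy already used in the proof of Proposition \ref{polyfini}, but to exploit the fact that here $\lambda$ is assumed to be ultimately contained in the end $B$ itself, not merely along a subsequence. We may assume $\O$ is not a triangle (otherwise $\Aut(\O)$ is virtually abelian and the statement is degenerate), so that $S$ has strictly negative Euler characteristic and Lemma \ref{injgeo} and Proposition \ref{fonda1} apply. First I would attach to the end $B$ an elementary loop $c$ turning around $B$ and set $\g = \Hol(c) \in \G$; Lemma \ref{injgeo} provides a sector $\F$ of $\g$ such that the natural map $\F/_{<\g>} \to S$ is an injection. By Proposition \ref{fonda1} the image of $\F$ in $S$ contains a neighbourhood of $B$, except exactly when $\g$ is hyperbolic with $p^0_{\g} \in \partial \O$; in that case I replace $\F$ by the convex closed $\g$-invariant set $\widehat{\F} = \F \cup T_{\g}$, where $T_{\g}$ is the triangle with vertices $p^+_{\g}, p^-_{\g}, p^0_{\g}$, and Proposition \ref{fonda1} again guarantees that $\widehat{\F}/_{<\g>} \to S$ is injective with image containing a neighbourhood of $B$. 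When $\g$ is hyperbolic with $p^0_{\g}\in\partial\O$ one has moreover $\Ax(\g)\subset\O$, otherwise Proposition \ref{hyp} would force $\O$ to be a triangle; so the two bullets of the statement correspond exactly to these two situations. Writing $\widehat{\F}$ for $\F$ in all the remaining cases, I thus obtain in every case a convex closed $\g$-invariant set $\widehat{\F}$ whose quotient embeds in $S$ and whose image $N_B$ contains a neighbourhood of the end.

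Since $\lambda$ is ultimately in $B$ and $N_B$ contains a neighbourhood of $B$, there is $T_0$ with $\lambda([T_0,\infty[) \subset N_B$, so that $\widetilde{\lambda}([T_0,\infty[) \subset \pi^{-1}(N_B) = \bigcup_{\delta \in \G} \delta \widehat{\F}$. The heart of the argument, and the step I expect to be the main obstacle, is to upgrade ``covered by the translates'' into ``ultimately contained in one single translate $\delta_0 \widehat{\F}$''. For this I would first observe that the translates are pairwise disjoint inside $\O$: if $z \in \delta_1 \widehat{\F} \cap \delta_2 \widehat{\F}$ with $z \in \O$, then $\delta_1^{-1} z$ and $\delta_2^{-1}z$ lie in $\widehat{\F}$ and have the same image in $S$, so injectivity of $\widehat{\F}/_{<\g>}\to S$ gives $\delta_2^{-1}z = \g^n \delta_1^{-1} z$; the element $\delta_2 \g^n \delta_1^{-1}$ then fixes $z \in \O$ and hence is trivial, since $\G$ is torsion-free and therefore acts freely on $\O$, forcing $\delta_1 \widehat{\F} = \delta_2 \widehat{\F}$. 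Next I would invoke local finiteness: by Proposition \ref{inftyfini} a Dirichlet-Lee domain meets only finitely many translates of $\widehat{\F}$, and since such a domain is a locally finite fundamental domain, every compact subset of $\O$ meets only finitely many of the sets $\delta \widehat{\F}$.

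Combining these two facts concludes the proof. The sets $(\delta \widehat{\F} \cap \O)_{\delta}$ are pairwise disjoint, each closed in $\O$, and form a locally finite family; hence on the connected set $C = \widetilde{\lambda}([T_0,\infty[)$, which lies entirely in $\O$ (the limit point $x_{\infty}$ being approached but never attained), each piece $C \cap \delta \widehat{\F}$ is both closed and open in $C$: around any of its points only finitely many translates occur, and the finitely many other ones, being closed and disjoint from that point, are avoided by a small neighbourhood. By connectedness of $C$, a single translate $\delta_0 \widehat{\F}$ contains $\widetilde{\lambda}([T_0,\infty[)$. Finally I replace $\g$ by its conjugate $\g' = \delta_0 \g \delta_0^{-1}$, which still represents the holonomy of $B$ since the holonomy of an end is defined only up to conjugacy; then $\delta_0 \widehat{\F}$ is precisely a sector of $\g'$, respectively the union of a sector of $\g'$ with the triangle $T_{\g'}$ in the hyperbolic case with $p^0 \in \partial \O$, and $\widetilde{\lambda}$ is ultimately contained in it, which is exactly the asserted dichotomy. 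Everything apart from the disjointness-plus-local-finiteness step is a direct application of Propositions \ref{fonda1} and \ref{inftyfini}; note in particular that convexity of $\widetilde{\lambda}$ is not needed, only its connectedness.
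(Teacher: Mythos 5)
Your proof is correct, and its skeleton is the same as the paper's: pick an elementary loop $c$ around the end $B$, set $\g=\Hol(c)$, use Lemma \ref{injgeo} and Proposition \ref{fonda1} to produce a closed $\g$-invariant set $\widehat{\F}$ (a sector, or a sector union the triangle $T_{\g}$ exactly when $\g$ is hyperbolic with $p^0_{\g}\in\partial\O$) whose image in $S$ contains a neighbourhood of $B$, lift the tail of $\lambda$ into $\bigcup_{\delta\in\G}\delta\widehat{\F}$, show that this tail lies in a single translate $\delta_0\widehat{\F}$, and conjugate $\g$ by $\delta_0$. The genuine difference is how the single-translate step is justified. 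The paper disposes of it in one line by invoking convexity of $\F$ (or of $\F\cup T_{\g}$), in the spirit of the proof of Proposition \ref{polyfini}; but the pigeonhole-plus-convexity mechanism works there because $\widetilde{\lambda}$ lies inside one convex fundamental domain, which is not the case here, so the paper's argument as written leaves a small hole (disjoint closed subintervals covering a ray need not reduce to a single one without some finiteness). Your argument fills exactly that hole: distinct translates of $\widehat{\F}$ are pairwise disjoint (injectivity of $\widehat{\F}/\langle\g\rangle\rightarrow S$ plus freeness of the action of the torsion-free group $\G$), the family of translates is locally finite in $\O$ (Proposition \ref{inftyfini} combined with local finiteness of a Dirichlet--Lee domain), and a clopen/connectedness argument concludes; as you observe, this needs only connectedness of $\widetilde{\lambda}$, not straightness. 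Three caveats, the first two inherited from the paper rather than introduced by you: the application of Proposition \ref{inftyfini} to $\widehat{\F}=\F\cup T_{\g}$ is an unproved extension that the paper also makes silently (in Propositions \ref{polyfini} and \ref{fonda2}); both proofs produce \emph{one} sector of the conjugated element containing the tail, whereas the statement claims ``tout secteur'' --- the upgrade is easy (if the tail left some sector $\F_2$ along a sequence of times, cocompactness of $\langle\g\rangle$ on the closure of the difference of two nested sectors would force $\lambda$ to return infinitely often to a fixed compact subset of $S$, contradicting that $\lambda$ is ultimately in every neighbourhood of the end), but neither proof records it; finally, your opening reduction is slightly off, since ``$\O$ is not a triangle'' does not imply $\chi(S)<0$ (take $\G$ cyclic, so that $S$ is a cylinder), although in that case injectivity of $\F/\langle\g\rangle\rightarrow S$ is automatic because the deck group equals $\langle\g\rangle$, so your argument survives --- the paper ignores this case altogether.
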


\begin{proof}
On consid\`ere un lacet simple $c$ qui fait le tour du bout $B$ de $S$ et l'\'el\'ement $\g=\Hol(c)$. On note $\F$ un secteur de $\g$. La proposition \ref{fonda1} montre que si $\g$ n'est pas hyperbolique avec $p^0_{\g} \in \partial \O$ alors $\F/_{<\g>}$ est un voisinage du bout $B$. Elle montre aussi que si $\g$ est hyperbolique avec $p^0_{\g} \in \partial \O$ alors pour obtenir un voisinage du bout $B$, on peut prendre la projection de la r\'eunion de $\F$ et du triangle $T_{\g}$ d\'efinit par les points $p^+_{\g},p^-_{\g},p^0_{\g}$.

Comme $\F$ ou $\F \cup T_{\g}$ est convexe et que la demi-g\'eod\'esique $\lambda$ converge vers le bout $B$ de $S$, quitte \`a conjuguer $\g$ on peut supposer que $\lambda$ est inclus dans $\F$.
\end{proof}

On obtient le corollaire suivant:

\begin{coro}\label{corostrict}
Soient $\O$ un ouvert proprement convexe et un sous-groupe
discret $\G$ qui pr\'eserve $\O$. On suppose que l'action de $\G$ sur $\O$ est de covolume fini. On se donne un domaine fondamental $D$ convexe et localement fini. Soient $x_0$ un point de $\O$ et $x_{\infty}$ un point de $\partial \O$. On note $S$ la surface $\Quo$, $\widetilde{\lambda}$ le segment $[x_0,x_{\infty}[$ param\'etr\'e par la longueur d'arc et $\lambda$ la demi-g\'eod\'esique obtenue en projetant $\widetilde{\lambda}$ sur $S$. Si la demi-g\'eod\'esique $\lambda$ est ultimement incluse dans un bout $B$ de $S$ alors il existe $\delta \in \G$ et un $T>0$ tel que pour tout $t>T$, $\lambda(t) \in \delta D$. En particulier, $x_{\infty}$ est le point fixe d'un \'el\'ement parabolique de $\G$ qui repr\'esente l'holonomie du bout $B$.
\end{coro}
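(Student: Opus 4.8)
The plan is to pin down $x_{\infty}$ first and only then control the successive translates of $D$ crossed by $\widetilde{\lambda}$. Since the action of $\G$ on $\O$ has finite covolume, the surface $S=\Quo$ is de type fini (Th\'eor\`eme \ref{typefini}) and, par le r\'esultat principal (Th\'eor\`eme \ref{le resultat}), l'holonomie de tout lacet faisant le tour d'un bout est parabolique; en l'appliquant \`a un lacet \'el\'ementaire primitif $c$ faisant le tour du bout $B$, j'obtiens que $\g=\Hol(c)$ est parabolique, d'unique point fixe $p_{\g}\in\partial\O$, le bord $\partial\O$ \'etant $\Cc^1$ en $p_{\g}$ (Proposition \ref{para}). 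Comme $\g$ est parabolique, il n'est en particulier pas hyperbolique avec $p^0_{\g}\in\partial\O$, donc le Lemme \ref{lemstrict} s'applique et montre que $\widetilde{\lambda}$ est ultimement inclus dans tout secteur $\F$ de $\g$. La proposition d\'ecrivant la forme des secteurs donne, dans le cas parabolique, $\overline{\F}\cap\partial\O=\{p_{\g}\}$; puisque $\widetilde{\lambda}(t)\to x_{\infty}\in\partial\O$ alors que $\widetilde{\lambda}(t)\in\F$ pour $t$ grand, le passage \`a l'adh\'erence force $x_{\infty}=p_{\g}$. Ceci fournit d\'ej\`a le ``en particulier'': $x_{\infty}$ est le point fixe de l'\'el\'ement parabolique $\g$ repr\'esentant l'holonomie de $B$.

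Il reste \`a produire un unique $\delta$ tel que $\widetilde{\lambda}(t)\in\delta D$ pour $t>T$. Je fixe un secteur $\F$ contenant ultimement $\widetilde{\lambda}$. D'apr\`es la Proposition \ref{inftyfini}, le domaine $D$ ne rencontre qu'un nombre fini d'images de $\F$; de fa\c con \'equivalente (comme dans la preuve du Lemme \ref{lemmeinj1}, $\g$ \'etant primitif) on a $\F\subset\bigcup_{n\in\Z,\;i=1,\dots,r}\g^{n}g_i^{-1}D$ pour un nombre fini d'\'el\'ements $g_i\in\G$. Ainsi $\widetilde{\lambda}$ est ultimement recouvert par l'orbite sous $\langle\g\rangle$ de la famille finie $g_1^{-1}D,\dots,g_r^{-1}D$, et il ne passe d'une tuile \`a la suivante qu'en traversant l'un des hyperplans $M^{\delta}_{x_0}$ bordant ces tuiles. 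Tout revient donc \`a montrer que $\widetilde{\lambda}$ ne traverse qu'un nombre fini de telles faces: apr\`es la derni\`ere travers\'ee il demeure dans une seule tuile $\delta D$, ce qui est la conclusion voulue.

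L'obstacle principal est pr\'ecis\'ement cette finitude des travers\'ees au voisinage de la pointe $p_{\g}$, o\`u $\widetilde{\lambda}$ sort de tout compact de $\O$ et o\`u les faces s'accumulent. Pour la traiter, je redescends au cylindre $\F/_{\langle\g\rangle}$, qui se plonge dans $S$ comme voisinage du bout $B$ (Lemme \ref{injgeo}, Proposition \ref{fonda1}); le pavage de $\O$ par les images de $D$ y induit un pavage localement fini n'ayant qu'un nombre fini de tuiles modulo $\langle\g\rangle$, et $\lambda$ y est une demi-g\'eod\'esique convergeant monotonement vers le bout. En utilisant le domaine fondamental $D'$ de $\langle\g\rangle$ sur $\F$ construit dans la Proposition \ref{fonda1} (born\'e par une droite $L$ passant par $p_{\g}$ et ses images $\g^{n}L$, dont l'adh\'erence contient $p_{\g}$), les cordes $\g^{n}L$ convergent vers $p_{\g}$ et vers la tangente $D_{\g}$, de sorte que $\g^{n}L$ ne s\'epare $\widetilde{\lambda}(T_0)$ de $p_{\g}$ que pour un nombre fini de $n$; la g\'eod\'esique $\widetilde{\lambda}$, qui se termine en $p_{\g}$, n'en traverse donc qu'un nombre fini. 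Comme il n'y a qu'un nombre fini de tuiles $g_i^{-1}D$ par p\'eriode, $\widetilde{\lambda}$ ne traverse qu'un nombre fini de faces au total, d'o\`u l'existence de $\delta$ et de $T$. (Quitte \`a remplacer $\G$ par un sous-groupe d'indice fini sans torsion via le lemme de Selberg, on peut de plus supposer $D$ poly\`edre fini par la Proposition \ref{polyfini}: seules un nombre fini de tuiles ont alors $p_{\g}$ dans leur adh\'erence, ce qui rend l'argument de finitude encore plus direct, via la finitude locale des hyperplans du Corollaire \ref{finitude}.)
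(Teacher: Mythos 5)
Your first paragraph is correct and is essentially the paper's own proof: finite covolume makes the holonomy $\g$ of a loop around $B$ parabolic (Corollary \ref{caract}), Lemma \ref{lemstrict} puts the tail of $\widetilde{\lambda}$ in a sector $\F$ of $\g$, and $\overline{\F}\cap\partial\O=\{p_{\g}\}$ forces $x_{\infty}=p_{\g}$. The paper actually stops there and treats the remaining assertion (``il vient que le segment est ultimement inclus dans un domaine fondamental'') as immediate; you attempt to justify it, which is the right instinct, but your justification has a genuine gap exactly at the decisive point.

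Here is the gap. You correctly establish that $\widetilde{\lambda}$ crosses only finitely many of the chords $\g^{n}L$ (in fact, once inside $\F$ it crosses none of them transversally: every $\g^{n}L$ passes through $p_{\g}$, and two distinct lines through $p_{\g}$ meet only at $p_{\g}$, which is not on $[x_0,p_{\g}[$; so the tail of $\widetilde{\lambda}$ sits in a single wedge $W_0$ between two consecutive chords). You also correctly invoke Proposition \ref{inftyfini} (or Lemma \ref{lemmeinj1}) to write the tiles meeting $\F$ as $\g^{n}g_i^{-1}D$, finitely many modulo $\langle\g\rangle$. But the inference ``comme il n'y a qu'un nombre fini de tuiles $g_i^{-1}D$ par p\'eriode, $\widetilde{\lambda}$ ne traverse qu'un nombre fini de faces'' assumes precisely what must be proved: that a tile $\g^{n}g_i^{-1}D$ belonging to a far-away ``period'' cannot intrude into the fixed wedge $W_0$, i.e.\ that each tile meets only finitely many wedges (equivalently, that $W_0$ meets only finitely many tiles). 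Local finiteness of the tiling holds only on compact subsets of $\O$, and $W_0$ is not compact --- this is the very accumulation phenomenon you name as ``l'obstacle principal'', and your argument only controls the walls $\g^{n}L$ of the auxiliary $\langle\g\rangle$-tiling of $\F$, not the faces of the $\G$-tiling. A correct completion needs a genuine geometric input, for instance: if a tile $T$ met wedges of arbitrarily large index, then by convexity both $T$ and $\g T$ would contain points of the open wedge $A_m$ for every large $m$; but $T$ and $\g T$ are convex with disjoint interiors, hence separated by a line $H$, and $H$ must pass through $p_{\g}$ since both sets accumulate there; a line through $p_{\g}$ cuts the interior of at most one wedge, so for all other large $m$ the open wedge $A_m$ lies strictly on one side of $H$ and cannot meet both $T$ and $\g T$ --- contradiction. (Alternatively, for a Dirichlet--Lee domain one can use Proposition \ref{polyfini}: each tile is a finite polygon, so its corner at $p_{\g}$ spans a compact interval of directions avoiding the tangent $D_{\g}$, whereas the directions spanned by the wedges converge to $D_{\g}$.) Finally, your closing parenthesis is wrong on two counts: you may not replace $\G$ by a finite-index subgroup nor assume $D$ is a Dirichlet--Lee polyhedron, since both $D$ and the conclusion are part of the given data; and the claim that only finitely many tiles have $p_{\g}$ in their closure is false --- if $p_{\g}\in\overline{g_i^{-1}D}$ then $p_{\g}\in\overline{\g^{n}g_i^{-1}D}$ for every $n$, so infinitely many tiles touch $p_{\g}$, exactly as in the classical picture of a hyperbolic cusp.
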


\begin{proof}
Le corollaire \ref{caract} montre que l'holonomie de tous les lacets \'el\'ementaires de $S$ est parabolique. L'intersection de l'adh\'erence de tous les secteurs d'un \'el\'ement parabolique ne contient qu'un seul point du bord de $\O$: le point fixe de cet \'el\'ement parabolique. Par cons\'equent, le point $x_{\infty}$ est le point fixe d'un \'el\'ement parabolique de $\G$. Il vient que le segment $[x_0,x_{\infty}[$ est ultimement inclus dans un domaine fondamental.
\end{proof}

On peut \`a pr\'esent montrer le th\'eor\`eme suivant:

\begin{theo}\label{strictabord}
Soient $\C$ une partie proprement convexe et $\G$ un sous-groupe
discret de type fini et sans torsion de $\s$ qui pr\'eserve $\C$. On suppose que le quotient $\Quc$ est une surface projective \`a bord g\'eod\'esique et que $\C$ n'est pas un triangle. Alors, tout segment maximal non trivial de $\partial \C$ est pr\'eserv\'e par un \'el\'ement non parabolique de $\G$ qui correspond \`a un lacet \'el\'ementaire de $\Quc$.
\end{theo}

On rappelle le th\'eor\`eme suivant dû \`a Benz\'ecri que l'on a d\'ej\`a utilis\'e dans la partie \ref{Hil}.

\begin{theo}[Benz\'ecri]\label{ben2}
L'action de $\s$ sur l'ensemble $\{ (\O,x) \, | \, \O \textrm{ est
un}$ $\textrm{ouvert proprement convexe de } \P \textrm{ et } x
\in \O \}$ est propre et cocompacte.
\end{theo}

\begin{proof}
[D\'emonstration du th\'eor\`eme \ref{strictabord}]
On note $\O$ l'int\'erieur de $\C$. On obtient ainsi $\overset{\circ}{S}=\Quo$ une surface projective convexe sans bord et de type fini. Supposons qu'il existe un segment $s$ non trivial et maximal inclus dans le bord $\partial \C$ de $\C$. Soit
$(e_i)_{i=1...3}$ une base de $\R^3$. On peut supposer que les
extr\'emit\'es de $s$ sont les points $[e_2]$ et $[e_3]$ et que $[e_1]
\notin \overline{\O}$. On consid\`ere les \'el\'ements:
$$
g_{t} = \left(\begin{array}{c c c} 2^t & 0 & 0\\
0 & 2^{-t}& 0\\
0 & 0 & 2^{-t}
\end{array} \right).
$$
Ainsi, $\underset{t \rightarrow +\infty}{\lim} g_t \O$ est un
triangle $T$ dont les sommets sont $e_1, \, e_2$ et $e_3$.

Soit $x_0 \in \O \cap T$, on consid\`ere la famille $\widetilde{\lambda}_t=(g_t^{-1} x_0
)_{t \geqslant 0}$ qui est dans $\O \cap T$ pour tout $t$. Il est essentiel de remarquer que la famille  $(\widetilde{\lambda}_t)_{t>0}$ est un segment ouvert dont l'une des extr\'emit\'es est $x_0$ et l'autre est dans $s$. Soit $D$ un domaine fondamental convexe et
localement fini pour l'action de $\G$ sur $\O$ contenant le point
$x_0$. On note $\lambda_t$ la projection de $\widetilde{\lambda}_t$ sur $\overset{\circ}{S}$. La surface $\overset{\circ}{S}$ est hom\'eomorphe \`a la surface compacte $\Sigma_{g}$ de genre $g$ \`a laquelle on a retir\'e $p$ points $x_i$, $i=1...p$. On consid\`ere des disques ouverts $B_i$ de centre $x_i$ et suffisament petit pour \^etre disjoints deux \`a deux. On note $K'$ le compl\'ementaire de ces disques et $K$ le compact correspondant dans $\overset{\circ}{S}$. On a l'alternative:
\begin{enumerate}
\item Ou la demi-g\'eod\'esique $\lambda$ est ultimement incluse dans un bout $B$ de $S$.

\item Ou alors, il existe une suite de r\'eels $(t_n)_{n \in \N}$ tendant vers l'infini et tel que $\lambda_{t_n} \in K$.
\end{enumerate}

\begin{enumerate}
\item Commençons par traiter le second cas. Dans ce cas, il existe une famille $\g_{t_n} \in \G$ tel que
$\g_{t_n} g_{t_n}^{-1} x_0$ appartienne \`a une partie compacte de $D$. Quitte \`a extraire on peut supposer que la suite $(\g_{t_n} g_{t_n}^{-1} x_0)_{n \geqslant 0}$ converge vers un point $x_{\infty} \in D$. On a donc les convergences suivantes:

$$\underset{n \rightarrow \infty}{\lim} (\O, \g_{t_n} g_{t_n}^{-1} x)
= (\O , x_{\infty}) \textrm{ et } \underset{n \rightarrow
\infty}{\lim}  g_{t_n} \g_{t_n}^{-1}  (\O, \g_{t_n} g_{t_n}^{-1} x) = (T,x)$$

Le th\'eor\`eme \ref{ben2}  montre que la suite $(g_{t_n} \g_{t_n}^{-1})_{{t_n} \geqslant 0}$
est born\'ee, ce qui montre que $\O$ est un triangle, ce qui est
absurde.

\item Il  reste donc \`a traiter le premier cas. Il peut \^etre utile de remarquer que si on fait l'hypoth\`ese plus forte que l'action de $\G$ sur $\O$ est de covolume fini alors le corollaire \ref{corostrict} montre que la limite de $\widetilde{\lambda}_t$ en $+\infty$ qui est un point du segment $s$ devrait \^etre fix\'ee par un \'el\'ement parabolique, ce qui est absurde (lemme \ref{para}). Mais revenons, au cas g\'en\'eral, on note $\g$ un repr\'esentant de l'holonomie du bout $B$. La proposition \ref{fonda2}  montre qu'il faut distinguer quatre cas:
\begin{itemize}
\item L'\'el\'ement $\g$ est parabolique.

\item L'\'el\'ement $\g$ est quasi-hyperbolique ou hyperbolique avec $p^0_{\g} \notin \partial \O$ et $\Ax(\g) \subset \partial \O$.

\item L'\'el\'ement $\g$ est hyperbolique avec $p^0_{\g} \in \partial \O$ et l'un des axes secondaires de $\g$ est inclus dans $\partial \O$.
\end{itemize}

Dans les trois premiers cas, le lemme \ref{lemstrict}  montre qu'il existe un secteur $\F$ de $\g$ tel que la g\'eod\'esique $\widetilde{\lambda}$ est ultimement inclus dans $\F$. Le deuxi\`eme et troisi\`eme cas se traite de la m\^eme façon. On commence par le cas ou $\g$ est parabolique.

\begin{itemize}
    \item Si $\g$ est parabolique alors tout secteur de $\g$ ne poss\`ede qu'un point d'adh\'erence sur le bord de $\O$, \`a savoir le point fixe de $\g$. Par cons\'equent, $x_{\infty}$ est fix\'ee par $\g$, or, le lemme \ref{para}  montre que le point fixe d'un \'el\'ement parabolique ne peut-\^etre sur un segment non trivial du bord de $\O$. Ce cas ne peut donc pas se produire.

    \item Si $\g$ est quasi-hyperbolique ou hyperbolique avec $p^0_{\g} \notin \partial \O$ alors l'adh\'erence de tout secteur de $\g$ contient l'axe de $\g$. Par cons\'equent, le segment $s$ est l'axe de $\g$. C'est ce que l'on voulait montrer.

    \item Si $\g$ est hyperbolique avec $p^0_{\g} \in \partial \O$ alors d'apr\`es le lemme \ref{lemstrict}, la g\'eod\'esique $\widetilde{\lambda}$ est ultimement inclus dans la r\'eunion $\widetilde{\F}$ de $\F$ et de l'unique triangle inclus dans $\C$ d\'efinit par les points $p^+_{\g},p^-_{\g},p^0_{\g}$. Par cons\'equent, le segment $s$ est l'un des deux axes secondaires de $\g$.
\end{itemize}
\end{enumerate}
\end{proof}

\begin{coro}\label{strict}
Soient $\O$ un ouvert proprement convexe et un sous-groupe
discret sans torsion $\G$ qui pr\'eserve $\O$. On suppose que l'action de $\G$ sur
$\O$ est de covolume fini et que $\O$ n'est pas un triangle.
Alors, $\O$ est strictement convexe.
\end{coro}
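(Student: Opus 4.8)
Le plan est de d\'eduire directement ce corollaire du th\'eor\`eme \ref{strictabord} et de la caract\'erisation du volume fini fournie par le corollaire \ref{caract}. Comme l'action de $\G$ sur $\O$ est de covolume fini, le th\'eor\`eme \ref{typefini} montre d'abord que $\G$ est de type fini ; de plus, $\O$ \'etant \emph{ouvert}, le quotient $\Quo$ est une surface \emph{sans bord}, que l'on peut voir comme le cas particulier (\`a bord g\'eod\'esique vide) d'une surface projective \`a bord g\'eod\'esique. En posant $\C = \O$ (de sorte que le bord r\'eel $\partial_r \O$ est vide), toutes les hypoth\`eses du th\'eor\`eme \ref{strictabord} sont alors r\'eunies : le groupe $\G$ est discret, sans torsion (par hypoth\`ese), de type fini, il pr\'eserve $\C = \O$, le quotient $\Quo$ est une surface \`a bord g\'eod\'esique, et $\O$ n'est pas un triangle.

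Je raisonnerais ensuite par l'absurde. Supposons que $\O$ ne soit pas strictement convexe, c'est-\`a-dire que $\partial \O$ contienne un segment non trivial ; on peut alors en choisir un qui soit maximal, que l'on note $s$. Le th\'eor\`eme \ref{strictabord} fournit un \'el\'ement \emph{non parabolique} $\g \in \G$ qui pr\'eserve $s$ et qui correspond \`a un lacet \'el\'ementaire de la surface $\Quo$.

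Le point-cl\'e consiste alors \`a invoquer le corollaire \ref{caract} : la structure projective proprement convexe sur la surface sans bord $\Quo$ \'etant de volume fini, l'holonomie de \emph{tous} les lacets \'el\'ementaires de $\Quo$ est parabolique. En particulier, l'\'el\'ement $\g$ ci-dessus, qui correspond \`a un lacet \'el\'ementaire, devrait \^etre parabolique, ce qui contredit frontalement sa non-parabolicit\'e. Cette contradiction montre qu'aucun segment non trivial n'est contenu dans $\partial \O$, autrement dit que $\O$ est strictement convexe.

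La v\'eritable substance de l'argument ne r\'eside pas dans cette d\'eduction, qui est purement formelle, mais dans les deux r\'esultats qu'elle combine : le th\'eor\`eme \ref{strictabord} (stricte convexit\'e dans le cadre \`a bord), dont la preuve repose sur le th\'eor\`eme de Benz\'ecri \ref{ben2} pour exhiber le long de $s$ une famille $g_t$ ramenant $\O$ sur un triangle, et le corollaire \ref{caract} qui relie le volume fini \`a la parabolicit\'e de l'holonomie des bouts. Le seul point de vigilance, et donc le principal \emph{obstacle} de cette \'etape, est de v\'erifier que le cas du covolume fini ---~o\`u $\C = \O$ est ouvert, de bord r\'eel vide~--- entre bien dans le cadre du th\'eor\`eme \ref{strictabord} ; c'est imm\'ediat, puisqu'une surface sans bord est une surface \`a bord g\'eod\'esique vide, et c'est ce qui autorise le passage du \og cas \`a bord \fg\ au pr\'esent \'enonc\'e.
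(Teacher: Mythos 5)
Votre preuve est correcte et suit essentiellement la m\^eme d\'emarche que celle du papier : on applique le th\'eor\`eme \ref{strictabord} pour obtenir qu'un segment maximal non trivial du bord serait stabilis\'e par un \'el\'ement non parabolique correspondant \`a un lacet \'el\'ementaire, puis le corollaire \ref{caract} (parabolicit\'e de l'holonomie de tous les lacets \'el\'ementaires en volume fini) donne la contradiction. Votre v\'erification explicite des hypoth\`eses de \ref{strictabord} (type fini via le th\'eor\`eme \ref{typefini}, cas $\C=\O$ \`a bord g\'eod\'esique vide) est laiss\'ee implicite dans le papier, mais ce n'est qu'un compl\'ement de r\'edaction, pas une autre approche.
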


\begin{proof}
Tout segment maximal et non trivial du bord de $\O$ est stabilis\'e par un \'el\'ement non parabolique $\g \in \G$ (th\'eor\`eme \ref{strictabord}) qui correspond \`a un lacet \'el\'ementaire trac\'e sur $\Quo$. Mais le th\'eor\`eme \ref{caract} montre que l'holonomie de tout lacet \'el\'ementaire de $\Quo$ est parabolique. L'ouvert $\O$ est donc strictement convexe.
\end{proof}

\subsection{Dualit\'e}

Soit $S$ une surface sans bord. Nous allons d\'efinir une op\'eration de dualit\'e sur l'espace $\beta(S)$ et nous allons voir que cette op\'eration pr\'eserve le sous-espace $\beta_f(S)$.

\begin{defi}
Soit $\O$ un ouvert proprement convexe de $\P$. On note $\O^*$
l'ouvert convexe de $\Pd$ : $\O^* = \{ \R f \in \Pd \, | \,
\forall \, \R v \in \overline{\O}, \, f(v)\neq 0 \}$. C'est un
ouvert proprement convexe. On l'appelle \emph{le dual de $\O$}.
\end{defi}

Pour faire passer cette notion de dualit\'e au niveau des surfaces nous allons avoir besoin d'une application $\Aut(\O)$-\'equivariante $\theta_{\O} : \O \rightarrow \O^*$. On rappelle ici une construction dû \`a Vinberg d'une telle application.

Soit $\Cc$ un des deux c\^ones ouverts proprement convexe de $\R^3$
dont l'image dans $\P$ est $\O$. On va de nouveau utiliser la
fonction caract\'eristique de $\Cc$. On note $\Cc^* = \{f \in \R^{3*}\,
| \, \forall \, v \in \overline{\Cc}-\{ 0 \}, \, f(v) > 0 \}$ le
c\^one dual de $\Cc$. Son image dans $\Pd$ est $\O^*$. Pour tout $v
\in \Cc$, on d\'efinit $v^* \in \Cc^*$ par la formule suivante:

$$ v^* = \frac{\int_{\Cc^*} fe^{-f(v)}df}{\int_{\Cc^*} e^{-f(v)}df}$$

Le point $v^*$ est le centre de gravit\'e du convexe $\{ f \in \Cc^* \, | \,
f(v) = 3 \}$. On d\'efinit donc $x^* = \theta_{\O}(x)$ comme la
droite engendr\'e par l'image $v^*$ de n'importe quel g\'en\'erateur $v \in \Cc$
de la droite $x \in \O$.

\begin{rem}
Il est vrai que $\O^{**} = \O$ via l'identification naturel entre
un espace vectoriel et son bidual. Par contre, l'application
$\theta_{\O^*} \circ \theta_{\O} \neq Id$ en g\'en\'eral. On pourra
consulter \cite{Beno4} pour un contre-exemple.
\end{rem}

On obtient ainsi la d\'efinition suivante:

\begin{defi}
Soient $S$ une surface sans bord et un point base $x_0 \in S$. Notons $\pi_1$ le groupe fondamental de $S$ bas\'e en $x_0$. On munit $S$ d'une structure projective proprement convexe via une d\'eveloppante $d:\widetilde{S}\rightarrow \P$ et une holonomie $\rho:\pi_1 \rightarrow \s$. La surface duale de $S$, not\'e $S^*$ est la surface projective proprement convexe d\'efinit par la d\'eveloppante $d^*=\theta_{\O} \circ d:\widetilde{S}\rightarrow \Pd$ et l'holonomie $\rho^*$ est la repr\'esentation duale de la repr\'esentation $\rho$. En particulier, la surface projective proprement convexe $S^*$ s'identifie au quotient $\O^*/_{\G^t}$. On notera $\theta_S$ l'hom\'eomorphisme induit par $\theta_{\O}$ entre $S$ et $S^*$.
\end{defi}

\begin{theo}\label{dual}
Soit $S$ une surface sans bord. On munit $S$ d'une structure
projective convexe. Alors, $S$ est de volume fini si et seulement
si $S^*$ est de volume fini.
\end{theo}

\begin{proof}
Le th\'eor\`eme \ref{caract}  montre qu'une surface projective proprement convexe est de volume fini si et seulement si l'holonomie de chaque bout est parabolique. Soit $\g$ un lacet qui fait le tour d'un bout de $S$. Le lacet $\theta_S(\g)$ fait le tour d'un bout de $S^*$. L'holonomie $\rho(\g)$ de $\g$ est parabolique si et seulement si l'holonomie $\rho^*(\g) = ^t\g^{-1}$ de $\theta_S(\g)$ est parabolique. Le fait d'\^etre de volume fini est donc stable par dualit\'e.
\end{proof}

Le fait suivant est tr\`es classique.

\begin{fait}\label{strictc1}
Soit $\O$ un ouvert proprement convexe de $\P$ alors le bord $\partial \O$ de $\O$ est $C^1$ si et seulement si $\O^*$ est strictement convexe.
\end{fait}

\begin{theo}\label{c1}
Soient $\O$ un ouvert proprement convexe de $\P$ et un sous-groupe discret sans torsion $\G$ non virtuellement ab\'elien de $\s$ qui pr\'eserve $\O$. Si l'action de $\G$ sur $\O$ est de covolume fini alors l'ouvert $\O$ est \`a bord $C^1$.
\end{theo}

\begin{proof}
L'action de $\G$ sur $\O$ est de covolume fini. La proposition \ref{dual}  montre que l'action dual du groupe $\G$ sur l'ouvert dual $\O^*$ de $\O$ est de covolume fini. Par cons\'equent, le corollaire \ref{strict} affirme que l'ouvert $\O^*$ est strictement convexe. Le fait \ref{strictc1}  permet de conclure que le bord
$\partial \O$ de $\O$ est $C^1$.
\end{proof}

\subsection{Caract\'erisation de la finitude du volume en termes d'ensemble limite}

\begin{defi}
Lorsque $\G$ est un sous-groupe discret irr\'eductible de $\s$ qui préserve un ouvert proprement convexe, on peut d\'efinir $\Lambda_{\G}$ \emph{l'ensemble limite} de $\G$, c'est le plus petit
ferm\'e invariant non vide de $\P$ (\cite{Beno5}).
\end{defi}

\begin{rem}
Le corollaire \ref{irre}  montre que l'ensemble limite $\Lambda_{\G}$ est bien d\'efini d\`es que $\G$ n'est pas virtuellement ab\'elien. De plus, l'ensemble limite est l'adh\'erence de l'ensemble
des points fixes attractifs des \'el\'ements hyperboliques de $\G$ (\cite{Beno5}). Il est clair que $\Lambda_{\G} \subset \partial \O$.
\end{rem}

\begin{theo}\label{enslimite}
Soient $\O$ un ouvert proprement convexe et $\G$ un sous-groupe discret non virtuellement ab\'elien de $\s$ qui pr\'eserve $\O$. Alors, l'action de $\G$ sur $\O$ est de covolume fini si et seulement si $\G$ est de type fini et $\Lambda_{\G} =\partial \O$.
\end{theo}

\begin{proof}
Remarquons que $\O$ n'est pas un triangle puisque $\G$ n'est pas virtuellement ab\'elien. Commençons par supposer que $\Lambda_{\G} \neq \partial \O$ et
montrons que $\mu(\Quo) = \infty$. L'int\'erieur $K$ de l'enveloppe convexe de
$\Lambda_{\G}$  est invariante par $\G$ et $\O - K$ est non vide
puisque $\O$ est strictement convexe (corollaire \ref{strict}). Par
cons\'equent, les composantes connexes de $\O - K$ sont permut\'ees
par $\G$ et leur stabilisateur est soit trivial, soit engendr\'e par
un \'el\'ement hyperbolique. Dans tous les cas, la proposition \ref{mubord} montre que $\mu(\Quo)=\infty$.

Supposons \`a pr\'esent que $\mu(\Quo)=\infty$ et $\G$ est de type fini. On peut supposer que $\G$ est sans torsion puisque $\G$ est de type fini. Le corollaire \ref{caract}  montre que l'holonomie de l'un des bouts de la
surface sans bord et de type fini $\Quo$ est hyperbolique ou
quasi-hyperbolique. Par cons\'equent, si on consid\`ere $c$ un lacet autour de l'un de ces bouts, la r\'eunion $E$ des relev\'es de la composante \'el\'ementaire associ\'ee \`a $c$ est un ferm\'e de $\O$ pr\'eserv\'e par $\G$. La partie $\overline{(\partial \O - \overline{E^{\P}})^{\P}}$ est un ferm\'e $\G$-invariant. Il est diff\'erent de $\partial \O$ car $\Hol(c)$ n'est pas parabolique. Donc $\Lambda_{\G} \neq \partial \O$.
\end{proof}

\begin{rem}
L'hypoth\`ese $\G$ type fini est essentielle comme le montre la construction classique suivante. Soient $\O$ un ouvert proprement convexe et $\G$ un sous-groupe discret non virtuellement ab\'elien et sans torsion de $\s$ qui pr\'eserve $\O$ tel que l'action de $\G$ sur $\O$ est de covolume fini. La proposition \ref{typefini}  montre que $\G$ est ou un groupe libre non ab\'elien de type fini ou isomorphe au groupe fondamental d'une surface compacte de genre sup\'erieur ou \'egale \`a 2. Par cons\'equent, le groupe d\'eriv\'e $[\G,\G]$ de $\G$  est un sous-groupe d'indice infini de $\G$. Son action sur $\O$ n'est donc pas de covolume fini. Mais, l'ensemble limite $\Lambda_{[\G,\G]}$ est $\G$-invariant car $[\G,\G]$ est distingu\'e dans $\G$, par cons\'equent  $\Lambda_{[\G,\G]} =\Lambda_{\G}$. Ceci ne contredit pas notre th\'eor\`eme puisqu'il est bien connu que $[\G,\G]$ n'est pas de type fini.
\end{rem}

\begin{prop}\label{enslimiteabord1}
Soit $\G$ un sous-groupe discret non virtuellement ab\'elien, sans torsion et de type fini de $\s$ qui
pr\'eserve un ouvert proprement convexe de $\O$. On consid\`ere l'enveloppe convexe $\C$ dans $\overline{\O} - \Lambda_{\G}$ de $\Lambda_{\G}$ l'ensemble limite de $\G$. La surface $\Quc$ est de volume fini.
\end{prop}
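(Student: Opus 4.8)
The plan is to exhibit $S=\Quc$ as a finite-type projective surface with geodesic boundary and then to invoke the volume criterion of Theorem~\ref{le resultat}: it suffices to check that $S$ is of finite type, is not a cylinder, and that the holonomy of every loop making the tour of an end is parabolic. First I would fix the setup. Since $\G$ is not virtually abelian it is irreducible (Proposition~\ref{irreducgold}), so $\Lambda_{\G}$ is well defined and $\C$, the convex hull of $\Lambda_{\G}$ inside $\overline{\O}-\Lambda_{\G}$, is properly convex and is not a triangle. As $\G$ is torsion free it acts freely and properly on $\C$, so $S=\Quc$ is a projective surface whose geodesic boundary is the projection of the real boundary $\partial_r\C$; write $\O'=\overset{\circ}{\C}$ for the interior, a properly convex open set on which $\G$ acts. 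Because $\C$ is convex it is contractible, hence $\pi_1(S)\cong\G$ is finitely generated; a surface with finitely generated fundamental group is of finite type, and $S$ is not a cylinder because $\G$ is not virtually abelian.

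It then remains to show that the holonomy $\g=\Hol(c)$ of any loop $c$ making the tour of an end of $S$ is parabolic. By Proposition~\ref{classi} and torsion-freeness, $\g$ is hyperbolic, quasi-hyperbolic or parabolic (elliptic is excluded by torsion-freeness, planar by $\C$ not being a triangle), and I would rule out the two non-parabolic families. The governing observation is that the endpoints of the axis of any non-parabolic element already lie in $\Lambda_{\G}\subset\overline{\C}$: for a hyperbolic element these are $p^+_{\g},p^-_{\g}$, and for a quasi-hyperbolic element they are its two fixed points, each attractive for $\g^{\pm1}$, hence limit points by minimality of $\Lambda_{\G}$. Thus $\overline{\Ax(\g)}\subset\overline{\C}$. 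If $\Ax(\g)\subset\partial\O'$ --- automatic for quasi-hyperbolic elements (Proposition~\ref{quasihyp}) and for hyperbolic elements with boundary axis --- then $\Ax(\g)\subset\partial_r\C$, and Propositions~\ref{fonda1} and~\ref{fonda2} show that $c$ is freely homotopic to a boundary component of $S$, so it does not make the tour of an end. If instead $\Ax(\g)\subset\O'$ and $p^0_{\g}\notin\partial\C$, then $c$ is not elementary (Proposition~\ref{fonda1}), so again it cannot make the tour of an end.

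The main obstacle is the remaining configuration, $\g$ hyperbolic with $\Ax(\g)\subset\O'$ and $p^0_{\g}\in\partial\C$, which a priori produces an elementary tour-of-an-end loop of infinite volume (case~4 of Proposition~\ref{pic}). Here I would apply Proposition~\ref{hyp}, in the case $p^0_{\g}\in\overline{\O'}$ and $\Ax(\g)\subset\O'$, to the convex open set $\O'$: it forces both secondary segments $[p^+_{\g},p^0_{\g}]$ and $[p^-_{\g},p^0_{\g}]$ to lie in $\partial\O'$, with $p^0_{\g}$ a non-$\Cc^1$ corner. Their endpoints $p^{\pm}_{\g}$ lie in $\Lambda_{\G}$ while their interiors lie in $\partial_r\C$, so both secondary segments are geodesic boundary segments of $\C$; but two distinct geodesic boundary segments sharing the single ideal point $p^0_{\g}$ is exactly what cannot occur when $\Quc$ is a surface --- this is the obstruction already isolated inside the proof of Proposition~\ref{fonda2}. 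Hence $p^0_{\g}\notin\partial\C$ and this configuration does not arise. It follows that every tour-of-an-end loop of $S$ has parabolic holonomy, so Theorem~\ref{le resultat} gives $\mu_S(S)<\infty$, the finite-volume collars of the boundary components and the compact core contributing only finite volume. As a consistency check, Theorem~\ref{strictabord} matches each maximal boundary segment of $\C$ with a non-parabolic elementary loop, that is, precisely with the geodesic boundary of $S$, so the end-by-end accounting is complete.
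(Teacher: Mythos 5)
Two of your three non-parabolic cases are treated soundly, and in the same spirit as the paper: when $\Ax(\g)\subset\partial\O'$ (with $\O'=\overset{\circ}{\C}$) the axis lies in $\C\cap\partial\O'$ because its endpoints belong to $\Lambda_{\G}$, so $c$ is parallel to a boundary circle rather than a tour-of-end loop; and when $\Ax(\g)\subset\O'$ with $p^0_{\g}\in\partial\C$, both secondary axes would be geodesic boundary segments of $\C$, which is the non-surface obstruction used inside the proof of Proposition \ref{fonda2}. The genuine gap is the remaining case, which you dispatch in one line: you claim that if $\g$ is hyperbolic with $\Ax(\g)\subset\O'$ and $p^0_{\g}\notin\partial\C$, then Proposition \ref{fonda1} makes $c$ non-elementary. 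Proposition \ref{fonda1} states the implication in the other direction only (non-elementary $\Rightarrow$ this holonomy type), and its converse is false: for a convex-cocompact Fuchsian group (take $\O$ an ellipse and $\G$ a Schottky group), every loop around a funnel end is a simple, elementary, tour-of-end loop whose holonomy is hyperbolic with interior axis and with $p^0_{\g}$ outside $\overline{\O}$. Since your argument in this case uses no property of $\C$ beyond proper convexity, it would equally ``prove'' that funnel ends never exist, which is absurd. This is not a peripheral case: funnel-type ends are exactly what passing to the convex hull of the limit set is supposed to eliminate, and no valid step of your proposal excludes them.

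To close this case you must actually use the convex-hull hypothesis. The paper does so indirectly, by working on the original quotient $\Quo$: Theorem \ref{strictabord} together with Proposition \ref{fonda1} identifies $\C$ as $\O'$ with the orbits of the axes of the hyperbolic and quasi-hyperbolic holonomies of the ends of $\Quo$ adjoined, so each non-parabolic end of $\Quo$ becomes a boundary circle of $\Quc$, every end of $\Quc$ then has parabolic holonomy, and Proposition \ref{fonda2} concludes. If you prefer to argue directly on $\Quc$, one way is the following: a funnel end would give a $\g$-invariant lift $\tilde{c}$ of $c$ with endpoints $p^{\pm}_{\g}$, bounding a region $\tilde{E}$ (covering the elementary component) whose closure meets $\partial\O'$ in an arc $\alpha$ from $p^{+}_{\g}$ to $p^{-}_{\g}$ distinct from the closed axis. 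Since $\overline{\O'}$ is the closed convex hull of $\Lambda_{\G}$, either $\alpha$ is a straight segment --- impossible, because the only segment of $\overline{\O'}$ with endpoints $p^{\pm}_{\g}$ is the closed axis, which lies in $\O'$ --- or the interior of $\alpha$ contains an extreme point of $\overline{\O'}$, hence a point $q\in\Lambda_{\G}$. But $\Lambda_{\G}$ lies in the accumulation set of every $\G$-orbit, so the orbit of a lift of a base point chosen outside the elementary component enters every neighbourhood of $q$, hence enters $\tilde{E}$; projecting to the surface, that base point would lie in the elementary component, a contradiction. Without an argument of this kind, your case analysis does not close.
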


\begin{proof}
Par d\'efinition, l'int\'erieur $\U$ de $\C$ est le plus petit ouvert convexe de $\P$ pr\'eserv\'e par $\G$. La partie $\C$ est le convexe obtenu en ajoutant \`a $\U$ les axes des \'el\'ements hyperbolique et quasi-hyperbolique de $\G$ qui repr\'esente un lacet qui fait le tour d'un bout de $\Quo$ d'holonomie hyperbolique ou quasi-hyperbolique (théorème \ref{strictabord} et proposition \ref{fonda1}). Par cons\'equent, tous les bouts de la surface $\Quc$ ont une holonomie parabolique et l'holonomie des composantes connexes du bord de $\Quc$ est hyperbolique ou quasi-hyperbolique. La proposition \ref{fonda2} montre que la surface $\Quc$ est de volume fini.
\end{proof}

%\begin{prop}
%Soit $\G$ un sous-groupe discret non virtuellement ab\'elien, sans torsion et de type fini de $\s$ qui
%pr\'eserve une partie proprement convexe de $\C$, on suppose que le quotient $\Quc$ est de volume fini. Tout segment inclus dans
%
%\end{prop}

\subsection{Unicit\'e de l'ouvert $\O$ lorsque le volume est fini}

\begin{prop}\label{unique}
Soit $\G$ un sous-groupe discret de $\s$, on suppose qu'il existe
un ouvert proprement convexe $\O$ tel que $\mu(\Quo) < \infty$ et
que $\O$ n'est pas un triangle. Alors, $\O$ est l'unique ouvert convexe de
$\P$ sur lequel $\G$ agit proprement.
\end{prop}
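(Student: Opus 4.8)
The plan is to reduce the uniqueness statement to the intrinsic description of the limit set, together with the equality $\Lambda_{\G}=\partial\O$. First I would collect what the hypotheses grant for free. Since $\O$ is not a triangle and $\mu(\Quo)<\infty$, Corollary~\ref{irre} gives that $\G$ is irreducible, and $\G$ is not virtually abelian (a virtually abelian group of finite covolume would force $\O$ to be a triangle). Hence the hypotheses of Theorem~\ref{enslimite} are met, and the finiteness of the covolume yields both that $\G$ is finitely generated and that $\Lambda_{\G}=\partial\O$. The decisive remark is that $\Lambda_{\G}$ is intrinsic to $\G$: it is the smallest nonempty closed $\G$-invariant subset of $\P$, a notion that makes no reference to the particular convex set $\O$.

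Let $\O'$ now be an arbitrary convex open subset of $\P$ on which $\G$ acts properly, so that $\G\subset\Aut(\O')$. The first step is to prove that $\O'$ is properly convex. Let $\C'$ be one of the two convex open cones of $\R^3$ lying over $\O'$ (i.e. a connected component of $\pi^{-1}(\O')$, with $\pi:\R^3\setminus\{0\}\to\P$). If $\O'$ were not properly convex, then the lineality space $L=\{\,v\in\R^3 : v\in\overline{\C'}\text{ and }-v\in\overline{\C'}\,\}$ would be a nonzero subspace, and from $\C'=\C'+L$ it is canonically attached to $\C'$; since $\G$ permutes the two cones over $\O'$, the subspace $L$ is $\G$-invariant. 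If $L=\R^3$ then $\O'=\P$, on which the infinite discrete group $\G$ cannot act properly because $\P$ is compact; and if $L$ is a proper nonzero subspace, its invariance contradicts the irreducibility of $\G$. Therefore $\O'$ is properly convex.

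With $\O'$ properly convex, $\partial\O'$ is a nonempty proper closed subset of $\P$, and it is $\G$-invariant since $\G$ preserves $\O'$. By minimality of the limit set we get $\Lambda_{\G}\subseteq\partial\O'$, and as $\Lambda_{\G}=\partial\O$ this reads $\partial\O\subseteq\partial\O'$. To conclude I would invoke a purely topological fact: the boundary of a properly convex open subset of $\P$ is a topological circle (in an affine chart containing its closure it is the boundary of a bounded convex body). A subset of a circle that is itself homeomorphic to a circle is the whole circle: the inclusion is a continuous injection of the compact space $\partial\O\cong\mathbb{S}^1$ into $\partial\O'\cong\mathbb{S}^1$, hence a homeomorphism onto its image, and a proper compact connected subset of $\mathbb{S}^1$ is an arc, not a circle. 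Thus $\partial\O=\partial\O'$; since each of $\O$ and $\O'$ is the orientable (disk) component of the complement of this common convex curve, we obtain $\O=\O'$.

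The main obstacle is the first step, namely guaranteeing that an a priori arbitrary convex open invariant set is properly convex, so that both the minimality of $\Lambda_{\G}$ and the circle argument can be applied; this is exactly where irreducibility and properness of the action are used. Everything else follows formally from Theorem~\ref{enslimite} and the definition of $\Lambda_{\G}$ as the smallest closed invariant set. It is worth noting that this argument requires neither the strict convexity of $\O$ (Corollary~\ref{strict}) nor the $C^1$ regularity of $\partial\O$ (Theorem~\ref{c1}): only that $\O$ is properly convex, with boundary a topological circle, enters.
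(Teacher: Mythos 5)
Your second and third steps are correct, and they give a clean alternative to the second half of the paper's argument: applying the minimality of $\Lambda_{\G}$ to the closed invariant set $\partial \O'$, and then the circle-in-circle argument, does yield $\O'=\O$ \emph{once $\O'$ is known to be properly convex}. The genuine gap is in your first step. You assert that $\pi^{-1}(\O')$ consists of two convex open cones. With the paper's definition of convexity (the intersection with every projective line is connected), this is false in general: it holds exactly when $\O'$ misses at least one projective line, i.e.\ when $\O'$ is an affinely convex subset of some affine chart. There are convex open sets, in the paper's sense, that meet every line, and the decisive one is $\O'=\P\setminus\overline{\O}$, the open M\"obius band outside $\overline{\O}$: every line meets it either in a full projective line or in the complement of a closed segment, so it is convex; it is open and $\G$-invariant; and its preimage in $\R^3\setminus\{0\}$ is connected and non-convex, so there are no ``two convex cones lying over $\O'$'' and no lineality space to speak of. (Even your case $\O'=\P$ does not fit the two-cone framework, since its preimage is connected; that it is handled correctly by compactness is incidental.)

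Your proof never rules out a proper action of $\G$ on this M\"obius band, or on a $\G$-invariant convex open subset of it: properness of the action on $\O'$ is invoked only once, to exclude $\O'=\P$. But excluding these outside regions is precisely the hard part of the proposition, and it requires a dynamical input. The paper supplies it by showing that the fixed points $p^0_{\g}$ of the hyperbolic elements of $\G$ are dense in $\P\setminus\O$, using Benoist's density of pairs of attracting/repelling fixed points in $\Lambda_{\G}\times\Lambda_{\G}$, Proposition \ref{hyp} (which identifies $p^0_{\g}$ as the intersection of the tangents at $p^{+}_{\g}$ and $p^{-}_{\g}$), Theorem \ref{enslimite}, and crucially the $C^1$ regularity of $\partial\O$ from Theorem \ref{c1}; since a point fixed by an infinite-order element cannot lie in a set on which $\G$ acts properly, \emph{any} open set with a proper $\G$-action, convex or not, must avoid this dense subset of $\P\setminus\O$ and is therefore contained in $\O$. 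Your closing remark that neither Corollaire \ref{strict} nor Theorem \ref{c1} is needed is thus exactly where the argument goes astray: they are indeed not needed for your steps 2--3, but they (or some substitute) are what rules out invariant convex open sets lying outside $\overline{\O}$ --- a case your step 1 silently assumes away.
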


\begin{proof}
Soit $F= \overline{\{x \in \P \, | \, \exists \g \in \G \textrm{
tel que } \g x = x \}}$, commençons par montrer que $F = \P - \O$.
On sait d'apr\`es \cite{Beno5} que pour tout couple de points $(x^+,x^-) \in \Lambda_{\G}
\times \Lambda_{\G}$, il existe une suite d'\'el\'ements hyperboliques $\g_n \in \G$ tel que
$\underset{n \rightarrow \infty}{\lim} p^+_{\g_n} = x^+$ et
$\underset{n \rightarrow \infty}{\lim} p^-_{\g_n} = x^-$. La proposition \ref{hyp} montre que le point $p^0_{\g_n}$ est l'intersection des tangentes \`a $\partial \O$ en $p^+_{\g_n}$ et $p^-_{\g_n}$.
Le th\'eor\`eme \ref{enslimite} montre que $\Lambda_{\G} = \partial \O$. Par cons\'equent, comme le bord $\partial \O$ de $\O$ est $C^1$ (proposition \ref{c1}), l'ensemble $\{ p^0_{\g} \, | \, \g \in \G \textrm{ hyperbolique}
\}$ est dense dans $\P - \O$. Par cons\'equent, si $\O'$ est un
ouvert sur lequel $\G$ agit proprement alors $\O' \subset \O$.
Il  reste \`a remarquer que l'enveloppe convexe de toute orbite d'un point de $\O$ est $\O$ lui-m\^eme. L'adh\'erence de toute orbite d'un point quelconque de $\O$ contient l'ensemble limite $\Lambda_{\G}$, or $\Lambda_{\G} = \partial \O$. Ainsi, le seul ouvert convexe $\G$-invariant de $\P$ est $\O$.
\end{proof}

Cette proposition  montre que l'holonomie d'une structure projective proprement convexe de volume fini sur une surface sans bord caract\'erise cette structure. On obtient facilement le corollaire suivant:

\begin{coro}\label{pourlechap3}
Soient $S$ une surface de type fini \underline{sans bord}  de caractéristique d'Euler strictement négative, et $\rho$ une repr\'esentation du groupe fondamental de $S$, il existe au plus une structure projective proprement convexe de volume fini sur la surface $S$ dont l'holonomie est $\rho$.
\end{coro}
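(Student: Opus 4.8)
The plan is to deduce uniqueness directly from Proposition \ref{unique}: a finite-volume properly convex structure is entirely recorded by its developing image, and that image is rigidly determined by the holonomy group, so two structures with the same holonomy must share the same image and hence be isotopic.

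First I would unwind the definitions. Suppose $S$ carries two properly convex projective structures of finite volume, both with holonomy $\rho$. Each provides a developing map that is a $\pi_1(S)$-equivariant diffeomorphism from $\widetilde{S}$ onto a properly convex open set, say $\O_1$ and $\O_2$. Since the developing map is a diffeomorphism onto its image, $\rho$ is faithful and its image $\G=\rho(\pi_1(S))$ is a discrete subgroup of $\s$; in both cases $\G$ preserves the corresponding open set with $\mu(\O_i/_{\G})<\infty$. As $\chi(S)<0$, the group $\pi_1(S)$ is not virtually ab\'elien, so neither $\O_1$ nor $\O_2$ can be a triangle, the automorphism group of a triangle being virtually ab\'elien (which would force $\chi(S)=0$).

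Next I would invoke Proposition \ref{unique} applied to $\O_1$: since $\G$ is discrete, $\mu(\O_1/_{\G})<\infty$, and $\O_1$ is not a triangle, the set $\O_1$ is the \emph{unique} convex open subset of $\P$ on which $\G$ acts properly. Now $\G$ also acts properly on $\O_2$, being a discrete subgroup of $\Aut(\O_2)$, which acts properly on $\O_2$. Uniqueness therefore forces $\O_2=\O_1=:\O$ as subsets of $\P$.

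Finally I would pass from equality of developing images back to the surface. The two developing maps $\dev_1,\dev_2\colon\widetilde{S}\to\O$ are $\rho$-equivariant diffeomorphisms onto the same $\O$, so $\dev_2^{-1}\circ\dev_1$ is a $\pi_1(S)$-equivariant self-diffeomorphism of $\widetilde{S}$ and descends to a diffeomorphism $h\colon S\to S$ inducing the identity on $\pi_1(S)$. Since $S$ is aspherical with $\chi(S)<0$, such an $h$ is homotopic, hence isotopic, to the identity, so the two marked structures coincide in $\beta_f(S)$. The main obstacle is precisely this last step: verifying that equality of the developing images together with equivariance under the same $\rho$ yields \emph{isotopy} of the marked structures rather than mere projective equivalence; everything preceding it is a direct application of Proposition \ref{unique}.
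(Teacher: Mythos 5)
Your proposal is correct and follows the paper's own route: the paper derives this corollary directly from Proposition \ref{unique} ("on obtient facilement le corollaire"), exactly as you do — discreteness of $\rho(\pi_1(S))$, exclusion of the triangle via $\chi(S)<0$, uniqueness of the invariant convex forcing $\O_1=\O_2$, then passage from equality of developing images to isotopy of marked structures. You have merely made explicit the final marking/isotopy step that the paper leaves to the reader, and you handle it correctly.
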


\begin{rem}
Ce th\'eor\`eme est faux pour les surfaces \`a bord. En effet, consid\'erons une surface \`a bord $S$ de type fini et donnons nous une structure projective proprement convexe de volume fini sur $S$. On note $\C$ la partie proprement convexe de $\P$ donn\'e par la d\'eveloppante de cette structure et $\O$ l'int\'erieur de $\C$. Consid\'erons un lacet $c$ homotope \`a une composante connexe $L$ du bord, on a trois cas \`a distinguer (proposition \ref{fonda1}):

\begin{itemize}
\item L'\'el\'ement $\Hol(c)$ est quasi-hyperbolique,

\item L'\'el\'ement $\Hol(c)$ est hyperbolique et $\Ax(\Hol(c)) \subset \partial \C$,

\item L'\'el\'ement $\Hol(c)$ est hyperbolique et $\Ax(\Hol(c)) \subset \O$.
\end{itemize}

Tout relev\'e du bord $L$ est un segment inclus dans $\partial \C$ pr\'eserv\'e par un conjugu\'e de $\Hol(c)$. Par cons\'equent, dans les deux premiers cas, le bord $L$ de $S$ est la projection de l'axe principal de $\Hol(c)$ sur la surface $S$. Et, dans le troisi\`eme cas, le bord $L$ est la projection de l'un des deux axes secondaires de $\Hol(c)$.

Par cons\'equent, \'etant donn\'e une structure projective proprement convexes de volume fini sur une surface \`a bord $S$. Notons $n_h$ le nombre de composantes connexes du bord de $S$ dont l'holonomie est hyperbolique, le paragraphe pr\'ec\'edent montre que l'on peut construire au moins $3^{n_h}$ structures projectives proprement convexes de volume fini sur la surface $S$ qui ont la m\^eme holonomie que $S$. Et, le th\'eor\`eme \ref{fonda2} montre qu'il n'y en a pas d'autres.

Pour r\'esoudre ce probl\`eme dans le but d'\'etudier l'espace des modules des structures projectives proprement convexe de volume fini sur les surfaces \`a bord, on introduit la notion de surface \`a bord g\'eod\'esique principal.
\end{rem}

\begin{defi}
Une structure projective proprement convexe sur une surface \`a bord $S$ est dite \`a \emph{bord g\'eod\'esique principal} lorsqu'elle est \`a bord g\'eod\'esique et toute composante connexe $L$ du bord de $S$ qui a une holonomie hyperbolique est la projection de l'axe principal de cette holonomie.
\end{defi}

La proposition suivante est \`a pr\'esent \'evidente.

\begin{prop}
Soient $S$ une surface de type fini de caractéristique d'Euler strictement négative, et $\rho$ une repr\'esentation du groupe fondamental de $S$, il existe au plus une structure projective proprement convexe de volume fini \`a bord g\'eod\'esique principal sur la surface $S$ dont l'holonomie est $\rho$.
\end{prop}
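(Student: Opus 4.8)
Le plan est de montrer que l'holonomie $\rho$ d\'etermine enti\`erement le convexe d\'evelopp\'e par la structure, l'unicit\'e \`a isotopie pr\`es s'en d\'eduisant alors comme dans le Corollaire \ref{pourlechap3}. Notons $\G$ l'image de $\rho$. Comme $S$ est de caract\'eristique d'Euler strictement n\'egative, $\G$ n'est pas virtuellement ab\'elien, donc est irr\'eductible (Corollaire \ref{irre}) et son ensemble limite $\Lambda_{\G}$ est bien d\'efini. Le fait d\'ecisif est que $\Lambda_{\G}$ ne d\'epend que de $\G \subset \s$, et non du convexe choisi, puisque c'est l'adh\'erence des points fixes attractifs des \'el\'ements hyperboliques de $\G$ : c'est donc un invariant de la seule repr\'esentation $\rho$.

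Je me donnerais une structure projective proprement convexe de volume fini et \`a bord g\'eod\'esique principal d'holonomie $\rho$, et noterais $\C$ le convexe d\'evelopp\'e, $\O$ son int\'erieur. L'objectif est d'identifier $\C$ \`a l'enveloppe convexe de $\Lambda_{\G}$, qui elle ne d\'epend que de $\rho$. L'inclusion de cette enveloppe convexe dans $\C$ est imm\'ediate, car $\Lambda_{\G} \subset \partial \O$ et $\C$ est convexe. Pour la r\'eciproque, je lirais le bord de $\C$ : la finitude du volume force l'holonomie de tout lacet faisant le tour d'un bout de $\overset{\circ}{S}$ \`a \^etre parabolique (Proposition \ref{fonda2}), de point fixe dans $\Lambda_{\G}$, tandis que la m\^eme Proposition \ref{fonda2}, jointe \`a l'hypoth\`ese de bord g\'eod\'esique principal, assure que chaque composante connexe du bord de $S$ se d\'eveloppe sur l'axe principal $\Ax(\g)$ de son holonomie $\g$, lequel est l'un des segments bordant l'enveloppe convexe de $\Lambda_{\G}$ (Proposition \ref{enslimiteabord1}). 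Ces points paraboliques et ces segments bordent exactement l'enveloppe convexe de $\Lambda_{\G}$, et l'on obtient que $\C$ est l'enveloppe convexe de $\Lambda_{\G}$.

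L'\'etape d\'elicate, et la seule o\`u l'hypoth\`ese de bord g\'eod\'esique principal intervienne, est cette derni\`ere inclusion de $\C$ dans l'enveloppe convexe de $\Lambda_{\G}$. Sans cette hypoth\`ese, la remarque qui pr\'ec\`ede montre qu'une composante du bord dont l'holonomie $\g$ est hyperbolique avec $p^0_{\g} \in \partial \O$ pourrait se d\'evelopper sur un axe secondaire, ce qui agrandit $\C$ du triangle de sommets $p^+_{\g}, p^-_{\g}, p^0_{\g}$ : on retrouve ainsi les $3^{n_h}$ structures distinctes de m\^eme holonomie. L'hypoth\`ese de bord principal interdit ces triangles suppl\'ementaires et \'epingle donc $\C$ sur l'unique convexe, l'enveloppe convexe de $\Lambda_{\G}$, manifestement le m\^eme pour deux telles structures.

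Je conclurais comme dans le cas sans bord : deux structures de volume fini \`a bord g\'eod\'esique principal d'holonomie $\rho$ ayant m\^eme convexe d\'evelopp\'e et m\^eme holonomie, la d\'eveloppante \'etant un diff\'eomorphisme $\G$-\'equivariant de $\widetilde{S}$ sur $\C$ unique \`a composition pr\`es par le rel\`evement d'un diff\'eomorphisme de $S$ isotope \`a l'identit\'e, elles d\'efinissent le m\^eme point de l'espace des structures marqu\'ees. Le seul pr\'ealable est de se ramener \`a $\G$ sans torsion par le lemme de Selberg, ce qui est inoffensif puisque la torsion ne modifie ni $\Lambda_{\G}$ ni le convexe d\'evelopp\'e.
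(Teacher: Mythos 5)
Your proof is correct in substance, and it supplies an argument where the paper gives essentially none: the paper declares the proposition \emph{\'evidente} after the remark that classifies the finite-volume structures sharing the holonomy $\rho$ into at most $3^{n_h}$ families, one for each choice of principal or secondary axis at every boundary component with hyperbolic holonomy, the principal-boundary condition then fixing the choice. Your route is different: you pin the developed convex $\C$ to the convex hull of the limit set $\Lambda_{\G}$, an object depending only on $\rho$, in the spirit of Propositions \ref{unique} and \ref{enslimiteabord1}. This is a legitimate and more self-contained implementation of what the paper leaves implicit: the remark's claim that there are no structures beyond the $3^{n_h}$ enumerated ones is itself unproved there, and identifying $\C$ with the hull of $\Lambda_{\G}$ is exactly what is needed to justify it; in exchange, the paper's presentation is shorter.

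One step must however be completed, and it is the crux. From (i) the ends of $S$ have parabolic holonomy (Proposition \ref{fonda2}) and (ii) $\partial S$ develops onto principal axes, you conclude that these data bound \emph{exactly} the hull, hence that $\C$ equals the hull. As written this is not yet a proof: a priori $\O$ could strictly contain the interior $\U$ of the hull while (i) and (ii) still hold, so the excess must be ruled out. The missing argument is short. By the proof of Proposition \ref{enslimiteabord1} (via Theorem \ref{strictabord} and Proposition \ref{fonda1}), the non-trivial maximal segments of $\partial\U$ are precisely the axes $\Ax(\g)$ of the holonomies of the boundary components; under the principal-boundary hypothesis each such axis lies in $\partial_r\C \subset \partial\O$, and $\Lambda_{\G}\subset\partial\O$, so $\partial\U\cap\O=\varnothing$. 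Hence $\U$ is open and closed in the connected set $\O$, so $\O=\U$, and $\C=\O\cup\partial_r\C$ is the hull. (The same case analysis is what really justifies your ``immediate'' inclusion of the hull in $\C$; convexity of $\C$ alone only gives containment in $\overline{\C}$.) Minor repairs: Corollaire \ref{irre} assumes $\mu(\Quo)<\infty$, which fails for a bordered surface since collars of the geodesic boundary have infinite volume; invoke Goldman's Proposition \ref{irreducgold} instead, which only needs $\G$ not virtually abelian. The parabolicity statement should concern the ends of $S$, not of $\overset{\circ}{S}$: the ends of $\overset{\circ}{S}$ created by $\partial S$ are exactly the non-parabolic ones. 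Finally, Selberg's lemma is unnecessary and even hazardous here (passing to a finite-index subgroup changes the surface); the torsion-freeness needed for Proposition \ref{enslimiteabord1} is automatic, since $\G\cong\pi_1(S)$ acts freely on $\C$.
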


\subsection{Discr\'etude du groupe $\Aut(\O)$}

\begin{theo}\label{alter}
Soient $\G$ un sous-groupe discret de $\s$ et $\O$ un ouvert proprement convexe tel que $\mu(\Quo) < \infty$. On suppose que $\O$ n'est pas un triangle alors on a l'alternative
\underline{exclusive} suivante:
\begin{itemize}
\item L'adh\'erence de Zariski $\overline{\G^Z}$ de $\G$ est
conjugu\'ee au groupe $\mathrm{SO}_{2,1}(\R)$ et $\O$ est un
ellipsoïde et $\Aut(\O) = \overline{\G^Z}$.

\item Le groupe $\G$ est Zariski dense dans $\s$ et $\Aut(\O)$ est un
sous-groupe discret de $\s$. En particulier, $\G$ est d'indice
fini dans $\Aut(\O)$.
\end{itemize}
\end{theo}

\begin{proof}
Supposons que l'adh\'erence de Zariski de $\G$ est
conjugu\'ee au groupe $\mathrm{SO}_{2,1}(\R)$ par cons\'equent, le groupe $\G$ pr\'eserve une ellipse. La proposition \ref{unique} montre que $\O$ est cet ellipsoïde.

Si l'adh\'erence de Zariski de $\G$ n'est pas conjugu\'ee au groupe $\mathrm{SO}_{2,1}(\R)$ alors le corollaire \ref{Zariski} montre que $\G$ est Zariski dense. Le lemme \ref{disoudens} montre que $\Aut(\O)$ est un
sous-groupe discret ou dense de $\s$. Or, il pr\'eserve $\O$ il ne
peut donc pas \^etre dense.
\end{proof}

\begin{lemm}\label{disoudens}
Soit $H$ un sous-groupe Zariski dense de $\s$, on a l'alternative
suivante:
\begin{itemize}
\item Le groupe $H$ est discret, ou bien

\item Le groupe $H$ est dense.
\end{itemize}
\end{lemm}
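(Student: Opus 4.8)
Le plan est de comparer l'adhérence de $H$ pour la topologie réelle (celle de groupe de Lie) à son adhérence de Zariski, et d'exploiter la simplicité de l'algèbre de Lie $\mathfrak{sl}_3(\R)$. Je commencerais par poser $L = \overline{H}$, l'adhérence de $H$ dans $\s$ pour la topologie usuelle. C'est un sous-groupe fermé de $\s$, donc un sous-groupe de Lie. Comme $L \supset H$, l'adhérence de Zariski de $L$ contient celle de $H$, qui vaut $\s$ par hypothèse; le groupe $L$ est donc lui aussi Zariski dense dans $\s$.

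Ensuite, je considérerais la composante neutre $L^0$ de $L$ et son algèbre de Lie $\mathfrak{h} = \mathrm{Lie}(L^0) \subset \mathfrak{sl}_3(\R)$. La composante neutre d'un groupe topologique étant toujours distinguée, le sous-groupe $L^0$ est normalisé par $L$, et par conséquent le sous-espace $\mathfrak{h}$ est stable par l'action adjointe de tout élément de $L$: pour tout $g \in L$, on a $\mathrm{Ad}(g)\,\mathfrak{h} = \mathfrak{h}$.

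La principale difficulté sera de passer de cette invariance par $L$ à une invariance par $\s$ tout entier. Pour cela, je remarquerais que le normalisateur $N = \{ g \in \s \, | \, \mathrm{Ad}(g)\,\mathfrak{h} \subset \mathfrak{h} \}$ est un sous-groupe Zariski fermé de $\s$: la représentation adjointe est polynomiale et la condition de préserver le sous-espace $\mathfrak{h}$ s'exprime par l'annulation de certaines coordonnées, donc par des équations polynomiales en les coefficients de $g$. Comme $N$ contient $H$, qui est Zariski dense, on obtient $N = \s$; et puisque $\mathrm{Ad}(g)$ est inversible, l'inclusion $\mathrm{Ad}(g)\,\mathfrak{h} \subset \mathfrak{h}$ est en fait une égalité pour tout $g \in \s$. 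Ainsi $\mathfrak{h}$ est un idéal de $\mathfrak{sl}_3(\R)$.

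Pour conclure, j'utiliserais que $\mathfrak{sl}_3(\R)$ est une algèbre de Lie simple, de sorte que ses seuls idéaux sont $\{0\}$ et $\mathfrak{sl}_3(\R)$. Si $\mathfrak{h} = \{0\}$, la composante neutre $L^0$ est triviale, donc $L$ est discret, et comme $H \subset L$ le groupe $H$ est discret. Si $\mathfrak{h} = \mathfrak{sl}_3(\R)$, alors $L^0 = \s$ puisque $\s$ est connexe, d'où $L = \s$ et $H$ est dense. Ceci établit l'alternative annoncée.
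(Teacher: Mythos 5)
Your proof is correct and takes essentially the same route as the paper: both pass to the identity component of the closure of $H$ for the usual topology, use the Zariski density of $H$ to show that this component is normalized by all of $\s$, and conclude by simplicity. The only difference is cosmetic: you run the normality-plus-simplicity step at the level of the Lie algebra $\mathfrak{sl}_3(\R)$, which has the merit of making explicit the Zariski-closedness of the normalizer that the paper's proof leaves implicit.
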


\begin{proof}
Ce lemme est vrai plus g\'en\'eralement pour les sous-groupes Zariski
dense d'un groupe alg\'ebrique quasi-simple. La composante neutre $G$ de l'adh\'erence (pour la topologie usuelle) de $H$ est un sous-groupe ferm\'e,
connexe de $\s$ qui est normalis\'e par $H$, et donc par $\s$
puisque $H$ est Zariski dense. Par cons\'equent, comme $\s$ est
simple. On obtient que:
\begin{itemize}
\item $G= \{ 1 \}$, ou bien

\item $G = \s$.
\end{itemize}
\end{proof}

Ce th\'eor\`eme entra\^ine le corollaire suivant.

\begin{coro}
Soit $\O$ un ouvert proprement convexe de $\P$, on suppose qu'il
existe $\G_1$ et $\G_2$ tel que $\mu(\O/_{\G_1}) < \infty$ et
$\mu(\O/_{\G_2}) < \infty$. On suppose de plus que $\O/_{\G_1}$
est compact et que $\O/_{\G_2}$ n'est pas compact. Alors, $\O$ est
un ellipsoïde.
\end{coro}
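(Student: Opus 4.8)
Le plan est de ramener la question \`a l'alternative exclusive du th\'eor\`eme~\ref{alter}, appliqu\'ee simultan\'ement \`a $\G_1$ et \`a $\G_2$. Je commencerais par \'ecarter le cas d\'eg\'en\'er\'e en montrant que $\O$ n'est pas un triangle : si c'\'etait le cas, $\Aut(\O)$ serait virtuellement le groupe de Lie ab\'elien $\R^2$ agissant simplement transitivement sur $\O$, de sorte que $\G_2 \subset \Aut(\O)$ serait virtuellement ab\'elien ; le lemme sur les groupes virtuellement ab\'eliens de covolume fini forcerait alors $\O/_{\G_2}$ \`a \^etre compact, contredisant l'hypoth\`ese. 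Donc $\O$ n'est pas un triangle, et comme $\mu(\O/_{\G_1}) < \infty$ et $\mu(\O/_{\G_2}) < \infty$, les groupes $\G_1$ et $\G_2$ v\'erifient tous deux les hypoth\`eses du th\'eor\`eme~\ref{alter}.

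J'appliquerais ensuite l'alternative du th\'eor\`eme~\ref{alter} \`a chacun des deux groupes. Si $\G_1$ ou $\G_2$ tombe dans le premier cas, alors $\O$ est un ellipso\"ide et la preuve est termin\'ee ; il suffit donc d'exclure la situation o\`u $\G_1$ et $\G_2$ sont tous deux dans le second cas. Dans cette situation, $\Aut(\O)$ est un sous-groupe \emph{discret} de $\s$, et $\G_1$ comme $\G_2$ en sont des sous-groupes d'indice fini.

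Il reste alors \`a faire jouer les deux conditions de compacit\'e l'une contre l'autre via le surgroupe commun $\Aut(\O)$. Comme $\G_1 \subset \Aut(\O)$, il existe une surjection continue $\O/_{\G_1} \to \O/_{\Aut(\O)}$ ; comme $\O/_{\G_1}$ est compact, $\O/_{\Aut(\O)}$ l'est aussi. Mais $\G_2$ est d'indice fini dans $\Aut(\O)$, de sorte que $\O/_{\G_2} \to \O/_{\Aut(\O)}$ est un rev\^etement fini d'une base compacte ; l'espace $\O/_{\G_2}$ serait donc compact, ce qui contredit l'hypoth\`ese. Cette contradiction \'elimine le second cas pour les deux groupes, et par cons\'equent $\O$ est un ellipso\"ide. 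La seule subtilit\'e, plus qu'un v\'eritable obstacle, r\'eside dans ce transfert de compacit\'e \`a travers les inclusions d'indice fini : pour l'implication \og rev\^etement fini d'un compact est compact \fg{} on utilise que $\Aut(\O)$ agit proprement sur $\O$ (premier Fait de la partie~\ref{Hil}).
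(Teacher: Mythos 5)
Your proof is correct and takes essentially the same approach as the paper: exclude the triangle case, apply Theorem~\ref{alter} to both groups, and reach a contradiction by transferring compactness across finite-index subgroups of the discrete group $\Aut(\O)$. The only difference is cosmetic --- the paper derives the contradiction from the common finite-index subgroup $\G_1 \cap \G_2$, whose quotient would have to be simultaneously compact (finite cover of $\O/_{\G_1}$) and non-compact (it surjects onto $\O/_{\G_2}$), whereas you go up to the common overgroup $\Aut(\O)$; note only that ``rev\^etement'' is a slight abuse when $\Aut(\O)$ has torsion, but the compact-fundamental-set argument suggested by your appeal to properness makes the transfer rigorous, and the paper's own wording has exactly the same degree of informality.
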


\begin{proof}
Tout d'abord, l'ouvert $\O$ n'est pas un triangle puisque
$\O/_{\G_2}$ n'est pas compact. Ensuite, si $\O$ n'est pas un
ellipsoïde alors le th\'eor\`eme \ref{alter}  montre que $\G_1$ et
$\G_2$ sont des sous-groupes d'indice fini de $\Aut(\O)$ par
cons\'equent $\G_1 \cap \G_2$ est un sous-groupe d'indice fini de
$\G_1$ et $\G_2$. Il vient que $\O/_{\G_1 \cap \G_2}$ devrait
\^etre compact et non compact, ce qui est absurde.
\end{proof}

\backmatter

\end{document}